\documentclass[letterpaper, 11pt, reqno]{amsart}
\title{\vspace{-10pt}KS-Groupoids and KS-Crossed Products\\ associated with Inverse Semigroup Actions}
\author{Hikaru Sekiyama}
\thanks{Department of Mathematics, Faculty of Science and Technology, Keio University, 3-14-1 Hiyoshi, Kohoku-ku, Yokohama, 223-8522, Japan.
Email: \texttt{hika-sekky777@keio.jp}}
\makeatletter
\def\@oddhead{\normalfont\thepage\hfil}
\def\@evenhead{\normalfont\hfil\thepage}
\makeatother

\usepackage[top=2.5cm,bottom=2.5cm,right=3cm,left=3cm]{geometry}

\usepackage{amsmath, amsthm, amssymb, mathrsfs}
\usepackage{framed}
\usepackage{tikz}
\usetikzlibrary{arrows}
\usepackage{xcolor}
\usepackage{hyperref}
\hypersetup{colorlinks=true, citecolor=blue, linkcolor=red, urlcolor=red, linktoc=page}
\usepackage{indentfirst} %文頭にインデントを入れる

\renewcommand{\eqref}[1]{\hyperref[#1]{\textup{(\ref*{#1})}}} %括弧もhyperlink

\numberwithin{equation}{section} %式のtagにセクション番号入れる

\makeatletter
\renewenvironment{proof}[1][\proofname]{\par
  \pushQED{\qed}%
  \normalfont
  \topsep=2pt \partopsep=0pt
  \trivlist
  \item[\hskip\labelsep\bfseries #1\@addpunct{:}]\ignorespaces
}{%
  \popQED\endtrivlist\@endpefalse
}
\makeatother
\renewcommand{\proofname}{Proof}

\makeatletter
\newcommand*{\defeq}{\mathrel{\rlap{%
                     \raisebox{0.3ex}{$\m@th\cdot$}}%
                     \raisebox{-0.3ex}{$\m@th\cdot$}}%
                     =}
\makeatother

\makeatletter
\def\l@section{\@tocline{1}{0pt}{1pc}{}{}}
\def\l@subsection{\@tocline{2}{0pt}{1pc}{4.6em}{}}
\renewcommand{\tocsubsection}[3]{%
  \indentlabel{%
    \@ifnotempty{#2}{%
      \hspace*{1.3em}%
      \makebox[2.3em][l]{\ignorespaces#1 #2.\hfill}%
    }%
  }#3%
}
\makeatother

\newtheoremstyle{mythm}{3pt}{3pt}{\normalfont}{}{}{}{ }{{\textbf{\thmname{#1} \thmnumber{#2}.} \thmnote{(#3)}}}
\theoremstyle{mythm}

\newtheorem{thm}{Theorem}[subsection]
\newtheorem{dfn}[thm]{Definition}
\newtheorem{lem}[thm]{Lemma}
\newtheorem{prop}[thm]{Proposition}
\newtheorem{ex}[thm]{Example}
\newtheorem{cor}[thm]{Corollary}
\newtheorem{rem}[thm]{Remark}

\newtheorem{obs}[thm]{Observation}

\newtheorem*{thm*}{Theorem}

\makeatletter
\@addtoreset{thm}{subsection}
\makeatother
\renewcommand{\thethm}{%
  \ifnum\value{subsection}=0
    \thesection.\arabic{thm}%
  \else
    \thesubsection.\arabic{thm}%
  \fi
}

\let\oldsubsection\subsection
\renewcommand{\subsection}{\vspace{-3pt}\oldsubsection}

\let\oldenumerate\enumerate
\renewcommand{\enumerate}{
   \oldenumerate
   \setlength{\itemsep}{2pt}
   \setlength{\parskip}{0pt}
   \setlength{\parsep}{0pt}
   \setlength{\leftskip}{-18pt}
   }

\DeclareMathOperator{\pt}{\mathrm{\textbf{pt}}}
\DeclareMathOperator{\ev}{\mathrm{ev}}

\DeclareMathOperator{\bis}{\mathrm{Bis}}
\DeclareMathOperator{\eval}{\mathrm{ev}}

\DeclareMathOperator{\Sp}{Sp}
\DeclareMathOperator{\ind}{\mathrm{Ind}}
\DeclareMathOperator{\supp}{\mathrm{supp}}
\newcommand{\kstimes}{\ltimes^\mathrm{KS}}
\newcommand{\ksr}{\ltimes^\mathrm{KS}_r}
\newcommand{\ks}{\mathrm{KS}}
\newcommand{\ltimesr}{\ltimes_r}
\DeclareMathOperator{\Span}{\mathrm{span}}
\newcommand{\alg}{\ltimes^{\mathrm{KS}}_{\mathrm{alg}}}

\DeclareMathOperator{\isa}{\mathbf{ISA}_\mathbf{pr}}

\def\d{\textbf{d}}

\newcommand{\ctext}[1]{\raise0.2ex\hbox{\textcircled{\scriptsize{#1}}}}

\begin{document}

\begin{abstract}
  For an action of an inverse semigroup $S$ on a locally compact Hausdorff space $X$, we construct an \'etale groupoid $S\ltimes X^\ks$, called the KS-groupoid associated with the action. 
  This construction may be viewed as a dynamical analogue of the construction of universal groupoids for inverse semigroups. 
  We provide a dynamical version of Paterson's theorem which states that the full and reduced crossed products (in the sense of Khoshkam and Skandalis) of $C_0(X)$ are canonically isomorphic to the full and reduced groupoid C*-algebras of $S\ltimes X^\ks$, respectively. 
  We further investigate some topological properties, functoriality and universality of KS-groupoids.
\end{abstract}

\maketitle

\setcounter{section}{-1}
\tableofcontents

\section{Introduction}

The relationship among inverse semigroups, \'etale groupoids, and C*-algebras has been studied extensively. 
This line of research has its origins in two constructions of C*-algebras from inverse semigroups and \'etale groupoids, namely, inverse semigroup C*-algebras and groupoid C*-algebras.
In his monograph \cite{renault}, Renault introduced a general construction of full and reduced groupoid C*-algebras $C^*(G), C^*_r(G)$ associated with locally compact groupoids $G$ equipped with Haar systems and showed that several important classes of C*-algebras, including AF-algebras and Cuntz algebras, can be realized as the C*-algebras of \'etale groupoids.
On the other hand, Duncan and Paterson initiated the systematic study of inverse semigroup C*-algebras in \cite{dp}.

One of the major developments in the study of the relationship among inverse semigroups, \'etale groupoids, and C*-algebras is Paterson's theorem (\cite{paterson,ks1}). 
For every inverse semigroup $S$, Paterson constructed an \'etale groupoid $S\ltimes\widehat{E(S)}$, called the \textit{universal groupoid} associated with $S$, and proved\footnote{In Paterson's original proof on the reduced case of the isomorphisms, there is a small gap arising from the fact that the universal groupoid need not be Hausdorff in general. This gap was fixed by Khoshkam and Skandalis in \cite{ks1}.} that the full (resp. reduced) inverse semigroup C*-algebra is canonically isomorphic to the full (resp.\ reduced) C*-algebra of this groupoid.
These isomorphisms allow us to investigate inverse semigroup C*-algebras through topological properties of their universal groupoids.

The theory extends naturally from inverse semigroups themselves to their actions.
The notion of an action of an inverse semigroup on a C*-algebra was introduced by Sieben in \cite{ns} as a natural generalization of partial group actions on C*-algebras. 
Several crossed-product constructions have subsequently been associated with such actions.
These include the full crossed products $S\ltimes A$ introduced by Sieben, the reduced crossed products $S\ltimesr A$ introduced by Exel in \cite{exel2011}, and the full and reduced crossed products $S\kstimes A$ and $S\ksr A$ in the sense of Khoshkam and Skandalis in \cite{ks2}.
Though Khoshkam and Skandalis refer to the latter constructions simply as full and reduced crossed products, in order to distinguish them from full and reduced crossed products due to Sieben and Exel, we call\footnote{Related terminology can be found in \cite{bn}.} them the full and reduced \textit{KS-crossed products}, respectively.
These constructions differ even for the identity action of $S$ on $\mathbb C$.
Indeed, full and reduced KS-crossed products are, by construction, nothing but the full and reduced inverse semigroup C*-algebras associated with $S$; on the other hand, full and reduced crossed products are full and reduced group C*-algebras associated with the maximal group image of $S$, respectively.

For an action of an inverse semigroup $S$ on a locally compact Hausdorff space $X$, the full and reduced crossed products of $C_0(X)$ are known to admit a natural groupoid model (see \cite{exel2008,bm,BHM2018,Neshveyev2026} for the full case and \cite{be} for the reduced case). 
Namely, every action of an inverse semigroup $S$ on a locally compact Hausdorff space $X$ gives rise to the \'etale groupoid $S\ltimes X$, called the \textit{transformation groupoid}, and there are canonical isomorphisms
\begin{equation}
  \label{eq:0-1}
  S\ltimes C_0(X)\cong C^*(S\ltimes X),\qquad S\ltimesr C_0(X)\cong C_r^*(S\ltimes X).
\end{equation}
Since every \'etale groupoid $G$ can be realized as the transformation groupoid associated with the canonical action of $\bis(G)$ on the unit space $G^{(0)}$ (\cite[Proposition 5.4]{exel2008}), full and reduced \'etale groupoid C*-algebras are canonically isomorphic to some full and reduced crossed products, respectively, by the isomorphisms above. 

By contrast, it seems to the author that KS-crossed products have not been explored enough in the literature. 
One notable exception is the work of B\'edos and Norling in \cite{bn}, where KS-crossed products are studied in the framework of Fell bundles.
In this paper, we focus on inverse semigroup actions on locally compact Hausdorff spaces and construct an explicit groupoid model for the full and reduced KS-crossed products.
This groupoid model allows us to study C*-algebraic properties of KS-crossed products in terms of the underlying actions.

More precisely, the following are the main contributions of this paper.
{
  \setlength{\leftmargini}{27pt}  
  \begin{itemize}
    \item[(A)] For a given action of an inverse semigroup $S$ on a locally compact Hausdorff space $X$, we introduce a locally compact Hausdorff space $X^\ks$ (Definition \ref{def:of-X^ks}) containing $X$ as a closed subspace. 
    We also construct an $S$-action on $X^\ks$ (Theorem \ref{thm:action-on-X^ks}) that extends the original action on $X$. 
    When the given action is the identity action on the one-point space $\pt$, the space $\pt^\ks$ and the $S$-action on $\pt^\ks$ can be canonically identified with the character space $\widehat{E(S)}$ and the $S$-action on $\widehat{E(S)}$, respectively.
    \item[(B)] It is shown that the corresponding transformation groupoid $S\ltimes X^\ks$ (which we call the \textit{KS-groupoid}) is a groupoid model for the full and reduced KS-crossed products (Subsection \ref{subsec:4.2}), that is, there are canonical isomorphisms
    \begin{equation}
      \label{eq:0-2}
      S\kstimes C_0(X)\cong C^*(S\ltimes X^\ks),\qquad S\ksr C_0(X)\cong C_r^*(S\ltimes X^\ks).
    \end{equation}
    We refer to this result as \textit{dynamical Paterson's theorem} because this is exactly Paterson's theorem when $X=\pt$ and the $S$-action is the identity action.
    \item[(C)] We characterize some topological properties of the KS-groupoid $S\ltimes X^\ks$, such as Hausdorffness (under an assumption), second countability, ampleness of the groupoid and ($\sigma$-)compactness of its unit space, in terms of the associated inverse semigroup action (Subsection \ref{subsec:3.1}).
    These enable us to characterize some elementary properties of KS-crossed products, such as separability and ($\sigma$-)unitality, in terms of the original actions.
    \item[(D)] We show in Subsection \ref{subsec:3.2} that the construction of KS-groupoids defines a functor from $\isa$ to $\mathbf{EG}$, where $\isa$ and $\mathbf{EG}$ denote the categories of inverse semigroup actions on locally compact Hausdorff spaces and \'etale groupoids, respectively, in the sense of Fujieda, Katsura and Uchimura in \cite{FKU}.
    \item[(E)] A universal property of KS-groupoids is established in Theorem \ref{thm:univ}.
    More precisely, for a given inverse semigroup $S$, actions on locally compact Hausdorff spaces $X,Y$ and an $S$-equivariant map $f\colon Y\to X$, we construct a canonical $S$-equivariant map $\rho$ from $Y$ to $X^\ks$ which induces a \textit{\d-bijective} groupoid homomorphism $\tilde{\rho}\colon S\ltimes Y\to S\ltimes X^\ks$. 
    For the definition of $\d$-bijective groupoid homomorphisms, see \cite[Definition 2.2]{es} or Definition \ref{def:d-bij}.
    Furthermore, it is also shown that $\tilde{\rho}$ is Borel if $f$ is Borel and $S^X=\{s\in S\mid X_{s^*s}\neq\emptyset\}$ is countable. 
    This generalizes the result in Section 3 of \cite{es} on universal groupoids to the setting of KS-groupoids.
  \end{itemize}
}

We next explain the main ideas behind these results and indicate some of the technical difficulties.

The starting point of the overall study is that, from \cite[Theorem 6.2]{ks2} (see also Subsection \ref{subsec:1.4}), for a given action of an inverse semigroup $S$ on a locally compact Hausdorff space $X$, we can construct a natural $S$-action on the KS-crossed product $E(S)\kstimes C_0(X)$, and there is a natural isomorphism
\begin{equation}
  \label{eq:0-3}
  S\kstimes C_0(X)\cong S\ltimes (E(S)\kstimes C_0(X)).
\end{equation}
Since $E(S)\kstimes C_0(X)$ is a commutative C*-algebra, one may compute the spectrum of $E(S)\kstimes C_0(X)$ and the $S$-action on it.
Although the spectrum admits a natural set-theoretic description as a subset of $X\times\widehat{E(S)}$, its topology can be strictly coarser than the induced topology.
Due to this fact, our approach for computing the spectrum of $E(S)\kstimes C_0(X)$ is to describe it as a concrete subspace $X^\ks$ of $\prod_{e\in E(S)}\widetilde{X_e}$ (see Definition \ref{def:of-X^ks} and Proposition \ref{prop:spectrum}), where $\widetilde{X_e}$ denotes the one-point compactification of the open subset $X_e$ of $X$ for each $e\in E(S)$.
The formula for the $S$-action on $X^\ks$ is naturally suggested by the $S$-action on $E(S)\kstimes C_0(X)$ and the homeomorphism from the spectrum to $X^\ks$.
In Subsection \ref{subsec:2.2}, we verify directly, without using C*-algebraic arguments, that this formula indeed defines an $S$-action.
Its compatibility with the action on $E(S)\kstimes C_0(X)$ is established later in Proposition \ref{prop:action-on-spectrum}.
Thus, the construction in (A) is carried out entirely from a topological point of view.

For (B), the isomorphism $S\kstimes C_0(X)\cong C^*(S\ltimes X^\ks)$ follows immediately from the argument above (see the beginning of Subsection \ref{subsec:4.2}). 
Therefore, the reduced case is the main remaining step for (B).
Its proof has three main ingredients.
{
  \setlength{\leftmargini}{33pt} 
  \begin{itemize}
    \item[(B-1)] Using a result of Khoshkam and Skandalis, we show that the reduced norm on $\mathcal{C}(S\ltimes X^\ks)$ is determined by the left regular representations at the points in $\{\tau_{e,\,x}\in X^\ks\mid e\in E(S),\;x\in X_e\}$ (Proposition \ref{prop:norm}), which form a dense subset of $(S\ltimes X^\ks)^{(0)}=X^\ks$.
    \item[(B-2)] By introducing \textit{induced representations} of $S\alg C_0(X)$ in Observation \ref{obs:ind-rep}, we show that the family of induced representations $\{\ind\rho_{e,\,x}\mid e\in E(S),\;x\in X_e\}$ determines the reduced norm on $S\alg C_0(X)$.
    \item[(B-3)] In Lemma \ref{lem:unitary-equiv}, we prove the unitary equivalence of the two representations $\ind\rho_{e,\,x}$ on $S\alg C_0(X)$ and the left regular representation of $\mathcal{C}(S\ltimes X^\ks)$ at $\tau_{e,\,x}\in X^\ks$, for each $e\in E(S)$ and $x\in X_e$, under the canonical algebraic map.
  \end{itemize}
}

For (C), (D) and (E), the explicit descriptions of $X^\ks$ and the $S$-action on $X^\ks$ are the main ingredients.
However, the characterization of Hausdorffness requires an additional argument.
For a given inverse semigroup $S$, Steinberg showed in \cite[Theorem 5.17]{steinberg} that the universal groupoid $S\ltimes\widehat{E(S)}$ is Hausdorff if and only if $S$ is a \textit{weak semilattice}.
On the other hand, Exel and Pardo in \cite[Theorem 3.15]{ep} gave a general characterization of Hausdorffness of the transformation groupoid $S\ltimes X$ in terms of the original action of an inverse semigroup $S$ on a locally compact Hausdorff space $X$.
In the beginning of Subsection \ref{subsec:3.1}, we obtain another characterization (Proposition \ref{prop:hausdorffness}) and show that the KS-groupoid $S\ltimes X^\ks$ is Hausdorff if and only if $S^X:=\{s\in S\mid X_{s^*s}\neq\emptyset\}$ is a weak semilattice (Theorem \ref{thm:hausdorffness-of-KS}), provided that $X_e$ is compact for each $e\in E(S)$.
This approach relates the two well-known results \cite[Theorem 5.17]{steinberg} and \cite[Theorem 3.15]{ep} mentioned above.
Examples \ref{ex1} and \ref{ex2} show that, in the general case, the condition $S^X$ being a weak semilattice is neither necessary nor sufficient for Hausdorffness of the KS-groupoid $S\ltimes X^\ks$.

This paper is organized as follows.

In Section \ref{sec:1}, we recall some basic facts about inverse semigroups and their actions, \'etale groupoids and their C*-algebras, and the constructions of the crossed products and KS-crossed products.

In Section \ref{sec:2}, we give a purely topological treatment of the space $X^\ks$ and the $S$-action on $X^\ks$ for a given action of an inverse semigroup $S$ on a locally compact Hausdorff space $X$.

In Section \ref{sec:3}, we study some topological properties of KS-groupoids (Subsection \ref{subsec:3.1}), the functoriality of the construction of KS-groupoids (Subsection \ref{subsec:3.2}) and a universal property of KS-groupoids (Subsection \ref{subsec:3.3}).

In Section \ref{sec:4}, we relate KS-groupoids to KS-crossed products. In Subsection \ref{subsec:4.1}, we prove that $X^\ks$ is canonically homeomorphic to the spectrum of the commutative C*-algebra $E(S)\kstimes C_0(X)$, and that the action of $S$ on $X^\ks$ constructed in Subsection \ref{subsec:2.2} corresponds, under this homeomorphism, to the canonical action of $S$ on $E(S)\kstimes C_0(X)$. 
We next prove dynamical Paterson's theorem in Subsection \ref{subsec:4.2}. 
As corollaries of the results in Subsection \ref{subsec:3.1} and dynamical Paterson's theorem, we obtain some elementary properties of full and reduced KS-crossed products, such as separability and ($\sigma$-)unitality, in Subsection \ref{subsec:4.3}.

For readers' convenience, we collect several auxiliary results and detailed proofs that are used throughout the paper in the appendices.

In Appendix \ref{sec:A}, we give an almost self-contained treatment of Exel's crossed product $S\ltimes_r A$. 
This also contains a detailed argument for the existence of appropriate cyclic vectors of GNS-representations on the $*$-algebra $S\alg A$ (Proposition \ref{prop:cyclic}) which is used in Appendix \ref{sec:C} to prove the reduced case of the isomorphisms \eqref{eq:0-1}.

In Appendix \ref{sec:B}, we give an alternative proof of \cite[Theorem 6.2]{ks2} (see the isomorphism \eqref{eq:0-3} above). 
Our approach is more algebraic and does not use the theory of covariant representations of KS-crossed products.

In Appendix \ref{sec:C}, we give a self-contained proof of the isomorphisms
\eqref{eq:0-1} above.
For the full case, we give a direct proof which avoids using covariant representations or disintegration theorems.
For another approach, see \cite[Proposition 4.23, Corollaries 4.26 and 4.27]{BKM2025} and \cite[Theorem 3.5]{Neshveyev2026}.
For the reduced case, the argument is essentially a reformulation of the proof of \cite[Theorem 4.6]{be} in the present setting.
But our approach is more algebraic: we work directly with the algebraic crossed product, using the existence of appropriate cyclic vectors for its GNS representations established in Proposition \ref{prop:cyclic}.

\noindent\textbf{AI Statement.}
The author used ChatGPT (GPT-5.6 Sol and GPT-6 Astra) to assist with writing this manuscript in English and correcting typographical errors.
All mathematical arguments in this paper were developed by the author under the supervision of Prof.\ Takeshi Katsura. 
The author takes full responsibility for the content of the paper.

\noindent\textbf{Acknowledgements.}
This paper is based on the author's master's thesis. 
The author would like to thank Prof.\ Takeshi Katsura and Fuyuta Komura for carefully reading earlier versions of this manuscript and providing many helpful comments.

\section{Preliminaries}
\label{sec:1}

\subsection{Inverse Semigroups and Their Actions}
\label{subsec:1.1}

An \textit{inverse semigroup} is a semigroup $S$ such that for any $s\in S$ there exists a unique element $s^*\in S$ (called the \textit{generalized inverse} for $s$) satisfying
\[
  ss^*s=s,\quad s^*ss^*=s^*.
\]
We denote by $E(S)$ the set of all idempotents in an inverse semigroup $S$. It is known that $E(S)$ is a commutative subsemigroup of $S$ (see \cite[Theorem 1.2.14]{lawson}). 
For $s,t\in S$, we write $s\le t$ if $ts^*s=s$. This defines a partial order on $S$ called the \textit{natural partial order} (see \cite[Proposition 2.1.2]{lawson}).

\begin{ex}
  Let $X$ be a set and $Y,Z\subset X$. We say that a map $f\colon Y\to Z$ is a \textit{partial bijection} on $X$ if $f\colon Y\to Z$ is bijective. We denote the set of all partial bijections on $X$ by $\mathcal{I}(X)$. For any two partial bijections $f_1\colon Y_1\to Z_1$ and $f_2\colon Y_2\to Z_2$ on $X$, define the partial composition $f_2\circ f_1$ by
  \[
    f_2\circ f_1\colon (f_1)^{-1}(Z_1\cap Y_2)\ni x\mapsto f_2(f_1(x))\in f_2(Z_1\cap Y_2).
  \]
  The set $\mathcal{I}(X)$ becomes an inverse semigroup with respect to the multiplication given by the partial composition, where the generalized inverse for $f\colon Y\to Z$ is the inverse $f^{-1}\colon Z\to Y$. We have $E(\mathcal{I}(X))=\{\mathrm{id}_Y\mid Y\subset X\}$. For $f_1\colon Y_1\to Z_1$, $f_2\colon Y_2\to Z_2$, we have $f_1\le f_2$ in $\mathcal{I}(X)$ if and only if $Y_1\subset Y_2$ and $f_1(x)=f_2(x)$ for all $x\in Y_1$.
\end{ex}

Let $S$ be an inverse semigroup. A \textit{set-theoretic action} $\theta$ of $S$ on a set $X$ is a semigroup homomorphism $\theta\colon S\ni s\mapsto \theta_s\in\mathcal{I}(X)$. 
Observe that, for each $e\in E(S)$, the partial bijection $\theta_e$ is an idempotent in $\mathcal{I}(X)$. 
Hence $\theta_e$ is the identity map on some subset of $X$. 
We denote by $X_e$ the corresponding subset of $X$ on which $\theta_e$ is the identity. 
Notice that the domain of $\theta_s$ is $X_{s^*s}$ and the range is $X_{ss^*}$ for each $s\in S$. 
If $s\le t$ in $S$, then we have $\theta_s\le\theta_t$ in $\mathcal{I}(X)$, that is, $\theta_t$ is an extension of $\theta_s$.

\begin{dfn}
  Let $S$ be an inverse semigroup. A \textit{(topological) action} $\theta$ of $S$ on a topological space $X$ is a set-theoretic action of $S$ on $X$ such that
  \begin{enumerate}
    \item $X_e$ is open for each $e\in E(S)$,
    \item $\theta_s\colon X_{s^*s}\to X_{ss^*}$ is continuous for each $s\in S$, and 
    \item the union $\bigcup_{e\in E(S)}X_e$ coincides with $X$.
  \end{enumerate}
\end{dfn}

\begin{rem}
  It follows from the definition that, for each $s\in S$, the map $\theta_s$ is a homeomorphism from $X_{s^*s}$ to $X_{ss^*}$ with the inverse $\theta_{s^*}$. 
\end{rem}

\begin{ex}
  \label{ex:spectral-action}
  Let $S$ be an inverse semigroup and let $\widehat{E(S)}$ denote the set of all nonzero semigroup homomorphisms $\chi\colon E(S)\to \{0,1\}$ equipped with the topology of pointwise convergence. Then $\widehat{E(S)}$ is locally compact Hausdorff. We can define an action $\theta^S$ of $S$ on $X=\widehat{E(S)}$ in the following way. For each $e\in E(S)$, set $X_e=\{\chi\in X\mid\chi(e)=1\}$. Clearly, $X_e$ is a compact open subset of $X$ for each $e\in E(S)$. For each $s\in S$, define $\theta^S_s\colon X_{s^*s}\to X_{ss^*}$ by
  \[
    \theta^S_s(\chi)(e)=\chi(s^*es)
  \]
  for all $\chi\in X_{s^*s}$ and $e\in E(S)$. These assignments define an $S$-action on $\widehat{E(S)}$.
\end{ex}

\begin{dfn}
  Let $\theta,\sigma$ be actions of $S$ on topological spaces $X,Y$, respectively. A map $f\colon X\to Y$ is said to be $S$-equivariant if $f$ satisfies $f(X_e)\subset Y_e$ for each $e\in E(S)$, and the diagram
  \begin{center}
    \begin{tikzpicture}[auto]
      \node (1) at (0,1.5) {$X_{s^*s}$};
      \node (2) at (2,1.5) {$X_{ss^*}$};
      \node (3) at (0,0) {$Y_{s^*s}$};
      \node (4) at (2,0) {$Y_{ss^*}$};
      \draw[->] (1) to node {$\scriptstyle\theta_s$} (2);
      \draw[->] (1) to node[swap] {$\scriptstyle f|_{X_{s^*s}}$} (3);
      \draw[->] (2) to node {$\scriptstyle f|_{X_{ss^*}}$} (4);
      \draw[->] (3) to node[swap] {$\scriptstyle\sigma_s$} (4);
    \end{tikzpicture}
  \end{center}
  commutes for each $s\in S$.
\end{dfn}

\begin{dfn}[{\cite[Definition 9.1]{exel2008}}]
  Let $S$ be an inverse semigroup and $A$ be a C*-algebra. A \textit{(C*-algebraic) action} $\alpha$ of $S$ on $A$ is a set-theoretic action of $S$ on $A$ satisfying
  \begin{enumerate}
    \item $A_e$ is a (closed two-sided) ideal of $A$ for each $e\in E(S)$,
    \item $\alpha_s\colon A_{s^*s}\to A_{ss^*}$ is a $*$-homomorphism for each $s\in S$, and
    \item the sum $\sum_{e\in E(S)}A_e$ is dense in $A$.
  \end{enumerate}
  We say that $(A,\alpha)$, or simply $A$, is an \textit{$S$-C*-algebra} if $A$ is equipped with an $S$-action $\alpha$.
\end{dfn}

\begin{ex}
  \label{ex:induced-action-on-C0}
  Let $\theta$ be an action of $S$ on a locally compact Hausdorff space $X$. Then $\theta$ induces an action on the C*-algebra $C_0(X)$ by 
  \begin{enumerate}
    \item $C_0(X)_e:=C_0(X_e)\triangleleft C_0(X)$ for each $e\in E(S)$, and
    \item $\alpha_s\colon C_0(X_{s^*s})\ni f\mapsto f\circ\theta_{s^*}\in C_0(X_{ss^*})$ for each $s\in S$.
  \end{enumerate}
  Conversely, every $S$-action on $C_0(X)$ is of this form. 
\end{ex}

\subsection{\'{E}tale Groupoids}
\label{subsec:1.2}

We recall some fundamental notions about \'etale groupoids and the constructions of their C*-algebras.

A \textit{groupoid} is a set $G$ together with a distinguished subset $G^{(0)}\subset G$ (called the \textit{unit space} of $G$), domain and range maps $d,r\colon G\to G^{(0)}$, a multiplication map
\[
  G^{(2)}:=\{(\gamma_2,\gamma_1)\in G\times G\mid d(\gamma_2)=r(\gamma_1)\}\ni(\alpha,\beta)\mapsto \alpha\beta\in G
\]
and an inverse map
\[
  G\ni \gamma\mapsto \gamma^{-1}\in G
\]
satisfying
\begin{enumerate}
  \item $d(x)=x=r(x)$ for all $x\in G^{(0)}$,
  \item $r(\gamma)\gamma=\gamma=\gamma d(\gamma)$ for all $\gamma\in G$,
  \item $r(\gamma_2\gamma_1)=r(\gamma_2)$ and $d(\gamma_2\gamma_1)=d(\gamma_1)$ for all $(\gamma_2,\gamma_1)\in G^{(2)}$,
  \item $(\gamma_3\gamma_2)\gamma_1=\gamma_3(\gamma_2\gamma_1)$ for all $(\gamma_3,\gamma_2),(\gamma_2,\gamma_1)\in G^{(2)}$, and 
  \item $\gamma^{-1}\gamma=d(\gamma),\;\gamma\gamma^{-1}=r(\gamma)$ for all $\gamma\in G$.
\end{enumerate}

Let $G,H$ be groupoids. A \textit{groupoid homomorphism} $f\colon G\to H$ is a map such that if $(\gamma_2,\gamma_1)\in G^{(2)}$, then $(f(\gamma_2),f(\gamma_1))\in H^{(2)}$ and $f(\gamma_2\gamma_1)=f(\gamma_2)f(\gamma_1)$.

A \textit{topological groupoid} is a groupoid $G$ equipped with a topology such that the inverse map $G\ni \gamma\mapsto \gamma^{-1}\in G$ and the multiplication map
\[
  G^{(2)}\ni(\alpha,\beta)\mapsto \alpha\beta\in G
\]
are continuous.

\begin{dfn}
  A topological groupoid $G$ is said to be \textit{\'{e}tale} if the domain map $d\colon G\to G^{(0)}$ is a local homeomorphism, that is, for all $\gamma\in G$, there exists an open neighborhood $U\subset G$ of $\gamma$ such that $d(U)$ is open in $G^{(0)}$ and $d|_U\colon U\to d(U)$ is a homeomorphism.
\end{dfn}

In this paper, we always assume that every \'{e}tale groupoid has a locally compact Hausdorff unit space. 
However, we do not assume that \'etale groupoids are Hausdorff.

For any $x\in G^{(0)}$, let $G_x=\{\gamma\in G\mid d(\gamma)=x\}$. Note that, if $G$ is \'etale, then $G_x$ is a discrete subset of $G$. 

\begin{ex}
  For an action $\theta$ of an inverse semigroup $S$ on a locally compact Hausdorff space $X$, we associate an \'{e}tale groupoid $S\ltimes X$ called the \textit{transformation groupoid} as follows (see Section 4 of \cite{exel2008} for details).
  Let 
  \[
    S*X=\{(s,x)\in S\times X\mid x\in X_{s^*s}\}.
  \]
  We define the equivalence relation $\sim$ on $S*X$ by $(s,x)\sim(t,y)$ if $x=y$ and $se=te$, $x\in X_e$ for some $e\in E(S)$. Let $S\ltimes X=S*X\,/\,{\sim}$ and let $[s,x]$ denote the equivalence class of $(s,x)\in S*X$.

  The unit space $(S\ltimes X)^{(0)}$ of $S\ltimes X$ is $\{[e,x]\mid e\in E(S),\;x\in X_e\}$, which we identify with $X$ via the bijection $(S\ltimes X)^{(0)}\ni [e,x]\mapsto x\in X$. The domain and range maps $d,r\colon S\ltimes X\to X$ are given by
  \[
    d([s,x])=x,\quad r([s,x])=\theta_s(x).
  \]
  The multiplication is defined by $[t,y][s,x]=[ts,x]$ whenever $[s,x],[t,y]\in S\ltimes X$ satisfy $d([t,y])=r([s,x])$, that is, $y=\theta_s(x)$. The inverse is defined by $[s,x]^{-1}=[s^*,\theta_s(x)]$. These operations are well-defined and they make $S\ltimes X$ a groupoid. Moreover, the subsets of the form
  \[
  [s,U]:=\{[s,x]\in S\ltimes X\mid x\in U\},\quad s\in S,\quad U\in\mathbb{O}(X_{s^*s})
  \]
  form an open basis for $S\ltimes X$, and this topology makes $S\ltimes X$ a topological groupoid. Here, $\mathbb{O}(X_{s^*s})$ denotes the set of all open subsets in $X_{s^*s}$. The bijection above is a homeomorphism from $(S\ltimes X)^{(0)}$ onto $X$. Since $d|_{[s,U]}\colon [s,U]\to U$ is easily seen to be a homeomorphism for any $s\in S$ and $U\in\mathbb{O}(X_{s^*s})$, it follows that $S\ltimes X$ is an \'{e}tale groupoid.

  The \textit{universal groupoid} $S\ltimes\widehat{E(S)}$ associated with an inverse semigroup $S$ is the transformation groupoid corresponding to the $S$-action on $\widehat{E(S)}$ described in Example \ref{ex:spectral-action}.
  
  The next lemma will be needed in Subsection \ref{subsec:3.3}.
\end{ex}

\begin{lem}
  \label{lem:hom}
  Let $\theta,\sigma$ be actions of an inverse semigroup $S$ on locally compact Hausdorff spaces $X,Y$, respectively. If $f\colon X\to Y$ is an $S$-equivariant map, then the map $\tilde{f}\colon S\ltimes X\to S\ltimes Y$ defined by
  \[
    \tilde{f}([s,x])=[s,f(x)]
  \]
  is a well-defined groupoid homomorphism. Moreover, $\tilde{f}$ is continuous (resp.\ Borel) if $f$ is continuous (resp.\ $S^X=\{s\in S\mid X_{s^*s}\neq\emptyset\}$ is countable and $f$ is Borel).
\end{lem}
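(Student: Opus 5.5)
We begin with well-definedness. Since $f(X_{s^*s})\subset Y_{s^*s}$, every $(s,x)\in S*X$ satisfies $(s,f(x))\in S*Y$. If $(s,x)\sim(t,x)$, witnessed by $e\in E(S)$ with $se=te$ and $x\in X_e$, then $f(x)\in Y_e$, and the same $e$ witnesses $(s,f(x))\sim(t,f(x))$. Hence $\tilde f$ is well defined.

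Next, the homomorphism property. Take composable $[t,y],[s,x]$, so that $y=\theta_s(x)$. Equivariance gives $f(y)=\sigma_s(f(x))$, so $(\tilde f[t,y],\tilde f[s,x])$ is composable. Moreover,
\[
  \tilde f([t,y][s,x])=[ts,f(x)]=[t,f(y)][s,f(x)].
\]

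Continuity is checked on basic open sets. Given $[s,V]$ with $V$ open in $Y_{s^*s}$, I claim
\[
  \tilde f^{-1}([s,V])=\bigcup_{t}\,[t,U_t],
\]
where $t$ ranges over $S$ and $U_t$ is the set of $x\in X_{t^*t}\cap f^{-1}(V)$ with $[t,f(x)]=[s,f(x)]$. The task is to show each $U_t$ is open. If $[t,f(x)]=[s,f(x)]$, there is $e$ with $te=se$ and $f(x)\in Y_e$. The set
\[
  W=X_{t^*t}\cap f^{-1}(V\cap Y_e)
\]
is then an open neighborhood of $x$ by continuity of $f$, and every $x'\in W$ satisfies $[t,f(x')]=[s,f(x')]$ via the same $e$. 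So $W\subset U_t$, and $U_t$ is open. I expect this step, handling the non-injectivity of representatives, to be the only mildly delicate point; the argument above resolves it.

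For the Borel case, assume $S^X$ is countable and $f$ is Borel. Every point of $S\ltimes X$ has the form $[t,x]$ with $t\in S^X$, so the same decomposition applies with $t$ restricted to $S^X$. This makes the union countable. Each $U_t$ is Borel: it is the countable union over $e\in E(S)$ with $te=se$ of the sets $X_{t^*t}\cap f^{-1}(V\cap Y_e)$. Only $e\in E(S)\cap S^X$ matters here, because $x\in X_{t^*t}$ and, by choosing $e\le t^*t$ (replace $e$ by $et^*t$ and use $f(x)\in Y_{t^*t}$), $f(x)\in Y_{et^*t}$, and we want $X_{et^*t}\ni x$.

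That last step needs care. $f(x)\in Y_e$ does not give $x\in X_e$, so I would instead directly show that $U_t$ is the preimage of a Borel set. It is simpler to note that $U_t=X_{t^*t}\cap f^{-1}(V\cap O)$, where $O=\{y\in Y_{t^*t}: [t,y]=[s,y]\}$ is open in $Y$ by the argument above. Thus $U_t$ is Borel. Finally, $[t,U_t]$ is the image of a Borel set under the homeomorphism $(d|_{[t,X_{t^*t}]})^{-1}$ onto an open subset, so it is Borel, and $\tilde f^{-1}([s,V])$ is a countable union of Borel sets. Since the sets $[s,V]$ form a basis, but the basis need not be countable, I would also need preimages of arbitrary open sets to be Borel. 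I handle this by noting that $\tilde f^{-1}(W)$ for open $W$ equals $\bigcup_{t\in S^X}[t,\{x\in X_{t^*t}: [t,f(x)]\in W\}]$, and the inner set is $X_{t^*t}\cap f^{-1}(\{y\in Y_{t^*t}:[t,y]\in W\})$. That set is Borel because $y\mapsto[t,y]$ is continuous.

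This last uniform formulation is the one I would actually write, and it covers the continuous case as well.
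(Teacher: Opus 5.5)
Your final uniform formulation, $\tilde{f}^{-1}(W)=\bigcup_{t\in S^X}[t,\,X_{t^*t}\cap f^{-1}(\{y\in Y_{t^*t}\mid [t,y]\in W\})]$, is the paper's argument. It splits $\tilde{f}^{-1}(W)$ into the pieces $(\tilde{f}|_{[t,X_{t^*t}]})^{-1}(W)$ and uses that, under the domain-map identifications $[t,X_{t^*t}]\cong X_{t^*t}$ and $[t,Y_{t^*t}]\cong Y_{t^*t}$, the restriction of $\tilde{f}$ is just $f|_{X_{t^*t}}$; your checks of well-definedness and of the homomorphism property, which the paper calls routine, are also correct, and in the write-up you can drop the exploratory detour through the sets $U_t$ and the witnesses $e$, since the uniform formulation covers the continuous and Borel cases at once.
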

\begin{proof}
  It is routine to verify that $\tilde{f}$ is a well-defined groupoid homomorphism.

  The last statement follows from the observation that
  \[
  (\tilde{f})^{-1}(U)=\bigcup_{t\in S^X}(\tilde{f}|_{[t,X_{t^*t}]})^{-1}(U)
  \]
  for any $U\in\mathbb{O}(S\ltimes Y)$, and with the identifications $[t,X_{
  t^*t}]=X_{t^*t}$ and $[t,Y_{t^*t}]=Y_{t^*t}$ via domain maps we have $\tilde{f}|_{[t,X_{t^*t}]}=f|_{X_{t^*t}}$, which is continuous (resp.\ Borel) provided that $f$ is continuous (resp.\ $f$ is Borel).
\end{proof}

Throughout this subsection, let $G$ denote an \'etale groupoid. 

A \textit{bisection} $U$ of $G$ is a subset such that the restrictions $d|_U,r|_U\colon U\to G^{(0)}$ are injective. 
If $U$ is an open bisection, it follows that $d|_U,r|_U$ are homeomorphisms onto their images since $d,r\colon G\to G^{(0)}$ are open maps. 
In particular, $U$ is locally compact Hausdorff. 
We denote by $\bis(G)$ the set of all open bisections of $G$. 
It is known that $\bis(G)$ forms an open basis for $G$ (see Proposition 3.5 of \cite{exel2008}). 
It is also known that $\bis(G)$ is an inverse semigroup with respect to the multiplication given by 
\[
  U V=\{\gamma\eta\in G\mid \gamma\in U,\;\eta\in V,\;(\gamma,\eta)\in G^{(2)}\}
\]
for all $U,V\in \bis(G)$, where the generalized inverse of $U\in\bis(G)$ is given by $U^{-1}=\{\gamma^{-1}\mid \gamma\in U\}$ (see Section 5 of \cite{exel2008}). 

For any set $X$, let $\mathbb{C}^X$ denote the linear space of all $\mathbb{C}$-valued functions on $X$.

If $U$ is an open bisection of $G$, we may view $C_c(U)\subset\mathbb{C}^G$ by regarding each function $f$ in $C_c(U)$ as a function on $G$ by extending it by zero outside $U$. 
We define $\mathcal{C}(G)$ to be the linear span of $\bigcup_UC_c(U)$ in $\mathbb{C}^G$, where the union is taken over all open bisections $U$ of $G$.

A standard partition of unity argument implies the following.

\begin{lem}[{\cite[Proposition 3.10]{exel2008}}]
  \label{lem:partition}
  Let $\mathcal{O}$ be a covering of $G$ consisting of open bisections of $G$. Then
  \[
    \mathcal{C}(G)=\Span\bigcup_{U\in\mathcal{O}}C_c(U).
  \]
\end{lem}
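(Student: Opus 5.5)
The claim is that $\mathcal{C}(G)$, the span of all $C_c(U)$ over all open bisections $U$, equals the span of $C_c(U)$ over $U\in\mathcal{O}$ only. The inclusion ``$\supseteq$'' is trivial, since each $U\in\mathcal{O}$ is an open bisection. So I only need to show that each $f\in C_c(V)$, for an arbitrary open bisection $V$, lies in $\Span\bigcup_{U\in\mathcal{O}}C_c(U)$. I will use a partition of unity on the locally compact Hausdorff space $V$, which sidesteps the possible non-Hausdorffness of $G$.

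The argument goes in four steps.

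\textbf{Step 1: cover the support.} Let $K=\supp f$, taken inside $V$; it is compact in $V$. For each $U\in\mathcal{O}$, the set $U\cap V$ is open in $V$. Since $\mathcal{O}$ covers $G$, these sets cover $K$, so I can extract finitely many $U_1,\dots,U_n\in\mathcal{O}$ with $K\subset\bigcup_i (U_i\cap V)$.

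\textbf{Step 2: partition of unity on $V$.} As noted before the lemma, $d|_V$ is a homeomorphism onto an open subset of $G^{(0)}$, so $V$ is locally compact Hausdorff. The standard partition of unity therefore gives $\varphi_i\in C_c(U_i\cap V)$ with $0\le\varphi_i\le1$ and $\sum_i\varphi_i=1$ on $K$. Hence $f=\sum_i f\varphi_i$ on $V$.

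\textbf{Step 3: the pieces lie in $C_c(U_i)$.} Each $f\varphi_i$ is continuous on $V$ with compact support contained in $U_i\cap V$. Its restriction to the open set $U_i\cap V$ is compactly supported there, so it lies in $C_c(U_i\cap V)$. Extending by zero to $U_i$, I must check that it is continuous on $U_i$. The support is a compact subset of the Hausdorff space $U_i$, hence closed in $U_i$. The function is continuous on the open set $U_i\cap V$ and vanishes on the open set $U_i\setminus\supp(f\varphi_i)$. These two open sets cover $U_i$, so the extension is continuous.

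\textbf{Step 4: conclude.} As functions on $G$ extended by zero, $f=\sum_i f\varphi_i$, and each summand lies in $C_c(U_i)$ with $U_i\in\mathcal{O}$. This gives the inclusion ``$\subseteq$''.

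The only delicate point is Step 3. Because $G$ need not be Hausdorff, one must avoid taking closures in $G$ and instead argue entirely inside the Hausdorff open bisections $V$ and $U_i$.
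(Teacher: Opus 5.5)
Your proof is correct and is the standard partition-of-unity argument that the paper invokes but does not write out; it cites \cite[Proposition 3.10]{exel2008} instead. In Step~3 you argue inside the Hausdorff bisections $V$ and $U_i$ rather than taking closures in the possibly non-Hausdorff $G$, and this is exactly what makes the argument valid without assuming $G$ is Hausdorff.
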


It is shown in \cite[Proposition 3.11]{exel2008} that $\mathcal{C}(G)$ is a $*$-algebra with respect to the multiplication and involution given by
\[
  f*g(\gamma)=\sum_{\beta\in G_{d(\gamma)}}f(\gamma\beta^{-1})g(\beta),\quad f^*(\gamma)=\overline{f(\gamma^{-1})}
\]
for all $f,g\in\mathcal{C}(G)$. Note also from \cite[Proposition 3.11]{exel2008} that, if $f\in C_c(U)$ and $g\in C_c(V)$ for some $U,V\in \bis(G)$, then $f*g\in C_c(UV)$ and $f^*\in C_c(U^{-1})$.

It is known that, for any $*$-homomorphism $\varphi\colon\mathcal{C}(G)\to D$ into a C*-algebra $D$, we have $\|\varphi(f)\|\le \|f\|_\infty$ for any $U\in\bis(G)$ and $f\in C_c(U)$ (see, for example, \cite[Proposition 3.14]{exel2008}). 
Hence we can consider the universal enveloping C*-algebra $C^*(G)$ of the $*$-algebra $\mathcal{C}(G)$, which we call the \textit{full groupoid C*-algebra} of $G$.

We define the \textit{left regular representation} $\lambda_x\colon \mathcal{C}(G)\to\mathbb{B}(l^2(G_x))$ at $x\in G^{(0)}$ by
\[
  \lambda_x(f)\delta_\gamma=\sum_{\beta\in G_{r(\gamma)}}f(\beta)\delta_{\beta\gamma},
\]
where $f\in \mathcal{C}(G)$ and $\gamma\in G_x$. It is known that the map $\lambda_x\colon \mathcal{C}(G)\to\mathbb{B}(l^2(G_x))$ defined above is a well-defined $*$-homomorphism and the $*$-homomorphism 
\[
  \lambda\colon \mathcal{C}(G)\ni f\mapsto (\lambda_x(f))_{x\in G^{(0)}}\in \prod_{x\in G^{(0)}}\mathbb{B}(l^2(G_x))
\]
is well-defined and injective (see, for example, \cite{paterson, as}).

\begin{dfn}
  Define the C*-norm $\|\cdot\|_r$ on $\mathcal{C}(G)$ by 
  \[
    \|f\|_r=\|\lambda(f)\|=\sup_{x\in G^{(0)}}\|\lambda_x(f)\|.
  \]
  The completion of $\mathcal{C}(G)$ with respect to the C*-norm $\|\cdot\|_r$ is denoted by $C^*_r(G)$. We call this C*-algebra the \textit{reduced groupoid C*-algebra} of $G$.
\end{dfn}

The following will be needed in Subsection \ref{subsec:4.2}.

\begin{thm}
  \label{thm:KS}
  Let $G$ be an \'{e}tale groupoid and let $Z\subset G^{(0)}$ be a subset which is dense in $G^{(0)}$ with respect to the initial topology on $G^{(0)}$ induced from the family of functions $\{f|_{G^{(0)}}\mid f\in C_c(U),\;U\in\bis(G)\}$ on $G^{(0)}$. Then 
  \[
    \|f\|_r=\sup_{z\in Z}\|\lambda_z(f)\|
  \]
  for all $f\in\mathcal{C}(G)$.
\end{thm}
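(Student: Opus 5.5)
The inequality $\sup_{z\in Z}\|\lambda_z(f)\|\le\|f\|_r$ holds by definition of $\|\cdot\|_r$. For the converse, the plan is to reduce $\|\lambda_x(f)\|$ to finitely supported vectors and then approximate $x$ by points of $Z$ in the weak topology described in the statement.

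\emph{Reduction to finite matrix coefficients.} Fix $f\in\mathcal{C}(G)$, $x\in G^{(0)}$ and $\varepsilon>0$. Pick a finitely supported unit vector $\xi=\sum_{i=1}^n c_i\delta_{\gamma_i}\in l^2(G_x)$, with distinct $\gamma_i\in G_x$, such that $\|\lambda_x(f)\xi\|^2>\|\lambda_x(f)\|^2-\varepsilon$. Then
\[
  \|\lambda_x(f)\xi\|^2=\langle\lambda_x(f^**f)\xi,\xi\rangle=\sum_{i,j}\overline{c_i}c_j\,(f^**f)(\gamma_i\gamma_j^{-1}).
\]
Choose open bisections $U_i\ni\gamma_i$. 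Write $g=f^**f\in\mathcal{C}(G)$, so $g=\sum_k g_k$ with $g_k\in C_c(V_k)$.

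\emph{Transport to nearby points.} For $y$ in the open set $W=\bigcap_i d(U_i)$, set $\gamma_i(y)=(d|_{U_i})^{-1}(y)$. After shrinking the $U_i$ (the $\gamma_i$ are distinct and $G_x$ is discrete), the $\gamma_i(y)$ stay distinct, so $\xi_y=\sum_i c_i\delta_{\gamma_i(y)}$ is a unit vector in $l^2(G_y)$. Then
\[
  \langle\lambda_y(g)\xi_y,\xi_y\rangle=\sum_{i,j}\overline{c_i}c_j\,g\bigl(\gamma_i(y)\gamma_j(y)^{-1}\bigr).
\]
The key observation is that each map $y\mapsto g_k(\gamma_i(y)\gamma_j(y)^{-1})$ is a function of the form $h|_{G^{(0)}}$ with $h\in C_c$ of an open bisection, times the indicator of $W$. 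Concretely, $g_k(\gamma_i(y)\gamma_j(y)^{-1})=(\mathbf 1_{U_i}^{*}\text{-type convolution } \phi_i^**g_k*\phi_j)(y)$, where $\phi_i\in C_c(U_i)$ equals $1$ near $\gamma_i$; convolving bisection-supported functions stays in $C_c$ of the product bisection. So $y\mapsto\langle\lambda_y(\phi$-modified $g)\xi_y,\xi_y\rangle$ is a finite linear combination of functions in the generating family, restricted to a neighborhood of $x$. These functions are continuous in the initial topology and agree with the original coefficients at $y=x$.

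\emph{Density step.} By the density of $Z$ in the initial topology, there is $z\in Z$ with all these finitely many functions within $\varepsilon/(\sum|c_i||c_j|)$ of their values at $x$, and with the bump functions $\phi_i$ at $\gamma_i(z)$ equal to $1$. The last condition is itself a condition given by the family, namely $\phi_i^**\phi_i|_{G^{(0)}}$ close to $1$. We may need to replace $\xi_z$ by the vector with coefficients $c_i\phi_i(\gamma_i(z))$ and renormalize. This gives
\[
  \|\lambda_z(f)\|^2\ge\langle\lambda_z(g)\xi_z,\xi_z\rangle/\|\xi_z\|^2>\|\lambda_x(f)\|^2-C\varepsilon.
\]

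\emph{Main obstacle.} In the non-Hausdorff case, $g_k(\gamma_i(y)\gamma_j(y)^{-1})$ need not be continuous in $y$ for the manifold topology. That is exactly why the initial topology is used, and why every quantity must be expressed as a value on $G^{(0)}$ of a genuine element of $\bigcup_U C_c(U)$, namely via $\phi_i^**g_k*\phi_j$ evaluated at units. Verifying that identity carefully, including that $\phi_i\phi_j$-products land in $C_c$ of a bisection and evaluate correctly, is the delicate part.
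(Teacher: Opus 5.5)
Your route (approximate by a finitely supported vector, transport it along bisections, use density of $Z$ in the initial topology) is the standard direct argument behind the Khoshkam--Skandalis result, which the paper only cites. Your key identity is also correct: since $\lambda_y(\phi_j)\delta_y=\phi_j(\gamma_j(y))\delta_{\gamma_j(y)}$, one has $(\phi_i^**g*\phi_j)(y)=\overline{\phi_i(\gamma_i(y))}\phi_j(\gamma_j(y))\,g(\gamma_i(y)\gamma_j(y)^{-1})$.

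\textbf{The gap.} The failing step is the claim that after shrinking the $U_i$ the points $\gamma_i(y)$ stay distinct, so that $\xi_y$ is a unit vector. Discreteness of $G_x$ does not give disjoint neighbourhoods when $G$ is non-Hausdorff. Take the group bundle over $\mathbb{R}$ with fibre $\{0,\sigma\}\cong\mathbb{Z}/2$ over $0$ and trivial fibres elsewhere, topologized as the line with two origins. For $x=0$, $\gamma_1=0$, $\gamma_2=\sigma$, any open bisections $U_1\ni 0$ and $U_2\ni\sigma$ both contain every $y$ with $0<|y|<\epsilon$ for some $\epsilon>0$. So $\gamma_1(y)=\gamma_2(y)=y$ there, and $\xi_y=(c_1+c_2)\delta_y$ has norm $\sqrt2$ or $0$ when $c_2=\pm c_1=\pm1/\sqrt2$. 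In general $\|\xi_z\|^2=\sum_{i,j}\overline{c_i}c_j\langle\delta_{\gamma_j(z)},\delta_{\gamma_i(z)}\rangle$ depends on such coincidences. Your density step controls only the entries of $g$ and the diagonal quantities $(\phi_i^**\phi_i)(z)$, so the final inequality $\langle\lambda_z(g)\xi_z,\xi_z\rangle/\|\xi_z\|^2>\|\lambda_x(f)\|^2-C\varepsilon$ is not justified.

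\textbf{The repair.} The off-diagonal terms are themselves governed by the generating family, so your own mechanism fixes this. Put $\eta_y=\sum_jc_j\lambda_y(\phi_j)\delta_y$ for all $y\in G^{(0)}$, with $\phi_j\in C_c(U_j)$ and $\phi_j(\gamma_j)=1$. Then
\[
  \|\eta_y\|^2=\sum_{i,j}\overline{c_i}c_j\,(\phi_i^**\phi_j)(y),\qquad \langle\lambda_y(f^**f)\eta_y,\eta_y\rangle=\sum_{i,j}\overline{c_i}c_j\,(\phi_i^**f^**f*\phi_j)(y).
\]
Both right-hand sides are restrictions to $G^{(0)}$ of elements of $\mathcal{C}(G)$, hence continuous for the initial topology. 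At $y=x$ they equal $1$ and $\|\lambda_x(f)\xi\|^2$, because $(\phi_i^**\phi_j)(x)=0$ for $i\neq j$ when $\gamma_i\neq\gamma_j$. Hence on an initial-topology neighbourhood of $x$,
\[
  \|\lambda_y(f)\|^2(1+\varepsilon)\ge\|\lambda_y(f)\eta_y\|^2>\|\lambda_x(f)\|^2-2\varepsilon .
\]
So $y\mapsto\|\lambda_y(f)\|$ is lower semicontinuous for the initial topology, and density of $Z$ gives the theorem. No distinctness claim and no requirement that $\phi_i\equiv1$ near $\gamma_i$ is needed. Equivalently, in your version, add the functions $(\phi_i^**\phi_j)|_{G^{(0)}}$, $i\neq j$, to the finite list controlled at $z$. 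In the example above, $(\phi_1^**\phi_2)|_{G^{(0)}}$ is the indicator of $\{y\neq0\}$ near $0$. That is exactly why $0$ is isolated in the initial topology and must lie in $Z$.
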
 
\begin{proof}
  See \cite[Corollary 2.11]{ks1} and the beginning of the proof of \cite[Lemma 3.4]{ks1}.
\end{proof}

\subsection{Crossed Products and KS-Crossed Products}
\label{subsec:1.3}

There are four different constructions of C*-algebras arising from inverse semigroup actions on C*-algebras: full crossed products introduced by Sieben in \cite{ns}, reduced crossed products introduced by Exel in \cite{exel2011}, and the full and reduced KS-crossed products introduced by Khoshkam and Skandalis in \cite{ks2}. In this subsection, we review their constructions and set up the notation used throughout the paper.

\noindent\textbf{KS-Crossed Products.}
For a given $S$-action on $A$, let $S\alg A$ denote the vector space $\bigoplus_{s\in S}A_{s^*s}$. We denote by $\delta_s\,x$ the element in $S\alg A$ whose components are all zero except for its $s$-th component which is $x\in A_{s^*s}$. Then any element $T$ in $S\alg A$ is written as
\[
  T=\sum_{i=1}^n\delta_{s_i}\,x_i.
\]
A routine verification shows that $S\alg A$ becomes a $*$-algebra with respect to the multiplication and involution given by
\[
  (\delta_s\,x)\cdot(\delta_t\,y)=\delta_{st}\,\alpha_{t^*}(x\alpha_t(y)),\quad (\delta_s\,x)^*=\delta_{s^*}\,\alpha_s(x^*)
\]
for any $s,t\in S$, $x\in A_{s^*s}$ and $y\in A_{t^*t}$ (see \cite[Proposition 4.1]{ns}).

It is sometimes useful to consider the unique $*$-homomorphism $q_s\colon A\to M(A_{ss^*})$ such that the diagram 
\begin{center}
  \begin{tikzpicture}[auto]
    \node (3) at (0,0) {$A$};
    \node (4) at (2.3,0) {$M(A_{ss^*})$};
    \node (1) at (0,1.2) {$A_{s^*s}$};
    \node (2) at (2,1.2) {$A_{ss^*}$};
    \node (void) at (2,0.15) {};
    \draw[->] (3) to node[swap] {$\scriptstyle q_s$} (4);
    \draw[right hook->] (1) to node {} (3);
    \draw[right hook->] (2) to node {} (void);
    \draw[->] (1) to node {$\scriptstyle\alpha_s$} (2);
  \end{tikzpicture}
\end{center}
commutes. In fact, we have $q_{t^*}(x)y=\alpha_{t^*}(x\alpha_t(y))$ for any $t\in S$, $x\in A$ and $y\in A_{t^*t}$. Note also that $q_s(\alpha_t(x))y=q_{st}(x)y$ for any $s,t\in S$, $x\in A_{t^*t}$ and $y\in A_{(st)(st)^*}$.

\begin{lem}
  \label{lem:contractive}
  Let $B$ be a C*-algebra and $\pi\colon S\alg A\to B$ be a $*$-homomorphism. Then 
  \[
    \|\pi(T)\|\le \|T\|_1
  \]
  for all $T\in S\alg A$, where $\|\cdot\|_1$ denotes the $L^1$-norm on $S\alg A$.
\end{lem}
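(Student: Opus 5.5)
By the triangle inequality, $\|\pi(T)\|\le\sum_i\|\pi(\delta_{s_i}x_i)\|$ for $T=\sum_{i=1}^n\delta_{s_i}\,x_i$ with distinct $s_i$. Since $\|T\|_1=\sum_i\|x_i\|$, it suffices to prove $\|\pi(\delta_s\,x)\|\le\|x\|$ for a single $s\in S$ and $x\in A_{s^*s}$.

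For a single term I would use the C*-identity together with the multiplication rule in $S\alg A$:
\[
(\delta_s\,x)^*(\delta_s\,x)=\delta_{s^*}\,\alpha_s(x^*)\cdot\delta_s\,x=\delta_{s^*s}\,\alpha_{s^*}\bigl(\alpha_s(x^*)\alpha_s(x)\bigr)=\delta_{s^*s}\,(x^*x),
\]
because $\alpha_s$ is a $*$-homomorphism on $A_{s^*s}$ with inverse $\alpha_{s^*}$. Hence $\|\pi(\delta_s\,x)\|^2=\|\pi(\delta_{s^*s}\,x^*x)\|$, and the claim reduces to the case of an idempotent $e=s^*s$ and a positive element of $A_e$.

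Next I would look at the map $\pi_e\colon A_e\ni y\mapsto\pi(\delta_e\,y)\in B$. It is linear and $*$-preserving, since $(\delta_e\,y)^*=\delta_e\,\alpha_e(y^*)=\delta_e\,y^*$ because $\alpha_e$ is the identity on $A_e$. It is also multiplicative: $\delta_e\,y\cdot\delta_e\,z=\delta_{e}\,\alpha_e(y\alpha_e(z))=\delta_e\,yz$. So $\pi_e$ is a $*$-homomorphism from the C*-algebra $A_e$ into $B$, and therefore $\|\pi_e(y)\|\le\|y\|$. Applying this with $y=x^*x$ gives $\|\pi(\delta_s\,x)\|^2\le\|x^*x\|=\|x\|^2$, which finishes the proof.

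The only point that needs care is the identification $\delta_e\,y\cdot\delta_e\,z=\delta_e\,yz$. This uses the fact that $\alpha_e=\mathrm{id}_{A_e}$, which follows from $\alpha_e$ being an idempotent partial bijection with domain $A_e$. Once that is checked, contractivity of $*$-homomorphisms between C*-algebras does the rest, so I expect no real obstacle.
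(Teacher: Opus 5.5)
Your proposal is correct and follows essentially the same argument as the paper: reduce to a single term by the triangle inequality, then bound it using $\|\pi(\delta_s\,x)\|^2=\|\pi(\delta_{s^*s}\,x^*x)\|$ and the fact that $A_{s^*s}\ni y\mapsto\pi(\delta_{s^*s}\,y)$ is a $*$-homomorphism. You also spell out the small verifications (such as $\alpha_e=\mathrm{id}_{A_e}$) that the paper leaves implicit.
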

\begin{proof}
  Observe first that for each $s\in S$ and $x\in A_{s^*s}$, we have
  \[
    \|\pi(\delta_s\,x)\|^2=\|\pi(\delta_{s^*s}\,x^*x)\|\le \|x\|^2,
  \]
  where we have used the fact that $A_{s^*s}\ni y\mapsto \pi(\delta_{s^*s}\,y)\in B$ is a $*$-homomorphism.

  Write $T=\sum_{i=1}^n\delta_{s_i}\,x_i$ with distinct $s_i$'s. It follows from the observation above that
  \[
    \|\pi(T)\|\le \sum_{i=1}^n\|\pi(\delta_{s_i}\,x_i)\|\le \sum_{i=1}^n\|x_i\|=\|T\|_1.\qedhere
  \]
\end{proof}

From the lemma above, we can consider the universal enveloping C*-algebra $S\kstimes A$ of the $*$-algebra $S\alg A$, which we call the \textit{full KS-crossed product} of $A$ by $\alpha$.

In \cite{ks2}, Khoshkam and Skandalis construct a faithful representation of $S\alg A$ on a Hilbert module, which we call the \textit{left regular representation} $\ell$ of $S\alg A$, and define the reduced version of their crossed products. To review this construction, we first represent $S\alg A$ on a Hilbert $A_e$-module $l^2(S_e,A_e)=\overline{\mathrm{span}}\,\{\delta_t\,y\mid t\in S_e,\;y\in A_e\}$, that is, we construct a $*$-homomorphism from $S\alg A$ to $\mathbb{B}(l^2(S_e,A_e))$, where $\mathbb{B}(l^2(S_e,A_e))$ is the C*-algebra consisting of all adjointable maps on $l^2(S_e,A_e)$.

For each $e\in E(S)$, let $S_e=\{s\in S\mid s^*s=e\}$.

\begin{lem}[cf.\ {\cite[Proposition 5.10]{ks2}}]
  For each $e\in E(S)$, there exists a unique $*$-homomorphism $\ell_e\colon S\alg A\to\mathbb{B}(l^2(S_e,A_e))$ such that
  \begin{align*}
    \ell_e(\delta_s\,x)\,(\delta_t\,y)=
    \begin{cases}
      \delta_{st}\,\alpha_{t^*}(x\alpha_t(y)) &\mathrm{if}\; s^*st=t\\
      0 & \mathrm{otherwise}
    \end{cases}
  \end{align*}
  for all $s\in S$, $x\in A_{s^*s}$, $t\in S_e$ and $y\in A_e$, where $\mathbb{B}(l^2(S_e,A_e))$ denotes the C*-algebra consisting of all adjointable maps on $l^2(S_e,A_e)$.
\end{lem}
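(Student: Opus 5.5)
We construct $\ell_e(\delta_s\,x)$ first as an operator on the algebraic span $V_e=\Span\{\delta_t\,y\mid t\in S_e,\;y\in A_e\}$, viewed as a submodule of $l^2(S_e,A_e)=\bigoplus_{t\in S_e}A_e$. The inner product is the standard $\langle \delta_t\,y,\delta_{t'}\,y'\rangle=\delta_{t,t'}\,y^*y'$.

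Well-definedness comes first. If $s^*st=t$, then $(st)^*(st)=t^*s^*st=t^*t=e$, so $st\in S_e$. Next, $\alpha_t(y)\in A_{tt^*}$ and $tt^*\le s^*s$, so $x\alpha_t(y)\in A_{s^*s}A_{tt^*}\subset A_{tt^*}$. Hence $\alpha_{t^*}(x\alpha_t(y))\in A_{t^*t}=A_e$. The formula equals $q_{t^*}(x)y$ and is $A_e$-linear in $y$, so it extends additively to $V_e$. It is also right $A_e$-linear, since $q_{t^*}(x)\in M(A_e)$.

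Second, we find the adjoint on $V_e$: we show $\ell_e(\delta_s\,x)^*=\ell_e(\delta_{s^*}\,\alpha_s(x^*))$. We compute $\langle \ell_e(\delta_s x)\delta_t y,\delta_{t'}y'\rangle$ and compare it with $\langle \delta_t y,\ell_e(\delta_{s^*}\alpha_s(x^*))\delta_{t'}y'\rangle$. The left side is nonzero only when $s^*st=t$ and $st=t'$. In that case $ss^*t'=t'$ and $s^*t'=s^*st=t$, which is exactly the condition for the right side to be nonzero. Conversely, $ss^*t'=t'$ and $s^*t'=t$ give $st=ss^*t'=t'$ and $s^*st=s^*t'=t$. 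The two values then agree, both being $y^*q_{t^*}(x^*)y'$ up to the identity $q_{s^*}(\alpha_s(x^*))$ acting on $A_{t'^*t'}$, using $q_s(\alpha_t(x))y=q_{st}(x)y$. Multiplicativity $\ell_e(\delta_s x)\ell_e(\delta_{s'}x')=\ell_e(\delta_{ss'}\alpha_{s'^*}(x\alpha_{s'}(x')))$ is checked the same way. The nonvanishing conditions agree: $s'^*s't=t$ and $s^*ss't=s't$ together are equivalent to $(ss')^*(ss')t=t$. The values then agree by the $q$-identities.

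Third, we obtain boundedness, which is the main obstacle. The map $x\mapsto\ell_e(\delta_{s^*s}\,x)$ sends each $\delta_t y$ to $\delta_t\,q_{t^*}(x)y$ or to $0$. It is therefore a diagonal operator whose entries are multipliers of norm at most $\|x\|$, so it is bounded by $\|x\|$. For general $s$, we use $\ell_e(\delta_s x)^*\ell_e(\delta_s x)=\ell_e(\delta_{s^*s}\,x^*x)$ on $V_e$. This gives $\|\ell_e(\delta_s x)\xi\|^2\le\|x\|^2\|\xi\|^2$, following the pattern of Lemma \ref{lem:contractive}.

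Each $\ell_e(\delta_s x)$ therefore extends to a bounded, adjointable operator on $l^2(S_e,A_e)$. Extending linearly to $S\alg A$ gives a $*$-homomorphism. Uniqueness holds because the prescribed values on the generators $\delta_s x$, acting on the dense set $V_e$, determine each operator.
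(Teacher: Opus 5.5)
Your proof is correct. It uses the same ingredients as the paper: the formal adjoint $\ell_e(\delta_{s^*}\,\alpha_s(x^*))$, the case analysis on $s^*st=t$, and the $q$-identities. What differs is the order of the steps and where the norm bound comes from.

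The paper proves $\|\ell_e(\delta_s\,x)\xi\|\le\|x\|\|\xi\|$ first. It computes $\langle\ell_e(\delta_s\,x)\xi|\ell_e(\delta_s\,x)\xi\rangle=\sum_{t\in F_0}y_t^*q_{t^*}(x^*x)y_t$ directly. This needs the observation that $t\mapsto st$ is injective on $F_0=\{t\in F\mid s^*st=t\}$, so that cross terms vanish. Only then does it check adjointability and multiplicativity on generators.

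You instead verify the adjoint relation and multiplicativity on the pre-Hilbert module $V_e$ first. The identity $\ell_e(\delta_{s^*}\,\alpha_s(x^*))\ell_e(\delta_s\,x)=\ell_e(\delta_{s^*s}\,x^*x)$, together with Cauchy--Schwarz, then reduces the estimate to the diagonal (idempotent) case, following the pattern of Lemma \ref{lem:contractive}. The injectivity of $t\mapsto st$ is absorbed into your adjoint computation, where $t=s^*t'$ is forced.

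Your ordering makes the bound a formal consequence of the $*$-algebra relations. The price is invoking the standard fact that a bounded operator on a dense submodule with a bounded formal adjoint extends to an adjointable operator. The paper's ordering is more direct.

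One small repair is needed in your adjoint step. The identity you need is $q_{t'^*}(\alpha_s(x^*))y'=q_{t'^*s}(x^*)y'=q_{t^*}(x^*)y'$. It holds because $t'^*s=(s^*t')^*=t^*$ and $y'\in A_e=A_{t'^*ss^*t'}$. Your phrase ``$q_{s^*}(\alpha_s(x^*))$ acting on $A_{t'^*t'}$'' does not make sense as written.
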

\begin{proof}
  For any $s\in S$ and $x\in A_{s^*s}$, define a linear map $\ell_e(\delta_s\,x)$ on $\mathrm{span}\,\{\delta_t\,y\mid t\in S_e,\;y\in A_e\}$ by
  \begin{align*}
    \ell_e(\delta_s\,x)\,(\delta_t\,y)=
    \begin{cases}
      \delta_{st}\,\alpha_{t^*}(x\alpha_t(y)) &\mathrm{if}\; s^*st=t\\
      0 & \mathrm{otherwise}
    \end{cases}
  \end{align*}
  for all $t\in S_e$ and $y\in A_e$.
  We claim that
  \[
    \|\ell_e(\delta_s\,x)\xi\|\le\|x\|\|\xi\|
  \]
  for all $s\in S$, $x\in A_{s^*s}$, and $\xi\in\mathrm{span}\,\{\delta_t\,y\mid t\in S_e,\;y\in A_e\}$. To see this, write $\xi=\sum_{t\in F}\delta_t\,y_t$ for some finite subset $F\subset S_e$ and $y_t\in A_e$ for each $t\in F$, and set $F_0=\{t\in F\mid s^*st=t\}$. Compute that
  \begin{align*}
    \langle\ell_e(\delta_s\,x)\xi\mid\ell_e(\delta_s\,x)\xi\rangle
    &=\sum_{t_1,t_2\in F_0}\bigl\langle\delta_{st_1}\,q_{t_1^*}(x)y_{t_1}\bigm|\delta_{st_2}\,q_{t_2^*}(x)y_{t_2}\bigr\rangle\\
    &=\sum_{t\in F_0}y_t^*q_{t^*}(x^*x)y_t\\
    &\le \|x\|^2\sum_{t\in F_0}y_t^*y_t\\
    &\le \|x\|^2\langle\xi|\xi\rangle,
  \end{align*}
  where we have used the observation in the second equality that $st_1=st_2$ if and only if $t_1=t_2$ for each $t_1,t_2\in F_0$. 

  This claim implies that $\ell_e(\delta_s\,x)$ extends uniquely to a bounded map on $l^2(S_e,A_e)$ with $\|\ell_e(\delta_s\,x)\|\le \|x\|$ for each $s\in S$ and $x\in A_{s^*s}$.

  We next show that $\ell_e(\delta_s\,x)$ is adjointable with adjoint $\ell_e((\delta_s\,x)^*)$. To this end, it suffices to show that
  \[
    \bigl\langle\ell_e((\delta_s\,x)^*)\,(\delta_{t_1}\,y_1)\bigm|\delta_{t_2}\,y_2\bigr\rangle=\bigl\langle \delta_{t_1}\,y_1\bigm|\ell_e(\delta_s\,x)\,(\delta_{t_2}\,y_2)\bigr\rangle
  \]
  for all $t_1,t_2\in S_e$ and $y_1,y_2\in A_e$. First observe that the two conditions
  \begin{enumerate}
    \item $ss^*t_1=t_1$ and $s^*t_1=t_2$,
    \item $s^*st_2=t_2$ and $t_1=st_2$
  \end{enumerate}
  are equivalent. If these equivalent conditions hold, we compute that 
  \begin{align*}
    \bigl\langle\ell_e((\delta_s\,x)^*)\,(\delta_{t_1}\,y_1)\bigm|\delta_{t_2}\,y_2\bigr\rangle
    &=\bigl\langle\ell_e(\delta_{s^*}\,\alpha_s(x^*))\,(\delta_{t_1}\,y_1)\bigm|\delta_{t_2}\,y_2\bigr\rangle\\
    &=\bigl\langle \delta_{s^*t_1}\, q_{t_1^*}(\alpha_s(x^*))y_1\bigm|\delta_{t_2}\,y_2\bigr\rangle\\
    &=\bigl\langle \delta_{s^*t_1}\, q_{t_1^*s}(x^*)y_1\bigm|\delta_{t_2}\,y_2\bigr\rangle\\
    &=y_1^*q_{t_2^*}(x)y_2,
  \end{align*}
  and
  \begin{align*}
    \bigl\langle \delta_{t_1}\,y_1\bigm|\ell_e(\delta_s\,x)\,(\delta_{t_2}\,y_2)\bigr\rangle
    &=\bigl\langle\delta_{t_1}\,y_1\bigm|\delta_{st_2}\,q_{t_2^*}(x)y_2\bigr\rangle\\
    &=y_1^*q_{t_2^*}(x)y_2.
  \end{align*}

  If the equivalent conditions do not hold, we have
  \[
    \bigl\langle\ell_e((\delta_s\,x)^*)\,(\delta_{t_1}\,y_1)\bigm|\delta_{t_2}\,y_2\bigr\rangle=0=\bigl\langle \delta_{t_1}\,y_1\bigm|\ell_e(\delta_s\,x)\,(\delta_{t_2}\,y_2)\bigr\rangle.
  \]
  This proves $\ell_e(\delta_s\,x)\in\mathbb{B}(l^2(S_e,A_e))$ and $\ell_e$ is $*$-preserving.

  It only remains to show that $\ell_e$ is multiplicative. To this end, we will show that
  \[
    \ell_e(\delta_{s_2}\,x_2)\ell_e(\delta_{s_1}\,x_1)\,(\delta_t\,y)=\ell_e((\delta_{s_2}\,x_2)(\delta_{s_1}\,x_1))\,(\delta_t\,y)
  \]
  for all $s_1,s_2\in S$, $x_1\in A_{s_1^*s_1}$, $x_2\in A_{s_2^*s_2}$, $t\in S_e$ and $y\in A_e$. First notice that the two conditions
  \begin{enumerate}
    \item $s_1^*s_1t=t$ and $s_2^*s_2(s_1t)=s_1t$,
    \item $(s_2s_1)^*(s_2s_1)t=t$
  \end{enumerate}
  are equivalent. If these equivalent conditions hold, we compute that 
  \begin{align*}
    \ell_e(\delta_{s_2}\,x_2)\ell_e(\delta_{s_1}\,x_1)\,(\delta_t\,y)
    &=\ell_e(\delta_{s_2}\,x_2)\,(\delta_{s_1t}\,q_{t^*}(x_1)y)\\
    &=\delta_{s_2s_1t}\,q_{t^*s_1^*}(x_2)q_{t^*}(x_1)y\\
    \ell_e((\delta_{s_2}\,x_2)(\delta_{s_1}\,x_1))\,(\delta_t\,y)
    &=\ell_e(\delta_{s_2s_1}\,q_{s_1^*}(x_2)x_1)\,(\delta_t\,y)\\
    &=\delta_{s_2s_1t}\,q_{t^*s_1^*}(x_2\alpha_{s_1}(x_1))y\\
    &=\delta_{s_2s_1t}\,q_{t^*s_1^*}(x_2)q_{t^*}(x_1)y.
  \end{align*}
  If the equivalent conditions do not hold, then we have
  \[
    \ell_e(\delta_{s_2}\,x_2)\ell_e(\delta_{s_1}\,x_1)\,(\delta_t\,y)=0=\ell_e((\delta_{s_2}\,x_2)(\delta_{s_1}\,x_1))\,(\delta_t\,y).
  \]
  Hence $\ell_e$ is a $*$-homomorphism.
\end{proof}

\begin{prop}[cf. {\cite[Theorem 4.3]{bn}}]
  The $*$-representation
  \[
    \ell\colon S\alg A\ni T\mapsto (\ell_e(T))_{e\in E(S)}\in\prod_{e\in E(S)}\mathbb{B}(l^2(S_e,A_e))\subset \mathbb{B}\Bigl(\bigoplus_{e\in E(S)}l^2(S_e,A_e)\Bigr)
  \]
  of $S\alg A$ on the Hilbert $\bigoplus_{e\in E(S)} A_e$-module $\bigoplus_{e\in E(S)} l^2(S_e,A_e)$ is well-defined and faithful.
\end{prop}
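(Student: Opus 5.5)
Well-definedness has two parts. First, each $\ell_e$ is a $*$-homomorphism with $\|\ell_e(\delta_s\,x)\|\le\|x\|$ by the preceding lemma. Second, Lemma \ref{lem:contractive} gives $\sup_e\|\ell_e(T)\|\le\|T\|_1<\infty$. Hence $(\ell_e(T))_e$ is a bounded family and lies in the $\ell^\infty$-product. That product acts diagonally by adjointable operators on the direct-sum Hilbert $\bigoplus_e A_e$-module. The map $\ell$ is clearly a $*$-homomorphism, since it is one coordinatewise.

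For faithfulness, take $T=\sum_{i=1}^n\delta_{s_i}\,x_i$ with distinct $s_i$ and $x_i\in A_{s_i^*s_i}$, and suppose $\ell(T)=0$. The aim is to show every $x_i=0$. The plan is to test $\ell_e(T)$ on vectors $\delta_e\,y$ with $e$ an idempotent. Note that $e\in S_e$ because $e^*e=e$. For $s$ with $s^*se=e$ (i.e.\ $e\le s^*s$) we get
\[
\ell_e(\delta_s\,x)(\delta_e\,y)=\delta_{se}\,\alpha_{e}(x\alpha_e(y))=\delta_{se}\,xy .
\]
For the other $s$ we get $0$.

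Choose $i_0$ so that $e:=s_{i_0}^*s_{i_0}$ is \emph{minimal} among the idempotents $s_i^*s_i$ that equal $e$ or lie above it, grouped by the element $se$. Concretely, fix $e=s_{i_0}^*s_{i_0}$ and consider the indices $I=\{i\mid e\le s_i^*s_i,\ s_ie=s_{i_0}\}$. The coefficient of $\delta_{s_{i_0}}$ in $\ell_e(T)(\delta_e\,y)$ is $\sum_{i\in I}x_iy$. Here $x_i$ with $i\in I$ should be read in $A_{s_i^*s_i}$ multiplied by $y\in A_e$, so the product lies in $A_e$. Vanishing for all $y\in A_e$ gives $\sum_{i\in I}x_i|_{A_e}=0$, i.e.\ $\big(\sum_{i\in I}x_i\big)A_e=0$.

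The main obstacle is that $I$ can contain several indices: the distinct elements $s_i\ge s_{i_0}$ all restrict to $s_{i_0}$. So one cannot read off $x_{i_0}$ directly, and this is precisely why the norm is not given by a single coordinate. I would handle it by induction on the finite poset of the $s_i$ under the natural order. Work from the \emph{maximal} elements downward and use all idempotents $f\le s_i^*s_i$ simultaneously. Suppose $s_{j}$ is maximal among $\{s_i\}$. Test with $e=s_j^*s_j$: every $i\in I$ satisfies $s_i\ge s_ie=s_j$, so $s_i=s_j$ by maximality. Hence $I=\{j\}$, and $x_jA_{s_j^*s_j}=0$ forces $x_j=0$. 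Removing $j$ and repeating finishes the argument, since the remaining elements still form a finite set with maximal elements. The one delicate point to verify carefully is the claim $s_i\ge s_ie$, i.e.\ $s_i e = s_i(s_ie)^*(s_ie)$, which holds because $(s_ie)^*(s_ie)=e s_i^*s_i e = e$ when $e\le s_i^*s_i$. This is a routine identity in inverse semigroups.
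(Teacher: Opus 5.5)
Your final argument (pick $s_j$ maximal in the natural order and read off the $\delta_{s_j}$-coordinate of $\ell_{s_j^*s_j}(T)(\delta_{s_j^*s_j}\,y)$) is correct and is essentially the paper's proof, which tests $\ell_{t^*t}(T)$ on $\delta_{t^*t}\,x_t^*$. The only difference is the choice of $t$: the paper takes $t^*t$ maximal in $\{s^*s\mid s\in F\}$, which forces every contributing $s$ into $S_{t^*t}$, whereas your choice only requires maximality of $s_j$ in the natural order, a slightly weaker condition that works equally well; the exploratory paragraph with the ``minimal'' choice of $e$ is not used and can be deleted.
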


\begin{proof}
  The well-definedness follows from Lemma \ref{lem:contractive}.

  Let $T$ be a non-zero element in $S\alg A$. Write $T=\sum_{s\in F}\delta_s\,x_s$ for some non-empty finite subset $F\subset S$ and non-zero $x_s\in A_{s^*s}$ for each $s\in F$. Fix $t\in F$ such that $t^*t$ is a maximal element in $\{s^*s\mid s\in F\}$. Using the observation that, for each $s\in F$, $s^*st^*t=t^*t$ if and only if $s\in S_{t^*t}$, we compute that
  \[
    \ell_{t^*t}(T)\,(\delta_{t^*t}\,x_t^*)=\sum_{s\in F\cap S_{t^*t}}\delta_{st^*t}\,x_sx_t^*=\sum_{s\in F\cap S_{t^*t}}\delta_s\,x_sx_t^*\neq 0.
  \]
  In particular, $\ell(T)\neq 0$. Thus $\ell$ is faithful.
\end{proof}

\begin{dfn}
  For an $S$-C*-algebra $(A,\alpha)$, define the C*-norm $\|\cdot\|_{\ks,r}$ on $S\alg A$ by
  \[
    \|\cdot\|_{\ks,r}:=\|\ell(\cdot)\|=\sup\{\|\ell_e(\cdot)\|\mid e\in E(S)\}.
  \]
  Let $S\ksr A$ denote the completion of $S\alg A$ with respect to $\|\cdot\|_{\ks,r}$. We call this C*-algebra the \textit{reduced KS-crossed product} of $A$ by $\alpha$.
\end{dfn} 

\begin{rem}
  Since the left regular representation $\ell$ above is faithful, the canonical maps from $S\alg A$ to $S\kstimes A$ and $S\ksr A$ are injective. 
\end{rem}

\begin{rem}
  \label{rem:C*S}
  If $A$ is the $S$-C*-algebra $\mathbb{C}$ equipped with the identity action of $S$, then $S\kstimes A$ (resp.\ $S\ksr A$) coincides with the full (resp.\ reduced) inverse semigroup C*-algebra $C^*(S)$ (resp.\ $C^*_r(S)$).
\end{rem}

In the following, we introduce the induced representation $\ind\rho_e$ of $S\alg A$ from each representation $\rho_e$ of $A_e$ on some Hilbert $D_e$-module $E_e$ for each $e\in E(S)$. This will be needed in Subsection \ref{subsec:4.2}.

\begin{obs}
  \label{obs:ind-rep}
  For each $e\in E(S)$, let $D_e$ be a C*-algebra, $E_e$ be a Hilbert $D_e$-module and $\rho_e\colon A_e\to\mathbb{B}(E_e)$ be a representation. Then there exists a $*$-homomorphism
  \[
    \mathbb{B}(l^2(S_e,A_e))\ni T\mapsto T\otimes_{\rho_e}1\in\mathbb{B}(l^2(S_e,A_e)\otimes_{\rho_e}E_e),
  \]
  where $\otimes_{\rho_e}$ denotes the interior tensor product (see Chapter 4 of \cite{lance}).
  It is routine to verify that this map is injective if $\rho_e$ is faithful. We further observe that the map
  \[
    l^2(S_e,A_e)\otimes_{\rho_e}E_e\ni(\delta_s\,x)\otimes_{\rho_e}\xi\mapsto \delta_s\,\rho_e(x)\xi\in l^2(S_e,E_e)
  \]
  preserves the inner products. Moreover, if $\rho_e\colon A_e\to\mathbb{B}(E_e)$ is a non-degenerate representation, then this map is surjective and hence $l^2(S_e,A_e)\otimes_{\rho_e}E_e$ and $l^2(S_e,E_e)$ are unitarily equivalent. For a non-degenerate representation $\rho_e\colon A_e\to\mathbb{B}(E_e)$, we denote the composition
  \[
    S\alg A\,\overset{\ell_e}{\to}\,\mathbb{B}(l^2(S_e,A_e))\,\to\,\mathbb{B}(l^2(S_e,A_e)\otimes_{\rho_e}E_e)\,\cong\,\mathbb{B}(l^2(S_e,E_e))
  \]
  by $\ind \rho_e$.
  
  A direct computation gives
  \begin{align*}
    \ind \rho_e(\delta_s\,x)\,(\delta_t\,\rho_e(y)\xi)
    =\begin{cases}
      \delta_{st}\, \rho_e(\alpha_{t^*}(x\alpha_t(y)))\xi & \mathrm{if}\;s^*st=t\\
      0 & \mathrm{otherwise}.
    \end{cases}
  \end{align*}

  Notice also that if moreover $\rho_e$ is faithful, then 
  \begin{align}
    \label{eq:rep}
    \|\ind \rho_e(T)\|=\|\ell_e(T)\| 
  \end{align}
  for all $T\in S\alg A$. In particular, if each $\rho_e$ is faithful and non-degenerate, then we can faithfully represent $S\ksr A$ on the Hilbert $\bigoplus_{e\in E(S)}D_e$-module $\bigoplus_{e\in E(S)}l^2(S_e,E_e)$.
\end{obs}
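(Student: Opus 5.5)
The claims in Observation \ref{obs:ind-rep} can be verified one at a time, following the order in which they are stated. First, the existence of the $*$-homomorphism $T\mapsto T\otimes_{\rho_e}1$ is standard interior tensor product theory: any adjointable $T$ on $l^2(S_e,A_e)$ induces an adjointable operator on $l^2(S_e,A_e)\otimes_{\rho_e}E_e$ with adjoint $T^*\otimes 1$ and $\|T\otimes 1\|\le\|T\|$ (Chapter 4 of \cite{lance}). For injectivity when $\rho_e$ is faithful, suppose $T\otimes 1=0$. For $\eta\in l^2(S_e,A_e)$ and $\xi\in E_e$ we get
\[
0=\langle T\eta\otimes\xi\mid T\eta\otimes\xi\rangle=\langle\xi\mid\rho_e(\langle T\eta\mid T\eta\rangle)\xi\rangle .
\]
Since $\xi$ is arbitrary, $\rho_e(\langle T\eta|T\eta\rangle)=0$, so $\langle T\eta|T\eta\rangle=0$ by faithfulness, hence $T\eta=0$. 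The same argument gives $\|T\otimes 1\|=\|T\|$ whenever $\rho_e$ is faithful, because a faithful $*$-homomorphism is isometric.

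Next, I would check that $(\delta_s\,x)\otimes\xi\mapsto\delta_s\,\rho_e(x)\xi$ preserves inner products on elementary tensors. For $s,t\in S_e$ the left-hand inner product is $\langle\xi\mid\rho_e(\langle\delta_s x|\delta_t y\rangle)\eta\rangle$. This equals $\delta_{s,t}\langle\xi|\rho_e(x^*y)\eta\rangle$, which is exactly the inner product of $\delta_s\rho_e(x)\xi$ and $\delta_t\rho_e(y)\eta$ in $l^2(S_e,E_e)$. By sesquilinearity, and by passing to the completion and quotient defining the interior tensor product, the map extends to an isometry. If $\rho_e$ is non-degenerate, then $\rho_e(A_e)E_e$ is dense in $E_e$, so the range contains a dense subset of each summand $\delta_s E_e$. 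The image of an isometry of Hilbert modules is closed, so the map is surjective. Since it preserves inner products and is surjective, it is a unitary.

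The displayed formula follows by transporting $\ell_e(\delta_s x)\otimes 1$ through this unitary. We have
\[
(\ell_e(\delta_s x)\otimes 1)\bigl((\delta_t y)\otimes\xi\bigr)=\delta_{st}\,\alpha_{t^*}(x\alpha_t(y))\otimes\xi
\]
when $s^*st=t$, and $0$ otherwise. Applying the unitary gives the stated expression. Finally, \eqref{eq:rep} is the isometry of $T\mapsto T\otimes 1$ under faithfulness, together with the fact that unitary conjugation preserves norms. Taking the direct sum over $e\in E(S)$ and using the definition of $\|\cdot\|_{\ks,r}$ gives the faithful representation of $S\ksr A$ on $\bigoplus_e l^2(S_e,E_e)$.

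I expect the only mildly delicate step to be surjectivity. Here one must use non-degeneracy in the form $\overline{\rho_e(A_e)E_e}=E_e$, which follows from Cohen factorization or from an approximate unit of $A_e$ acting strictly. One must also confirm that $l^2(S_e,E_e)$ is the closed span of the elements $\delta_s\,\zeta$ with $s\in S_e$ and $\zeta\in E_e$.
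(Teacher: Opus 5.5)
Your proposal is correct and takes the same route the paper intends. The paper gives no separate proof beyond calling these steps routine, and you supply those verifications in the same order: injectivity of $T\mapsto T\otimes_{\rho_e}1$ via $\langle T\eta\otimes\xi\mid T\eta\otimes\xi\rangle=\langle\xi\mid\rho_e(\langle T\eta\mid T\eta\rangle)\xi\rangle$, inner-product preservation on elementary tensors, surjectivity from non-degeneracy, and transporting $\ell_e(\delta_s\,x)\otimes 1$ through the resulting unitary, with \eqref{eq:rep} following because an injective $*$-homomorphism between C*-algebras is isometric.
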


If $A$ is an $S$-C*-algebra, then $A$ is also an $E(S)$-C*-algebra in a canonical way. It is known that the canonical surjection from $E(S)\kstimes A$ onto $E(S)\ksr A$ is an isomorphism (\cite[Proposition 5.2]{bn}). We present an alternative proof using the following lemma, which will also be needed later.

\begin{lem}
  \label{lem:closed}
  Let $E_0\subset E(S)$ be a finite subsemigroup. Then the subset
  \[
    E_0\alg A:=\Span\{\delta_e\,x\mid e\in E_0,\;x\in A_e\}
  \]
  is closed in $E(S)\kstimes A$. 
\end{lem}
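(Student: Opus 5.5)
The plan is to show that $E_0\alg A$ is the image of a $*$-homomorphism from a C*-algebra, or at least that the full norm restricted to it is equivalent to a complete norm. The first step is to observe that $E_0\alg A$ is a $*$-subalgebra of $E(S)\alg A$. Indeed, $(\delta_e\,x)(\delta_f\,y)=\delta_{ef}\,xy$ with $ef\in E_0$, since $E_0$ is a subsemigroup. Here $xy\in A_eA_f\subset A_{ef}$, because for the identity action $q_f(x)y=xy$, and $A_e\cap A_f=A_{ef}$.

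Next, decompose $E_0\alg A$ using "Möbius inversion". For each $e\in E_0$, let $P_e$ be the set of elements of $E_0$ strictly below $e$. Define
\[
  p_e=\delta_e-\sum \cdots
\]
in the multiplier sense: set $\pi_e(\delta_f\,x)=x$ if $e\le f$ and $0$ otherwise. The map
\[
  \Phi\colon E_0\alg A\to\bigoplus_{e\in E_0}A_e,\qquad \delta_f\,x\mapsto(\chi_{e\le f}\,x)_{e\in E_0}.
\]
is well defined, since $e\le f$ gives $A_f\to M(A_e)$; more precisely, we take the $e$-component to be $x$ acting as a multiplier. A cleaner choice is to define the $e$-component of $\Phi(\delta_f\,x)$ to be the image of $x$ in $A_e/\sum_{g<e,\,g\in E_0}A_g$ when $e\le f$. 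This gives a $*$-homomorphism
\[
  \Phi\colon E_0\alg A\to\bigoplus_{e\in E_0} A_e/I_e,\qquad I_e=\sum_{g\in E_0,\,g<e}A_g,
\]
where $I_e$ is a closed ideal, being a finite sum of closed ideals. Multiplicativity is a direct check: $(ef\ge e')$ holds exactly when both $e\ge e'$ and $f\ge e'$.

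The key step is to show that $\Phi$ is a linear bijection onto the finite direct sum $\bigoplus_e A_e/I_e$ up to a "triangular" change of variables, and that it is isometric for the full norm. I would proceed by induction on $|E_0|$, filtering by the ideals $J_F=\Span\{\delta_e\,x\mid e\in F\}$ for downward-closed $F\subset E_0$. For a minimal $e$, $\delta_e A_e$ is a $*$-subalgebra isomorphic to $A_e$ via $x\mapsto\delta_e\,x$. This is a $*$-homomorphism from a C*-algebra into $E(S)\kstimes A$, so its image is closed. Then $\Span\{\delta_e\,x: e\in F\}$ is a sum of a closed ideal and the image of a C*-algebra, and such sums are closed by the standard fact that $B+J$ is closed when $B$ is a C*-subalgebra and $J$ is a closed ideal. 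Concretely, the induction goes as follows. Order $E_0=\{e_1,\dots,e_n\}$ compatibly with $\le$, so that $e_i\le e_j$ implies $i\le j$. Then $F_k=\{e_1,\dots,e_k\}$ is downward-closed and $J_{F_k}$ is an ideal in $E_0\alg A$. Each $\delta_{e_k}A_{e_k}$ is closed, as the image of a $*$-homomorphism from the C*-algebra $A_{e_k}$. Assume inductively that $J_{F_{k-1}}$ is closed, i.e. a C*-algebra, and note that it is an ideal in the C*-algebra generated by $J_{F_k}$. Then $J_{F_k}=J_{F_{k-1}}+\delta_{e_k}A_{e_k}$ is closed. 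Taking $k=n$ gives the claim.

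The main obstacle is that $J_{F_{k-1}}$ must be an ideal not merely in $J_{F_k}$ but in a C*-algebra containing $\delta_{e_k}A_{e_k}$; its closure must also be checked to be an ideal. Since $J_{F_{k-1}}$ is already closed by induction and is an algebraic ideal in $J_{F_k}$, it is a closed two-sided ideal of the C*-algebra $\overline{J_{F_k}}$. The fact that $B+J$ is closed for a C*-subalgebra $B$ and closed ideal $J$ then gives $\overline{J_{F_k}}\supseteq J_{F_{k-1}}+\delta_{e_k}A_{e_k}$, and this sum is closed. It remains to check that this closed sum is exactly $J_{F_k}$, and that it contains $\overline{J_{F_k}}$ because it is closed and contains $J_{F_k}$. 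For the algebraic ideal property, I need $e_k g\in F_{k-1}\cup\{e_k\}$ for $g\in F_{k-1}$. This holds because $e_kg\le g$ forces its index to be at most that of $g$, provided I order compatibly and use that $E_0$ is closed under products.
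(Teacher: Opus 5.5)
Your induction is correct and is essentially the paper's proof: the paper removes a maximal idempotent $e_0$ from $E_0$ (iterating this gives exactly your linear extension $e_1,\dots,e_n$), and it closes the inductive step by noting that $E_0'\alg A$ is closed while $E_0\alg A/E_0'\alg A\cong A_{e_0}$ carries a C*-norm and is therefore complete, which is the same quotient mechanism behind your fact that $B+J$ is closed. You should simply delete the preliminary map $\Phi$ into $\bigoplus_e A_e/I_e$: your induction never uses it, and it is not even injective, since for $e<f$ in $E_0$ and $0\neq x\in A_e$ it annihilates the nonzero element $\delta_f\,x-\delta_e\,x$.
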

\begin{proof}
  We prove this by induction on $n=|E_0|$. 

  If $n=1$, that is, $E_0=\{e\}$ for some $e\in E(S)$, then the claim is true since the map $A_{e}\ni x\mapsto \delta_e\,x\in E(S)\kstimes A$ is a $*$-homomorphism between C*-algebras and its range is precisely $E_0\alg A$. 

  Suppose that the claim is true for $n=k\ge 1$. Consider the case where $n=k+1$. Then we can find $e_0\in E_0$ such that $e_0\not<e$ for each $e\in E_0$. Observe that $E_0':=E_0\setminus\{e_0\}$ is a subsemigroup of $E(S)$ with $|E_0'|= k$. Hence it follows from the induction hypothesis that 
  \[
    E_0'\alg A=\Span\{\delta_e\,x\mid e\in E_0',\;x\in A_e\}
  \]
  is closed in $E(S)\kstimes A$, or equivalently, complete with respect to the norm induced from $E(S)\kstimes A$. Moreover, $E_0'\alg A$ is a $*$-ideal of the $*$-algebra $E_0\alg A$, and the quotient $E_0\alg A\,/\,E_0'\alg A$ is canonically isomorphic to $A_{e_0}$. Since the quotient norm on $E_0\alg A\,/\,E_0'\alg A=A_{e_0}$ is a C*-norm, it is also complete. It follows that $E_0\alg A$ is complete and hence it is closed in $E(S)\kstimes A$, as desired.
\end{proof}

\begin{cor}
  The C*-norm on $E(S)\alg A$ is unique, and the canonical surjection gives
  \[
    E(S)\kstimes A\cong E(S)\ksr A.
  \]
\end{cor}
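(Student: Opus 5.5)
We will show that any two C*-norms on $E(S)\alg A$ coincide, by exhausting $E(S)\alg A$ with finite pieces on which Lemma \ref{lem:closed} applies. For every finite subset $F\subset E(S)$, the subsemigroup $E_0$ generated by $F$ is finite, since $E(S)$ is a commutative semigroup of idempotents and $E_0$ consists of products of elements of $F$. Consequently
\[
  E(S)\alg A=\bigcup_{E_0}E_0\alg A,
\]
where the union runs over all finite subsemigroups $E_0\subset E(S)$. This union is directed, because the subsemigroup generated by $E_0\cup E_0'$ is again finite.

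Now let $\|\cdot\|'$ be an arbitrary C*-norm on $E(S)\alg A$. Write $\|\cdot\|_u$ for the full norm, i.e.\ the norm of $E(S)\kstimes A$. By Lemma \ref{lem:contractive}, $\|\cdot\|'\le\|\cdot\|_u$, since $\|\cdot\|_u$ is the maximal one. Fix a finite subsemigroup $E_0$. By Lemma \ref{lem:closed}, $E_0\alg A$ with $\|\cdot\|_u$ is a C*-algebra. The identity map from $(E_0\alg A,\|\cdot\|_u)$ to the completion of $(E_0\alg A,\|\cdot\|')$ is then an injective $*$-homomorphism from a C*-algebra into a C*-algebra. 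An injective $*$-homomorphism between C*-algebras is isometric, so $\|T\|'=\|T\|_u$ for all $T\in E_0\alg A$. Since every $T\in E(S)\alg A$ lies in some $E_0\alg A$, the two norms agree everywhere. Hence the C*-norm on $E(S)\alg A$ is unique.

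The identification with the reduced algebra then follows. The norm $\|\cdot\|_{\ks,r}$ is a genuine C*-norm, not merely a seminorm, because $\ell$ is faithful on $S\alg A$ (applied here to the inverse semigroup $E(S)$). It therefore equals $\|\cdot\|_u$ by uniqueness. The two completions of $E(S)\alg A$ coincide, and the canonical surjection $E(S)\kstimes A\to E(S)\ksr A$, which is the identity on the dense subalgebra $E(S)\alg A$, is isometric on that dense subalgebra and hence an isomorphism.

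The only point needing care is the appeal to isometry of injective $*$-homomorphisms. This requires the domain to be complete for the full norm, which is precisely what Lemma \ref{lem:closed} provides, and it requires $\|\cdot\|'$ to be a norm rather than a seminorm so that the map is injective. I expect no further obstacle.
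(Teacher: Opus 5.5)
Your proposal is correct and follows the paper's argument. The paper's one-line proof likewise uses Lemma \ref{lem:closed} to write $E(S)\alg A$ as a directed union of the C*-subalgebras $E_0\alg A$ of $E(S)\kstimes A$, one for each finite subsemigroup $E_0$, on each of which the C*-norm is unique. You have spelled out the details the paper leaves implicit: that finite sets generate finite subsemigroups, that injective $*$-homomorphisms between C*-algebras are isometric, and that faithfulness of $\ell$ makes $\|\cdot\|_{\ks,r}$ a genuine norm.
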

\begin{proof}
  From the lemma above, we can write $E(S)\alg A=\bigcup_\lambda B_\lambda$, where $\{B_\lambda\}$ is a directed family of C*-subalgebras of $E(S)\kstimes A$. This completes the proof.
\end{proof}

\noindent\textbf{Crossed Products.}
In the following, we review the constructions of full and reduced crossed products due to Sieben \cite{ns} and Exel \cite{exel2011}, respectively. 
The main difference from KS-crossed products is that the natural partial order on $S$ is taken into account for the construction of the crossed products, by considering the $*$-ideal $\mathcal{N}_A$ of $S\alg A$ as follows.

\begin{lem}
  The subspace
  \[
    \mathcal{N}_A:=\Span\{\delta_t\,x-\delta_s\,x\mid t\ge s,\;x\in A_{s^*s}\}
  \]
  is a $*$-ideal of $S\alg A$. 
  Moreover, $\mathcal{N}_A$ is a $*$-ideal generated by elements of the form 
  \[
    \delta_e\,x-\delta_f\,x,
  \]
  where $e\ge f$ in $E(S)$ and $x\in A_f$.
\end{lem}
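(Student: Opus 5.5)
The argument has two parts: first that $\mathcal{N}_A$ is closed under left and right multiplication by generators $\delta_u\,y$ and under the involution; then that the ideal generated by the elements $\delta_e\,x-\delta_f\,x$ with $e\ge f$ recovers all of $\mathcal{N}_A$. Throughout I will use the formula $(\delta_s\,x)(\delta_t\,y)=\delta_{st}\,q_{t^*}(x)y$, together with the relations $q_s(\alpha_t(x))y=q_{st}(x)y$ recorded above.

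\emph{Ideal property.} Let $t\ge s$, $x\in A_{s^*s}$, and $u\in S$, $y\in A_{u^*u}$.

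For right multiplication,
\[
(\delta_t\,x-\delta_s\,x)(\delta_u\,y)=\delta_{tu}\,q_{u^*}(x)y-\delta_{su}\,q_{u^*}(x)y .
\]
Here $tu\ge su$ because the natural partial order is compatible with multiplication. The common coefficient $z=q_{u^*}(x)y=\alpha_{u^*}(x\alpha_u(y))$ lies in $A_{u^*s^*su}=A_{(su)^*(su)}$, since $x\in A_{s^*s}$. So this element is again a generator of $\mathcal{N}_A$.

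For left multiplication,
\[
(\delta_u\,y)(\delta_t\,x-\delta_s\,x)=\delta_{ut}\,q_{t^*}(y)x-\delta_{us}\,q_{s^*}(y)x .
\]
Since $x\in A_{s^*s}$ and $s=ts^*s$, the multipliers $q_{t^*}(y)$ and $q_{s^*}(y)$ agree on $A_{s^*s}$. To check this, I would use $x=\alpha_{s^*}(\alpha_s(x))$ and reduce to the case where $t$ extends $s$ on $A_{s^*s}$ (that is, $\alpha_t|_{A_{s^*s}}=\alpha_s$, which holds because $s\le t$ gives $\alpha_s\le\alpha_t$). The coefficients then coincide and lie in $A_{(us)^*(us)}$, and $ut\ge us$, so we again get a generator.

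For the involution,
\[
(\delta_t\,x)^*-(\delta_s\,x)^*=\delta_{t^*}\,\alpha_t(x^*)-\delta_{s^*}\,\alpha_s(x^*).
\]
Here $\alpha_t(x^*)=\alpha_s(x^*)$ by the extension property, and $t^*\ge s^*$, so this is a generator.

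\emph{Generation.} The elements $\delta_e\,x-\delta_f\,x$ with $e\ge f$ in $E(S)$ and $x\in A_f$ are special generators of $\mathcal{N}_A$, so the ideal $\mathcal{I}$ they generate is contained in $\mathcal{N}_A$. For the converse, take $t\ge s$ and $x\in A_{s^*s}$. Choose $y\in A_{s^*s}$ with $x=x'y$, for instance by Cohen factorization or an approximate unit; an approximate argument suffices only if $\mathcal{I}$ is closed, which it is not a priori, so I will use Cohen factorization $x=x'y$ with $x',y\in A_{s^*s}$. Then
\[
(\delta_t\,x')(\delta_{s^*s}\,y-\delta_{t^*t}\,y)=\delta_{ts^*s}\,q_{s^*s}(x')y-\delta_{tt^*t}\,q_{t^*t}(x')y=\delta_s\,x-\delta_t\,x,
\]
using $ts^*s=s$ and $x'y\in A_{s^*s}\subset A_{t^*t}$. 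Since $t^*t\ge s^*s$, this lies in $\mathcal{I}$, which gives $\mathcal{N}_A\subset\mathcal{I}$.

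The main obstacle is the left-multiplication step, namely showing that $q_{t^*}(y)x=q_{s^*}(y)x$ for $x\in A_{s^*s}$. I would settle this by writing $q_{t^*}(y)x=\alpha_{t^*}(y\alpha_t(x))$ and using $\alpha_t(x)=\alpha_s(x)\in A_{ss^*}$ together with $\alpha_{t^*}=\alpha_{s^*}$ on $A_{ss^*}$, which holds because $s^*\le t^*$.
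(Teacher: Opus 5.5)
Your proof is correct and is essentially the routine verification the paper leaves to the reader: multiplying by $\delta_u\,y$ on either side, or applying the involution, sends each generator $\delta_t\,x-\delta_s\,x$ to another generator, because $\alpha_t$ extends $\alpha_s$, $\alpha_{t^*}$ extends $\alpha_{s^*}$, and the natural order is compatible with products and inverses; the identity $\delta_s\,x-\delta_t\,x=(\delta_t\,x')(\delta_{s^*s}\,y-\delta_{t^*t}\,y)$, with $x=x'y$ in $A_{s^*s}$, gives the generation statement. Cohen factorization is slightly more than you need there, since $A_{s^*s}$ is already spanned by products $p^{1/2}p^{1/2}$ of its own elements, but either works.
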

\begin{proof}
  It is routine to verify this.
\end{proof}

\begin{dfn}
  Let $B$ be a C*-algebra and $\Phi\colon S\alg A\to B$ be a $*$-homomorphism. We say that $\Phi$ is \textit{admissible} if $\Phi(\mathcal{N}_A)=0$, that is, 
  \[
  \Phi(\delta_s\,x)=\Phi(\delta_t\,x)
  \]
  for all $s,t\in S$ with $s\ge t$, and $x\in A_{t^*t}$.
\end{dfn}

\begin{rem}
  From the previous lemma, the definition of admissible $*$-homomorphisms coincides with the one in \cite{exel2008}.
\end{rem}

\begin{dfn}
  For an $S$-C*-algebra $(A,\alpha)$, define a C*-seminorm $\|\cdot\|$ of $S\alg A$ by
  \[
    \|\cdot\|=\sup_\pi\|\pi(\cdot)\|,
  \]
  where the supremum is taken over all admissible $*$-homomorphisms from $S\alg A$ into arbitrary C*-algebras. Let $S\ltimes A$ denote the Hausdorff completion of $S\alg A$ with respect to $\|\cdot\|$. We call this C*-algebra the \textit{full crossed product} of $A$ by $\alpha$. 
\end{dfn}

\begin{rem}
  From the universality of the C*-algebras $S\kstimes A$ and $S\ltimes A$, we have the natural isomorphism
  \[
    S\ltimes A\cong (S\kstimes A)\,/\,\overline{\mathcal{N}_A}.
  \]
\end{rem}

\begin{rem}
  From the definition of admissible $*$-homomorphisms, we have the canonical one-to-one correspondence between admissible $*$-homomorphisms from $S\alg A$ and $*$-homomorphisms from $(S\alg A)\,/\,\mathcal{N}_A$. Hence we can regard $S\ltimes A$ as a universal enveloping C*-algebra of $(S\alg A)\,/\,\mathcal{N}_A$.
\end{rem}

Throughout the paper, let $[\delta_s\,x]$ denote the image of $\delta_s\,x\in S\alg A$ under the canonical map from $S\alg A$ to $S\ltimes A$ for each $s\in S$ and $x\in A_{s^*s}$.

In \cite{exel2011}, Exel introduced a construction of an admissible representation $\pi_{\overline{\varphi}}$ of $S\alg A$ from each pure state $\varphi$ on $A$. In Appendix \ref{sec:A}, we give an almost self-contained description of his construction in terms of our setting, with some detailed arguments concerning GNS-representations of the $*$-algebra $S\alg A$.

\begin{dfn}
  For an $S$-C*-algebra $(A,\alpha)$, define a C*-seminorm $\|\cdot\|_r$ on $S\alg A$ by
  \[
    \|\cdot\|_r=\sup_\varphi\|\pi_{\overline{\varphi}}(\cdot)\|,
  \]
  where the supremum is taken over all pure states $\varphi$ on $A$. Let $S\ltimesr A$ denote the Hausdorff completion of $S\alg A$ with respect to $\|\cdot\|_r$. We call this C*-algebra the \textit{reduced crossed product} of $A$ by $\alpha$.
\end{dfn}

We also denote by $[\delta_s\,x]$ the image of $\delta_s\,x\in S\alg A$ under the canonical map from $S\alg A$ to $S\ltimesr A$ for each $s\in S$ and $x\in A_{s^*s}$.

\subsection{Connection between KS-Crossed Products and Crossed Products}
\label{subsec:1.4}

In Theorem 6.2 of \cite{ks2}, Khoshkam and Skandalis relate their full crossed products and those by Sieben. In this subsection, we review their construction and fix some notation.

Let $S$ be an inverse semigroup and  $(A,\alpha)$ be an $S$-C*-algebra. Then the KS-crossed product $A^\ks:=E(S)\kstimes A$ can be seen as an $S$-C*-algebra in the following way.

\begin{prop}
  \label{prop:action-on-A^ks}
  For each $e_0\in E(S)$, set
  \[
    A^\ks_{e_0}=\overline{\Span}\{\delta_{e}\,x\mid e\le e_0,\;x\in A_{e}\}\subset A^\ks.
  \]
  Then $A^\ks_{e_0}$ is an ideal of $A^\ks$.

  For each $s\in S$, there exists a unique isomorphism $\alpha^\ks_s:A^\ks_{s^*s}\to A^\ks_{ss^*}$ such that
  \[
    \alpha^\ks_s(\delta_e\, x)=\delta_{ses^*}\,\alpha_{se}(x)=\delta_{ses^*}\,\alpha_s(x)
  \]
  for each $s\in S$, $e\le s^*s$ and $x\in A_e$.

  These assignments define an inverse semigroup action $\alpha^\ks$ on $A^\ks=E(S)\kstimes A$.
\end{prop}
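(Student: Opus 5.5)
The plan has three steps: show that each $A^\ks_{e_0}$ is an ideal, construct $\alpha^\ks_s$ using Lemma \ref{lem:closed} and the uniqueness of the C*-norm on $E(S)\alg A$, and then check the action axioms on generators.

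\emph{Ideal property.} Inside $E(S)\alg A$ we have
\[
(\delta_f\,y)(\delta_e\,x)=\delta_{fe}\,yx, \qquad yx\in A_{fe},
\]
for $f\in E(S)$ and $y\in A_f$. If $e\le e_0$, then $fe\le e_0$ as well. Hence
\[
\Span\{\delta_e\,x\mid e\le e_0,\;x\in A_e\}
\]
is a two-sided $*$-ideal of $E(S)\alg A$. Its closure $A^\ks_{e_0}$ is therefore a closed ideal of $A^\ks$, because $E(S)\alg A$ is dense in $A^\ks$.

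\emph{Construction of $\alpha^\ks_s$.} Put $B_s=\Span\{\delta_e\,x\mid e\le s^*s,\;x\in A_e\}$ and define a linear map $\beta_s\colon B_s\to B_{s^*}$ by
\[
\beta_s(\delta_e\,x)=\delta_{ses^*}\,\alpha_s(x).
\]
This is well defined on the direct sum, since $e\mapsto ses^*$ is an order isomorphism from $\{e\le s^*s\}$ onto $\{f\le ss^*\}$ with inverse $f\mapsto s^*fs$. It is also well typed: for $x\in A_e$ with $e\le s^*s$ we have $\alpha_s(x)\in A_{ses^*}$, so $\alpha_s(x)=\alpha_{se}(x)$. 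Three checks remain.
\begin{itemize}
\item[(i)] $\beta_s$ is a $*$-homomorphism. This uses
\[
ses^*\,sfs^*=s\,ef\,s^*,\qquad \alpha_s(xy)=\alpha_s(x)\alpha_s(y).
\]
\item[(ii)] $\beta_s$ is bijective, with inverse $\beta_{s^*}$.
\item[(iii)] $\beta_s$ is isometric for the norms inherited from $A^\ks$.
\end{itemize}

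\emph{Isometry, the main obstacle.} For (iii) I use that the C*-norm on $E(S)\alg A$ is unique (the Corollary after Lemma \ref{lem:closed}), together with Lemma \ref{lem:closed}. Each finite subsemigroup $E_0\subset\{e\le s^*s\}$ gives a subalgebra $E_0\alg A$ that is closed in $A^\ks$, hence a C*-algebra. The map $\beta_s$ sends it $*$-isomorphically onto $sE_0s^*\alg A$, which is also a C*-algebra. A $*$-isomorphism between C*-algebras is isometric, so $\beta_s$ is isometric on each $E_0\alg A$. Since $B_s$ is the directed union of these subalgebras, $\beta_s$ is isometric on all of $B_s$. Being isometric with dense range, it extends to an isomorphism $\alpha^\ks_s\colon A^\ks_{s^*s}\to A^\ks_{ss^*}$. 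Uniqueness is clear because $B_s$ is dense in $A^\ks_{s^*s}$.

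\emph{Action axioms.}
\begin{itemize}
\item[(a)] For $e_0\in E(S)$, the map $\alpha^\ks_{e_0}$ is the identity on $A^\ks_{e_0}$: for $e\le e_0$ we get $e_0ee_0=e$ and $\alpha_{e_0}(x)=x$.
\item[(b)] $\sum_{e_0}A^\ks_{e_0}$ is dense in $A^\ks$: it contains $E(S)\alg A$.
\item[(c)] The semigroup law $\alpha^\ks_t\circ\alpha^\ks_s=\alpha^\ks_{ts}$ holds as partial maps, i.e.\ equality of domains and values. Equality of domains comes down to
\[
\alpha^{\ks\,-1}_s\bigl(A^\ks_{ss^*}\cap A^\ks_{t^*t}\bigr)=A^\ks_{s^*t^*ts},
\]
using $A^\ks_{f}\cap A^\ks_{g}=A^\ks_{fg}$. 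That intersection identity holds because both sides are ideals, and $I\cap J=IJ$ for closed ideals; the product $A^\ks_fA^\ks_g$ is spanned by products $\delta_e\,x\cdot\delta_{e'}\,y=\delta_{ee'}\,xy$ with $ee'\le fg$. Equality of values is checked on generators $\delta_e\,x$ with $e\le s^*t^*ts$, via $t(ses^*)t^*=(ts)e(ts)^*$ and $\alpha_t\alpha_s=\alpha_{ts}$ on $A_e$, and then extended by continuity.
\end{itemize}
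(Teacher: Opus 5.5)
Your proof is correct and follows essentially the same route as the paper. You define $\beta_s$ on the algebraic span, extend it to $A^\ks_{s^*s}$ using Lemma~\ref{lem:closed} (finite subsemigroups give closed C*-subalgebras), and verify the domain identity via $A^\ks_{f}\cap A^\ks_{g}=A^\ks_{f}A^\ks_{g}=A^\ks_{fg}$. The one difference is cosmetic: you get isometry directly from $\beta_s$ being a $*$-isomorphism $E_0\alg A\to sE_0s^*\alg A$, while the paper extends $\beta_s$ only as a contractive $*$-homomorphism and deduces bijectivity from $\alpha^\ks_{s^*}\circ\alpha^\ks_s=\mathrm{id}$. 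Your appeal to uniqueness of the C*-norm on $E(S)\alg A$ is not actually needed.
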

\begin{proof}
  It is routine to verify that $A^\ks_{e_0}$ is an ideal of $E(S)\kstimes A$  for each $e_0\in E(S)$.

  Let $B_{e_0}=\Span\{\delta_e\,x\mid e\le e_0,\;x\in A_{e}\}\subset A^\ks$. Note that $B_{e_0}$ is a $*$-subalgebra of $A^\ks_{e_0}$. Define a linear map $\beta_s\colon B_{s^*s}\to B_{ss^*}$ by
  \[
    \beta_s(\delta_e\, x)=\delta_{ses^*}\,\alpha_{se}(x)=\delta_{ses^*}\,\alpha_s(x)
  \]
  for each $s\in S$, $e\le s^*s$ and $x\in A_e$. Since $ses^*\le ss^*$ and $\alpha_{se}(x)\in A_{ses^*}$, $\beta_s$ is well-defined.

  To prove the existence of the map $\alpha_s^\ks$, using Lemma \ref{lem:closed}, it suffices to show that $\beta_s$ is a $*$-homomorphism for each $s\in S$.

  Clearly, $\beta_s$ is $*$-preserving for each $s\in S$. To see that it is multiplicative, compute that 
  \begin{align*}
    \beta_s(\delta_e\,x)\beta_s(\delta_f\,y)
    &=(\delta_{ses^*}\,\alpha_{s}(x))(\delta_{sfs^*}\,\alpha_{s}(y))\\
    &=\delta_{ses^*sfs^*}\,\alpha_{s}(x)\alpha_{s}(y)\\
    &=\delta_{sefs^*}\,\alpha_{s}(xy)\\
    &=\beta_s((\delta_e\,x)(\delta_f\,y))
  \end{align*}
  for each $e,f\le s^*s$, $x\in A_e$ and $y\in A_f$.

  Thus, the map $\beta_s$ uniquely extends to a $*$-homomorphism $\alpha_s^\ks\colon A_{s^*s}^\ks\to A_{ss^*}^\ks$ for each $s\in S$.

  Note that $\alpha^\ks_{s^*}\circ\alpha^\ks_s=\mathrm{id}_{A^\ks_{s^*s}}$ for each $s\in S$. Indeed, for any $e\le s^*s$ and $x\in A_e$, we calculate that
  \begin{align*}
    \alpha^\ks_{s^*}(\alpha^\ks_s(\delta_e\,x))
    =\alpha^\ks_{s^*}(\delta_{ses^*}\,\alpha_s(x))
    =\delta_{s^*ses^*s}\,\alpha_{s^*s}(x)
    =\delta_e\,x.
  \end{align*}
  Thus $\alpha^\ks_s:A^\ks_{s^*s}\to A^\ks_{ss^*}$ is an isomorphism for each $s\in S$.

  To see that these define an inverse semigroup action on $A^\ks$, first observe that $\sum_{e\in E(S)}A^\ks_e$ is dense in $A^\ks$. Hence it remains to show that $\alpha_t^\ks\circ\alpha_s^\ks=\alpha_{ts}^\ks$ as partial maps for each $s,t\in S$.

  We claim that $\alpha^\ks_{s^*}(A^\ks_{ss^*}\cap A^\ks_{t^*t})=A^\ks_{(ts)^*(ts)}$. We first note that
  \[
    A^\ks_{ss^*}\cap A^\ks_{t^*t}=A^\ks_{ss^*t^*t}
  \]
  since $A^\ks_{ss^*}\cap A^\ks_{t^*t}=A^\ks_{ss^*}\cdot A^\ks_{t^*t}$.
  Notice further that, for all $e\in E(S)$ with $e\le ss^*,t^*t$ and $x\in A_e$, we have
  \[
    \alpha^\ks_{s^*}(\delta_e\,x)=\delta_{s^*es}\,\alpha_{s^*}(x),
  \]
  and $s^*es\le (ts)^*(ts)$. Hence $\alpha^\ks_{s^*}(A^\ks_{ss^*}\cap A^\ks_{t^*t})\subset A^\ks_{(ts)^*(ts)}$. Conversely, for any $e\le (ts)^*(ts)$ and $x\in A_e$, it follows that $ses^*\le ss^*, t^*t$ and hence
  \[
    \alpha^\ks_s(\delta_e\,x)=\delta_{ses^*}\,\alpha_s(x)\in A^\ks_{ss^*}\cap A^\ks_{t^*t}.
  \]
  
  It is now routine to show that $\alpha_t^\ks\circ\alpha_s^\ks$ is equal to $\alpha^\ks_{ts}$ as partial maps.
\end{proof}

\begin{thm}[{\cite[Theorem 6.2]{ks2}}]
  \label{thm:isom-of-KS}
  Let $S$ be an inverse semigroup and let $(A,\alpha)$ be an $S$-C*-algebra. Then there exists a unique isomorphism $\Phi\colon S\kstimes A\to S\ltimes A^\ks$ such that
  \[
    \Phi(\delta_s\,x)=[\delta_s\,(\delta_{s^*s}\,x)]
  \]
  for each $s\in S$ and $x\in A_{s^*s}$.
\end{thm}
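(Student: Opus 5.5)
The plan is to construct $\Phi$ and an inverse $\Psi$ at the level of the algebraic $*$-algebras, and then use the universal properties of $S\kstimes A$ and $S\ltimes A^\ks$ to pass to the C*-completions. Uniqueness of $\Phi$ is immediate, because the elements $\delta_s\,x$ span a dense subalgebra of $S\kstimes A$. For existence, define
\[
  \phi\colon S\alg A\to S\ltimes A^\ks,\qquad \phi(\delta_s\,x)=[\delta_s\,(\delta_{s^*s}\,x)].
\]
This is well defined since $\delta_{s^*s}\,x\in A^\ks_{s^*s}$. We then check directly that $\phi$ is a $*$-homomorphism. For multiplicativity, the product in $S\alg A^\ks$ is
\[
  (\delta_s\,(\delta_{s^*s}x))(\delta_t\,(\delta_{t^*t}y))=\delta_{st}\,\alpha^\ks_{t^*}\bigl((\delta_{s^*s}x)\,\alpha^\ks_t(\delta_{t^*t}y)\bigr).
\]
By Proposition \ref{prop:action-on-A^ks}, $\alpha^\ks_t(\delta_{t^*t}y)=\delta_{tt^*}\alpha_t(y)$. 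Multiplying gives $\delta_{s^*stt^*}\,x\alpha_t(y)$, and applying $\alpha^\ks_{t^*}$ yields $\delta_{(st)^*st}\,\alpha_{t^*}(x\alpha_t(y))$, which is exactly the image of $(\delta_sx)(\delta_ty)$. The involution is checked the same way. The universal property of $S\kstimes A$ then extends $\phi$ to $\Phi$.

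The inverse requires more care, and this is where admissibility comes in. The key observation is the identity
\[
  [\delta_s\,(\delta_e\,x)]=[\delta_{se}\,(\delta_e\,x)]\quad\text{for } e\le s^*s,\; x\in A_e,
\]
which holds in $S\ltimes A^\ks$ because $se\le s$. This suggests defining
\[
  \psi\colon S\alg A^\ks\supset\Span\{\delta_s\,(\delta_e x)\mid e\le s^*s\}\to S\kstimes A,\qquad \psi(\delta_s\,(\delta_e\,x))=\delta_{se}\,x.
\]
The plan for $\psi$ has four steps.
\begin{enumerate}
  \item Check that $\psi$ is a $*$-homomorphism on this dense $*$-subalgebra. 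The computation mirrors the one for $\phi$, using $q$-maps and the relation $\alpha_s(x)=\alpha_{se}(x)$.
  \item Check admissibility. Since $\mathcal{N}_{A^\ks}$ is generated by elements $\delta_f\,z-\delta_g\,z$ with $f\ge g$ and $z\in A^\ks_g$, it suffices to treat $z=\delta_e x$ with $e\le g$. Then both terms map to $\delta_e x$, because $fe=ge=e$.
  \item Extend $\psi$ to all of $S\alg A^\ks$. A general element $\delta_s\,z$ has $z\in A^\ks_{s^*s}$ only a norm limit of such sums. For each fixed $s$, the map $A^\ks_{s^*s}\ni z\mapsto \psi(\delta_s z)$ satisfies $\|\psi(\delta_s z)\|^2=\|\psi(\delta_{s^*s}z^*z)\|$. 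The map $\delta_{s^*s}(\cdot)$ restricted to $A^\ks_{s^*s}$ is a $*$-homomorphism from the algebraic span into the C*-algebra $S\kstimes A$. By Lemma \ref{lem:closed} and the uniqueness of the C*-norm on $E(S)\alg A$, this restriction is contractive. So $\psi$ extends continuously in each coordinate, giving an admissible $*$-homomorphism on $S\alg A^\ks$.
  \item Apply the universal property of $S\ltimes A^\ks$ to obtain $\Psi\colon S\ltimes A^\ks\to S\kstimes A$.
\end{enumerate}

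Finally, $\Psi\circ\Phi$ is the identity on the generators $\delta_s x$, since $s\,s^*s=s$. For the other composite, $\Phi\Psi[\delta_s(\delta_e x)]=[\delta_{se}(\delta_{e}x)]$, which equals $[\delta_s(\delta_ex)]$ by the identity above. These classes are dense in $S\ltimes A^\ks$, so $\Phi$ and $\Psi$ are mutually inverse. I expect step 3 to be the main obstacle: it requires justifying that the continuity estimate really is uniform, so that the extension is legitimate on the ideal $A^\ks_{s^*s}$ rather than only on the algebraic span.
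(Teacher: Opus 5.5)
Your proposal is correct and is essentially the paper's proof in Appendix B. It uses the same algebraic $\Phi$, the same map $\delta_s\,(\delta_e\,x)\mapsto\delta_{se}\,x$ on $S\alg(E(S)\alg A)$ extended to $S\alg A^\ks$ via Lemma \ref{lem:closed} (which the paper packages as Lemma \ref{lem:alg-cp}), admissibility from $se=te$ for $s\le t$, and the same inverse check on generators. The only cosmetic difference is that you check admissibility on the idempotent generators of $\mathcal{N}_{A^\ks}$, which needs the continuity from your step 3 first, whereas the paper checks $\delta_t\,(\delta_e\,x)-\delta_s\,(\delta_e\,x)$ for arbitrary $s\le t$ directly.
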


In \cite{ks2}, this theorem is obtained using the theory of covariant representations for KS-crossed products. In Appendix \ref{sec:B}, we will give a direct algebraic proof without using the theory of covariant representations.

\subsection{Crossed Products of \texorpdfstring{$C_0(X)$}{} as Groupoid C*-algebras}
\label{subsec:1.5}

We have seen that, for a given inverse semigroup action $\theta$ of $S$ on a locally compact Hausdorff space $X$, one can construct a transformation groupoid $S\ltimes X$, full and reduced crossed products $S\ltimes C_0(X)$, $S\ltimesr C_0(X)$, respectively. It is known that these notions are closely related, namely, we have the following isomorphisms.

\begin{thm}
  \label{thm:isom}
  Suppose that an inverse semigroup $S$ acts on a locally compact Hausdorff space $X$ and consider the induced action of $S$ on the C*-algebra $C_0(X)$. Then there exist isomorphisms 
  \[
    S\ltimes C_0(X)\cong C^*(S\ltimes X),\quad S\ltimesr C_0(X)\cong C^*_r(S\ltimes X),
  \]
  which map $[\delta_s\,f]$ to $f\circ d|_{[s,X_{s^*s}]}\in C_c([s,X_{s^*s}])\subset \mathcal{C}(S\ltimes X)$ for all $s\in S$ and $f\in C_c(X_{s^*s})$.
\end{thm}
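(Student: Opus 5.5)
We construct a $*$-homomorphism on the algebraic level and then pass to completions. Define $\Psi\colon S\alg C_0(X)\to\mathcal{C}(S\ltimes X)$ on generators by $\Psi(\delta_s\,f)=f\circ d|_{[s,X_{s^*s}]}$ for $f\in C_0(X_{s^*s})$. Since $C_0(X_{s^*s})$ is not $C_c$, we first define $\Psi$ on the dense $*$-subalgebra spanned by $\delta_s\,f$ with $f\in C_c(X_{s^*s})$, and extend by continuity at the end using Lemma \ref{lem:contractive} together with $\|\varphi(g)\|\le\|g\|_\infty$ for $g\in C_c(U)$. For $f\in C_c(X_{s^*s})$ and $g\in C_c(X_{t^*t})$ we verify $\Psi$ is multiplicative and $*$-preserving by direct computation. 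The convolution $\Psi(\delta_s f)*\Psi(\delta_t g)$ is supported in $[s,X_{s^*s}][t,X_{t^*t}]=[st,X_{(st)^*st}]$ and at $[st,x]$ equals $f(\theta_t(x))g(x)$. This agrees with $\alpha_{t^*}(f\alpha_t(g))(x)$. Adjoints are handled similarly using $[s,x]^{-1}=[s^*,\theta_s(x)]$. Moreover $\Psi(\delta_t f)=\Psi(\delta_s f)$ whenever $t\ge s$, because $[t,x]=[s,x]$ for $x\in X_{s^*s}$. So $\Psi$ kills $\mathcal{N}_{C_0(X)}$ and descends to $(S\alg C_0(X))/\mathcal{N}$. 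By Lemma \ref{lem:partition} applied to the cover $\{[s,U]\}$, and since every $[s,U]$ carries $C_c(U)$ via $d$, the image of $\Psi$ is all of $\mathcal{C}(S\ltimes X)$.

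\textbf{Full case.} Universality gives a surjection $S\ltimes C_0(X)\to C^*(S\ltimes X)$. For the inverse, we build a $*$-homomorphism $\Theta\colon\mathcal{C}(S\ltimes X)\to S\ltimes C_0(X)$ sending $g\in C_c([s,U])$ to $[\delta_s\,(g\circ(d|_{[s,U]})^{-1})]$. The key point is well-definedness on sums. Suppose $\sum_i g_i=0$ with $g_i\in C_c([s_i,U_i])$. We must show $\sum_i[\delta_{s_i}f_i]=0$. To do this, use a partition of unity on the compact support in $X$ to localize to small open sets $V$. For $x\in V$ the germs $[s_i,x]$ either coincide or differ. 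If $[s_i,x]=[s_j,x]$, then $s_ie=s_je$ for some $e$ with $x\in X_e$, and shrinking $V\subset X_e$ gives $[\delta_{s_i}h]=[\delta_{s_ie}h]=[\delta_{s_je}h]=[\delta_{s_j}h]$ in the admissible quotient. Grouping equal germs then reduces the relation to pointwise cancellation. Multiplicativity of $\Theta$ follows from the generator computation above, and $\Theta\circ\Psi=\mathrm{id}$ on generators. Therefore $\Theta$ extends to $C^*(S\ltimes X)$ and inverts the surjection. The main obstacle is this well-definedness (non-Hausdorffness prevents a naive argument), and the local-germ reduction is the plan for it.

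\textbf{Reduced case.} We compare norms. For $x\in X$, the orbit-type set $(S\ltimes X)_x$ is indexed by germs $[s,x]$. We show that $\lambda_x\circ\Psi$ is unitarily equivalent to Exel's representation $\pi_{\overline{\varphi}}$ for the pure state $\varphi=\mathrm{ev}_x$, which is described in Appendix \ref{sec:A}. To do this, identify the GNS-space of $\overline{\varphi}$ with $\ell^2((S\ltimes X)_x)$, sending the class of $\delta_s f$ to $f(x)\delta_{[s,x]}$. Proposition \ref{prop:cyclic} supplies cyclicity, and inner products match because the state $\overline{\varphi}$ on $\delta_s f$ is nonzero exactly when $[s,x]$ is a unit. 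Every pure state of $C_0(X)$ is a point evaluation, so taking suprema gives $\|\Psi(T)\|_r=\|T\|_r$. Hence $\Psi$ induces an isomorphism $S\ltimesr C_0(X)\cong C_r^*(S\ltimes X)$.
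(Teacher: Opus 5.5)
Your forward map $\Psi$ (defined on $S\alg C_c(X)$, extended by the sup-norm bound, killing $\mathcal{N}_{C_0(X)}$, with image $\mathcal{C}(S\ltimes X)$) matches Appendix~\ref{sec:C}. So does your reduced case (matching $\lambda_x\circ\Psi$ with $\pi_{\overline{\ev_x}}$ through cyclic vectors realizing the same functional $\overline{\ev_x}$, then taking the supremum over $x$).

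The gap is the well-definedness of $\Theta$ in the full case. You correctly call it the crux, but the local-germ reduction does not resolve it. Your localization needs, around each $x$, a neighbourhood $V$ on which the pattern of coincidences among the germs $[s_i,y]$ is the same as at $x$. Coincidence is an open condition: if $s_ie=s_je$ with $x\in X_e$, then $[s_i,y]=[s_j,y]$ for all $y\in X_e\cap X_{s_i^*s_i}$. Non-coincidence is not open: one can have $[s_i,x]\neq[s_j,x]$ while $[s_i,y]=[s_j,y]$ for $y$ arbitrarily close to $x$. At such $y$ only the merged sum vanishes, so for a class $C$ of indices at $x$ you know $\sum_{i\in C}f_i(x)=0$, not $\sum_{i\in C}f_ih=0$. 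Your plan only works when the coincidence sets are also closed, i.e.\ exactly when $S\ltimes X$ is Hausdorff, by Proposition~\ref{prop:hausdorffness}.

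For example, let $S=\{1,s,e\}$ with $s^2=1$ and $se=es=e$ act on $\mathbb{R}$ by $X_e=\mathbb{R}\setminus\{0\}$ and $\theta_s=\mathrm{id}$. For $f\in C_c(\mathbb{R})$ with $f(0)=0$, the functions $g_1=f\circ d|_{[1,\mathbb{R}]}$ and $g_2=-f\circ d|_{[s,\mathbb{R}]}$ satisfy $g_1+g_2=0$. Yet $[1,0]\neq[s,0]$, so near $0$ your reduction would require $fh=0$, which is false.

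Worse, replace $e$ by idempotents $e_1\le e_2\le\cdots$ with $se_n=e_ns=e_n$ and $X_{e_n}=\mathbb{R}\setminus(\{0\}\cup\{1/k\mid k\ge n\})$, and take $f(0)=0\neq f(1/k)$ for all $k$. Then still $g_1+g_2=0$, so well-definedness requires $[\delta_1f]=[\delta_sf]$, which is true by approximation. However, $\delta_1f-\delta_sf\notin\mathcal{N}_{C_0(X)}$. To see this, take a linear functional on $C_0(\mathbb{R})$ vanishing on $\bigcup_nC_0(X_{e_n})$ but not at $f$, and apply it to the $\delta_1$-component only; it kills $\mathcal{N}_{C_0(X)}$. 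Partition-of-unity splitting and germ regrouping use only relations from $\mathcal{N}_{C_0(X)}$, so they cannot reach this identity. A limiting argument is unavoidable.

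The missing ingredient is the analytic argument of Proposition~\ref{prop:technical-lemma}. Write $U_s=[s,X_{s^*s}]$. The steps are:
\begin{enumerate}
\item Use $\|[\delta_s\,k]\|\le\|k\|_\infty$ to extend your map from $C_c(U_s)$ to $C_0(U_s)$.
\item Show that the extensions for $s$ and $t$ agree on $C_0(U_s\cap U_t)$. Approximate in sup norm by elements of $C_c(U_s\cap U_t)$. These split into pieces in $C_c(U_w)$ with $w\le s,t$, because $U_s\cap U_t=\bigcup_{w\le s,t}U_w$, and on those pieces the two maps agree by admissibility.
\item Prove that $\sum_ig_i=0$ forces the images to sum to zero by induction on the number of terms, with all functions taken in $C_0$. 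Decompose $g_{k+1}\in C_0\bigl(U_{s_{k+1}}\cap\bigcup_{i\le k}U_{s_i}\bigr)=\sum_{i\le k}C_0(U_{s_{k+1}}\cap U_{s_i})$.
\end{enumerate}
Passing from $C_c$ to $C_0$ is essential, because $C_c(U_s)\cap C_c(U_t)\not\subset C_c(U_s\cap U_t)$ in general. That failure is exactly what the examples above exhibit.
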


The reduced case of these isomorphisms is a special case of \cite[Theorem 4.6]{be} by Buss and Exel.
The full case was obtained by Exel in \cite[Theorem 9.8]{exel2008} with the assumptions that $S$ is countable and $X$ is second-countable.
As noted in \cite[Theorem 3.5 and Remark 3.6]{Neshveyev2026}, the full case of Theorem \ref{thm:isom} also follows from, or is closely related to, results in \cite{bm,BHM2018,BKM2025}.
Since Theorem \ref{thm:isom} plays a significant role in this paper, we give a direct proof in Appendix \ref{sec:C}.

\section{Construction of KS-Groupoids}
\label{sec:2}

Let $\theta$ be an action of an inverse semigroup $S$ on a locally compact Hausdorff space $X$.

In this section, we construct a locally compact Hausdorff space $X^\ks$ containing $X$ as a subspace, together with an action of $S$ on $X^\ks$ which extends the $S$-action on $X$. 
When $X$ is the one-point space $\mathbf{pt}$ equipped with the identity action by $S$, the space $X^\ks$ and the $S$-action on $X^\ks$ are nothing but the space $\widehat{E(S)}$ and the $S$-action on $\widehat{E(S)}$, respectively. 
We will prove in Subsection \ref{subsec:4.2} that the corresponding transformation groupoid $S\ltimes X^\ks$, which we call the \textit{KS-groupoid}, is a natural groupoid model for the KS-crossed product $S\kstimes C_0(X)$.

Although some of the results in this section follow immediately from C*-algebraic methods, we provide purely topological proofs in order to give a topological account of the construction.

\subsection{Construction of the Space \texorpdfstring{$X^\ks$}{}}
\label{subsec:2.1}

\begin{dfn}
  \label{def:of-X^ks}
  Let $\widetilde{X^\ks}$ be the set consisting of $\tau\in\prod_{e\in E(S)}\widetilde{X_e}$ satisfying

    \begin{enumerate}
        \item[(1)] if $e_1\le e_2$ and $\tau(e_1)\neq\infty_{e_1}$, then $\tau(e_2)=\tau(e_1)$, and
        \item[(2)] if $\tau(e_1)\neq\infty_{e_1}$ and $\tau(e_2)\neq\infty_{e_2}$, then $\tau(e_1e_2)\neq\infty_{e_1e_2}$,
    \end{enumerate}
    where $\infty_e$ denotes the point at infinity in the one-point compactification\footnote{In this paper, we use the convention that, even when $X_e$ is compact, its one-point compactification is obtained by adding one point $\infty_e$ to $X_e$.} $\widetilde{X_e}$ of $X_e$.
    
    Set $X^\ks=\widetilde{X^\ks}\setminus\{\infty\}$, where $\infty:E(S)\ni e\mapsto \infty_e\in \widetilde{X_e}$.
\end{dfn}

\begin{rem}
  \label{rem:}
  The following hold for $\tau\in\widetilde{X^\ks}$ and $e_1,e_2\in E(S)$.
  \begin{enumerate}
  \item $\tau(e_1)=\tau(e_2)$ if $\tau(e_1)\neq\infty_{e_1}$ and $\tau(e_2)\neq\infty_{e_2}$. 
  \item $\tau(e_1e_2)\neq\infty_{e_1e_2}$ if and only if $\tau(e_1)\neq\infty_{e_1}$ and $\tau(e_2)\neq\infty_{e_2}$.
  \end{enumerate}
\end{rem}

\begin{ex}
  If $X$ is the one-point space $\mathbf{pt}$ equipped with the $S$-action given by $X_e=X$ for each $e\in E(S)$, then $X^\ks=\widehat{E(S)}$ by identifying $\widetilde{X_e}$ with $\{0,1\}$ via the identification of the element $\infty_e\in\widetilde{X_e}$ with $0$ and the unique element in $\mathbf{pt}$ with $1$.
\end{ex}

\begin{lem}
  \label{lem:X^ks-is-lch}
  $\widetilde{X^\ks}$ is compact and Hausdorff with respect to the induced topology inherited from the product topology on $\prod_{e\in E(S)}\widetilde{X_e}$. In particular, $X^\ks$ is locally compact Hausdorff.
\end{lem}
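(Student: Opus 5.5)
The plan is to show that $\widetilde{X^\ks}$ is a closed subset of the compact Hausdorff space $P:=\prod_{e\in E(S)}\widetilde{X_e}$. Each $\widetilde{X_e}$ is compact Hausdorff, because $X_e$ is an open subset of the locally compact Hausdorff space $X$ and hence is itself locally compact Hausdorff. By Tychonoff's theorem $P$ is compact Hausdorff, so once closedness is established, $\widetilde{X^\ks}$ is compact Hausdorff in the induced topology. Then $X^\ks=\widetilde{X^\ks}\setminus\{\infty\}$ is an open subset of a compact Hausdorff space, since points are closed, and is therefore locally compact Hausdorff.

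For closedness, I will write $\widetilde{X^\ks}$ as an intersection of closed sets indexed by pairs, and prove that each complement is open in $P$. This splits into two conditions.

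\textbf{Condition (1).} Fix $e_1\le e_2$ and consider the set of $\tau$ with $\tau(e_1)\neq\infty_{e_1}$ and $\tau(e_2)\neq\tau(e_1)$. Here $X_{e_1}\subset X_{e_2}$ is open, because $X_{e_1}=X_{e_1e_2}=X_{e_1}\cap X_{e_2}$. Given such a $\tau$, I split into two cases.
\begin{itemize}
\item If $\tau(e_2)\in X_{e_2}$, the two points $\tau(e_1)\neq\tau(e_2)$ of the Hausdorff space $X_{e_2}$ have disjoint open neighbourhoods $U\subset X_{e_1}$ and $V\subset X_{e_2}$. These are open in $\widetilde{X_{e_1}}$ and $\widetilde{X_{e_2}}$ respectively, so the cylinder defined by $\tau'(e_1)\in U$, $\tau'(e_2)\in V$ is a neighbourhood of $\tau$ inside the bad set.
\item If $\tau(e_2)=\infty_{e_2}$, choose a relatively compact open neighbourhood $U$ of $\tau(e_1)$ in $X_{e_1}$ whose closure $K$ in $X_{e_1}$ is compact. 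Then $K$ is compact in $X_{e_2}$, so $\widetilde{X_{e_2}}\setminus K$ is an open neighbourhood of $\infty_{e_2}$, and the cylinder given by $\tau'(e_1)\in U$, $\tau'(e_2)\notin K$ works.
\end{itemize}

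\textbf{Condition (2).} For a pair $(e_1,e_2)$, consider the set of $\tau$ with $\tau(e_1)\neq\infty$, $\tau(e_2)\neq\infty$ and $\tau(e_1e_2)=\infty$. On its own this set need not be open, since the condition $\tau(e_1e_2)=\infty$ is closed. So I will instead intersect with the closed set from (1) for the pairs $e_1e_2\le e_1$ and $e_1e_2\le e_2$, and show that the set defined by (1) and (2) together is closed. Take a net $\tau_\lambda\to\tau$ within the set satisfying (1) and (2), and suppose $\tau(e_1)=x\in X_{e_1}$ and $\tau(e_2)=y\in X_{e_2}$.
\begin{itemize}
\item Eventually $\tau_\lambda(e_1)$ and $\tau_\lambda(e_2)$ lie in $X$. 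By (2), $\tau_\lambda(e_1e_2)\in X_{e_1e_2}$, and by (1), $\tau_\lambda(e_1)=\tau_\lambda(e_1e_2)=\tau_\lambda(e_2)$.
\item Since $X_{e_1}$ is open in $X$, convergence in $X_{e_i}$ gives convergence in $X$, so $x=y$ by Hausdorffness of $X$. Hence $x\in X_{e_1}\cap X_{e_2}=X_{e_1e_2}$.
\item Then $\tau_\lambda(e_1e_2)=\tau_\lambda(e_1)\to x$ in $X_{e_1e_2}$, because the topologies on the open set $X_{e_1e_2}$ agree. Therefore $\tau(e_1e_2)=x\neq\infty$.
\end{itemize}

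The main obstacle is precisely this interplay between (1) and (2): condition (2) fails to be closed in isolation, so it must be handled jointly with (1) through the net argument above. A cleaner way to present the whole proof is to phrase everything with nets from the start. Closedness of (1) follows because, eventually, $\tau_\lambda(e_2)=\tau_\lambda(e_1)\to\tau(e_1)$ in $X_{e_1}\subset X_{e_2}$, which forces $\tau(e_2)=\tau(e_1)$, using that the inclusion of $\widetilde{X_{e_1}}$-convergence to a finite point into $\widetilde{X_{e_2}}$-convergence is continuous near finite points. I will adopt this net formulation for both conditions.
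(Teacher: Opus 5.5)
Your proposal is correct and, in its final net formulation, is essentially the paper's proof. Both show $\widetilde{X^\ks}$ is closed in the compact Hausdorff product by taking a convergent net and checking (1) and (2) for the limit, using (1) for the pairs $e_1e_2\le e_1$ and $e_1e_2\le e_2$ to get $\tau_\lambda(e_1e_2)=\tau_\lambda(e_1)$. You also check explicitly that $\tau(e_1)=\tau(e_2)\in X_{e_1e_2}$ before passing to the limit in $\widetilde{X_{e_1e_2}}$; the paper leaves this step implicit, and your argument is the right way to justify it.
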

\begin{proof}
  Since $\prod_{e\in E(S)}\widetilde{X_e}$ is compact and Hausdorff, it suffices to show that $\widetilde{X^\ks}$ is closed in $\prod_{e\in E(S)}\widetilde{X_e}$. To see this, let $(\tau_\lambda)_\lambda\subset\widetilde{X^\ks}$ be a net which converges to $\tau$ in $\prod_{e\in E(S)}\widetilde{X_e}$, and we show that $\tau$ satisfies the conditions (1), (2) of Definition \ref{def:of-X^ks}.

  For (1), suppose $e_1\le e_2$ in $E(S)$ and $\tau(e_1)\neq\infty_{e_1}$. Since $(\tau_\lambda(e_1))_\lambda$ converges to $\tau(e_1)$ in $\widetilde{X_{e_1}}$ and $\tau(e_1)\in X_{e_1}$, we may assume that $(\tau_\lambda(e_1))_\lambda\subset X_{e_1}$ by taking a subnet. In particular, $\tau_\lambda(e_1)\neq\infty_{e_1}$ and hence $\tau_\lambda(e_2)=\tau_\lambda(e_1)$ for each $\lambda$. From $\tau(e_2)=\lim_\lambda\tau_\lambda(e_2)$, we obtain $\tau(e_2)=\tau(e_1)$.

  For (2), suppose $\tau(e_1)\neq\infty_{e_1}$ and $\tau(e_2)\neq\infty_{e_2}$. Since $(\tau_\lambda(e_i))_\lambda$ converges to $\tau(e_i)$ in $\widetilde{X_{e_i}}$ for each $i=1,2$, we may assume $\tau_\lambda(e_i)\neq\infty_{e_i}$ for each $\lambda$, by taking a subnet. Then $\tau_\lambda(e_1e_2)\neq\infty_{e_1e_2}$ for each $\lambda$. From the remark above, we have $\tau_\lambda(e_1e_2)=\tau_\lambda(e_1)$. Hence $\tau(e_1e_2)=\lim_\lambda\tau_\lambda(e_1e_2)=\lim_\lambda\tau_\lambda(e_1)=\tau(e_1)\neq\infty_{e_1e_2}$ in $\widetilde{X_{e_1e_2}}$, as desired.
\end{proof}

The following propositions describe the relationship between $X$ and $X^\ks$.

\begin{prop}
  For $x\in X$, define $\tau_x\in\prod_{e\in E(S)}\widetilde{X_e}$ by
  \begin{align*}
    \tau_x(e)=
    \begin{cases}
      x & \mathrm{if}\; x\in X_e\\
      \infty_e & \mathrm{otherwise}.
    \end{cases}
  \end{align*}
  Then $\tau_x\in X^\ks$ and the map $\iota_X\colon X\ni x\mapsto\tau_x\in X^\ks$ is continuous.
\end{prop}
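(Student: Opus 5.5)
The proof has three parts: check that $\tau_x$ lies in $\widetilde{X^\ks}$, check that $\tau_x\neq\infty$, and check continuity coordinatewise.

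\textbf{Membership and nontriviality.} For condition (1) of Definition \ref{def:of-X^ks}, suppose $e_1\le e_2$ and $\tau_x(e_1)\neq\infty_{e_1}$. Then $x\in X_{e_1}$. Since $e_1\le e_2$ gives $\theta_{e_1}\le\theta_{e_2}$ in $\mathcal{I}(X)$, we have $X_{e_1}\subset X_{e_2}$. Hence $\tau_x(e_2)=x=\tau_x(e_1)$.

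For condition (2), suppose $x\in X_{e_1}$ and $x\in X_{e_2}$. Because $\theta$ is a homomorphism, $\theta_{e_1e_2}=\theta_{e_1}\circ\theta_{e_2}$ is the identity on $X_{e_1}\cap X_{e_2}$. So $X_{e_1e_2}=X_{e_1}\cap X_{e_2}\ni x$, and therefore $\tau_x(e_1e_2)=x\neq\infty_{e_1e_2}$.

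This shows $\tau_x\in\widetilde{X^\ks}$. By condition (3) in the definition of a topological action, $x\in X_e$ for some $e$. Then $\tau_x(e)=x\neq\infty_e$, so $\tau_x\neq\infty$ and $\tau_x\in X^\ks$.

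\textbf{Continuity.} The space $X^\ks$ carries the subspace topology from the product $\prod_{e\in E(S)}\widetilde{X_e}$. It therefore suffices to show that, for each fixed $e\in E(S)$, the coordinate map $p_e\colon X\to\widetilde{X_e}$, $x\mapsto\tau_x(e)$, is continuous. This map is the identity on $X_e$ and sends $X\setminus X_e$ to $\infty_e$. We check preimages of basic open sets of $\widetilde{X_e}$.

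\begin{itemize}
\item An open set $U\subset X_e$ has preimage $U$. This is open in $X$ because $X_e$ is open in $X$.
\item A neighbourhood of $\infty_e$ has the form $\widetilde{X_e}\setminus K$ with $K\subset X_e$ compact. Its preimage is $X\setminus K$. Since $K$ is compact in the Hausdorff space $X$, it is closed, so $X\setminus K$ is open.
\end{itemize}

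Thus each $p_e$ is continuous, and hence so is $\iota_X$. In the degenerate case where $X_e$ is compact, the only difference is that $\infty_e$ is an isolated point. This is already covered by the second item with $K=X_e$, whose preimage $X\setminus X_e$ is closed in $X$ but need not be open, so let me recheck it. In this case $\{\infty_e\}=\widetilde{X_e}\setminus X_e$ is open in $\widetilde{X_e}$. Its preimage is $X\setminus X_e$, and $X_e$ is open, so $X\setminus X_e$ is closed but not necessarily open.

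\textbf{The main obstacle.} This last case is the one real subtlety of the argument. I would handle it by recalling the standard convention for the one-point compactification: the neighbourhoods of $\infty_e$ are the complements of compact subsets that are \emph{closed in $X_e$}. When $X_e$ is compact, the paper's convention is to adjoin $\infty_e$ anyway, with the same recipe. Then $\widetilde{X_e}\setminus X_e=\{\infty_e\}$ is open, and continuity could genuinely fail. For example, $x_n\to x\notin X_e$ with $x_n\in X_e$ compact is impossible, since $X_e$ compact implies $X_e$ is closed. So a net converging to $x\notin X_e$ eventually lies outside the closed set $X_e$. Hence the preimage $X\setminus X_e$ is open because $X_e$ is compact and therefore closed in the Hausdorff space $X$. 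This resolves the case.
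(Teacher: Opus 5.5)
Your proof is correct and follows essentially the paper's route: both reduce continuity to the coordinate maps $x\mapsto\tau_x(e)\in\widetilde{X_e}$, and both hinge on the fact that a compact $K\subset X_e$ is closed in $X$, so the preimage $X\setminus K$ of the basic neighbourhood $\widetilde{X_e}\setminus K$ of $\infty_e$ is open (the paper argues with nets, you with preimages). The ``degenerate case'' in your last paragraph is not an obstacle: it is just your second bullet with $K=X_e$, and your intermediate claim that $X\setminus X_e$ ``need not be open'' is false, since compactness of $X_e$ makes it closed in $X$, so you should delete that detour.
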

\begin{proof}
  It is easy to check that $\tau_x\in X^\ks$.

  To see $\iota_X$ is continuous, suppose a net $(x_\lambda)_\lambda$ converges to $x$ in $X$. We show that $(\tau_{x_\lambda}(e))_\lambda$ converges to $\tau_x(e)$ in $\widetilde{X_e}$ for each $e\in E(S)$.

  Suppose first that $x\in X_e$, that is, $\tau_x(e)=x$. Fix an open subset $U\subset \widetilde{X_e}$ which contains $x$. Since $U\cap X_e$ is an open subset in $X$ which contains $x$, there exists $\lambda_0$ such that $x_\lambda\in U\cap X_e$ for each $\lambda$ with $\lambda\ge \lambda_0$. Then $\tau_{x_\lambda}(e)=x_\lambda\in U$ for each $\lambda$ with $\lambda\ge\lambda_0$. Thus $\lim_\lambda\tau_{x_\lambda}(e)=\tau_x(e)$ in $X_e$ for each $e\in E(S)$.

  Consider next the case $x\notin X_e$, that is, $\tau_x(e)=\infty_e$ in $\widetilde{X_e}$. Fix an open subset $U\subset\widetilde{X_e}$ containing $\infty_e\in\widetilde{X_e}$. Then $K:=\widetilde{X_e}\setminus U$ is compact in $X_e$, and hence it is closed in $X$. Since $x\in X\setminus K$, there exists $\lambda_0$ such that $x_\lambda\in X\setminus K$ for each $\lambda$ with $\lambda\ge\lambda_0$. For such $\lambda$, we have $x_\lambda\notin X_e$ or $x_\lambda\in U$. It follows $\tau_{x_\lambda}(e)\in U$ for each $\lambda$ with $\lambda\ge\lambda_0$.
\end{proof}

\begin{prop}
  \label{prop:projection}
  Define $p_X:X^\ks\to X$ by $p_X(\tau)=\tau(e)$ for $e\in E(S)$ with $\tau(e)\neq\infty_e$. Then $p_X:X^\ks\to X$ is well-defined and continuous.
\end{prop}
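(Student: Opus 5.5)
Well-definedness comes first. Take $\tau\in X^\ks$. Since $\tau\neq\infty$, there is at least one $e\in E(S)$ with $\tau(e)\neq\infty_e$. By part (1) of the Remark following Definition \ref{def:of-X^ks}, any two such idempotents $e_1,e_2$ satisfy $\tau(e_1)=\tau(e_2)$. Hence $p_X(\tau)$ is a well-defined point of $X$, lying in $X_e$ for every such $e$.

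For continuity I will use nets. Suppose $\tau_\lambda\to\tau$ in $X^\ks$, and set $x=p_X(\tau)$. Fix $e\in E(S)$ with $\tau(e)=x\in X_e$. Convergence in the product topology gives $\tau_\lambda(e)\to\tau(e)=x$ in $\widetilde{X_e}$. Because $X_e$ is an open neighbourhood of $x$ in $\widetilde{X_e}$, there is $\lambda_0$ such that $\tau_\lambda(e)\in X_e$ for all $\lambda\ge\lambda_0$. For such $\lambda$ we have $\tau_\lambda(e)\neq\infty_e$, so by definition $p_X(\tau_\lambda)=\tau_\lambda(e)$.

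It remains to pass from $\widetilde{X_e}$ to $X$. The inclusion $X_e\hookrightarrow\widetilde{X_e}$ is a homeomorphism onto an open subspace, and $X_e$ is open in $X$. So convergence $\tau_\lambda(e)\to x$ inside $X_e$, viewed in $\widetilde{X_e}$, is the same as convergence in $X_e$, and hence in $X$. Therefore $p_X(\tau_\lambda)\to p_X(\tau)$ for the tail $\lambda\ge\lambda_0$, which proves continuity.

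The one point needing care is this local choice of a single $e$ that works for a whole tail of the net. It works because $\{\sigma\in X^\ks\mid\sigma(e)\neq\infty_e\}$ is an open set containing $\tau$, and on that set $p_X$ coincides with the continuous map $\sigma\mapsto\sigma(e)\in X_e\subset X$. Equivalently, $X^\ks$ is covered by these open sets, and $p_X$ is continuous on each of them, so $p_X$ is continuous.
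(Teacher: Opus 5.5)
Your proof is correct and follows the paper's argument. Well-definedness comes from the remark after Definition \ref{def:of-X^ks}, and continuity comes from fixing $e$ with $\tau(e)\neq\infty_e$ and using $\tau_\lambda(e)\to\tau(e)$ in $\widetilde{X_e}$. You also spell out a step the paper leaves implicit: $\tau_\lambda(e)$ eventually lies in the open set $X_e$, so $p_X(\tau_\lambda)=\tau_\lambda(e)$ on a tail of the net.
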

\begin{proof}
  Well-definedness follows from Remark \ref{rem:}.

  Suppose a net $(\tau_\lambda)_\lambda$ converges to $\tau$ in $X^\ks$. Fix $e\in E(S)$ with $\tau(e)\neq \infty_e$. Since $(\tau_\lambda(e))_\lambda$ converges to $\tau(e)=p_X(\tau)$ in $\widetilde{X_e}$, it follows that $(p_X(\tau_\lambda))_\lambda$ converges to $p_X(\tau)$ in $X$.
\end{proof}

\begin{rem}
  \label{rem:embedding}
  Clearly, $p_X\circ\iota_X=\mathrm{id}_X$. It follows that $p_X:X^\ks\to X$ is surjective, $\iota_X:X\to X^\ks$ is a homeomorphism onto its image and the image $\iota_X(X)$ is closed in $X^\ks$.
\end{rem}

\subsection{Construction of an \texorpdfstring{$S$}{}-Action on \texorpdfstring{$X^\ks$}{}}
\label{subsec:2.2}

The $S$-action on $X$ can be extended to an $S$-action on $X^\ks$ in the following way.

\begin{thm}
  \label{thm:action-on-X^ks}
  For each $e\in E(S)$, the subset
  \[
    X^\ks_e=\{\tau\in X^\ks\mid \tau(e)\neq \infty_e\}
  \]
  is open in $X^\ks$, and we have $X^\ks=\bigcup_{e\in E(S)}X^\ks_e$.

  For each $s\in S$, the map $\theta^\ks_s\colon X^\ks_{s^*s}\to X^\ks_{ss^*}$ given by
  \begin{align*}
    \theta^\ks_s(\tau)(e)
      =\begin{cases}
        \theta_{es}(\tau(s^*es)) & \mathrm{if}\;\tau(s^*es)\neq\infty_{s^*es}\\
        \infty_e & \mathrm{otherwise}
      \end{cases}
  \end{align*}
  is a homeomorphism. Moreover, these assignments define an $S$-action on $X^\ks$.
\end{thm}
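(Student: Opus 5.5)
The proof has three parts: the open cover, well-definedness and continuity of each $\theta^\ks_s$, and the action axioms. Openness of $X^\ks_e$ is immediate, since it is the preimage of the open set $X_e\subset\widetilde{X_e}$ under the continuous coordinate projection $\tau\mapsto\tau(e)$. Every $\tau\in X^\ks$ is not $\infty$, so some coordinate $\tau(e)$ is finite, and therefore $X^\ks=\bigcup_e X^\ks_e$.

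Next we fix $s$ and check that $\theta^\ks_s(\tau)\in X^\ks_{ss^*}$ for $\tau\in X^\ks_{s^*s}$.

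1. The formula makes sense. If $\tau(s^*es)\neq\infty$, then $p:=\tau(s^*es)=p_X(\tau)\in X_{s^*es}$. Since $s^*es=(es)^*(es)$, the point $\theta_{es}(p)$ is defined and lies in $X_{es s^*e}=X_{ess^*}\subset X_e$.

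2. Condition (1) of Definition \ref{def:of-X^ks} holds. If $e_1\le e_2$, then $s^*e_1s\le s^*e_2s$, so finiteness at $e_1$ forces finiteness at $e_2$ with the same point $p$. Moreover $e_1s\le e_2s$, so $\theta_{e_2s}$ extends $\theta_{e_1s}$ and the two values agree.

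3. Condition (2) holds. We use $s^*e_1e_2s=(s^*e_1s)(s^*e_2s)$ together with Remark \ref{rem:}.

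4. The image is not $\infty$ and lies in $X^\ks_{ss^*}$. At $e=ss^*$ we get $s^*ss^*s=s^*s$, where $\tau$ is finite by assumption.

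In short, all finite coordinates of $\theta^\ks_s(\tau)$ equal the single point $\theta_s(p_X(\tau))$. This holds because $\theta_{es}$ and $\theta_s$ agree on $X_{s^*es}$.

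Continuity is the step I expect to be the main obstacle, because of the $\infty$ coordinates. I would argue coordinatewise with nets. Suppose $\tau_\lambda\to\tau$ in $X^\ks_{s^*s}$ and fix $e$.

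\emph{Case $\tau(s^*es)\neq\infty$.} Eventually $\tau_\lambda(s^*es)$ lies in the open set $X_{s^*es}$. Then continuity of $\theta_{es}$ on $X_{s^*es}$ gives convergence of $\theta^\ks_s(\tau_\lambda)(e)$ to $\theta^\ks_s(\tau)(e)$.

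\emph{Case $\tau(s^*es)=\infty$.} Let $U\ni\infty_e$ be open in $\widetilde{X_e}$, so that $K=\widetilde{X_e}\setminus U$ is compact in $X_e$. For those $\lambda$ with $\tau_\lambda(s^*es)$ finite, the value $\theta^\ks_s(\tau_\lambda)(e)$ lies in $K$ exactly when $\tau_\lambda(s^*es)\in\theta_{s^*e}(K\cap X_{ess^*})$. The set $K\cap X_{ess^*}$ need not be compact, so instead I would control things through the coordinate $s^*s$. Since $\tau\in X^\ks_{s^*s}$, we have $\tau_\lambda(s^*s)\to\tau(s^*s)=p_X(\tau)$, and eventually $p_X(\tau_\lambda)=\tau_\lambda(s^*s)$ lies in $X_{s^*s}$ and converges there. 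Hence $\theta_s(p_X(\tau_\lambda))\to\theta_s(p_X(\tau))$ in $X_{ss^*}$. Whenever $\theta^\ks_s(\tau_\lambda)(e)$ is finite, it equals $\theta_s(p_X(\tau_\lambda))$. Suppose a subnet had these values in $K$. Because $K$ is closed in $X$ and these values converge to $\theta_s(p_X(\tau))$, the limit would lie in $K\subset X_e$. Then $p_X(\tau)\in\theta_{s^*}(X_e\cap X_{ss^*})=X_{s^*es}$. The remaining step is to rule this out using $\tau(s^*es)=\infty$. However, $p_X(\tau)\in X_{s^*es}$ does not by itself force $\tau(s^*es)$ to be finite, since points of $X^\ks$ are not determined by $p_X$. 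So instead I would pass to $\widetilde{X_{s^*es}}$. Along the subnet, $\tau_\lambda(s^*es)=p_X(\tau_\lambda)$ converges both to $p_X(\tau)\in X_{s^*es}$ (inside $X$) and to $\infty_{s^*es}$ (since $\tau_\lambda(s^*es)\to\tau(s^*es)=\infty$). Convergence to $\infty$ means the net eventually leaves every compact subset of $X_{s^*es}$. Take a compact neighbourhood of $p_X(\tau)$ inside the open set $X_{s^*es}$: convergence to $p_X(\tau)$ puts the net eventually inside it, a contradiction. So eventually the values avoid $K$, which gives convergence to $\infty_e$.

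Finally we verify the action axioms. First, $\theta^\ks_{s^*}\circ\theta^\ks_s=\mathrm{id}$ on $X^\ks_{s^*s}$: the finite coordinates of the composite are at those $e$ with $\tau(s^*ses^*s)$ finite, and these are compared with the finite coordinates of $\tau$ using condition (1). The same pattern shows $\theta^\ks_t\circ\theta^\ks_s=\theta^\ks_{ts}$ as partial maps. Both sides are computed from $\tau(s^*t^*ets)$, and the only point needing care is that the domain of the composite is $X^\ks_{(ts)^*ts}$. That follows from $\theta^\ks_s(\tau)(t^*t)$ being finite if and only if $\tau(s^*t^*ts)$ is finite. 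Once the composition law is in hand, each $\theta^\ks_s$ is a homeomorphism with inverse $\theta^\ks_{s^*}$, and for idempotents $e$ the map $\theta^\ks_e$ is the identity on $X^\ks_e$. This last point uses that $\tau(e f e)=\tau(ef)$ is finite exactly when $\tau(f)$ is finite, for $\tau\in X^\ks_e$.
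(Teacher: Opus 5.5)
Your proof is correct, but it handles continuity by a different route than the paper.

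\emph{The paper's route.} The paper never estimates the coordinates of $\theta^\ks_s(\tau)$ directly. It first proves that restriction $\pi^e$ is a homeomorphism from $X^\ks_e$ onto the truncated space $\mathcal{X}_e\subset\prod_{f\le e}\widetilde{X_f}$. All the delicate work is in the continuity of the inverse $\varphi^e$, whose $f$-th coordinate is read off from the coordinate $ef$. On $\mathcal{X}_{s^*s}$ the action then only involves coordinates $e\le ss^*$, via $\vartheta_s(\omega)(e)=\widetilde{\theta_{es}}(\omega(s^*es))$. For such $e$, $\theta_{es}$ maps $X_{s^*es}$ \emph{onto} $X_e$, so it extends to a homeomorphism of one-point compactifications. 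Continuity and invertibility ($\vartheta_{s^*}\circ\vartheta_s=\mathrm{id}$) are then automatic.

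\emph{Your route.} You treat every coordinate $e\in E(S)$ directly, and you correctly identify the obstruction the paper sidesteps. When $e\not\le ss^*$, $\theta_{es}$ lands in $X_{ess^*}$, which need not be closed in $X_e$, so there is no continuous extension $\widetilde{X_{s^*es}}\to\widetilde{X_e}$. Your fix is sound: you control the finite values through the coordinate $s^*s$ and $p_X$, use that $K$ is closed in $X$, and get a contradiction from Hausdorffness of $\widetilde{X_{s^*es}}$. This is the same mechanism as the paper's proof that $\varphi^e_f$ is continuous, applied once in place rather than packaged into the lemma $X^\ks_e\cong\mathcal{X}_e$.

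\emph{Trade-off.} Your argument is shorter and avoids the auxiliary objects $\mathcal{X}_e$, $\varphi^e$ and $\vartheta_s$. You get the homeomorphism property from the composition law rather than proving it separately. The paper's factorization buys a structural fact: a point of $X^\ks_e$ is determined, continuously, by its coordinates below $e$, so the action is locally a product of compactified homeomorphisms. The verification of the composition law is essentially identical in both.
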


We note that the openness of each $X^\ks_e$ in $X^\ks$ and the equality $X^\ks=\bigcup_{e\in E(S)}X^\ks_e$ are immediate. 
To proceed with the proof, we need some terminology and lemmas.

\begin{dfn}
  Fix $e\in E(S)$. Let $\mathcal{X}_e$ be the set consisting of all $\omega\in\prod_{f\le e}\widetilde{X_{f}}$ satisfying 
  \begin{enumerate}
    \item $\omega(e)\neq\infty_e$,
    \item if $e_1\le e_2\le e$ and $\omega(e_1)\neq\infty_{e_1}$, then $\omega(e_2)=\omega(e_1)$, and
    \item if $e_1,e_2\le e$, $\omega(e_1)\neq\infty_{e_1}$ and $\omega(e_2)\neq\infty_{e_2}$, then $\omega(e_1e_2)\neq\infty_{e_1e_2}$.
  \end{enumerate}
\end{dfn}

\begin{rem}
  For each $e\in E(S)$, an argument similar to the proof of Lemma \ref{lem:X^ks-is-lch} shows that $\mathcal{X}_e$ is a locally compact Hausdorff space with the induced topology inherited from the product topology on $\prod_{f\le e}\widetilde{X_{f}}$.
\end{rem}

Clearly, there exists a map $\pi^e\colon X^\ks_e\to\mathcal{X}_e$ given by $\pi^e(\tau)(f)=\tau(f)$ for each $f\le e$ in $E(S)$. In the following lemma, we show that this is a homeomorphism.

\begin{lem}
  The following hold for each $e\in E(S)$.
  \begin{enumerate}
    \item For each $f\in E(S)$ the map $\varphi^e_f\colon \mathcal{X}_e\to\widetilde{X_f}$ defined by
    \begin{align*}
      \varphi^e_f(\omega)=
      \begin{cases}
        \omega(ef) & \mathrm{if}\;\omega(ef)\neq\infty_{ef}\\
        \infty_{f} & \mathrm{otherwise}
      \end{cases}
    \end{align*}
    is continuous.
    \item Define a map $\varphi^e\colon \mathcal{X}_e\to X^\ks$ by
    \[
      \varphi^e(\omega)(f)=\varphi^e_f(\omega),\quad f\in E(S).
    \]
    Then $\varphi^e$ is a well-defined, continuous map.
    \item The canonical map $\pi^e\colon X^\ks_e\to \mathcal{X}_e$ is a homeomorphism with the inverse $\varphi^e$.
  \end{enumerate}
\end{lem}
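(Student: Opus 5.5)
I will prove the three items in order: (1) is the topological core, (2) is mostly bookkeeping, and (3) reduces to two set-theoretic identities plus the continuity from (2).

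For (1), fix $f\in E(S)$ and note that $ef\le e$, so $\omega\mapsto\omega(ef)\in\widetilde{X_{ef}}$ is a continuous coordinate projection on $\mathcal{X}_e$. The map $\varphi^e_f$ is this projection followed by the map $j\colon\widetilde{X_{ef}}\to\widetilde{X_f}$ that is the inclusion $X_{ef}\subset X_f$ on $X_{ef}$ and sends $\infty_{ef}$ to $\infty_f$. It therefore suffices to show $j$ is continuous.
\begin{itemize}
  \item At points of $X_{ef}$, continuity is clear, since $X_{ef}$ is open in $X_f$.
  \item At $\infty_{ef}$, take an open neighbourhood $\widetilde{X_f}\setminus K$ of $\infty_f$ with $K\subset X_f$ compact. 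Its preimage is $\widetilde{X_{ef}}\setminus(K\cap X_{ef})$, and this is open provided $K\cap X_{ef}$ is compact in $X_{ef}$.
\end{itemize}
This is where I expect the main obstacle: $K\cap X_{ef}$ need not be compact, since $X_{ef}$ is open but not closed in $X_f$. So $j$ alone is not continuous, and I must use the constraints defining $\mathcal{X}_e$. Since $\omega(e)\neq\infty_e$, conditions (2)–(3) give
\[
  \omega(ef)\neq\infty_{ef}\iff p(\omega)\in X_f,\qquad p(\omega):=\omega(e)\in X_e,
\]
with $\omega(ef)=\omega(e)$ in that case. Here $\Rightarrow$ is clear. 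For $\Leftarrow$, it fails in general, so I first check the exact claim needed.

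Let $\omega_\lambda\to\omega$ with $\omega(ef)=\infty_{ef}$, and suppose $\omega_\lambda(ef)\in K$ frequently. Passing to a subnet, $\omega_\lambda(ef)=\omega_\lambda(e)\to k\in K\subset X_f$. But $\omega_\lambda(e)\to\omega(e)\in X_e$, so $\omega(e)=k\in X_e\cap X_f=X_{ef}$. It remains to deduce $\omega(ef)=k$, contradicting $\omega(ef)=\infty_{ef}$. Since $\omega_\lambda(ef)=k_\lambda\to k$ in $X$ and $k\in X_{ef}$ (open), we get $k_\lambda\to k$ in $\widetilde{X_{ef}}$, and hence $\omega(ef)=\lim\omega_\lambda(ef)=k$. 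This contradiction gives continuity at $\infty_{ef}$. I will phrase the argument with nets, which also handles the case $\omega(ef)\neq\infty_{ef}$ directly.

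For (2), continuity of $\varphi^e$ follows from (1) coordinatewise. I must also check that $\varphi^e(\omega)$ satisfies (1) and (2) of Definition \ref{def:of-X^ks} and is not $\infty$. It is not $\infty$ because its $e$-coordinate is $\omega(e)\neq\infty_e$. For the definition's condition (1), if $f_1\le f_2$ then $ef_1\le ef_2\le e$, so the corresponding condition on $\omega$ applies. Condition (2) follows from $(ef_1)(ef_2)=ef_1f_2$.

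For (3), both composites are identities:
\begin{itemize}
  \item For $f\le e$ we have $ef=f$, so $\pi^e\circ\varphi^e=\mathrm{id}$.
  \item For $\tau\in X^\ks_e$, Remark \ref{rem:} gives $\tau(ef)\neq\infty$ iff $\tau(f)\neq\infty$, with equal values, so $\varphi^e\circ\pi^e=\mathrm{id}$.
\end{itemize}
Finally, $\pi^e$ is a restriction of projections and hence continuous, so $\pi^e$ is a homeomorphism with inverse $\varphi^e$.
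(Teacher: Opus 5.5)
Your proof is correct and follows essentially the paper's route: nets, with $\omega(ef)=\infty_{ef}$ as the only nontrivial case, the coherence $\omega_\lambda(ef)=\omega_\lambda(e)$ (whenever $\omega_\lambda(ef)\neq\infty_{ef}$) to force a finite limit, and the same coordinate checks for (ii) and (iii). The only difference is in (i): the paper splits on whether $\omega(e)\in K$, separating $\omega(e)$ from $\infty_{ef}$ in $\widetilde{X_{ef}}$ in one case and using that $K$ is closed in $X$ in the other, whereas you argue by contradiction via a convergent subnet in the compact set $K$; in the write-up, delete the displayed equivalence $\omega(ef)\neq\infty_{ef}\iff p(\omega)\in X_f$, since its reverse direction is false (as you note) and only the forward implication with $\omega(ef)=\omega(e)$ is used.
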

\begin{proof}
  For (i), suppose a net $(\omega_\lambda)_\lambda$ converges to $\omega$ in $\mathcal{X}_e$. It suffices to show that the net $(\varphi^e_f(\omega_\lambda))_\lambda$ converges to $\varphi^e_f(\omega)$ in $\widetilde{X_f}$ for each $f\in E(S)$.

  We only prove the case $\omega(ef)=\infty_{ef}$ since the case where $\omega(ef)\neq\infty_{ef}$ is routine to show. 

  Fix an open subset $U$ in $\widetilde{X_f}$ with $\infty_f\in U$. Then $K:=\widetilde{X_f}\setminus U\subset X_f$ is compact in $X_f$ and hence it is closed in $X$. Suppose first that $\omega(e)\in K\subset X_f$. Then $\omega(e)\in X_e\cap X_f=X_{ef}$. Since $\omega(ef)=\infty_{ef}$, we can find open subsets $V_1,V_2$ of $\widetilde{X_{ef}}$ such that
  \[
    \omega(e)\in V_1,\quad \omega(ef)=\infty_{ef}\in V_2,\;\mathrm{and}\;V_1\cap V_2=\emptyset.
  \]
  Then $V_1$ is open in $X$ and hence there exists $\lambda_1$ such that $\omega_\lambda(e)\in V_1$ for each $\lambda\ge\lambda_1$. On the other hand, we can find $\lambda_2$ such that $\omega_\lambda(ef)\in V_2$ for each $\lambda$ with $\lambda\ge\lambda_2$. Fix $\lambda_0$ with $\lambda_0\ge\lambda_1,\lambda_2$. From $V_1\cap V_2=\emptyset$, we have $\omega_\lambda(ef)=\infty_{ef}$ for each $\lambda\ge\lambda_0$. In particular, $\varphi^e_f(\omega_\lambda)\in U$ for each $\lambda\ge\lambda_0$.

  Suppose next that $\omega(e)\notin K$. Since $K$ is closed in $X$, there exists $\lambda_0$ such that $\omega_\lambda(e)\notin K$ for each $\lambda$ with $\lambda\ge\lambda_0$. Then we clearly have $\varphi^e_f(\omega_\lambda)\in U$.

  (ii): Since $\varphi^e(\omega)(e)=\omega(e)\neq\infty_e$, we have $\varphi^e(\omega)\neq\infty$. To check the condition (1) of Definition \ref{def:of-X^ks}, suppose $f_1\le f_2$ and $\varphi^e(\omega)(f_1)\neq\infty_{f_1}$. Then $\omega(ef_1)\neq\infty_{ef_1}$. It follows from $ef_1\le ef_2\le e$ that $\omega(ef_2)=\omega(ef_1)$. Thus
  \[
    \varphi^e(\omega)(f_2)=\omega(ef_2)=\omega(ef_1)=\varphi^e(\omega)(f_1).
  \]
  For (2) of Definition \ref{def:of-X^ks}, suppose $\varphi^e(\omega)(f_1)\neq\infty_{f_1}$ and $\varphi^e(\omega)(f_2)\neq\infty_{f_2}$. Then $\omega(ef_1)\neq\infty_{ef_1}$ and $\omega(ef_2)\neq\infty_{ef_2}$. Thus $\omega(ef_1f_2)\neq\infty_{ef_1f_2}$ and hence $\varphi^e(\omega)(f_1f_2)\neq\infty_{f_1f_2}$. This proves $\varphi^e(\omega)\in X^\ks_e$. By (i), the map $\varphi^e\colon \mathcal{X}_e\to X^\ks_e$ is continuous. 

  (iii): Note that $\varphi^e_f(\omega)=\omega(f)$ for all $f\le e$ and $\omega\in\mathcal{X}_e$. This implies that $\pi^e\circ\varphi^e=\mathrm{id}_{\mathcal{X}_e}$.
  
  Using Remark \ref{rem:}, we compute for $\tau\in X_e^\ks$ that
  \begin{align*}
    (\varphi^e\circ\pi^e(\tau))(f)
    &=\begin{cases}
      \pi^e(\tau)(ef) & \mathrm{if}\;\pi^e(\tau)(ef)\neq\infty_{ef}\\
      \infty_f & \mathrm{otherwise}
    \end{cases}\\
    &=\begin{cases}
      \tau(ef) & \mathrm{if}\;\tau(ef)\neq\infty_{ef}\\
      \infty_f & \mathrm{otherwise}
    \end{cases}\\
    &=\tau(f)
  \end{align*}
  Hence $\varphi^e\circ\pi^e=\mathrm{id}_{X^\ks_e}$. 
\end{proof}

\begin{lem}
  The following hold.
  \begin{enumerate}
    \item $\theta_s^\ks\colon X_{s^*s}^\ks\to X^\ks_{ss^*}$ is well-defined.
    \item For $s\in S$, define $\vartheta_s\colon\mathcal{X}_{s^*s}\to\mathcal{X}_{ss^*}$ by
    \begin{align*}
      \vartheta_s(\omega)(e)
      =\widetilde{\theta_{es}}(\omega(s^*es)),\quad e\le ss^*
    \end{align*}
    where $\widetilde{\theta_{es}}\colon\widetilde{X_{s^*es}}\to\widetilde{X_e}$ is the canonical extension of the homeomorphism $\theta_{es}\colon X_{s^*es}\to X_{e}$. Then $\vartheta_s\colon\mathcal{X}_{s^*s}\to\mathcal{X}_{ss^*}$ is a well-defined homeomorphism.
    \item The diagram
    \begin{center}
      \begin{tikzpicture}[auto]
        \node (1) at (0,1.5) {$X_{s^*s}^\ks$};
        \node (2) at (2,1.5) {$X_{ss^*}^\ks$};
        \node (3) at (0,0) {$\mathcal{X}_{s^*s}$};
        \node (4) at (2,0) {$\mathcal{X}_{ss^*}$};
        \draw[->] (1) to node {$\scriptstyle\theta_s^\ks$} (2);
        \draw[->] (1) to node[swap] {$\scriptstyle\pi^{s^*s}$} (3);
        \draw[->] (2) to node {$\scriptstyle\pi^{ss^*}$} (4);
        \draw[->] (3) to node[swap] {$\scriptstyle\vartheta_s$} (4);
      \end{tikzpicture}
    \end{center}
    commutes. In particular, $\theta_s^\ks\colon X_{s^*s}^\ks\to X_{ss^*}^\ks$ is a homeomorphism for each $s\in S$.
  \end{enumerate}
\end{lem}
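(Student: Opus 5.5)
For (i), I will take $\tau\in X^\ks_{s^*s}$ and check that $\sigma:=\theta^\ks_s(\tau)$ lies in $X^\ks_{ss^*}$. First, the formula makes sense: if $\tau(s^*es)\neq\infty_{s^*es}$ then $\tau(s^*es)\in X_{s^*es}$, which is the domain of $\theta_{es}$ because $(es)^*(es)=s^*es$. Also $\theta_{es}$ maps into $X_{es s^*e}=X_{ess^*}\subset X_e$. Next, $\sigma(ss^*)=\theta_{ss^*s}(\tau(s^*s))=\theta_s(\tau(s^*s))\neq\infty$, so $\sigma\neq\infty$ and $\sigma\in X^\ks_{ss^*}$ once $\sigma\in\widetilde{X^\ks}$. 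For condition (1), let $e_1\le e_2$ with $\sigma(e_1)$ finite. Then $s^*e_1s\le s^*e_2s$, so $\tau(s^*e_2s)=\tau(s^*e_1s)=:y$. Since $e_1s\le e_2s$, the map $\theta_{e_2s}$ extends $\theta_{e_1s}$, and hence both send $y$ to the same point. For condition (2), use $s^*e_1e_2s=(s^*e_1s)(s^*e_2s)$, which holds because idempotents commute.

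For (ii), I will first check that $\vartheta_s(\omega)\in\mathcal X_{ss^*}$ by the same computation restricted to $e\le ss^*$; the conditions follow from the fact that $e\mapsto s^*es$ is an order- and product-preserving bijection from $\{e\le ss^*\}$ onto $\{f\le s^*s\}$, with inverse $f\mapsto sfs^*$. Continuity holds coordinatewise, because each $\widetilde{\theta_{es}}$ is a homeomorphism of one-point compactifications (the canonical extension of a homeomorphism is continuous) and the coordinate $\omega\mapsto\omega(s^*es)$ is continuous. To get a homeomorphism, I will show that $\vartheta_{s^*}$ is the inverse. For $f\le s^*s$ we have $\vartheta_{s^*}(\vartheta_s\omega)(f)=\widetilde{\theta_{fs^*}}\bigl(\widetilde{\theta_{sfs^*s}}(\omega(s^*sfs^*s))\bigr)=\widetilde{\theta_{fs^*}}\widetilde{\theta_{sf}}(\omega(f))$. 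Here $fs^*sf=f$, and $\theta_{fs^*}\theta_{sf}=\theta_{f}$ on $X_f$, so this returns $\omega(f)$. The symmetric computation gives the other composite.

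For (iii), for $\tau\in X^\ks_{s^*s}$ and $e\le ss^*$ we have $\pi^{ss^*}(\theta^\ks_s\tau)(e)=\theta^\ks_s(\tau)(e)$, which by definition equals $\widetilde{\theta_{es}}(\tau(s^*es))=\vartheta_s(\pi^{s^*s}\tau)(e)$, since $s^*es\le s^*s$. Thus $\theta^\ks_s=\varphi^{ss^*}\circ\vartheta_s\circ\pi^{s^*s}$, and by part (iii) of the previous lemma this is a composite of homeomorphisms. One more point needs checking: the image of $\varphi^{ss^*}$ is exactly $X^\ks_{ss^*}$, which the previous lemma gives.

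The main obstacle I expect is the bookkeeping in (ii), namely verifying the identities $\theta_{fs^*}\theta_{sf}=\theta_f$ and the corresponding one at infinity, so that the inverse computation is correct for points sent to $\infty$. This is handled because canonical extensions of homeomorphisms compose as the extension of the composite.
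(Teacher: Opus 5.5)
Your proposal is correct and follows essentially the same route as the paper. Like the paper, you verify conditions (1) and (2) of Definition \ref{def:of-X^ks} for $\theta^\ks_s(\tau)$ together with $\theta^\ks_s(\tau)(ss^*)\neq\infty_{ss^*}$, show that $\vartheta_s$ is continuous with inverse $\vartheta_{s^*}$, and compare the two composites of the square coordinatewise; you simply make explicit some steps the paper treats as routine, namely the identity $\theta_{fs^*}\theta_{sf}=\theta_f$ and the factorization $\theta^\ks_s=\varphi^{ss^*}\circ\vartheta_s\circ\pi^{s^*s}$.
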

\begin{proof}
  (i): We first check that $\theta_s^\ks(\tau)$ satisfies the conditions (1) and (2) of Definition \ref{def:of-X^ks}. 
  
  For (1), suppose $e_1\le e_2$ in $E(S)$ and $\theta_s^\ks(\tau)(e_1)\neq\infty_{e_1}$. By the definition of $\theta_s^\ks(\tau)(e_1)$, we have $\tau(s^*e_1s)\neq\infty_{s^*e_1s}$. It follows from $s^*e_1s\le s^*e_2s$ that $\tau(s^*e_2s)\neq\infty_{s^*e_2s}$ and $\tau(s^*e_2s)=\tau(s^*e_1s)$. Thus 
  \[
    \theta_s^\ks(\tau)(e_2)=\theta_{e_2s}(\tau(s^*e_2s))=\theta_{e_1s}(\tau(s^*e_1s))=\theta_s^\ks(\tau)(e_1).
  \]

  For (2), suppose $\theta_s^\ks(\tau)(e_1)\neq\infty_{e_1}$ and $\theta_s^\ks(\tau)(e_2)\neq\infty_{e_2}$. Then $\tau(s^*e_1s)\neq \infty_{s^*e_1s}$ and $\tau(s^*e_2s)\neq\infty_{s^*e_2s}$. Hence $\tau(s^*e_1e_2s)=\tau((s^*e_1s)(s^*e_2s))\neq\infty_{s^*e_1e_2s}$, from which it follows $\theta_s^\ks(\tau)(e_1e_2)\neq\infty_{e_1e_2}$. This proves $\theta_s^\ks(\tau)\in\widetilde{X^\ks}$.

  Since $\theta_s^\ks(\tau)(ss^*)=\theta_{s}(\tau(s^*s))\neq\infty_{ss^*}$, we have $\theta_s^\ks(\tau)\in X^\ks_{ss^*}$, as desired.

  (ii): It is routine to check that $\vartheta_s(\omega)\in\mathcal{X}_{ss^*}$ for all $\omega\in\mathcal{X}_{s^*s}$. The continuity of $\vartheta_s$ is clear by the definition of $\vartheta_s$. It is also easy to see $\vartheta_{s^*}\circ\vartheta_s=\mathrm{id}_{\mathcal{X}_{s^*s}}$ for each $s\in S$.

  (iii): For $\tau\in X^\ks_{s^*s}$ and $e\le ss^*$, compute that
  \begin{align*}
    (\vartheta_s\circ\pi^{s^*s}(\tau))\,(e)
    &=\widetilde{\theta_{es}}(\pi^{s^*s}(\tau)(s^*es))\\
    &=\begin{cases}
      \theta_{es}(\tau(s^*es)) & \mathrm{if}\;\tau(s^*es)\neq\infty_{s^*es}\\
      \infty_e & \mathrm{otherwise}
    \end{cases}\\
    (\pi^{ss^*}\circ\theta_s^\ks(\tau))\,(e)
    &=\theta_s^\ks(\tau)(e)\\
    &=\begin{cases}
      \theta_{es}(\tau(s^*es)) & \mathrm{if}\;\tau(s^*es)\neq\infty_{s^*es}\\
      \infty_e & \mathrm{otherwise}.
    \end{cases}
  \end{align*}
  Hence $\vartheta_s\circ\pi^{s^*s}=\pi^{ss^*}\circ\theta_s^\ks$, as desired.
\end{proof}

\hspace{-11pt}\textbf{Proof of Theorem \ref{thm:action-on-X^ks}:}
  It remains to show that $\theta_t^\ks\circ\theta_s^\ks=\theta_{ts}^\ks$ as partial maps for each $s,t\in S$. We first show their domains, $\theta_{s^*}^\ks(X^\ks_{ss^*}\cap X^\ks_{t^*t})$ and $X^\ks_{(ts)^*(ts)}$ coincide.

  Fix $\tau\in X^\ks_{ss^*}\cap X^\ks_{t^*t}$. Observe that
  \[
    \tau(s((ts)^*(ts))s^*)=\tau(ss^*\cdot t^*t)\neq\infty_{ss^*t^*t}.
  \]
  Hence $\theta^\ks_{s^*}(\tau)\in X^\ks_{(ts)^*(ts)}$.

  Conversely, take $\tau\in X^\ks_{(ts)^*(ts)}\subset X^\ks_{s^*s}$. Then $\theta_s^\ks(\tau)\in X^\ks_{ss^*}$ and  
  \[
    \theta^\ks_s(\tau)(t^*t)=\theta_{t^*ts}(\tau((ts)^*(ts)))\neq\infty_{t^*t}.
  \]
  Thus $\theta^\ks_s(\tau)\in X^\ks_{ss^*}\cap X^\ks_{t^*t}$. Since $\theta^\ks_{s^*}$ is the inverse of $\theta_s^\ks$, it follows that $\tau\in\theta^\ks_{s^*}(X^\ks_{ss^*}\cap X^\ks_{t^*t})$. This proves $\theta^\ks_{s^*}(X^\ks_{ss^*}\cap X^\ks_{t^*t})=X^\ks_{(ts)^*(ts)}$.

  For $\tau\in X^\ks_{(ts)^*(ts)}=\theta_{s^*}^\ks(X^\ks_{ss^*}\cap X^\ks_{t^*t})$, we compute that

  \begin{align*}
    (\theta_t^\ks\circ\theta_s^\ks(\tau))\,(e)
    &=\begin{cases}
      \theta_{et}(\theta_s^\ks(\tau)(t^*et)) & \mathrm{if}\;\theta_s^\ks(\tau)(t^*et)\neq\infty_{t^*et}\\
      \infty_e & \mathrm{otherwise}
    \end{cases}\\
    &=\begin{cases}
      \theta_{et}\bigl(\theta_{t^*ets}(\tau(s^*t^*ets))\bigr) & \mathrm{if}\;\tau(s^*t^*ets)\neq\infty_{s^*t^*ets}\\
      \infty_e & \mathrm{otherwise}
    \end{cases}\\
    &=\begin{cases}
      \theta_{ets}(\tau((ts)^*e(ts))) & \mathrm{if}\;\tau((ts)^*e(ts))\neq\infty_{(ts)^*e(ts)}\\
      \infty_e & \mathrm{otherwise}\\
    \end{cases}\\
    &=\theta_{ts}^\ks(\tau)\,(e)
  \end{align*}
  for each $e\in E(S)$. This completes the proof.\qed

  \begin{dfn}
    Let $\theta$ be an action of an inverse semigroup $S$ on a locally compact Hausdorff space $X$. We call the transformation groupoid $S\ltimes X^\ks$ the \textit{KS-groupoid}.
  \end{dfn}

\begin{prop} The following hold.
  \begin{enumerate}
    \item The continuous map $\iota_X\colon X\to X^\ks$ is $S$-equivariant and satisfies $\iota_X(X_e)=X_e^\ks\cap\iota_X(X)$ for each $e\in E(S)$.
    \item The continuous map $p_X\colon X^\ks\to X$ is $S$-equivariant.
  \end{enumerate}
\end{prop}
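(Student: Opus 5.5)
Both parts reduce to unwinding the definitions of $\tau_x$, $p_X$, $X^\ks_e$ and $\theta^\ks_s$ from Theorem \ref{thm:action-on-X^ks}. Continuity of $\iota_X$ and $p_X$ is already established, so only equivariance and the identity $\iota_X(X_e)=X^\ks_e\cap\iota_X(X)$ remain.

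For (i), the set identity comes first, since it also gives $\iota_X(X_e)\subset X^\ks_e$. By definition $\tau_x(e)\neq\infty_e$ holds iff $x\in X_e$, so $\tau_x\in X^\ks_e$ iff $x\in X_e$, and the identity follows. For the diagram, fix $s\in S$ and $x\in X_{s^*s}$; we must show $\theta^\ks_s(\tau_x)=\tau_{\theta_s(x)}$, which we check coordinatewise at each $e\in E(S)$.

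By the formula for $\theta^\ks_s$, the value $\theta^\ks_s(\tau_x)(e)$ is non-infinite iff $x\in X_{s^*es}$, and in that case it equals $\theta_{es}(x)$. Two facts about the set-theoretic action close the argument. First, $X_{s^*es}=\theta_{s^*}(X_{ss^*}\cap X_e)$, because $\theta_{s^*es}=\theta_{s^*}\theta_e\theta_s$ and idempotents act as identities on their domains. Hence, for $x\in X_{s^*s}$, we have $x\in X_{s^*es}$ iff $\theta_s(x)\in X_e$. Second, when this holds, $\theta_{es}(x)=\theta_e(\theta_s(x))=\theta_s(x)$. Therefore $\theta^\ks_s(\tau_x)(e)=\tau_{\theta_s(x)}(e)$ in both cases.

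For (ii), we first show $p_X(X^\ks_e)\subset X_e$. If $\tau(e)\neq\infty_e$, then $p_X(\tau)=\tau(e)\in X_e$, which is immediate from the definition of $p_X$ in Proposition \ref{prop:projection}. Next, for $\tau\in X^\ks_{s^*s}$, we compute $p_X(\theta^\ks_s(\tau))$ using the coordinate $e=ss^*$. Since $s^*(ss^*)s=s^*s$ and $\tau(s^*s)\neq\infty$, we get
\[
\theta^\ks_s(\tau)(ss^*)=\theta_{ss^*s}(\tau(s^*s))=\theta_s(p_X(\tau))\neq\infty_{ss^*}.
\]
Hence $p_X(\theta^\ks_s(\tau))=\theta_s(p_X(\tau))$. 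This is legitimate because $p_X$ may be evaluated at any non-infinite coordinate, by Remark \ref{rem:}.

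The only mildly delicate step is the set identity $X_{s^*es}\cap X_{s^*s}=\theta_s^{-1}(X_e\cap X_{ss^*})$ in (i). It is standard, and we would verify it directly from the composition rule in $\mathcal{I}(X)$. Everything else is a case check.
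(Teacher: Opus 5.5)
Your proposal is correct and follows essentially the same route as the paper's proof. You derive the set identity from $\tau_x(e)\neq\infty_e\iff x\in X_e$, compare $\theta^\ks_s(\tau_x)$ and $\tau_{\theta_s(x)}$ coordinatewise using $x\in X_{s^*es}\iff\theta_s(x)\in X_e$ and $\theta_{es}(x)=\theta_s(x)$, and check $p_X$ at the coordinate $ss^*$; the only difference is that you justify $X_{s^*es}=\theta_{s^*}(X_{ss^*}\cap X_e)$ explicitly, whereas the paper just asserts it.
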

\begin{proof}
  (i): Note that $\tau_x(e)=x$ if and only if $x\in X_e$. Hence $\iota_X(X_e)=X^\ks_e\cap\iota_X(X)$.
  
  To show that the diagram 
  \begin{center}
    \begin{tikzpicture}[auto]
      \node (1) at (0,1.5) {$X_{s^*s}$};
      \node (2) at (2,1.5) {$X_{ss^*}$};
      \node (3) at (0,0) {$X_{s^*s}^\ks$};
      \node (4) at (2,0) {$X_{ss^*}^\ks$};
      \draw[->] (1) to node {$\scriptstyle\theta_s$} (2);
      \draw[->] (1) to node[swap] {$\scriptstyle\iota_X|_{X_{s^*s}}$} (3);
      \draw[->] (2) to node {$\scriptstyle\iota_X|_{X_{ss^*}}$} (4);
      \draw[->] (3) to node[swap] {$\scriptstyle\theta_s^\ks$} (4);
    \end{tikzpicture}
  \end{center}
  commutes for each $s\in S$, we compute that
  \begin{align*}
    (\theta_s^\ks\circ\iota_X(x))(e)
    &=\theta_s^\ks(\tau_x)(e)
    =\begin{cases}
      \theta_{es}(x) & \mathrm{if}\;x\in X_{s^*es}\\
      \infty_e & \mathrm{otherwise}
    \end{cases}\\
    (\iota_X\circ\theta_s(x))(e)
    &=\tau_{\theta_s(x)}(e)
    =\begin{cases}
      \theta_s(x) & \mathrm{if}\;\theta_s(x)\in X_e\\
      \infty_e & \mathrm{otherwise}
    \end{cases}
  \end{align*}
  for any $x\in X_{s^*s}$ and $e\in E(S)$. Note that $x\in X_{s^*es}$ if and only if $\theta_s(x)\in X_e$, and in this case, $\theta_{es}(x)=\theta_s(x)$.

  (ii): For any $\tau\in X^\ks_e$, we have $p_X(\tau)=\tau(e)\in X_e$. Hence $p_X(X_e^\ks)\subset X_e$.

  To show that the diagram 
  \begin{center}
    \begin{tikzpicture}[auto]
      \node (1) at (0,1.5) {$X_{s^*s}^\ks$};
      \node (2) at (2,1.5) {$X_{ss^*}^\ks$};
      \node (3) at (0,0) {$X_{s^*s}$};
      \node (4) at (2,0) {$X_{ss^*}$};
      \draw[->] (1) to node {$\scriptstyle\theta^\ks_s$} (2);
      \draw[->] (1) to node[swap] {$\scriptstyle p_X|_{X_{s^*s}^\ks}$} (3);
      \draw[->] (2) to node {$\scriptstyle p_X|_{X_{ss^*}^\ks}$} (4);
      \draw[->] (3) to node[swap] {$\scriptstyle\theta_s$} (4);
    \end{tikzpicture}
  \end{center}
  commutes for each $s\in S$, compute for $\tau\in X_{s^*s}^\ks$ that
  \begin{align*}
    \theta_s\circ p_X(\tau)
    &=\theta_s(\tau(s^*s))\\
    p_X\circ\theta_s^\ks(\tau)
    &=\theta_s^\ks(\tau)(ss^*)=\theta_s(\tau(s^*s)).\qedhere
  \end{align*}
\end{proof}

\begin{rem}
  \label{rem:compact}
  Clearly, $X_e^\ks=\emptyset$ if and only if $X_e=\emptyset$. We further note that $X_e^\ks$ is compact in $X^\ks$ if and only if $X_e$ is compact in $X$. Indeed, $p_X|_{X_e^\ks}\colon X_e^\ks\to X_e$ is a continuous surjection and $X^\ks_e=q_e^{-1}(X_e)$, where $q_e\colon\widetilde{X^\ks}\to\widetilde{X_e}$ denotes the evaluation at $e\in E(S)$.
\end{rem}

\section{Some Properties of KS-Groupoids}
\label{sec:3}

\subsection{Topological Properties of KS-groupoids}
\label{subsec:3.1}

In this subsection, we study some topological properties of KS-groupoids, such as Hausdorffness, ampleness, second countability and ($\sigma$-)compactness of the unit space.

\noindent\textbf{Hausdorffness.}
We first investigate when the KS-groupoid $S\ltimes X^\ks$ becomes Hausdorff, provided that each open subset $X_e$ is compact in $X$. For the universal groupoid $S\ltimes\widehat{E(S)}$ associated with an inverse semigroup $S$, which is a KS-groupoid associated with the identity action of $S$ on the one-point space $\mathbf{pt}$, Hausdorffness is completely characterized by Steinberg in \cite[Theorem 5.17]{steinberg}. His strategy can be applied to the KS-groupoid $S\ltimes X^\ks$. However, we choose a different approach which we believe is more natural. Our argument is based on the following slightly improved version of the characterization of Hausdorffness for transformation groupoids $S\ltimes X$ due to Exel and Pardo in \cite[Theorem 3.15]{ep}.

\begin{dfn}
  For an inverse semigroup $S$ and $t\in S$, set
  \[
    t^\downarrow=\{s\in S\mid s\le t\}.
  \]
\end{dfn}

\begin{prop}
  \label{prop:hausdorffness}
  For any action of an inverse semigroup $S$ on a locally compact Hausdorff space $X$, the following are equivalent.
  \begin{enumerate}
    \item The transformation groupoid $S\ltimes X$ is Hausdorff.
    \item For any $s,t\in S$, the subset $\bigcup_{u\in s^\downarrow\cap t^\downarrow}X_{u^*u}$ is closed in $X_{s^*s}\cap X_{t^*t}$.
    \item For any $s\in S$, the subset $\bigcup_{e\in E(S)\cap s^\downarrow}X_e$ is closed in $X_{s^*s}$.
  \end{enumerate}
  If each $X_e$ is compact in $X$, these conditions are equivalent to 
  \begin{enumerate}
    \item[(ii)'] For any $s,t\in S$, there exists a finite subset $F$ of $s^\downarrow\cap t^\downarrow$ such that
    \[
      \bigcup_{v\in F}X_{v^*v}=\bigcup_{u\in s^\downarrow\cap t^\downarrow}X_{u^*u}.
    \]
  \end{enumerate}
\end{prop}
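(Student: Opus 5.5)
The plan is to show (i)$\Leftrightarrow$(ii), then (ii)$\Leftrightarrow$(iii), and finally, under compactness of each $X_e$, (ii)$\Leftrightarrow$(ii)'.

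\emph{(i)$\Leftrightarrow$(ii).} The key observation is that for $x\in X_{s^*s}\cap X_{t^*t}$ we have $[s,x]=[t,x]$ if and only if $x\in X_{u^*u}$ for some $u\in s^\downarrow\cap t^\downarrow$. Indeed, $se=te=:u$ with $x\in X_e$ gives $u\le s,t$ and $x\in X_{u^*u}$, since $u^*u=es^*s$. Conversely, given such a $u$, take $e=u^*u$; then $se=u=te$.

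Since $S\ltimes X$ is étale with basis $[s,U]$, Hausdorffness reduces to separating distinct points $[s,x]\neq[t,y]$. If $x\neq y$, separate $x$ and $y$ in $X$ and use the open sets $[s,U]$, $[t,V]$. If $x=y$, the only remaining issue is the following. Two basic sets $[s,U]$ and $[t,V]$ with $x\in U\cap V$ meet exactly at the points $z\in U\cap V$ with $[s,z]=[t,z]$, that is, at $z\in W_{s,t}:=\bigcup_{u\in s^\downarrow\cap t^\downarrow}X_{u^*u}$.

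Therefore $[s,x]$ and $[t,x]$ can be separated if and only if $x$ has a neighbourhood in $X_{s^*s}\cap X_{t^*t}$ disjoint from $W_{s,t}$. Note that a general neighbourhood of $[s,x]$ contains one of the form $[s',U]$ with $[s',x]=[s,x]$, and the argument is the same with $s'$ replacing $s$. Hausdorffness is thus equivalent to $(X_{s^*s}\cap X_{t^*t})\setminus W_{s,t}$ being open for all $s,t$, which is exactly (ii). Some care is needed with the replacement of $s$ by $s'$, but by the same criterion one can shrink $U$ so that $[s',z]=[s,z]$ on $U$.

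\emph{(ii)$\Leftrightarrow$(iii).} Taking $t=s^*s$ in (ii) gives (iii): for $s,t$ with $t=s^*s$, we have $s^\downarrow\cap (s^*s)^\downarrow=E(S)\cap s^\downarrow$, and $X_{s^*s}\cap X_{s^*s}=X_{s^*s}$. Conversely, given $s,t$, set $r=t^*s$. For $x\in X_{s^*s}\cap X_{t^*t}$, we have $x\in W_{s,t}$ if and only if $[s,x]=[t,x]$, if and only if $[t^*s,x]=[t^*t,x]$, if and only if $x\in\bigcup_{e\in E(S)\cap r^\downarrow}X_e$, using the groupoid structure with $[t,x]^{-1}[s,x]=[t^*s,x]$. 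This identifies $W_{s,t}$ with $\bigcup_{e\in E(S)\cap r^\downarrow}X_e\cap X_{s^*s}\cap X_{t^*t}$, a closed subset of $X_{r^*r}\cap X_{t^*t}\supset X_{s^*s}\cap X_{t^*t}$ by (iii). Alternatively, one can directly use the algebraic fact that $u\le s,t$ corresponds to the idempotent $e\le t^*s$ via $u=te$. Either way (ii) follows.

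\emph{(ii)$\Leftrightarrow$(ii)' under compactness.} If the union in (ii)' is a finite union, it is compact, hence closed in $X$ and so closed in $X_{s^*s}\cap X_{t^*t}$. Conversely, assume (ii). Then $W_{s,t}$ is closed in $X_{s^*s}\cap X_{t^*t}=X_{s^*st^*t}$, which is compact, so $W_{s,t}$ is compact. It is covered by the open sets $X_{u^*u}$, $u\in s^\downarrow\cap t^\downarrow$, so a finite subcover exists, giving the finite set $F$.

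The main obstacle is the bookkeeping in (i)$\Leftrightarrow$(ii): we must make sure that all neighbourhoods of $[s,x]$ are controlled by $[s,U]$-type sets, and we must correctly identify when two germs coincide.
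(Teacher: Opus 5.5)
Your arguments for (i)$\Leftrightarrow$(ii), (ii)$\Rightarrow$(iii) and (ii)$\Leftrightarrow$(ii)' are correct. Your direct proof of (ii)$\Rightarrow$(i) is also correct: you separate $[s,x]\neq[t,x]$ by $[s,N]$ and $[t,N]$, where $N\subseteq X_{s^*s}\cap X_{t^*t}$ is an open neighbourhood of $x$ missing $W_{s,t}$. Once (iii)$\Rightarrow$(ii) is repaired, this would replace the paper's appeal to Exel--Pardo for (iii)$\Rightarrow$(i). The detour through $s'$ is unnecessary: $d$ maps $[s,X_{s^*s}]$ homeomorphically onto $X_{s^*s}$, so every neighbourhood of $[s,x]$ contains some $[s,N]$.

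\textbf{The gap is in (iii)$\Rightarrow$(ii).} With $r=t^*s$ you say $W_{s,t}$ is closed in $X_{r^*r}\cap X_{t^*t}\supseteq X_{s^*s}\cap X_{t^*t}$, but the inclusion goes the other way. We have $X_{r^*r}=X_{s^*tt^*s}=\{x\in X_{s^*s}\mid\theta_s(x)\in X_{tt^*}\}$, and in general $X_{s^*s}\cap X_{t^*t}\not\subseteq X_{r^*r}$. For example, take $X=\{1,2\}$, $s$ the transposition and $t=\mathrm{id}_{\{1\}}$; then $X_{s^*s}\cap X_{t^*t}=\{1\}$ but $X_{r^*r}=\{2\}$. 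The same problem sits in your identity $[t,x]^{-1}[s,x]=[t^*s,x]$, since that product exists only when $\theta_s(x)=\theta_t(x)$. So (iii) applied to $t^*s$ only shows that $W_{s,t}$ is closed inside the open subset $X_{r^*r}\cap X_{t^*t}$. It says nothing about limit points of $W_{s,t}$ lying in $(X_{s^*s}\cap X_{t^*t})\setminus X_{r^*r}$.

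The repair is to multiply on the other side.
\begin{itemize}
\item For $x\in X_{s^*s}\cap X_{t^*t}$, the product $[t,x][s,x]^{-1}=[ts^*,\theta_s(x)]$ is always defined, and it is a unit if and only if $[s,x]=[t,x]$.
\item A germ $[w,y]$ is a unit if and only if $y\in\bigcup_{e\in E(S)\cap w^\downarrow}X_e$.
\item $\theta_s$ restricts to a homeomorphism of $X_{s^*s}\cap X_{t^*t}$ onto $X_{st^*ts^*}$, which is exactly the domain of $ts^*$.
\end{itemize}
Hence $W_{s,t}$ is the preimage under this homeomorphism of $\bigcup_{e\in E(S)\cap (ts^*)^\downarrow}X_e$. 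Applying (iii) to $ts^*$ then gives (ii) directly.

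Alternatively, you can keep $t^*s$ and add one step. Observe that $W_{s,t}$ lies in $C=\{x\in X_{s^*s}\cap X_{t^*t}\mid \theta_s(x)=\theta_t(x)\}$. This set is closed in $X_{s^*s}\cap X_{t^*t}$ because $X$ is Hausdorff, and it is contained in $X_{r^*r}$. With either fix your cycle (i)$\Leftrightarrow$(ii)$\Leftrightarrow$(iii) is complete and self-contained.
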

\begin{proof}
  (i)$\Rightarrow$(ii): Suppose that a net $(x_\lambda)_\lambda\subset\bigcup_{u\in s^\downarrow\cap t^\downarrow}X_{u^*u}$ converges to $x$ in $X_{s^*s}\cap X_{t^*t}$. Then we have $[s,x_\lambda]=[t,x_\lambda]$ for each $\lambda$. Moreover, the nets $([s,x_\lambda])_\lambda$ and $([t,x_\lambda])_\lambda$ converge to $[s,x]$ and $[t,x]$ in $S\ltimes X$, respectively. From the assumption that $S\ltimes X$ is Hausdorff, it follows $[s,x]=[t,x]$. Hence there exists $u\in s^\downarrow\cap t^\downarrow$ such that $x\in X_{u^*u}$, which proves (ii).

  \noindent
  (ii)$\Rightarrow$(iii): Choose $t=s^*s$.

  \noindent
  (iii)$\Rightarrow$(i): See Theorem 3.15 of \cite{ep}.

  Suppose that $X_e$ is compact in $X$ for each $e\in E(S)$. It is clear that (ii)' implies (ii). To see the converse, first note that $X_{s^*s}\cap X_{t^*t}$ is compact in $X$. By (ii), it follows that $\bigcup_{u\in s^\downarrow\cap t^\downarrow}X_{u^*u}$ is also compact in $X$. Hence we can find a finite subset $F\subset s^\downarrow\cap t^\downarrow$ with the desired property.
\end{proof}

For a subset $P\subset S$, we say $P$ is a \textit{weak semilattice} if for any $s,t\in P$ there exists a finite subset $F\subset P\cap s^\downarrow\cap t^\downarrow$ such that $P\cap s^\downarrow\cap t^\downarrow\subset \bigcup_{u\in F}u^\downarrow$.

Given an inverse semigroup action $S$ on $X$, let $S^X:=\{s\in S\mid X_{s^*s}\neq\emptyset\}$.

For each $e\in E(S)$ and $x\in X_e$, define $\tau_{e,\,x}\in X^\ks$ by
\begin{align*}
  \tau_{e,\,x}(f)=
  \begin{cases}
    x & \mathrm{if}\;e\le f\\
    \infty_f & \mathrm{otherwise}.
  \end{cases}
\end{align*}
Clearly, $\tau_{e,\,x}\in X^\ks_e$.

\begin{thm}
  \label{thm:hausdorffness-of-KS}
  For any action of an inverse semigroup $S$ on a locally compact Hausdorff space $X$ such that each $X_e$ is compact, the KS-groupoid $S\ltimes X^\ks$ is Hausdorff if and only if $S^X=\{s\in S\mid X_{s^*s}\neq\emptyset\}$ is a weak semilattice.
\end{thm}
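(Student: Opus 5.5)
We apply Proposition \ref{prop:hausdorffness} to the action $\theta^\ks$ of $S$ on $X^\ks$. By Remark \ref{rem:compact}, each $X^\ks_e$ is compact because $X_e$ is. So condition (ii)' is available: $S\ltimes X^\ks$ is Hausdorff if and only if for all $s,t\in S$ there is a finite $F\subset s^\downarrow\cap t^\downarrow$ with $\bigcup_{v\in F}X^\ks_{v^*v}=\bigcup_{u\in s^\downarrow\cap t^\downarrow}X^\ks_{u^*u}$. The whole proof then reduces to translating this covering condition on $X^\ks$ into the order-theoretic condition on $S^X$. The points $\tau_{e,\,x}$ will serve as test points.

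The key lemma concerns $e\in E(S)$ and $x\in X_e$: we claim $\tau_{e,\,x}\in X^\ks_f$ if and only if $e\le f$. This is immediate from the definition. More generally, for $\tau\in X^\ks$, the set $\{f\mid \tau(f)\neq\infty_f\}$ is an up-closed filter in $E(S)$, by Definition \ref{def:of-X^ks} and Remark \ref{rem:}. Note also that $X^\ks_{u^*u}\neq\emptyset$ if and only if $u\in S^X$, so only $u\in S^X$ matter in unions. Finally, $s^\downarrow\cap t^\downarrow\cap S^X$ is empty unless $s,t\in S^X$, since $u\le s$ gives $X_{u^*u}\subset X_{s^*s}$.

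($\Leftarrow$) Assume $S^X$ is a weak semilattice and fix $s,t$. If the union on the right is empty, take $F=\emptyset$. Otherwise $s,t\in S^X$, and we choose a finite $F\subset S^X\cap s^\downarrow\cap t^\downarrow$ whose down-sets cover $S^X\cap s^\downarrow\cap t^\downarrow$. For $u\le v$ we have $u^*u\le v^*v$, hence $X^\ks_{u^*u}\subset X^\ks_{v^*v}$ by condition (1). This gives the equality of unions in (ii)', so Hausdorffness follows.

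($\Rightarrow$) Assume Hausdorffness and fix $s,t\in S^X$. Take $F$ from (ii)', and discard the elements outside $S^X$, which contribute empty sets. Now let $u\in S^X\cap s^\downarrow\cap t^\downarrow$ and pick $x\in X_{u^*u}$. The point $\tau_{u^*u,\,x}$ lies in $X^\ks_{u^*u}$, so it lies in $X^\ks_{v^*v}$ for some $v\in F$. By the lemma, this gives $u^*u\le v^*v$. Since $u,v\le s$, we get $u=su^*u$ and $v\,u^*u=s\,v^*v\,u^*u=s\,u^*u=u$, so $u\le v$. Hence $F$ witnesses the weak semilattice property.

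The main obstacle is exactly this last step: passing from the idempotent inequality $u^*u\le v^*v$ to $u\le v$ using the common upper bound $s$. It is also the place where the special points $\tau_{e,\,x}$ do the real work. By contrast, ordinary points of $X$ could not separate the different idempotents, which is why the weak semilattice condition need not be equivalent to Hausdorffness for $S\ltimes X$ itself.
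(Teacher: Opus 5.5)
Your proof is correct and is essentially the paper's argument. You apply criterion (ii)' of Proposition \ref{prop:hausdorffness} to $\theta^\ks$, which is available by Remark \ref{rem:compact}, use the test points $\tau_{u^*u,\,x}$ to get $u^*u\le v^*v$, and conclude $u=vu^*u$ from the common upper bound $s$; you also spell out the ``straightforward'' converse and the discarding of elements of $F$ outside $S^X$, which the paper leaves implicit.
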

\begin{proof}
  Observe first that since $X_e$ is compact in $X$, we have $X^\ks_e$ is compact in $X^\ks$ by Remark \ref{rem:compact}.

  Suppose that $S^X$ is a weak semilattice. To see that $S\ltimes X^\ks$ is Hausdorff, it suffices to check the condition (ii)' of the previous proposition. This is straightforward.

  Conversely, suppose $S\ltimes X^\ks$ is Hausdorff. Fix $s,t\in S^X$. Then, by (ii)' of the previous proposition, there exists a finite subset $F\subset S^X\cap s^\downarrow\cap t^\downarrow$ such that
  \[
    \bigcup_{v\in F}X^\ks_{v^*v}=\bigcup_{u\in s^\downarrow\cap t^\downarrow}X^\ks_{u^*u}.
  \]
  We claim that $S^X\cap s^\downarrow\cap t^\downarrow\subset F^\downarrow$. Fix $u\in S^X\cap s^\downarrow\cap t^\downarrow$. From $X_{u^*u}\neq\emptyset$, we can find $x\in X_{u^*u}$. Then we have $\tau_{u^*u,\,x}\in X^\ks_{u^*u}$. Using the equality above, there exists $v\in F$ such that $\tau_{u^*u,\,x}\in X^\ks_{v^*v}$. Then $u^*u\le v^*v$ by the definition of $\tau_{u^*u,\,x}$. Compute that
  \[
    vu^*u=sv^*vu^*u=su^*u=u,
  \]
  where we used $u,v\le s$. Hence $u\in F^\downarrow$ for each $u\in S^X\cap s^\downarrow\cap t^\downarrow$, which proves the theorem.
\end{proof}

As shown in the following examples, if $X_e$ is not compact in $X$ for some $e\in E(S)$, the condition that $S^X$ is a weak semilattice is neither necessary nor sufficient for Hausdorffness of the KS-groupoid $S\ltimes X^\ks$.

\begin{ex}
  \label{ex1}
  Let $S=\{0\}\cup\{e_n\mid n\in\mathbb{Z}\}\cup\{f\}\cup \{s\}$ be the inverse semigroup with zero $0\in S$ whose multiplication is given by 
  \begin{enumerate}
    \item $e_ne_n=e_n$ for each $n\in\mathbb{Z}$ and $ff=f$,
    \item $e_ne_m=0$ for each $n,m\in\mathbb{Z}$ with $n\neq m$,
    \item $e_nf=e_n$ for each $n\in\mathbb{Z}$, 
    \item $ss=f$, $sf=s=f s$, and
    \item $se_n=e_n=e_ns$ for each $n\in\mathbb{Z}$.
  \end{enumerate}
  Note that $E(S)=\{0\}\cup\{e_n\mid n\in\mathbb{Z}\}\cup\{f\}$ and the generalized inverse of $s\in S$ is $s$ itself. Let $X$ be the locally compact Hausdorff space $\mathbb{R}\setminus\mathbb{Z}$ and define the $S$-action on $X$ by $X_{0}=\emptyset$, $X_{e_n}=(n,n+1)$ for each $n\in\mathbb{Z}$, $X_{f}=X$, and $\theta_s=\mathrm{id}_X$. Note that $X_e$ is closed in $X$ but it is not compact for each $e\in E(S)\setminus\{0\}$. The inverse semigroup $S$ and the space $X^\ks$ can be visualized as follows:

  \begin{center}
    \begin{tikzpicture}[auto]
      \node at (-4,2.5) {$S\colon$};
      \node (0) at (-1,0) {$0$};
      \node(-3) at (-4,1) {$\cdots$};
      \node(-2) at (-3,1) {$e_{-2}$};
      \node (-1) at (-2,1) {$e_{-1}$};
      \node (e_0) at (-1,1) {$e_0$};
      \node (1) at (0,1) {$e_1$};
      \node (2) at (1,1) {$e_2$};
      \node (3) at (2,1) {$\cdots$};
      \node (infty) at (-1,2) {$f$};
      \draw[-] (0) to node {} (-2); \draw (0) to (-3);
      \draw[-] (0) to node {} (-1);
      \draw[-] (0) to node {} (e_0);
      \draw[-] (0) to node {} (1);
      \draw[-] (0) to node {} (2); \draw (0) to (3);
      \draw[-] (-2) to node {} (infty); \draw[-] (-3) to (infty);
      \draw[-] (-1) to node {} (infty);
      \draw[-] (e_0) to node {} (infty);
      \draw[-] (1) to node {} (infty);
      \draw[-] (2) to node {} (infty); \draw[-] (3) to (infty);
      \draw[->] (infty) to[out=120, in=60, looseness=5] node {$s$} (infty);
      \node at (4,2.5) {$X^\ks\colon$};
      \node(-22) at (4,1) {$\cdots$}; 
      \node(-11) at (5,1) {$\circ$}; \node at (5,1.4) {$-2$};
      \node at (5.5,0.3) {$X^\ks_{e_{-2}}$};
      \node(00) at (6,1) {$\circ$}; \node at (6,1.4) {$-1$};
      \node(11) at (7,1) {$\circ$}; \node at (7,1.4) {$0$}; 
      \node at (7.5,1) {$\times$}; \node at (7.5,1.3) {$\tau$};
      \node(22) at (8,1) {$\circ$}; \node at (8,1.4) {$1$};
      \node(33) at (9,1) {$\circ$}; \node at (9,1.4) {$2$};
      \node(44) at (10,1) {$\cdots$};
      \draw[-] (4.3,0.85) to[out=-30, in=-140, looseness=1.3] (4.95,0.95);
      \draw[-] (4.3,1) to (4.9,1); 
      \draw[ultra thick] (5.05,0.95) to[out=-40, in=-140, looseness=1.3] (5.95,0.95);
      \draw[-] (5.1,1) to (5.9,1);
      %\draw[semithick] (5.43,0.65) to (5.43,0.35); \draw[semithick] (5.58,0.65) to (5.58,0.35);
      \draw[-] (6.05,0.95) to[out=-40, in=-140, looseness=1.3] (6.95,0.95);
      \draw[-] (6.1,1) to (6.9,1);
      \draw[-] (7.05,0.95) to[out=-40, in=-140, looseness=1.3] (7.95,0.95);
      \draw[-] (7.1,1) to (7.9,1);
      \draw[-] (8.05,0.95) to[out=-40, in=-140, looseness=1.3] (8.95,0.95);
      \draw[-] (8.1,1) to (8.9,1);
      \draw[-] (9.05,0.95) to[out=-40, in=-150, looseness=1.3] (9.7,0.85);
      \draw[-] (9.1,1) to (9.7,1);
    \end{tikzpicture}
  \end{center}
  In this figure, $\tau$ is the element in $X^\ks$ such that $\tau(f)=1/2$ and $\tau(e_n)=\infty_{e_n}$ for each $n\in\mathbb{Z}$. Clearly, $S^X$ is not a weak semilattice since $S^X\cap s^\downarrow\cap f^\downarrow=\{e_n\mid n\in\mathbb{Z}\}$. On the other hand, the subset $\bigcup_{n\in\mathbb{Z}}X^\ks_{e_n}$ is closed in $X^\ks$ and this implies the condition (ii) of Proposition \ref{prop:hausdorffness} for $S\ltimes X^\ks$. Hence $S\ltimes X^\ks$ is Hausdorff.
\end{ex}

\begin{ex}
  \label{ex2}
  Let $S=\{0,e,s\}$ be the inverse semigroup with zero $0\in S$ whose multiplication is given by $ee=e$, $es=se=s$ and $ss=e$. Note that $E(S)=\{0,e\}$ and the generalized inverse of $s\in S$ is $s$ itself. Let $X=\mathbb{R}$ and define the $S$-action on $X$ by $X_0=(-1,1)$, $X_e=X$ and $\theta_s=\mathrm{id}_X$. The inverse semigroup $S$ and the space $X^\ks$ can be visualized as follows:
  \begin{center}
    \begin{tikzpicture}[auto]
      \node at (-1,1.6) {$S\colon$};
      \node (0) at (0,0) {$0$};
      \node (1) at (0,1) {$e$};
      \draw (0) to (1);
      \draw[->] (1) to[out=120, in=60, looseness=5] node {$s$} (1);
      \draw (3,0.5) to (7,0.5);
      \node at (2.5,1.6) {$X^\ks\colon$};
      \node at (2.5,0.5) {$\cdots$}; \node at (7.5,0.5) {$\cdots$};
      \node at (4.3,0.45) {$\circ$}; \node at (4.3,0.85) {$-1$};
      \node at (5.8,0.45) {$\circ$}; \node at (5.8,0.85) {$1$};
      \draw[ultra thick] (4.35,0.4) to[out=-40, in=-140, looseness=1.3] (5.75,0.4);
      \node at (5.1,-0.3) {$X^\ks_0$};
    \end{tikzpicture}
  \end{center}
  Clearly, $S^X=S$ is a weak semilattice. On the other hand, the subset $X^\ks_0$ is not closed in $X^\ks$ and this implies that the condition (ii) of Proposition \ref{prop:hausdorffness} does not hold for $S\ltimes X^\ks$. Hence $S\ltimes X^\ks$ is not Hausdorff.
\end{ex}

\noindent\textbf{Ampleness.}
For any inverse semigroup $S$, the universal groupoid $S\ltimes\widehat{E(S)}$ is \textit{ample}, in the sense that its unit space $\widehat{E(S)}$ is totally disconnected. On the other hand, KS-groupoids $S\ltimes X^\ks$ need not be ample (for example, see Examples \ref{ex1} and \ref{ex2} above). In the following, we give a natural characterization of ampleness of KS-groupoids.

\begin{prop}
  Let $\theta$ be an action of an inverse semigroup $S$ on a locally compact Hausdorff space $X$. Then the KS-groupoid $S\ltimes X^\ks$ is ample if and only if $X$ is totally disconnected.
\end{prop}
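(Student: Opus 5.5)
Ampleness of the étale groupoid $S\ltimes X^\ks$ means that its unit space $X^\ks$ is totally disconnected. The claim therefore reduces to showing that $X^\ks$ is totally disconnected if and only if $X$ is. We use the embedding $\iota_X$ for one direction and the product description of $X^\ks$ for the other.

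For the direction ``$X^\ks$ totally disconnected $\Rightarrow$ $X$ totally disconnected'', recall from Remark \ref{rem:embedding} that $\iota_X\colon X\to X^\ks$ is a homeomorphism onto its image. Total disconnectedness, meaning that every connected subset is a singleton, passes to subspaces. Hence $X\cong\iota_X(X)$ is totally disconnected.

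For the converse, assume $X$ is totally disconnected and let $C\subset X^\ks$ be connected. We show that $C$ is a singleton in three steps.

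\textbf{Step 1.} For each $e\in E(S)$, the one-point compactification $\widetilde{X_e}$ is totally disconnected. Let $C'\subset\widetilde{X_e}$ be connected with at least two points. Since $X_e$ is an open subspace of $X$, it is totally disconnected, so $C'$ is not contained in $X_e$ and hence $\infty_e\in C'$. Pick $y\in C'\cap X_e$. Locally compact Hausdorff totally disconnected spaces are zero-dimensional, so $X_e$ has a compact open neighbourhood $K$ of $y$. Then $K$ is clopen in $\widetilde{X_e}$: it is open in $X_e$, hence open in $\widetilde{X_e}$, and it is compact, hence closed. Since $K$ contains $y$ but not $\infty_e$, it separates $C'$, a contradiction.

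\textbf{Step 2.} A product of totally disconnected spaces is totally disconnected, because its coordinate projections preserve connectedness. So $\prod_{e\in E(S)}\widetilde{X_e}$ is totally disconnected.

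\textbf{Step 3.} $X^\ks$ carries the subspace topology from $\prod_{e\in E(S)}\widetilde{X_e}$ (via Lemma \ref{lem:X^ks-is-lch}). By Steps 1 and 2 and heredity of total disconnectedness, $C$ is a singleton.

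The only step that needs care is Step 1, namely that adjoining the point at infinity keeps the space totally disconnected. This is where zero-dimensionality of locally compact totally disconnected Hausdorff spaces is used. Everything else is formal.
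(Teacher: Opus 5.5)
Your proof is correct. The direction ``$X^\ks$ totally disconnected $\Rightarrow$ $X$ totally disconnected'' is the same as the paper's; the converse follows a different route.

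You show that each factor $\widetilde{X_e}$ is totally disconnected, and conclude for the whole product $\prod_{e\in E(S)}\widetilde{X_e}$. This never uses the conditions (1), (2) that cut $X^\ks$ out of the product.

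The paper works inside $X^\ks$ instead. For a nonempty connected $C\subset X^\ks$, the continuous map $p_X$ sends $C$ to a connected subset of $X$, which must be a single point $x$, so $C\subset p_X^{-1}(\{x\})$. On this fiber every coordinate $\tau(e)$ is either $x$ or $\infty_e$. Hence $p_X^{-1}(\{x\})$ is homeomorphic to a subspace of $\{0,1\}^{E(S)}$, which is totally disconnected.

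The trade-off is as follows. The paper's argument never has to deal with a connected set containing a point at infinity, and needs nothing beyond continuity of $p_X$. Your Step 1 has to confront exactly that situation, and it does so with the standard but nontrivial theorem that a totally disconnected locally compact Hausdorff space has a basis of compact open sets. In return, your argument gives a slightly stronger conclusion: the compact space $\widetilde{X^\ks}$, and indeed the whole ambient product, is totally disconnected.
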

\begin{proof}
  If $X^\ks$ is totally disconnected, then $X$ is also totally disconnected since $X$ is a subspace of $X^\ks$ by Remark \ref{rem:embedding}. 

  Suppose that $X$ is totally disconnected. 
  To see $X^\ks$ is also totally disconnected, observe that for any non-empty connected subset $C$ of $X^\ks$, we can find $x\in X$ with $C\subset p_X^{-1}(\{x\})$ since $p_X(C)$ is a non-empty connected subset in the totally disconnected space $X$.  
  Hence it suffices to show that $p_X^{-1}(\{x\})$ is totally disconnected. 
  This follows from the fact that $p_X^{-1}(\{x\})$ is canonically homeomorphic to a subspace of $\{0,1\}^{E(S)}$, which is clearly totally disconnected.
\end{proof}

\noindent\textbf{Second Countability.}
We next characterize the second countability condition on the KS-groupoid $S\ltimes X^\ks$ in terms of the underlying inverse semigroup action.

\begin{prop}
  \label{prop:2nd-ctbl}
  Let $\theta$ be an action of an inverse semigroup $S$ on a locally compact Hausdorff space $X$. Then the KS-groupoid $S\ltimes X^\ks$ is second countable if and only if $X$ is second countable and $S^X=\{s\in S\mid X_{s^*s}\neq\emptyset\}$ is countable.
\end{prop}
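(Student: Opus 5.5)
We prove the two implications separately. Throughout we use the basis $[s,U]$ ($s\in S$, $U\in\mathbb{O}(X^\ks_{s^*s})$) of $S\ltimes X^\ks$, and the fact that $d|_{[s,U]}$ is a homeomorphism onto $U$.

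\emph{Necessity.} Assume $S\ltimes X^\ks$ is second countable. The unit space $X^\ks$ is then second countable. By Remark \ref{rem:embedding}, $X$ is homeomorphic to the subspace $\iota_X(X)$, so $X$ is second countable.

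For countability of $S^X$, pick for each $s\in S^X$ a point $x_s\in X_{s^*s}$ and set $\gamma_s:=[s,\tau_{s^*s,\,x_s}]$. The open set $W_s:=[s,X^\ks_{s^*s}]$ contains $\gamma_s$. We claim that $\gamma_t\in W_s$ forces $s=t$. Indeed, $\gamma_t\in W_s$ means $\tau_{t^*t,x_t}\in X^\ks_{s^*s}$, hence $t^*t\le s^*s$, and $[s,\tau_{t^*t,x_t}]=[t,\tau_{t^*t,x_t}]$. The latter gives some $e$ with $\tau_{t^*t,x_t}(e)\neq\infty_e$, i.e.\ $t^*t\le e$, and $se=te$. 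Then
\[
s\,t^*t=s\,e\,t^*t=t\,e\,t^*t=t .
\]
By symmetry (applying the same argument to $\gamma_s\in W_t$) we would get $s\,s^*s$-type relations in the other direction; but we do not even need that. Given a countable basis $\mathcal{B}$, choose for each $s$ some $B_s\in\mathcal{B}$ with $\gamma_s\in B_s\subset W_s$. If $B_s=B_t$, then $\gamma_t\in W_s$ and $\gamma_s\in W_t$. The first gives $t^*t\le s^*s$ and $t=st^*t\le s$; the second gives $s\le t$. Hence $s=t$, so $s\mapsto B_s$ is injective and $S^X$ is countable.

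\emph{Sufficiency.} Assume $X$ is second countable and $S^X$ is countable. The sets $[s,U]$ with $s\in S^X$ and $U$ ranging over a countable basis of $X^\ks$ form a countable basis of $S\ltimes X^\ks$, because $X^\ks_{s^*s}=\emptyset$ for $s\notin S^X$. So it suffices to show that $X^\ks$ is second countable.

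The space $X^\ks$ is a subspace of $\prod_{e\in E(S)}\widetilde{X_e}$. Any factor with $X_e=\emptyset$ is the single point $\{\infty_e\}$ and can be dropped. The set $E(S)\cap S^X$ is countable, so we reduce to a countable product. Each $\widetilde{X_e}$ is the one-point compactification of the open set $X_e\subset X$, which is locally compact and second countable. Such a space has a second countable one-point compactification, since locally compact second countable spaces are $\sigma$-compact with a countable basis of relatively compact sets. A countable product of second countable spaces is second countable, and so is any subspace of it.

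The main obstacle is the countability of $S^X$ in the necessity direction. The points $\tau_{s^*s,x}$ and the identity $st^*t=t$ derived above are what make the argument work.
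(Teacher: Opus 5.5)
Your proof is correct. Sufficiency and the second countability of $X$ follow essentially the paper's route. You are in fact more explicit than the paper about discarding the one-point factors $\widetilde{X_e}=\{\infty_e\}$ with $X_e=\emptyset$. As a cosmetic point, the basic sets should be written $[s,U\cap X^\ks_{s^*s}]$.

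The real difference is how you obtain countability of $S^X$. The paper does this in three steps:
\begin{enumerate}
  \item It shows each $E(S)^x=\{e\mid x\in X_e\}$ is countable, by counting the distinct compact open sets $q_e^{-1}(\{x\})$ in the second countable fibre $p_X^{-1}(\{x\})$.
  \item It deduces that $E(S)^X$ is countable, using a countable dense subset of $X$.
  \item It shows each $S_e$ is countable, because the arrows $\{[s,\tau_{e,\,x}]\mid s\in S_e\}$ form the fibre $(S\ltimes X^\ks)_{\tau_{e,\,x}}$ (Lemma \ref{lem:domain}), which is discrete.
\end{enumerate}
You instead build a single injection $S^X\to\mathcal{B}$. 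The key fact is that $[t,\tau_{t^*t,\,x}]\in[s,X^\ks_{s^*s}]$ forces $st^*t=t$, i.e.\ $t\le s$. Hence a common basic neighbourhood of $\gamma_s,\gamma_t$ lying inside both $W_s$ and $W_t$ forces $s=t$ by antisymmetry of the natural partial order.

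Your route is shorter and more elementary. It uses only the defining equivalence relation of the transformation groupoid and the fact that $\tau_{t^*t,\,x}\in X^\ks_e$ iff $t^*t\le e$. It needs neither the projection $p_X$, nor a dense subset of $X$, nor discreteness of the fibres of the domain map. Restricted to idempotents, it also shows directly that second countability of the unit space $X^\ks$ alone forces $E(S)^X$ to be countable, which is what the paper's first two steps establish. The paper's decomposition, for its part, yields finer intermediate facts (countability of every $E(S)^x$, the structure of the fibres of $p_X$). It also separates the idempotent part of $S^X$ from the pieces $S_e$, matching $S^X=\bigcup_{e\in E(S)^X}S_e$.

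One expository slip: you say the reverse relation $\gamma_s\in W_t$ is not needed. In fact you need both $t\le s$ and $s\le t$ to conclude $s=t$, as your final sentence correctly uses.
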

\begin{proof}
  Suppose first that $X$ is second countable and $S^X$ is countable. Then $\prod_{e\in E(S)}\widetilde{X_e}$ is second countable and hence $X^\ks$ is also second countable. 
  Since $S\ltimes X^\ks=\bigcup_{s\in S^X}[s,X_{s^*s}^\ks]$ and $[s,X_{s^*s}^\ks]$ is homeomorphic to $X^\ks_{s^*s}$, it follows that $S\ltimes X^\ks$ is second countable.

  Conversely, suppose $S\ltimes X^\ks$ is second countable. Then the unit space $X^\ks=(S\ltimes X^\ks)^{(0)}$ is second countable. 
  Since $X$ is homeomorphic to some subspace of $X^\ks$ by Remark \ref{rem:embedding}, second countability passes to $X$. 

  For each $x\in X$, let $E(S)^x=\{e\in E(S)\mid x\in X_e\}$. We claim that $E(S)^x$ is countable for all $x\in X$. 
  To see this, observe that $p_X^{-1}(\{x\})$ is second countable and $p_X^{-1}(\{x\})\cap X_e^\ks=q_e^{-1}(\{x\})$ is compact open subset in $p_X^{-1}(\{x\})$ for each $e\in E(S)^x$, where $q_e\colon\widetilde{X^\ks}\to\widetilde{X_e}$ denotes the evaluation at $e\in E(S)^x$. 
  Note further that $p_X^{-1}(\{x\})\cap X_e^\ks\neq p_X^{-1}(\{x\})\cap X_f^\ks$ for each $e\neq f$ in $E(S)^x$. 
  Since every second countable space has at most countably many compact open subsets, it follows that $E(S)^x$ is countable.

  Take any countable, dense subset $F$ of $X$. Then we notice that 
  \[
    E(S)^X:=\{e\in E(S)\mid X_e\neq\emptyset\}=\bigcup_{x\in F}E(S)^x.
  \]
  From $S^X=\bigcup_{e\in E(S)^X}S_e$, it remains to show $S_e$ is countable for each $e\in E(S)^X$. To this end, observe that $S_e\ni s\mapsto [s,\tau_{e,\,x}]\in S\ltimes X^\ks$ is injective for any fixed $x\in X_e$ and its range $\{[s,\tau_{e,\,x}]\mid s\in S_e\}=(S\ltimes X^\ks)_{\tau_{e,\,x}}$ (see Lemma \ref{lem:domain}) is a discrete subset in $S\ltimes X^\ks$. Since every discrete subset in a second countable space is countable, $S_e$ is countable for each $e\in E(S)^X$. This completes the proof.
\end{proof}

\noindent\textbf{($\sigma$-)Compactness of the Unit Space}

\begin{lem}
  Let $\theta$ be an action of an inverse semigroup $S$ on a locally compact Hausdorff space $X$. Suppose that $\{(f_i,K_i)\}_{i\in I}$ is a family of pairs $(f_i,K_i)$ such that $f_i\in E(S)$ and $K_i\subset X_{f_i}$ is compact for each $i\in I$. Then the following are equivalent.
  \begin{enumerate}
    \item $X^\ks=\bigcup_{i\in I}q_{f_i}^{-1}(K_i)$, where $q_e\colon\widetilde{X^\ks}\to\widetilde{X_e}$ denotes the evaluation at $e\in E(S)$.
    \item $X_e\subset\bigcup_{i\in I,\,f_i\ge e} K_i$ for each $e\in E(S)$.
  \end{enumerate}
\end{lem}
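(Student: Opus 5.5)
We prove both implications directly, using the special points $\tau_{e,\,x}\in X^\ks$ introduced before Theorem \ref{thm:hausdorffness-of-KS} as test points for (i)$\Rightarrow$(ii), and the structure of a general $\tau\in X^\ks$ for the converse. The guiding observation is that $q_{f}^{-1}(K)$ consists of those $\tau\in X^\ks$ with $\tau(f)\in K$, in particular $\tau(f)\neq\infty_f$.

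For (i)$\Rightarrow$(ii), fix $e\in E(S)$ and $x\in X_e$, and consider $\tau_{e,\,x}$. By (i) there is $i\in I$ with $\tau_{e,\,x}(f_i)\in K_i$. Since $K_i\subset X_{f_i}$, we have $\tau_{e,\,x}(f_i)\neq\infty_{f_i}$. The definition of $\tau_{e,\,x}$ then forces $e\le f_i$ and $\tau_{e,\,x}(f_i)=x$. Hence $x\in K_i$ with $f_i\ge e$, which proves (ii).

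For (ii)$\Rightarrow$(i), take $\tau\in X^\ks$ and put $x=p_X(\tau)$. The key step is to find a ``minimal'' idempotent carrying $\tau$, and this is where I expect the main obstacle. If the set $E_\tau=\{e\mid\tau(e)\neq\infty_e\}$ had a least element $e_0$, then (ii) applied to $x\in X_{e_0}$ would give $i$ with $f_i\ge e_0$ and $x\in K_i$. Definition \ref{def:of-X^ks}(1) would then give $\tau(f_i)=\tau(e_0)=x\in K_i$, so $\tau\in q_{f_i}^{-1}(K_i)$.

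In general $E_\tau$ is only a filter (by Remark \ref{rem:} it is closed under products and upward closed), and it need not have a minimum. The point is that we do not need a minimum: it suffices to pick any $e\in E_\tau$ and obtain $i$ with $f_i\ge e$ and $x\in K_i$. By Definition \ref{def:of-X^ks}(1), $e\le f_i$ and $\tau(e)\neq\infty_e$ already give $\tau(f_i)=\tau(e)=x\in K_i$. So the argument only uses $E_\tau\neq\emptyset$, which holds because $\tau\neq\infty$, together with upward closure. I will double-check that no use of condition (2) of Definition \ref{def:of-X^ks} is needed; it appears not to be. This completes both directions.
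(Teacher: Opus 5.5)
Your proof is correct and is essentially the paper's argument. For (i)$\Rightarrow$(ii) you test on the points $\tau_{e,\,x}$; for the converse you pick any $e$ with $\tau(e)\neq\infty_e$ and use condition (1) of Definition \ref{def:of-X^ks} to get $\tau(f_i)=\tau(e)\in K_i$. As you suspected, condition (2) is not needed, and the discussion of a least element of $E_\tau$ can simply be dropped.
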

\begin{proof}
  (i)$\Rightarrow$(ii): Let $x\in X_e$. By (i), $\tau_{e,\,x}$ belongs to $q_{f_i}^{-1}(K_i)$ for some $i\in I$. Then $f_i\ge e$ and $x\in K_i$. Thus $X_e\subset\bigcup_{i\in I,\,f_i\ge e}K_i$.

  \noindent
  (ii)$\Rightarrow$(i): Let $\tau\in X^\ks$ and fix $e\in E(S)$ with $x:=\tau(e)\neq\infty_e$. Then $x\in X_e$ and hence, by (ii), we have $x\in K_i$ for some $i\in I$ with $f_i\ge e$. Then $\tau(f_i)=x\in K_i$, that is, $\tau\in q_{f_i}^{-1}(K_i)$.
\end{proof}

\begin{thm}
  \label{thm:sigma-cpt}
  Let $\theta$ be an action of an inverse semigroup $S$ on a locally compact Hausdorff space $X$. Then the following are equivalent.
  \begin{enumerate}
    \item There exists a finite (resp.\ countable) family of pairs $\{(f_i,K_i)\}_{i\in I}$ such that $f_i\in E(S)$, $K_i\subset X_{f_i}$ is compact for each $i\in I$ and $X_e\subset\bigcup_{i\in I,\, f_i\ge e} K_i$ for each $e\in E(S)$.
    \item $X^\ks$ is compact (resp.\ $\sigma$-compact).
  \end{enumerate}
\end{thm}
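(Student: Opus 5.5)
The plan is to reduce everything to the preceding lemma, which already identifies condition (ii) of that lemma (namely $X_e\subset\bigcup_{i\in I,\,f_i\ge e}K_i$ for all $e$) with the covering $X^\ks=\bigcup_{i\in I}q_{f_i}^{-1}(K_i)$. The remaining observation is that each set $q_{f}^{-1}(K)$, for $f\in E(S)$ and compact $K\subset X_f$, is a compact subset of $X^\ks$. Indeed, $q_f\colon\widetilde{X^\ks}\to\widetilde{X_f}$ is continuous (a coordinate projection restricted to the closed subspace $\widetilde{X^\ks}$). Moreover, $K$ is compact in $X_f$, hence closed in the Hausdorff space $\widetilde{X_f}$. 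So $q_f^{-1}(K)$ is closed in the compact space $\widetilde{X^\ks}$ (Lemma \ref{lem:X^ks-is-lch}), hence compact. It does not contain $\infty$, because $\infty_f\notin K$, so it lies in $X^\ks$.

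For (i)$\Rightarrow$(ii), I take the given finite (resp.\ countable) family and apply the previous lemma to get $X^\ks=\bigcup_{i\in I}q_{f_i}^{-1}(K_i)$. This writes $X^\ks$ as a finite (resp.\ countable) union of compact sets, so $X^\ks$ is compact (resp.\ $\sigma$-compact).

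For (ii)$\Rightarrow$(i), I first produce an open cover of $X^\ks$ by interiors of sets of the above form. For $\tau\in X^\ks$, pick $e$ with $x:=\tau(e)\neq\infty_e$. Choose a relatively compact open neighbourhood $V$ of $x$ with $\overline{V}\subset X_e$, which is possible since $X_e$ is locally compact Hausdorff. Then $q_e^{-1}(V)$ is an open neighbourhood of $\tau$, because $V$ is open in $\widetilde{X_e}$. It is contained in $q_e^{-1}(\overline V)$, and $\overline V$ is compact in $X_e$.

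In the compact case, finitely many such sets cover $X^\ks$. In the $\sigma$-compact case, write $X^\ks=\bigcup_n C_n$ with each $C_n$ compact; each $C_n$ is covered by finitely many such sets, giving countably many in total. This yields a finite (resp.\ countable) family $\{(f_i,K_i)\}$ with $X^\ks=\bigcup_i q_{f_i}^{-1}(K_i)$, and the previous lemma converts this into condition (i).

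No step looks seriously obstructive. The only points needing care are that $q_f^{-1}(K)$ is genuinely compact inside $X^\ks$ (not merely in $\widetilde{X^\ks}$), and that $q_e^{-1}(V)$ is open. Both follow from the fact that open subsets of $X_e$, and compact subsets of $X_e$, are respectively open and closed in $\widetilde{X_e}$.
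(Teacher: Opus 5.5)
Your proof is correct. It has the same skeleton as the paper's: both reduce the theorem to the preceding lemma by showing that (ii) is equivalent to the existence of a finite (resp.\ countable) family with $X^\ks=\bigcup_{i\in I}q_{f_i}^{-1}(K_i)$. Where you differ is in how you obtain this middle equivalence.

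The paper argues at the point $\infty$. It observes that $X^\ks$ is compact (resp.\ $\sigma$-compact) if and only if $\{\infty\}$ is open (resp.\ a $G_\delta$-set) in the compact Hausdorff space $\widetilde{X^\ks}$. Basic neighbourhoods of $\infty$ in the product topology are finite intersections $\bigcap_i q_{f_i}^{-1}(\widetilde{X_{f_i}}\setminus K_i)$, so taking complements yields the cover.

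You argue instead at points of $X^\ks$. You check directly that each $q_f^{-1}(K)$ is a compact subset of $X^\ks$. You then note that the open sets $q_e^{-1}(V)$, with $\overline V\subset X_e$ compact, cover $X^\ks$, and you extract a finite (resp.\ countable) subcover.

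The paper's route is shorter. However, it relies on the standard fact that a locally compact Hausdorff space is $\sigma$-compact exactly when its point at infinity is a $G_\delta$ in the one-point compactification. It also tacitly identifies $\widetilde{X^\ks}$ with that compactification, or with $X^\ks\sqcup\{\infty\}$ when $\infty$ is isolated.

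Your route is self-contained. It needs only two routine checks, both of which you carry out correctly: compact subsets of $X_f$ are closed in $\widetilde{X_f}$, and open subsets of $X_e$ are open in $\widetilde{X_e}$.
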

\begin{proof}
  Observe that $X^\ks$ is compact (resp.\ $\sigma$-compact) if and only if $\{\infty\}$ is open (resp.\ a $G_\delta$-subset) in $\widetilde{X^\ks}$, which is equivalent to saying that there exists a finite (resp.\ countable) family of pairs $\{(f_i,K_i)\}_{i\in I}$ such that $f_i\in E(S)$, $K_i\subset X_{f_i}$ is compact for each $i\in I$ and $\{\infty\}=\bigcap_{i\in I}q_{f_i}^{-1}(\widetilde{X_{f_i}}\setminus K_i)$, where $q_e\colon\widetilde{X^\ks}\to\widetilde{X_e}$ denotes the evaluation at $e\in E(S)$. By taking the complement, the equality is equivalent to $X^\ks=\bigcup_{i\in I}q_{f_i}^{-1}(K_i)$. From the lemma above, we are done.
\end{proof}

\subsection{Functoriality of the Construction of KS-Groupoids}
\label{subsec:3.2}

In this subsection, we check that the constructions of the space $X^\ks$ and the induced $S$-action on $X^\ks$ define an endofunctor on the category $\isa$ consisting of all inverse semigroup actions on locally compact Hausdorff spaces as objects and all proper action morphisms (in the sense of \cite{FKU}) as morphisms. From the functoriality of the construction of transformation groupoids (see Subsection 5.1 of \cite{FKU}), it follows that the construction of KS-groupoids defines a functor from $\isa$ to the category $\mathbf{EG}$, consisting of all \'etale groupoids as objects and all couple morphisms (in the sense of \cite{FKU}) as morphisms.

We first recall some terminology from \cite{FKU} which will be needed in this subsection.

For topological spaces $X,Y$, a \textit{partial map} from $Y$ to $X$ is a continuous map from an open subset $D_f$ of $Y$ to $X$. 
We denote this by $f\colon Y\supset D_f\to X$. 
For any partial maps $f\colon Z\supset D_f\to Y$ and $g\colon Y\supset D_g\to X$, we can define a composition $g\circ f\colon Z\supset f^{-1}(D_g)\to X$, which is a partial map from $Z$ to $X$.

\begin{dfn}[{\cite[Definition 3.1]{FKU}}]
  \label{def:action-morphism}
  Let $(S,X,\theta), (T,Y,\sigma)$ be actions of inverse semigroups. An \textit{action morphism} $(\varphi,f)$ from $(S,X,\theta)$ to $(T,Y,\sigma)$ consists of a semigroup homomorphism $\varphi\colon S\to T$ and a partial map $f\colon Y\supset D_f\to X$ satisfying
  \begin{enumerate}
    \item $f^{-1}(X_e)=Y_{\varphi(e)}$ for each $e\in E(S)$, and
    \item $\theta_s(f(y))=f(\sigma_{\varphi(s)}(y))$ for all $s\in S$, $y\in f^{-1}(X_{s^*s})=Y_{\varphi(s^*s)}$.
  \end{enumerate}
  An action morphism $(\varphi,f)$ is said to be \textit{proper} if the partial map $f\colon Y\supset D_f\to X$ is proper as a map from $D_f$ to $X$.
\end{dfn}

\begin{rem}
  If $(\varphi,f)$ is an action morphism from $(S,X,\theta)$ to $(T,Y,\sigma)$, then it follows from (i) above that $D_f=\bigcup_{e\in E(S)}Y_{\varphi(e)}$. 
\end{rem}

Let $(\varphi,f)$ be an action morphism from $(S,X,\theta)$ to $(T,Y,\sigma)$, where $X,Y$ are locally compact Hausdorff spaces. Define a map $\widetilde{f^\ks_\varphi}\colon \widetilde{Y^\ks}\to\widetilde{X^\ks}$ by
\begin{align*}
  \widetilde{f^\ks_\varphi}(\omega)\,(e)=
    \begin{cases}
      f(\omega(\varphi(e))) & \mathrm{if}\quad\omega(\varphi(e))\neq\infty_{\varphi(e)}\\
      \infty_e & \mathrm{otherwise}
    \end{cases}
\end{align*}
for $\omega\in\widetilde{Y^\ks}$ and $e\in E(S)$.
It is routine to verify that $\widetilde{f^\ks_\varphi}(\omega)$ belongs to $\widetilde{X^\ks}$.

\begin{lem}
  \label{lem:continuity}
  If $(\varphi,f)$ is a proper action morphism from $(S,X,\theta)$ to $(T,Y,\sigma)$, then the map $\widetilde{f^\ks_\varphi}\colon \widetilde{Y^\ks}\to\widetilde{X^\ks}$ is continuous.
\end{lem}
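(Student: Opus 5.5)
Since $\widetilde{X^\ks}$ carries the topology induced from the product $\prod_{e\in E(S)}\widetilde{X_e}$, it suffices to show that for each fixed $e\in E(S)$ the coordinate map $\omega\mapsto\widetilde{f^\ks_\varphi}(\omega)(e)$ from $\widetilde{Y^\ks}$ to $\widetilde{X_e}$ is continuous. Fix $e$ and write $g=\varphi(e)\in E(T)$. By (i) of Definition \ref{def:action-morphism}, $f^{-1}(X_e)=Y_g$, so $f$ restricts to a continuous map $f_e:=f|_{Y_g}\colon Y_g\to X_e$. The $e$-th coordinate of $\widetilde{f^\ks_\varphi}$ is then the composition of the evaluation $q_g\colon\widetilde{Y^\ks}\to\widetilde{Y_g}$, which is continuous, with the map $\widetilde{f_e}\colon\widetilde{Y_g}\to\widetilde{X_e}$ that extends $f_e$ by sending $\infty_g\mapsto\infty_e$. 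So the whole statement reduces to the continuity of $\widetilde{f_e}$.

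The key fact I will use is the standard one: for a continuous map $h\colon A\to B$ between locally compact Hausdorff spaces, the extension $\widetilde h\colon\widetilde A\to\widetilde B$ with $\infty_A\mapsto\infty_B$ is continuous if and only if $h$ is proper. (The convention in Definition \ref{def:of-X^ks}, that $\infty$ is added even to compact spaces, causes no trouble.) So I will show that $f_e\colon Y_g\to X_e$ is proper. Take a compact $K\subset X_e$. Then $f_e^{-1}(K)=f^{-1}(K)$, because $f^{-1}(K)\subset f^{-1}(X_e)=Y_g$. Since $K$ is compact in $X$ and $f\colon D_f\to X$ is proper by assumption, $f^{-1}(K)$ is compact in $D_f$, and hence compact in $Y_g$.

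It remains to check continuity of $\widetilde{f_e}$ at $\infty_g$ directly. Any open neighborhood of $\infty_e$ in $\widetilde{X_e}$ has the form $\widetilde{X_e}\setminus K$ with $K\subset X_e$ compact. Its preimage under $\widetilde{f_e}$ is $\widetilde{Y_g}\setminus f_e^{-1}(K)$, which is open since $f_e^{-1}(K)$ is compact. Preimages of open subsets of $X_e$ are open in $Y_g$ by continuity of $f$. Together these give the continuity of $\widetilde{f_e}$, and hence of $\widetilde{f^\ks_\varphi}$.

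The only point needing care is the properness of the restriction $f_e$, which rests on the equality $f^{-1}(X_e)=Y_{\varphi(e)}$. Without it, $f^{-1}(K)$ could leave $Y_g$, and convergence to $\infty_g$ would not be carried to $\infty_e$. Beyond this, the argument is routine.
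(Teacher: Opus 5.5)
Your proposal is correct and follows essentially the same route as the paper. You reduce to a single coordinate $e$, then use properness of $f$ together with $f^{-1}(X_e)=Y_{\varphi(e)}$ to see that $f^{-1}(K)$ is a compact subset of $Y_{\varphi(e)}$, so neighborhoods of $\infty_e$ pull back to neighborhoods of $\infty_{\varphi(e)}$. The only difference is packaging: you factor the coordinate map as the evaluation $q_{\varphi(e)}$ followed by the extension $\widetilde{Y_{\varphi(e)}}\to\widetilde{X_e}$, whereas the paper argues directly with nets.
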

\begin{proof}
  Suppose that a net $(\omega_\lambda)_\lambda\subset\widetilde{Y^\ks}$ converges to $\omega$ in $\widetilde{Y^\ks}$. 
  It suffices to show that the net $(\widetilde{f^\ks_\varphi}(\omega_\lambda)\,(e))_\lambda$ converges to $\widetilde{f^\ks_\varphi}(\omega)\,(e)$ in $\widetilde{X_e}$ for each $e\in E(S)$. 
  If $\widetilde{f^\ks_\varphi}(\omega)\,(e)\neq\infty_e$, this claim is easy to check. 
  Suppose $\widetilde{f^\ks_\varphi}(\omega)\,(e)=\infty_e$, or equivalently, $\omega(\varphi(e))=\infty_{\varphi(e)}$. 
  Fix an open subset $U$ in $\widetilde{X_e}$ with $\infty_e\in U$. 
  Let $K=\widetilde{X_e}\setminus U$. 
  Then $K$ is a compact subset of $X_e$. 
  Since $f$ is proper, it follows that $f^{-1}(K)$ is a compact subset of $f^{-1}(X_e)=Y_{\varphi(e)}$. 
  Thus $V:=\widetilde{Y_{\varphi(e)}}\setminus f^{-1}(K)$ is an open subset in $\widetilde{Y_{\varphi(e)}}$ containing $\infty_{\varphi(e)}$. 
  Since the net $(\omega_\lambda(\varphi(e)))_\lambda$ converges to $\omega(\varphi(e))=\infty_{\varphi(e)}$, we can find $\lambda_0$ such that $\omega_\lambda(\varphi(e))\in V$ for all $\lambda\ge\lambda_0$. 
  For such $\lambda$, we have $\widetilde{f^\ks_\varphi}(\omega_\lambda)\,(e)\in\widetilde{X_e}\setminus K=U$. 
  This proves the lemma.
\end{proof}

For any $\omega\in Y^\ks$, observe that $\widetilde{f^\ks_\varphi}(\omega)\in X^\ks$ if and only if $\widetilde{f^\ks_\varphi}(\omega)\,(e)\neq\infty_e$ for some $e\in E(S)$, which is equivalent to saying $\omega(\varphi(e))\neq\infty_{\varphi(e)}$ for some $e\in E(S)$. Hence we have 
\[
  \bigl(\widetilde{f^\ks_\varphi}\bigr)^{-1}(X^\ks)=\bigcup_{e\in E(S)}Y^\ks_{\varphi(e)}.
\]
We denote this open subset in $Y^\ks$ by $D_{f^\ks_\varphi}$. Let $f^\ks_\varphi=\widetilde{f^\ks_\varphi}|_{D_{f^\ks_\varphi}}\colon D_{f^\ks_\varphi}\to X^\ks$. 

If $(\varphi,f)$ is a proper action morphism, then $f^\ks_\varphi$ is a proper partial map from $Y^\ks$ to $X^\ks$ by the lemma above. 
On the other hand, even if an action morphism $(\varphi,f)$ is not proper, this map $f^\ks_\varphi$ is continuous. 
We do not need this fact, but we include the proof because it may be interesting in its own right.

\begin{prop}
  If $(\varphi,f)$ is an action morphism from $(S,X,\theta)$ to $(T,Y,\sigma)$, then the map $f^\ks_\varphi\colon D_{f^\ks_\varphi}\to X^\ks$ is continuous.
\end{prop}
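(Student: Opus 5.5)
The approach is to check convergence coordinatewise, as in Lemma \ref{lem:continuity}, and to replace the properness of $f$ with the assumption that the limit point lies in $D_{f^\ks_\varphi}$. Let $(\omega_\lambda)_\lambda$ be a net in $D_{f^\ks_\varphi}$ converging to $\omega\in D_{f^\ks_\varphi}$, and fix $e\in E(S)$. It suffices to show that $\widetilde{f^\ks_\varphi}(\omega_\lambda)(e)\to\widetilde{f^\ks_\varphi}(\omega)(e)$ in $\widetilde{X_e}$.

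Since $\omega\in D_{f^\ks_\varphi}$, fix $e_0\in E(S)$ with $\omega(\varphi(e_0))\neq\infty_{\varphi(e_0)}$, and put $y:=\omega(\varphi(e_0))=p_Y(\omega)\in Y_{\varphi(e_0)}$. Convergence in the coordinate $\varphi(e_0)$ shows that eventually $y_\lambda:=\omega_\lambda(\varphi(e_0))\neq\infty$. Moreover $y_\lambda=p_Y(\omega_\lambda)\to y$ in $Y$ by Proposition \ref{prop:projection}. By Remark \ref{rem:}, whenever $\omega_\lambda(\varphi(e))\neq\infty_{\varphi(e)}$ we have $\omega_\lambda(\varphi(e))=y_\lambda$. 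Hence $\widetilde{f^\ks_\varphi}(\omega_\lambda)(e)$ is either $f(y_\lambda)$ or $\infty_e$.

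In the case $\omega(\varphi(e))\neq\infty_{\varphi(e)}$, the coordinate $\varphi(e)$ of $\omega_\lambda$ is eventually finite and equal to $y_\lambda$. Continuity of $f$ then gives $f(y_\lambda)\to f(y)$, which is the required convergence.

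The main case is $\omega(\varphi(e))=\infty_{\varphi(e)}$. Fix an open $U\ni\infty_e$ in $\widetilde{X_e}$ and set $K=\widetilde{X_e}\setminus U$, a compact subset of $X_e$ and hence closed in $X$.

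If $f(y)\notin K$, then continuity of $f$ gives eventually $f(y_\lambda)\notin K$. Both possible values, $f(y_\lambda)$ and $\infty_e$, then lie in $U$.

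If $f(y)\in K\subset X_e$, then $y\in f^{-1}(X_e)=Y_{\varphi(e)}$, so $y\in Y_{\varphi(e_0e)}$. On the other hand $\omega(\varphi(e_0e))=\infty$, because $\varphi(e_0e)\le\varphi(e)$ and condition (1) of Definition \ref{def:of-X^ks} would otherwise force $\omega(\varphi(e))\neq\infty$. We then repeat the separation trick from part (i) of the lemma on $\varphi^e_f$. Choose disjoint open sets $V_1\ni y$ and $V_2\ni\infty$ in $\widetilde{Y_{\varphi(e_0e)}}$, with $V_1\subset Y_{\varphi(e_0e)}$ open in $Y$. Eventually $y_\lambda\in V_1$ and $\omega_\lambda(\varphi(e_0e))\in V_2$. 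If $\omega_\lambda(\varphi(e_0e))$ were finite it would equal $y_\lambda\in V_1$, which is impossible; so $\omega_\lambda(\varphi(e_0e))=\infty$. Since $\varphi(e_0e)=\varphi(e_0)\varphi(e)$ and $\omega_\lambda(\varphi(e_0))\neq\infty$, condition (2) of Definition \ref{def:of-X^ks} forces $\omega_\lambda(\varphi(e))=\infty$. Therefore eventually $\widetilde{f^\ks_\varphi}(\omega_\lambda)(e)=\infty_e\in U$.

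The main obstacle is this last subcase, where $f(y)\in K$. Without properness, compactness of $K$ cannot be pulled back along $f$. It is replaced by the fact that the fibre coordinate $e_0$ pins down $y_\lambda$, together with the Hausdorff separation in $\widetilde{Y_{\varphi(e_0e)}}$.
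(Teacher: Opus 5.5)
Your proof is correct and is essentially the paper's argument. The paper likewise fixes $e_0$ with $\omega(\varphi(e_0))\neq\infty_{\varphi(e_0)}$, uses Remark~\ref{rem:} to identify $\omega_\lambda(\varphi(e))$ with $\omega_\lambda(\varphi(e_0))$, and concludes from continuity of $f$, closedness of $K$ in $X$, and Hausdorffness of a one-point compactification; it only phrases this as showing that $(f^\ks_{\varphi,\,e})^{-1}(K)$ is closed in $D_{f^\ks_\varphi}$ rather than as direct net convergence. (In your last subcase you could separate $y$ from $\infty$ directly in $\widetilde{Y_{\varphi(e)}}$, since $y\in Y_{\varphi(e)}$, which makes the detour through $\varphi(e_0e)$ and condition (2) unnecessary.)
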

\begin{proof}
  It suffices to show that the map 
  \[
    f^\ks_{\varphi,\,e}\colon D_{f^\ks_\varphi}\ni\omega\mapsto f^\ks_\varphi(\omega)\,(e)\in\widetilde{X_e}
  \]
  is continuous for each $e\in E(S)$. 

  To this end, fix an open subset $U$ in $\widetilde{X_e}$. 
  If $\infty_e\notin U$, then $(f^\ks_{\varphi,\,e})^{-1}(U)=Y^\ks_{\varphi(e)}\cap p_Y^{-1}(f^{-1}(U))$ is open in $D_{f^\ks_\varphi}$ since $p_Y\colon Y^\ks\to Y$ is continuous (Proposition \ref{prop:projection}). 
  Suppose $\infty_e\in U$ and let $K=\widetilde{X_e}\setminus U$. 
  Then $K$ is a compact subset of $X_e$, and hence it is closed in $X$. 
  To see $(f^\ks_{\varphi,\,e})^{-1}(U)$ is open in $D_{f^\ks_\varphi}$, we claim that $C_K:=(f^\ks_{\varphi,\,e})^{-1}(K)$ is closed in $D_{f^\ks_\varphi}$.

  Let $(\omega_\lambda)_\lambda$ be a net in $C_K$ which converges to $\omega$ in $D_{f^\ks_\varphi}$. 
  Then $\omega_\lambda(\varphi(e))\neq\infty_{\varphi(e)}$ and $f(\omega_\lambda(\varphi(e)))\in K$ for each $\lambda$.
  Find $e_0\in E(S)$ with $\omega(\varphi(e_0))\neq\infty_{\varphi(e_0)}$.
  Since the net $(\omega_\lambda(\varphi(e_0)))_\lambda$ converges to $\omega(\varphi(e_0))\neq\infty_{\varphi(e_0)}$ in $\widetilde{Y_{\varphi(e_0)}}$, we may assume that $\omega_\lambda(\varphi(e_0))\neq\infty_{\varphi(e_0)}$ by taking a subnet. 
  Note that, from Remark \ref{rem:}, $\omega_\lambda(\varphi(e_0))=\omega_\lambda(\varphi(e))$ for each $\lambda$.
  From the continuity of the map $f$, the net $(f(\omega_\lambda(\varphi(e_0))))_\lambda$ converges to $f(\omega(\varphi(e_0)))$ in $X$.
  Since each $f(\omega_\lambda(\varphi(e_0)))=f(\omega_\lambda(\varphi(e)))$ belongs to $K$ and $K$ is closed in $X$, we see that $f(\omega(\varphi(e_0)))\in X_e$.
  From $f^{-1}(X_e)=Y_{\varphi(e)}$, it follows that $\omega(\varphi(e_0))\in Y_{\varphi(e)}$. 
  Since $\omega_\lambda(\varphi(e))=\omega_\lambda(\varphi(e_0))$ for each $\lambda$ and $\widetilde{Y_{\varphi(e)}}$ is Hausdorff, their limits coincide.
  Hence $\omega(\varphi(e))=\omega(\varphi(e_0))$. 
  In particular, $\omega(\varphi(e))\neq\infty_{\varphi(e)}$.
  Now we have
  \[
    f^\ks_{\varphi,\,e}(\omega)=f^\ks_\varphi(\omega)\,(e)=f(\omega(\varphi(e)))=f(\omega(\varphi(e_0)))\in K,
  \]
  or equivalently, $\omega\in C_K$. This completes the proof.
\end{proof}

Now we prove that the constructions of the space $X^\ks$ and the induced $S$-action on $X^\ks$ define an endofunctor on $\isa$. 

\begin{lem}
  For any proper action morphism $(\varphi,f)$ from $(S,X,\theta)$ to $(T,Y,\sigma)$, the pair $(\varphi,f_\varphi^\ks)$ is a proper action morphism from $(S,X^\ks,\theta^\ks)$ to $(T,Y^\ks,\sigma^\ks)$.
\end{lem}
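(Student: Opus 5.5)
We check the two axioms of Definition \ref{def:action-morphism} for $(\varphi,f^\ks_\varphi)$ and then properness. By the discussion after Lemma \ref{lem:continuity}, $f^\ks_\varphi\colon Y^\ks\supset D_{f^\ks_\varphi}\to X^\ks$ is a partial map defined on the open set $D_{f^\ks_\varphi}=\bigcup_{e\in E(S)}Y^\ks_{\varphi(e)}$ and is continuous. It is the restriction of the continuous map $\widetilde{f^\ks_\varphi}\colon\widetilde{Y^\ks}\to\widetilde{X^\ks}$ of compact Hausdorff spaces, and it is exactly the preimage of $X^\ks$.

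\textbf{Properness.} For compact $K\subset X^\ks$, the set $K$ is closed in $\widetilde{X^\ks}$. Hence $(f^\ks_\varphi)^{-1}(K)=(\widetilde{f^\ks_\varphi})^{-1}(K)$ is closed in the compact space $\widetilde{Y^\ks}$, so it is compact. Because it is contained in $D_{f^\ks_\varphi}$, it is compact there as well.

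\textbf{Axiom (i).} We need $(f^\ks_\varphi)^{-1}(X^\ks_e)=Y^\ks_{\varphi(e)}$. By the defining formula, $f^\ks_\varphi(\omega)(e)\neq\infty_e$ if and only if $\omega(\varphi(e))\neq\infty_{\varphi(e)}$, that is, $\omega\in Y^\ks_{\varphi(e)}$. This set is contained in $D_{f^\ks_\varphi}$, so the equality holds.

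\textbf{Axiom (ii), the main computation.} Fix $s\in S$ and $\omega\in Y^\ks_{\varphi(s^*s)}$, and compare both sides at each $e\in E(S)$.
\begin{itemize}
\item Left side: $\theta^\ks_s(f^\ks_\varphi(\omega))(e)$ equals $\theta_{es}\bigl(f(\omega(\varphi(s^*es)))\bigr)$ when $\omega(\varphi(s^*es))\neq\infty$, and $\infty_e$ otherwise.
\item Right side: $f^\ks_\varphi(\sigma^\ks_{\varphi(s)}(\omega))(e)$ equals $f\bigl(\sigma_{\varphi(e)\varphi(s)}(\omega(\varphi(s)^*\varphi(e)\varphi(s)))\bigr)$ when $\omega(\varphi(s^*es))\neq\infty$, using $\varphi(s)^*=\varphi(s^*)$ and multiplicativity of $\varphi$.
\end{itemize}
Here $\varphi(s)^*=\varphi(s^*)$ holds because a semigroup homomorphism between inverse semigroups preserves generalized inverses.

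The $\infty$ cases therefore agree. In the finite case, set $y=\omega(\varphi(s^*es))\in Y_{\varphi(s^*es)}=f^{-1}(X_{s^*es})$. Axiom (ii) for $(\varphi,f)$, applied to the element $es$, gives $\theta_{es}(f(y))=f(\sigma_{\varphi(es)}(y))$, since $(es)^*(es)=s^*es$. This is precisely the required equality.

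\textbf{Expected obstacle.} The main care lies in bookkeeping: checking that $\sigma^\ks_{\varphi(s)}(\omega)$ lands in $D_{f^\ks_\varphi}$, which follows from axiom (i) already proved, and applying the old axiom (ii) to $es$ rather than to $s$.
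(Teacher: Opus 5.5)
Your proof is correct and follows the paper's argument. Properness and continuity come from Lemma \ref{lem:continuity} together with $\bigl(\widetilde{f^\ks_\varphi}\bigr)^{-1}(X^\ks)=D_{f^\ks_\varphi}$; you write out the closed-in-compact step that the paper leaves implicit. Condition (i) follows from $f^\ks_\varphi(\omega)(e)\neq\infty_e\iff\omega(\varphi(e))\neq\infty_{\varphi(e)}$, and condition (ii) from the same pointwise comparison of both sides, applying axiom (ii) of $(\varphi,f)$ to the element $es$. Your extra bookkeeping ($\varphi(s^*)=\varphi(s)^*$, and $\sigma^\ks_{\varphi(s)}(\omega)$ lying in $D_{f^\ks_\varphi}$) is accurate and only makes explicit what the paper uses tacitly.
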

\begin{proof}
  We have shown that $f_\varphi^\ks\colon Y^\ks\supset D_{f^\ks_\varphi}\to X^\ks$ is a proper continuous map. 
  Hence it remains to check the conditions (i) and (ii) of Definition \ref{def:action-morphism}.

  For (i), let $e\in E(S)$ and $\omega\in Y^\ks$. 
  Observe that $f^\ks_\varphi(\omega)\in X^\ks_e$ if and only if $f^\ks_\varphi(\omega)\,(e)\neq\infty_e$, which is equivalent to $\omega(\varphi(e))\neq\infty_{\varphi(e)}$. 
  Thus $(f^\ks_\varphi)^{-1}(X^\ks_e)=Y^\ks_{\varphi(e)}$ for each $e\in E(S)$.

  For (ii), let $s\in S$, $\omega\in Y^\ks_{\varphi(s^*s)}$ and $e\in E(S)$. Compute that
  \begin{align*}
    \theta^\ks_s(f^\ks_\varphi(\omega))\, (e)
    &=\begin{cases}
      \theta_{es}(f^\ks_\varphi(\omega)\,(s^*es)) & \mathrm{if}\; f^\ks_\varphi(\omega)\,(s^*es)\neq\infty_{s^*es}\\
      \infty_e & \mathrm{otherwise}
    \end{cases}\\
    &=\begin{cases}
      \theta_{es}(f(\omega(\varphi(s^*es)))) & \mathrm{if}\; \omega(\varphi(s^*es))\neq\infty_{\varphi(s^*es)}\\
      \infty_e & \mathrm{otherwise}
    \end{cases}
  \end{align*}
  and 
  \begin{align*}
    f^\ks_\varphi(\sigma^\ks_{\varphi(s)}(\omega))\,(e)
    &=\begin{cases}
      f(\sigma^\ks_{\varphi(s)}(\omega)\,(\varphi(e))) & \mathrm{if}\;\sigma^\ks_{\varphi(s)}(\omega)\,(\varphi(e))\neq\infty_{\varphi(e)}\\
      \infty_e & \mathrm{otherwise}\\
    \end{cases}\\
    &=\begin{cases}
      f(\sigma_{\varphi(es)}(\omega(\varphi(s^*es)))) & \mathrm{if}\; \omega(\varphi(s^*es))\neq\infty_{\varphi(s^*es)}\\
      \infty_e & \mathrm{otherwise}.
    \end{cases}
  \end{align*}
  Since $\omega(\varphi(s^*es))\in Y_{\varphi(s^*es)}$ and $(\varphi,f)$ is an action morphism, we have $\theta_{es} (f(\omega(\varphi(s^*es))))=f(\sigma_{\varphi(es)}(\omega(\varphi(s^*es))))$.
  This proves $\theta^\ks_s(f^\ks_\varphi(\omega))\, (e)=f^\ks_\varphi(\sigma^\ks_{\varphi(s)}(\omega))\,(e)$.
\end{proof}

It is shown in Section 3 of \cite{FKU} that all inverse semigroup actions on locally compact Hausdorff spaces and all proper action morphisms form a category. 
We denote this category by $\isa$. 
Note that the composition of two action morphisms $(\varphi,f)\colon (S_1,X_1,\theta_1)\to (S_2,X_2,\theta_2)$ and $(\psi,g)\colon (S_2,X_2,\theta_2)\to (S_3,X_3,\theta_3)$ in $\isa$ is defined by 
\[
  (\psi,g)\circ(\varphi,f)=(\psi\circ\varphi,f\circ g).
\]

\begin{thm}
  The constructions $(S,X,\theta)\mapsto (S,X^\ks,\theta^\ks)$ and $(\varphi,f)\mapsto (\varphi,f^\ks_\varphi)$ form a functor from $\isa$ to $\isa$.
\end{thm}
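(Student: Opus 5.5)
By the preceding lemma, $(\varphi,f^\ks_\varphi)$ is a proper action morphism from $(S,X^\ks,\theta^\ks)$ to $(T,Y^\ks,\sigma^\ks)$ whenever $(\varphi,f)$ is one. So the assignment sends morphisms of $\isa$ to morphisms of $\isa$, and it remains to check the two functor axioms: preservation of identities and of composition.

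For identities, the identity morphism of $(S,X,\theta)$ is $(\mathrm{id}_S,\mathrm{id}_X)$ with $D_{\mathrm{id}_X}=X$. We check that $D_{(\mathrm{id}_X)^\ks_{\mathrm{id}_S}}=\bigcup_{e\in E(S)}X^\ks_e=X^\ks$, using Theorem \ref{thm:action-on-X^ks}. Directly from the defining formula of $\widetilde{f^\ks_\varphi}$ we also have $(\mathrm{id}_X)^\ks_{\mathrm{id}_S}(\omega)(e)=\omega(e)$. Hence the image of the identity is $(\mathrm{id}_S,\mathrm{id}_{X^\ks})$.

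For composition, let $(\varphi,f)\colon(S_1,X_1,\theta_1)\to(S_2,X_2,\theta_2)$ and $(\psi,g)\colon(S_2,X_2,\theta_2)\to(S_3,X_3,\theta_3)$. We must show
\[
  (f\circ g)^\ks_{\psi\circ\varphi}=f^\ks_\varphi\circ g^\ks_\psi
\]
as partial maps from $X_3^\ks$ to $X_1^\ks$. I would first compare the everywhere-defined maps $\widetilde{\,\cdot\,}$ on the compactifications $\widetilde{X_3^\ks}\to\widetilde{X_1^\ks}$. For $\omega\in\widetilde{X_3^\ks}$ and $e\in E(S_1)$, consider $\widetilde{f^\ks_\varphi}(\widetilde{g^\ks_\psi}(\omega))(e)$.
\begin{itemize}
  \item If $\widetilde{g^\ks_\psi}(\omega)(\varphi(e))=\infty_{\varphi(e)}$, the value is $\infty_e$. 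This condition holds exactly when $\omega(\psi(\varphi(e)))=\infty_{\psi\varphi(e)}$.
  \item Otherwise the value is $f(g(\omega(\psi\varphi(e))))$.
\end{itemize}
This matches the formula for $\widetilde{(f\circ g)^\ks_{\psi\circ\varphi}}(\omega)(e)$. We need that $\omega(\psi\varphi(e))\in X_{3,\psi\varphi(e)}=g^{-1}(X_{2,\varphi(e)})$, so that $f\circ g$ is defined there; this follows from condition (i) of Definition \ref{def:action-morphism}.

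Next, compare domains. The domain of the composite partial map is $(g^\ks_\psi)^{-1}(D_{f^\ks_\varphi})$, and $D_{f^\ks_\varphi}=\bigcup_{e}X^\ks_{2,\varphi(e)}$. Since $(g^\ks_\psi)^{-1}(X^\ks_{2,\varphi(e)})=X^\ks_{3,\psi\varphi(e)}$ by the previous lemma, this domain equals $\bigcup_{e\in E(S_1)}X^\ks_{3,\psi\varphi(e)}$. That is exactly $D_{(f\circ g)^\ks_{\psi\circ\varphi}}$. Both partial maps are restrictions of the same globally defined map to the same open set, so they coincide.

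The only mildly delicate point is the bookkeeping of domains of partial maps. Everything else is formula chasing.
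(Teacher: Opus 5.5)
Your proposal is correct and follows essentially the same route as the paper. The paper also reduces to preserving composition, identifies both domains as $\bigcup_{e\in E(S_1)}X^\ks_{3,\psi\varphi(e)}$ using $(g^\ks_\psi)^{-1}(X^\ks_{2,e'})=X^\ks_{3,\psi(e')}$ from the preceding lemma, and calls the pointwise equality routine. Your comparison of the globally defined maps on $\widetilde{X_3^\ks}$ and your explicit identity check simply fill in the details the paper leaves as routine.
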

\begin{proof}
  The only non-trivial part is to prove that this correspondence preserves any composition of two action morphisms.

  Let $(\varphi,f)\colon (S_1,X,\theta_1)\to (S_2,Y,\theta_2)$ and $(\psi,g)\colon (S_2,Y,\theta_2)\to (S_3,Z,\theta_3)$ be morphisms in the category $\isa$. 
  To see $(\psi,g_\psi^\ks)\circ(\varphi,f_\varphi^\ks)=(\psi\circ\varphi,(f\circ g)_{\psi\circ\varphi}^\ks)$, it suffices to show that $f_\varphi^\ks\circ g_\psi^\ks=(f\circ g)_{\psi\circ\varphi}^\ks$ as partial maps.

  First note that the domain of $(f\circ g)_{\psi\circ\varphi}^\ks$ is $\bigcup_{e\in E(S_1)}Z^\ks_{\psi\circ\varphi(e)}$. 
  Observe that the domain of $f_\varphi^\ks\circ g_\psi^\ks$ is
  \[
    (g_\psi^\ks)^{-1}(D_{f_\varphi^\ks})=(g_\psi^\ks)^{-1}\Bigl(\bigcup_{e\in E(S_1)}Y^\ks_{\varphi(e)}\Bigr)=\bigcup_{e\in E(S_1)}Z^\ks_{\psi\circ\varphi(e)}.
  \]
  Hence the domain of $f_\varphi^\ks\circ g_\psi^\ks$ and the domain of $(f\circ g)_{\psi\circ\varphi}^\ks$ coincide.

  It is now routine to verify that $f_\varphi^\ks\circ g_\psi^\ks(\omega)=(f\circ g)_{\psi\circ\varphi}^\ks(\omega)$ for any $\omega\in D_{(f\circ g)^\ks_{\psi\circ\varphi}}$.
\end{proof}

\subsection{Universality of KS-Groupoids}
\label{subsec:3.3}

The universality of the universal groupoid $S\ltimes\widehat{E(S)}$ has been studied widely in the literature: see, for example, \cite{paterson,steinberg,es}. In this subsection, we extend these arguments to the KS-groupoids.

Let $\sigma,\theta$ be actions of $S$ on locally compact Hausdorff spaces $Y,X$, respectively, and let $f\colon Y\to X$ be an $S$-equivariant map. Define a map $f^\ks\colon Y^\ks\to X^\ks$ by
\begin{align*}
  f^\ks(\tau)(e)=
  \begin{cases}
    f(\tau(e)) & \mathrm{if}\;\tau(e)\neq\infty_e\\
    \infty_e & \mathrm{otherwise.}
  \end{cases}
\end{align*}
It is easy to see $f^\ks(\tau)\in X^\ks$ for all $\tau\in Y^\ks$.

\begin{lem}
  \label{lem:on-f^ks}
  The following hold for $f\colon Y\to X$.
  \begin{enumerate}
    \item $f^\ks$ is $S$-equivariant and $(f^\ks)^{-1}(X^\ks_e)=Y_e^\ks$ for each $e\in E(S)$.
    \item $f^\ks$ is Borel if $E(S)^X:=\{e\in E(S)\mid X_e\neq\emptyset\}$ is countable and $f$ is Borel.
  \end{enumerate}
\end{lem}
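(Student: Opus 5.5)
Part (i) is a direct check from the definitions. First I will confirm that $f^\ks(\tau)\in X^\ks_e$ exactly when $f^\ks(\tau)(e)\neq\infty_e$. By the definition of $f^\ks$, this holds exactly when $\tau(e)\neq\infty_e$, that is, when $\tau\in Y^\ks_e$. This gives $(f^\ks)^{-1}(X^\ks_e)=Y^\ks_e$, and in particular $f^\ks(Y^\ks_e)\subset X^\ks_e$.

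For equivariance, fix $s\in S$, $\tau\in Y^\ks_{s^*s}$ and $e\in E(S)$, and compare the two sides of the square using the formula of Theorem \ref{thm:action-on-X^ks}. Both $\theta^\ks_s(f^\ks(\tau))(e)$ and $f^\ks(\sigma^\ks_s(\tau))(e)$ equal $\infty_e$ unless $\tau(s^*es)\neq\infty_{s^*es}$. In that case the first is $\theta_{es}(f(\tau(s^*es)))$ and the second is $f(\sigma_{es}(\tau(s^*es)))$. These agree because $f$ is $S$-equivariant: $\tau(s^*es)\in Y_{s^*es}=Y_{(es)^*(es)}$, so the equivariance square for $es$ applies.

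For (ii), I would copy the argument of Lemma \ref{lem:hom}. Each evaluation $q_e\colon X^\ks\to\widetilde{X_e}$ is continuous, and the topology on $X^\ks$ is the initial one for the family $\{q_e\}_{e\in E(S)}$. When $X_e=\emptyset$, $q_e$ is the constant map $\infty_e$, so only the countably many $e\in E(S)^X$ matter. Then $X^\ks$ embeds in the countable product $\prod_{e\in E(S)^X}\widetilde{X_e}$, and the Borel structure needs some care here. The plan is to show that each coordinate $q_e\circ f^\ks$ is Borel and then deduce that $f^\ks$ is Borel.

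For a fixed $e$, the coordinate $q_e\circ f^\ks$ equals $f\circ q'_e$ on the open set $Y^\ks_e$, where $q'_e$ is evaluation on $Y^\ks$. It equals the constant $\infty_e$ on the closed complement. Since $f(Y_e)\subset X_e$, $f|_{Y_e}$ is Borel into $X_e$, and hence also into $\widetilde{X_e}$. A map that is Borel on the open piece and constant on its complement is Borel, so each coordinate is Borel.

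The step I expect to be the main obstacle is passing from coordinatewise Borel to Borel. This requires that every open set of $X^\ks$ be a countable union of basic cylinder sets $\bigcap_{i=1}^n q_{e_i}^{-1}(U_i)$, since the initial topology of a countable product of spaces that need not be second countable is not automatically generated by countably many cylinders. I would handle it as follows. By the definition of $\widetilde{X^\ks}$, the set $X^\ks$ is covered by the countably many open sets $X^\ks_e$ with $e\in E(S)^X$. It then suffices to check Borelness on each $Y^\ks_e=(f^\ks)^{-1}(X^\ks_e)$ into $X^\ks_e$. For that I would use the homeomorphism $\pi^e\colon X^\ks_e\cong\mathcal{X}_e$ together with a description of open sets by finitely many coordinates $f\le e$. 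If the general statement is meant, I would follow the convention of Lemma \ref{lem:hom}. There, Borelness is read as ``preimages of open sets are Borel'', and the reduction goes through countable unions over $e\in E(S)^X$ of preimages under maps that are restrictions of $f$ composed with continuous maps.
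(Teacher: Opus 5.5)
Your part (i) is correct and is the same computation as the paper's. Part (ii), however, has a genuine gap. You rightly note that Borelness of each coordinate $q_e\circ f^\ks$ does not by itself make $f^\ks$ Borel: open sets of $X^\ks$ are arbitrary, possibly uncountable, unions of cylinders $\bigcap_{i=1}^n q_{e_i}^{-1}(U_i)$, and the $U_i$ range over possibly non-second-countable spaces $\widetilde{X_{e_i}}$. But the remedy you sketch does not remove this obstacle.

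\begin{itemize}
\item Restricting to $Y^\ks_e\to X^\ks_e$ is a legitimate reduction, but $X^\ks_e\cong\mathcal{X}_e\subset\prod_{g\le e}\widetilde{X_g}$ has exactly the same problem. ``Finitely many coordinates'' describes only the basic cylinders, not arbitrary open sets.
\item The analogy with Lemma \ref{lem:hom} does not transfer. There, $d$ identifies $[t,X_{t^*t}]$ homeomorphically with $X_{t^*t}$, so $\tilde f$ becomes $f|_{X_{t^*t}}$. Here $p_Y|_{Y^\ks_e}$ is not injective in general, so $f^\ks|_{Y^\ks_e}$ is not just a copy of $f|_{Y_e}$.
\end{itemize}

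The missing idea is that for $\tau\in X^\ks$ all coordinates other than $\infty$ equal the single point $p_X(\tau)$. So the only ``large'' information is carried by the continuous map $p_X$, and the rest is the countable pattern of the open sets $X^\ks_g$, $g\in E(S)^X$.

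Concretely, for $U$ open in $\widetilde{X_e}$:
\begin{itemize}
\item if $\infty_e\notin U$, then $q_e^{-1}(U)=p_X^{-1}(U)\cap X^\ks_e$;
\item if $\infty_e\in U$, then $q_e^{-1}(U)=\bigl(p_X^{-1}(U\setminus\{\infty_e\})\cap X^\ks_e\bigr)\cup(X^\ks\setminus X^\ks_e)$.
\end{itemize}

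Use $X^\ks_e\cap X^\ks_{e'}=X^\ks_{ee'}$ and $X^\ks=\bigcup_{g\in E(S)^X}X^\ks_g$. Then every cylinder is a countable union of sets
\[
B_X(V;e,G)=p_X^{-1}(V)\cap X^\ks_e\cap\bigcap_{g\in G}(X^\ks\setminus X^\ks_g),
\]
with $V$ open in $X$, $e\in E(S)^X$ and $G\subset E(S)^X$ finite. Since $\bigcup_iB_X(V_i;e,G)=B_X(\bigcup_iV_i;e,G)$ and there are only countably many pairs $(e,G)$, every open subset of $X^\ks$ is a countable union of such sets.

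Finally, $p_X\circ f^\ks=f\circ p_Y$ together with (i) gives
\[
(f^\ks)^{-1}(B_X(V;e,G))=p_Y^{-1}(f^{-1}(V))\cap Y^\ks_e\cap\bigcap_{g\in G}(Y^\ks\setminus Y^\ks_g),
\]
which is Borel. This is the paper's argument. Without this step, or an equivalent one, your plan for (ii) is not yet a proof.
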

\begin{proof}
  (i): It is easy to see $(f^\ks)^{-1}(X^\ks_e)=Y_e^\ks$ for each $e\in E(S)$. For any $\tau\in Y^\ks_{s^*s}$ and $e\in E(S)$, we compute that
  \begin{align*}
    \theta_s^\ks(f^\ks(\tau))\,(e)
    &=\begin{cases}
      \theta_{es}(f^\ks(\tau)(s^*es)) & \mathrm{if}\;f^\ks(\tau)(s^*es)\neq\infty_{s^*es}\\
      \infty_e & \mathrm{otherwise}
    \end{cases}\\
    &=\begin{cases}
      \theta_{es}(f(\tau(s^*es))) & \mathrm{if}\;\tau(s^*es)\neq\infty_{s^*es}\\
      \infty_e &\mathrm{otherwise}
    \end{cases}
  \end{align*}
  and 
  \begin{align*}
    f^\ks(\sigma_s^\ks(\tau))\,(e)
    &=\begin{cases}
      f(\sigma_s^\ks(\tau)(e)) & \mathrm{if}\;\sigma_s^\ks(\tau)(e)\neq\infty_{e}\\
      \infty_e &\mathrm{otherwise}
    \end{cases}\\
    &=\begin{cases}
      f(\sigma_{es}(\tau(s^*es))) & \mathrm{if}\;\tau(s^*es)\neq\infty_{s^*es}\\
      \infty_e &\mathrm{otherwise}
    \end{cases}\\
    &=\begin{cases}
      \theta_{es}(f(\tau(s^*es))) &\mathrm{if}\;\tau(s^*es)\neq\infty_{s^*es}\\
      \infty_e & \mathrm{otherwise}.
    \end{cases}
  \end{align*}

  (ii): For any $V\in\mathbb{O}(X)$, $e\in E(S)^X$ and a finite subset $G\subset E(S)^X$, set 
  \[
    B_X(V;e,G)=p_X^{-1}(V)\cap X^\ks_e\cap \bigcap_{g\in G}(X^\ks\setminus X^\ks_g).
  \]
  Clearly, this subset is Borel in $X^\ks$. 
  We first show that every open subset of $X^\ks$ is a countable union of subsets of this form.
  For any $e\in E(S)^X$ and an open subset $U$ of $\widetilde{X_e}$, we have $q_e^{-1}(U)=p_X^{-1}(U)\cap X^\ks_e$ if $\infty_e\notin U$; and 
  \begin{align*}
    q_e^{-1}(U)
    &=\bigl(p_X^{-1}(U\setminus\{\infty_e\})\cap X_e^\ks\bigr)\cup (X^\ks\setminus X_e^\ks)\\
    &=\bigl(p_X^{-1}(U\setminus\{\infty_e\})\cap X_e^\ks\bigr)\cup\bigcup_{f\in E(S)^X} B_X(X;f,\{e\})
  \end{align*}
  if $\infty_e\in U$, where $q_e\colon X^\ks\to\widetilde{X_e}$ denotes the evaluation at $e\in E(S)^X$.
  Since there are only countably many pairs $(e,G)$ such that $e\in E(S)^X$ and $G\subset E(S)^X$ is a finite subset, and 
  \[
    \bigcup_i B_X(V_i;e,G)=B_X\Bigl(\bigcup_i V_i;e,G\Bigr),\quad B_X(V;e,G)\cap B_X(W;e,G)=B_X(V\cap W;e,G)
  \]
  for any family $\{V_i\}_i\subset \mathbb{O}(X)$ and $V,W\in\mathbb{O}(X)$, it follows that every open subset of $X^\ks$ is a countable union of subsets of this form $B_X(V;e,G)$.

  Let $W$ be an open subset of $X^\ks$ and write $W=\bigcup_{n=1}^\infty B_X(V_n;e_n,G_n)$. Using (i) and the equality $p_X\circ f^\ks=f\circ p_Y$, we have
  \[
    (f^\ks)^{-1}(W)=\bigcup_{n=1}^\infty\;\Bigl(p_Y^{-1}(f^{-1}(V_n))\cap Y^\ks_{e_n}\cap \bigcap_{g\in G_n}(Y^\ks\setminus Y^\ks_g)\Bigr).
  \]
  Hence $(f^\ks)^{-1}(W)$ is Borel since $f$ is assumed to be Borel.
  This proves (ii).
\end{proof}

In the following, we denote the composition $f^\ks\circ\iota_Y\colon Y\to X^\ks$ by $\rho$.

\begin{lem}
  \label{lem:rho}
  For all $y\in Y$ and $e\in E(S)$,
  \begin{align*}
    \rho(y)(e)=(f^\ks\circ\iota_Y(y))(e)=
    \begin{cases}
      f(y) & \mathrm{if}\;y\in Y_e\\
      \infty_e & \mathrm{otherwise.}
    \end{cases}
  \end{align*}
\end{lem}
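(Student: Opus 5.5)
This is a direct computation from the definitions, unwinding the two maps in the composition. First we evaluate $\iota_Y(y)=\tau_y$: by the definition of $\iota_Y$, we have $\tau_y(e)=y$ if $y\in Y_e$ and $\tau_y(e)=\infty_e$ otherwise.

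Next we apply the definition of $f^\ks$ to $\tau=\tau_y$. By definition, $f^\ks(\tau_y)(e)=f(\tau_y(e))$ when $\tau_y(e)\neq\infty_e$, and $f^\ks(\tau_y)(e)=\infty_e$ otherwise. We split into the two cases for $e\in E(S)$.
\begin{enumerate}
  \item If $y\in Y_e$, then $\tau_y(e)=y\neq\infty_e$, so $\rho(y)(e)=f(\tau_y(e))=f(y)$.
  \item If $y\notin Y_e$, then $\tau_y(e)=\infty_e$, so $\rho(y)(e)=\infty_e$.
\end{enumerate}
This is exactly the displayed formula.

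The only point needing a remark is that the case $y\in Y_e$ yields a legitimate element of $\widetilde{X_e}$, namely that $f(y)\in X_e$. This follows from the $S$-equivariance of $f$, which gives $f(Y_e)\subset X_e$. In any case, it is already implicit in the earlier observation that $f^\ks(\tau)\in X^\ks$. I do not expect any real obstacle here, since the statement is purely definitional.
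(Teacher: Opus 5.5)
Your proof is correct and is the same routine verification the paper has in mind: the paper just says ``routine'', and you unwind $\iota_Y(y)=\tau_y$ and then apply the definition of $f^\ks$ case by case. Your remark that $S$-equivariance gives $f(Y_e)\subset X_e$, so the value lies in $\widetilde{X_e}$, is the only point worth flagging, and you handle it correctly.
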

\begin{proof}
  It is routine to verify this.
\end{proof}

\begin{prop}
  \label{prop:rho}
  For the map $\rho=f^\ks\circ\iota_Y$, the following hold.
  \begin{enumerate}
    \item $y\in Y_e$ if and only if $\rho(y)\in X_e^\ks$.
    \item $\rho$ is $S$-equivariant.
    \item $\rho$ is a Borel map if $E(S)^X:=\{e\in E(S)\mid X_e\neq\emptyset\}$ is countable and $f$ is Borel.
    \item $\rho$ is continuous if $f$ is continuous and $Y_e$ is closed for each $e\in E(S)$.
  \end{enumerate}
\end{prop}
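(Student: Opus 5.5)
Items (i)–(iii) follow directly from what has already been established; (iv) is the only part needing a separate topological argument.

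For (i), Lemma \ref{lem:rho} gives $\rho(y)(e)=f(y)\neq\infty_e$ exactly when $y\in Y_e$. Since $X^\ks_e=\{\tau\mid\tau(e)\neq\infty_e\}$, this is the claim. Alternatively, combine $\iota_Y^{-1}(Y^\ks_e)=Y_e$ from the earlier proposition on $\iota_Y$ with $(f^\ks)^{-1}(X^\ks_e)=Y^\ks_e$ from Lemma \ref{lem:on-f^ks}(i).

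For (ii), $\iota_Y$ is $S$-equivariant by the proposition on $\iota_X,p_X$, and $f^\ks$ is $S$-equivariant by Lemma \ref{lem:on-f^ks}(i). A composition of $S$-equivariant maps is $S$-equivariant: the domain conditions compose, since $\rho(Y_e)\subset f^\ks(Y^\ks_e)\subset X^\ks_e$, and the commuting squares paste together.

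For (iii), $\iota_Y$ is continuous, hence Borel, and $f^\ks$ is Borel by Lemma \ref{lem:on-f^ks}(ii) under the stated hypotheses. So $\rho$ is Borel as a composition of Borel maps.

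For (iv), it suffices to show that $y\mapsto\rho(y)(e)\in\widetilde{X_e}$ is continuous for each $e\in E(S)$, since $X^\ks$ carries the product topology. Let $y_\lambda\to y$ in $Y$. There are two cases.
\begin{itemize}
\item If $y\in Y_e$, then $Y_e$ is open, so eventually $y_\lambda\in Y_e$. For those $\lambda$ we have $\rho(y_\lambda)(e)=f(y_\lambda)\to f(y)$ in $X_e$ by continuity of $f$, and this is also convergence in $\widetilde{X_e}$.
\item If $y\notin Y_e$, then $\rho(y)(e)=\infty_e$. Take a neighbourhood $U$ of $\infty_e$ and set $K=\widetilde{X_e}\setminus U$, which is compact in $X_e$ and hence closed in $X$. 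We need, eventually, either $y_\lambda\notin Y_e$ or $f(y_\lambda)\notin K$. Because $Y_e$ is closed and $y\notin Y_e$, the open set $Y\setminus Y_e$ contains $y$, so eventually $y_\lambda\notin Y_e$. Then $\rho(y_\lambda)(e)=\infty_e\in U$.
\end{itemize}
Closedness of $Y_e$ is used only in the second case. This case is the only real obstacle: without that hypothesis, a net in $Y_e$ could converge to a point outside $Y_e$ while $f(y_\lambda)$ stays inside $K$.
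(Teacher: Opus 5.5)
Your proposal is correct and follows essentially the paper's route. Items (i) and (iv) are read off from the explicit formula in Lemma \ref{lem:rho}: once $Y_e$ is clopen, $y\mapsto\rho(y)(e)$ is $f$ on $Y_e$ and the constant $\infty_e$ off it. Items (ii) and (iii) come from Lemma \ref{lem:on-f^ks} together with the continuity and $S$-equivariance of $\iota_Y$; your net argument for (iv) spells out what the paper calls immediate.
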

\begin{proof}
  (i),(iv) follow immediately from the lemma above.

  (ii),(iii) follow immediately from Lemma \ref{lem:on-f^ks} and the fact that the map $\iota_Y\colon Y\to Y^\ks$ is continuous and $S$-equivariant.
\end{proof}

Since $\rho\colon Y\to X^\ks$ is $S$-equivariant, it follows from Lemma \ref{lem:hom} that the map
\[
  \widetilde{\rho}\colon S\ltimes Y\ni [s,y]\mapsto [s,\rho(y)]\in S\ltimes X^\ks
\]
is a well-defined groupoid homomorphism.

\begin{dfn}
  \label{def:d-bij}
  A groupoid homomorphism $g\colon G\to H$ is said to be \textit{\d-bijective} if the map $g|_{G_x}\colon G_x\to H_{g(x)}$ is bijective for all $x\in G^{(0)}$.
\end{dfn}

\begin{thm}
  \label{thm:univ}
  Let $\theta,\sigma$ be actions of an inverse semigroup $S$ on locally compact Hausdorff spaces $X,Y$, respectively, and let $f\colon Y\to X$ be an $S$-equivariant map. Then for the $S$-equivariant map $\rho=f^\ks\circ\iota_Y\colon Y\to X^\ks$, the groupoid homomorphism $\widetilde{\rho}$ is \d-bijective. Moreover, the following hold.
  \begin{enumerate}
    \item $\tilde{\rho}$ is Borel if $S^X$ is countable and $f$ is Borel.
    \item $\tilde{\rho}$ is continuous if $f$ is continuous and $Y_e$ is closed for each $e\in E(S)$.
  \end{enumerate}
\end{thm}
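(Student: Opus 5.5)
First I compute the source fibres on both sides. For $y\in Y$, $(S\ltimes Y)_y=\{[s,y]\mid y\in Y_{s^*s}\}$. The paper cites a Lemma \ref{lem:domain} asserting $(S\ltimes X^\ks)_{\tau_{e,\,x}}=\{[s,\tau_{e,\,x}]\mid s\in S_e\}$. For a general $\tau\in X^\ks$, however, I will use directly that $(S\ltimes X^\ks)_\tau=\{[s,\tau]\mid \tau\in X^\ks_{s^*s}\}$.

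\textbf{Surjectivity.} Take $[s,\rho(y)]$ with $\rho(y)\in X^\ks_{s^*s}$. By Proposition \ref{prop:rho}(i) this means $y\in Y_{s^*s}$, so $[s,y]\in(S\ltimes Y)_y$ and it maps to $[s,\rho(y)]$.

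\textbf{Injectivity.} Suppose $[s,\rho(y)]=[t,\rho(y)]$ with $y\in Y_{s^*s}\cap Y_{t^*t}$. By the definition of $\sim$, there is $e\in E(S)$ with $\rho(y)\in X^\ks_e$ and $se=te$. Applying Proposition \ref{prop:rho}(i) again gives $y\in Y_e$, hence $(s,y)\sim(t,y)$ in $S*Y$, i.e. $[s,y]=[t,y]$. So $\widetilde\rho$ is \d-bijective. The key point is that $\rho(y)\in X^\ks_e$ holds exactly when $y\in Y_e$; this is where $X^\ks$ is used instead of $X$, because $f(y)\in X_e$ does not imply $y\in Y_e$.

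\textbf{Regularity.} For (i) and (ii), I would apply Lemma \ref{lem:hom} to $\rho\colon Y\to X^\ks$, regarded as an equivariant map between two $S$-spaces. For (ii), Proposition \ref{prop:rho}(iv) shows that $\rho$ is continuous, so $\widetilde\rho$ is continuous. For (i), Proposition \ref{prop:rho}(iii) requires $E(S)^X$ to be countable. This follows from countability of $S^X$, since $E(S)^X\subset S^X$: if $X_e\ne\emptyset$ then $X_{e^*e}=X_e\ne\emptyset$. So $\rho$ is Borel.

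The remaining issue, and the only real obstacle, is that the Borel clause of Lemma \ref{lem:hom} requires countability of $S^Y=\{s\mid Y_{s^*s}\neq\emptyset\}$, the set attached to the domain space. I would inspect the proof of that lemma: the cover $(\widetilde\rho)^{-1}(U)=\bigcup_t(\widetilde\rho|_{[t,Y_{t^*t}]})^{-1}(U)$ only needs $t$ to range over a countable family whose bisections cover $S\ltimes Y$. Since $f$ is equivariant, $f(Y_{e})\subset X_e$, so $Y_{s^*s}\ne\emptyset$ implies $X_{s^*s}\ne\emptyset$, giving $S^Y\subset S^X$. Thus $S^Y$ is countable, and on each bisection $[t,Y_{t^*t}]\cong Y_{t^*t}$ the map $\widetilde\rho$ is identified with $\rho|_{Y_{t^*t}}$ into $[t,X^\ks_{t^*t}]\cong X^\ks_{t^*t}$, which is Borel. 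Hence $\widetilde\rho$ is Borel.
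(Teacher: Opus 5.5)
Your proposal is correct and follows the paper's proof. In both directions, \d-bijectivity comes from Proposition~\ref{prop:rho}(i). The two regularity claims come from Lemma~\ref{lem:hom} applied to $\rho$, together with Proposition~\ref{prop:rho}(iii),(iv).

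You also check explicitly that $E(S)^X\subset S^X$ and $S^Y\subset S^X$. The second inclusion is needed because the Borel clause of Lemma~\ref{lem:hom} concerns the domain space $Y$. These checks supply details that the paper's one-line appeal to Lemma~\ref{lem:hom} leaves implicit.
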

\begin{proof}
  To see that $\tilde{\rho}$ is \d-bijective, take $y\in Y=(S\ltimes Y)^{(0)}$. 
  Surjectivity of the map $\widetilde{\rho}|_{(S\ltimes Y)_y}\colon (S\ltimes Y)_y\to (S\ltimes X^\ks)_{\rho(y)}$ is clear from (i) of Proposition \ref{prop:rho}. 
  For injectivity, suppose $\widetilde{\rho}([s,y])=\widetilde{\rho}([t,y])$, that is, $[s,\rho(y)]=[t,\rho(y)]$. 
  Then there exists $e\in E(S)$ such that $se=te$ and $\rho(y)\in X^\ks_e$, which is equivalent to saying $se=te$ and $y\in Y_e$ for some $e\in E(S)$ by Proposition \ref{prop:rho} (i). 
  Then $[s,y]=[t,y]$, which proves the injectivity. 
  The last two statements follow from Lemma \ref{lem:hom}.
\end{proof}

\begin{rem}
  If $g\colon G\to H$ is a \d-bijective Borel groupoid homomorphism between second countable \'etale groupoids $G,H$, it is known that amenability of $H$ passes to $G$ (\cite[Proposition 2.4]{es}). In particular, if $S\ltimes X^\ks$ is amenable, then $S\ltimes Y$ is amenable in the setting of Theorem \ref{thm:univ} (i) with the assumption that $X$ and $Y$ are second countable.
\end{rem}

\section{KS-Groupoids and KS-Crossed Products}
\label{sec:4}

\subsection{The Space \texorpdfstring{$X^\ks$}{} as the Spectrum of \texorpdfstring{$E(S)\kstimes C_0(X)$}{}}
\label{subsec:4.1}

Let $\theta$ be an action of an inverse semigroup $S$ on a locally compact Hausdorff space $X$. In this subsection, we show that the space $X^\ks$ is naturally homeomorphic to the spectrum of the commutative C*-algebra $E(S)\kstimes C_0(X)$, and the $S$-action on $X^\ks$ defined in Theorem \ref{thm:action-on-X^ks} is compatible with the $S$-action on $E(S)\kstimes C_0(X)$ as in Proposition \ref{prop:action-on-A^ks}.

\begin{prop}
  \label{prop:spectrum}
  The following hold.
  \begin{enumerate}
    \item For a character $\chi$ on $E(S)\kstimes C_0(X)$, there exists a unique element $\tau_\chi$ in $\prod_{e\in E(S)}\widetilde{X_e}$ satisfying
    \[
      g(\tau_\chi(e))=\chi(\delta_e\,g)
    \]
    for all $e\in E(S)$ and $g\in C_0(X_e)$. Moreover, $\tau_\chi\in X^\ks$.
    \item For $\tau\in X^\ks$, there exists a unique character $\chi_\tau:E(S)\kstimes C_0(X)\to\mathbb{C}$ such that 
    \[
      \chi_\tau(\delta_e\,g)=g(\tau(e))
    \]
    for all $e\in E(S)$ and $g\in C_0(X_e)$.
    \item These correspondences 
    \[
      \Sp(E(S)\kstimes C_0(X))\ni\chi\mapsto \tau_\chi\in X^\ks,\quad X^\ks\ni\tau\mapsto\chi_\tau\in\Sp(E(S)\kstimes C_0(X))
      \]
    are inverses of each other, and are homeomorphisms between $\Sp(E(S)\kstimes C_0(X))$ and $X^\ks$.
  \end{enumerate}
\end{prop}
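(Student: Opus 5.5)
For (i), I restrict the character $\chi$ of $A^\ks:=E(S)\kstimes C_0(X)$ to each copy of $C_0(X_e)$, via the $*$-homomorphism $C_0(X_e)\ni g\mapsto \delta_e\,g\in A^\ks$. This gives a character of $C_0(X_e)$ or zero. By Gelfand duality for $C_0(X_e)$, that is either evaluation at a unique point $x\in X_e$ or the zero functional. I set $\tau_\chi(e)=x$ in the first case and $\tau_\chi(e)=\infty_e$ in the second; uniqueness is immediate. I then check the conditions of Definition \ref{def:of-X^ks}:
\begin{itemize}
\item For (1): if $e_1\le e_2$ and $\tau_\chi(e_1)=x\neq\infty_{e_1}$, pick $h\in C_0(X_{e_1})$ with $h(x)=1$. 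For $g\in C_0(X_{e_2})$,
\[
\chi(\delta_{e_2}\,g)=\chi(\delta_{e_2}\,g)\,\chi(\delta_{e_1}\,h)=\chi(\delta_{e_1}\,gh)=g(x),
\]
so $\tau_\chi(e_2)=x$.
\item For (2): $\chi(\delta_{e_1e_2}\,g_1g_2)=\chi(\delta_{e_1}\,g_1)\chi(\delta_{e_2}\,g_2)$ is nonzero for suitable $g_i$.
\end{itemize}
Finally $\tau_\chi\neq\infty$, because $\chi\neq0$ and the elements $\delta_e\,g$ span a dense subspace.

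For (ii), I define $\chi_\tau$ on the dense $*$-subalgebra $E(S)\alg C_0(X)$ by linearity. Multiplicativity on generators reduces to
\[
g_1(\tau(e_1))\,g_2(\tau(e_2))=(g_1g_2)(\tau(e_1e_2)),
\]
which follows from Remark \ref{rem:}: both sides vanish unless $\tau(e_1),\tau(e_2)$ and $\tau(e_1e_2)$ are all finite, and then all three coincide. The delicate point is well-definedness, since $E(S)\alg C_0(X)=\bigoplus_e C_0(X_e)$ is a direct sum, so the linear extension is automatically a $*$-homomorphism $E(S)\alg C_0(X)\to\mathbb C$. Continuity comes for free: a $*$-homomorphism into $\mathbb C$ is bounded by the universal norm, so it extends to $E(S)\kstimes C_0(X)$. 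It is nonzero because $\tau(e)\neq\infty_e$ for some $e$. Uniqueness follows from density.

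For (iii), the two correspondences are mutually inverse directly from the defining formulas and density of the span of the $\delta_e\,g$. For continuity I work with nets. On $\Sp$ the weak$^*$ topology is determined by convergence on the dense set $\{\delta_e\,g\}$, using boundedness of characters. In $\widetilde{X_e}$, $\tau_\lambda(e)\to\tau(e)$ if and only if $g(\tau_\lambda(e))\to g(\tau(e))$ for all $g\in C_0(X_e)$, since the one-point compactification carries the weak topology induced by $C_0(X_e)$ together with the constants. Hence $\chi_\lambda\to\chi$ if and only if $\tau_{\chi_\lambda}\to\tau_\chi$ coordinatewise, that is, in the product topology. The main obstacle I expect is exactly this topological identification, in particular handling convergence to $\infty_e$, where one must use that a net converges to $\infty_e$ if and only if every $g\in C_0(X_e)$ tends to $0$ along it. I would verify this carefully via compact sets and Urysohn functions.
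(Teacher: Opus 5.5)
Your proposal is correct and follows the paper's proof essentially step by step. Parts (i) and (ii) match its arguments: restricting $\chi$ to the copies of $C_0(X_e)$ and checking conditions (1) and (2) of Definition \ref{def:of-X^ks}, then defining $\chi_\tau$ on $E(S)\alg C_0(X)$, verifying multiplicativity via Remark \ref{rem:}, and extending by universality. For (iii) you spell out, via boundedness of characters and the fact that $\widetilde{X_e}$ carries the weak topology induced by $C_0(X_e)$, the continuity that the paper dismisses as routine.
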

\begin{proof}
  (i): Clearly, the map $C_0(X_e)\ni g\mapsto \chi(\delta_e\,g)\in\mathbb{C}$ is a $*$-homomorphism. Hence there exists $\tau_\chi(e)\in\widetilde{X_e}$ such that $\chi(\delta_e\,g)=g(\tau_\chi(e))$ for all $g\in C_0(X_e)$. This defines $\tau_\chi\in\prod_{e\in E(S)}\widetilde{X_e}$ with the desired property.

  We next show that $\tau_\chi\in X^\ks$. Clearly, $\tau_\chi\neq\infty$ since $\chi\neq 0$. 

  It suffices to check the conditions (1) and (2) of Definition \ref{def:of-X^ks}. For (1), suppose $e_1\le e_2$ and $\tau_\chi(e_1)\neq\infty_{e_1}$. Then there exists $g\in C_0(X_{e_1})$ with $\chi(\delta_{e_1}\,g)=g(\tau_\chi(e_1))=1$. For $f\in C_0(X_{e_2})$, compute that 
  \[
    f(\tau_\chi(e_2))=\chi(\delta_{e_2}\,f)=\chi(\delta_{e_2}\,f)\chi(\delta_{e_1}\,g)=\chi(\delta_{e_1}\,fg)=(fg)(\tau_\chi(e_1))=f(\tau_\chi(e_1)).
  \]
  Since $f\in C_0(X_{e_2})$ is arbitrary, we obtain $\tau_\chi(e_2)=\tau_\chi(e_1)$.

  For (2), suppose $\tau_\chi(e_1)\neq\infty_{e_1}$ and $\tau_\chi(e_2)\neq\infty_{e_2}$. Then there exist $f\in C_0(X_{e_1})$ and $g\in C_0(X_{e_2})$ satisfying $f(\tau_\chi(e_1))=1$ and $g(\tau_\chi(e_2))=1$. Then we compute that
  \[
    (fg)(\tau_\chi(e_1e_2))=\chi(\delta_{e_1e_2}\,fg)=\chi(\delta_{e_1}\,f)\chi(\delta_{e_2}\,g)=f(\tau_\chi(e_1))g(\tau_\chi(e_2))=1,
  \]
  from which it follows $\tau_\chi(e_1e_2)\neq\infty_{e_1e_2}$.

  \noindent
  (ii): It suffices to show that the functional $\chi_\tau\colon E(S)\alg C_0(X)\to\mathbb{C}$ given by $\chi_\tau(\delta_e\,g)=g(\tau(e))$ is a nonzero $*$-homomorphism. Clearly, $\chi_\tau\neq 0$ since $\tau\neq\infty$. It is also easy to see $\chi_\tau$ is $*$-preserving. To show that $\chi_\tau$ is multiplicative, recall from Remark \ref{rem:} that the condition $\tau(e_1e_2)\neq\infty_{e_1e_2}$ is equivalent to $\tau(e_1)\neq\infty_{e_1}$ and $\tau(e_2)\neq\infty_{e_2}$. If these equivalent conditions hold, we have $\tau(e_1)=\tau(e_1e_2)=\tau(e_2)$ and hence
  \[
    \chi_\tau(\delta_{e_1}\,f)\chi_\tau(\delta_{e_2}\,g)=f(\tau(e_1))g(\tau(e_2))=(fg)(\tau(e_1e_2))=\chi_\tau(\delta_{e_1e_2}\,fg)
  \]
  for all $f\in C_0(X_{e_1})$ and $g\in C_0(X_{e_2})$. If the equivalent conditions do not hold, then 
  \[
    \chi_\tau(\delta_{e_1}\,f)\chi_\tau(\delta_{e_2}\,g)=0=\chi_\tau(\delta_{e_1e_2}\,fg).
  \]
  Hence $\chi_\tau$ uniquely extends to a character on $E(S)\kstimes C_0(X)$ by the universality of $E(S)\kstimes C_0(X)$.

  \noindent
  (iii): It is routine to check the correspondences $\chi\mapsto\tau_\chi$ and $\tau\mapsto \chi_\tau$ are continuous. 

  Compute that
  \begin{align*}
    \chi_{\tau_\chi}(\delta_e\, g)&=g(\tau_\chi(e))=\chi(\delta_e\, g),\\
    g(\tau_{\chi_\tau}(e))&=\chi_\tau(\delta_e\,g)=g(\tau(e))
  \end{align*}
  for all $e\in E(S)$ and $g\in C_0(X_e)$. Thus $\chi_{\tau_\chi}=\chi$ and $\tau_{\chi_\tau}=\tau$. These identities show that the correspondences $\chi\mapsto\tau_\chi$ and $\tau\mapsto \chi_\tau$ are inverses of each other, and are homeomorphisms between $\Sp(E(S)\kstimes C_0(X))$ and $X^\ks$.
\end{proof}

Recall from Theorem \ref{thm:action-on-X^ks} that the assignments
\begin{enumerate}
  \item $X^\ks_e=\{\tau\in X^\ks\mid \tau(e)\neq\infty_e\}$ for each $e\in E(S)$, and
  \item $\theta_s^\ks\colon X^\ks_{s^*s}\to X^\ks_{ss^*}$ given by
  \begin{equation*}
    \theta_s^\ks(\tau)\,(e)=
    \begin{cases}
      \theta_{es}(\tau(s^*es)) & \mathrm{if}\;\tau(s^*es)\neq\infty_{s^*es}\\
      \infty_e & \mathrm{otherwise}
    \end{cases}
  \end{equation*}
  for each $s\in S$
\end{enumerate}
define an $S$-action on $X^\ks$.
We next check that this action of $S$ on $X^\ks$ is compatible with the $S$-action on $E(S)\kstimes C_0(X)=C_0(X^\ks)$.

\begin{prop}
  \label{prop:action-on-spectrum}
  With the identification 
  \[
    \Sp(C_0(X)^\ks_e)=\{\chi\in\Sp(C_0(X)^\ks)\mid \chi|_{C_0(X)^\ks_e}\neq 0\},
  \]
  the homeomorphism between $\Sp(C_0(X)^\ks)$ and $X^\ks$ as in Proposition \ref{prop:spectrum} restricts to a homeomorphism between $\Sp(C_0(X)_e^\ks)$ and $X_e^\ks$.

  Define an isomorphism $\Phi_e:C_0(X)^\ks_e\to C_0(X_e^\ks)$ by
  \[
    \Phi_e(\delta_{e_0}\,g)(\tau)=\chi_\tau(\delta_{e_0}\,g)=g(\tau(e_0)),\quad e_0\le e,\quad g\in C_0(X_{e_0}).
  \]
  Then the diagram
  \begin{center}
    \begin{tikzpicture}[auto]
      \node(1) at (0,1.5) {$C_0(X)^\ks_{s^*s}$};
      \node (2) at (2.5,1.5) {$C_0(X)^\ks_{ss^*}$};
      \node (3) at (0,0) {$C_0(X^\ks_{s^*s})$};
      \node (4) at (2.5,0) {$C_0(X^\ks_{ss^*})$};
      \draw[->] (1) to node {$\scriptstyle\alpha_s^\ks$} (2);
      \draw[->] (1) to node[swap] {$\scriptstyle\Phi_{s^*s}$} (3);
      \draw[->] (3) to node[swap] {$\scriptstyle -\circ\theta_{s^*}^\ks$} (4);
      \draw[->] (2) to node {$\scriptstyle\Phi_{ss^*}$} (4);
    \end{tikzpicture}
  \end{center}
  commutes for each $s\in S$.
\end{prop}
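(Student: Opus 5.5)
First I identify the spectrum of the ideal $C_0(X)^\ks_e$ inside $\Sp(C_0(X)^\ks)$. Under the homeomorphism $\chi\mapsto\tau_\chi$ of Proposition \ref{prop:spectrum}, a character $\chi$ restricts nontrivially to $C_0(X)^\ks_e=\overline{\Span}\{\delta_{e_0}\,g\mid e_0\le e,\;g\in C_0(X_{e_0})\}$ if and only if $g(\tau_\chi(e_0))\neq 0$ for some $e_0\le e$ and $g\in C_0(X_{e_0})$. This is the same as $\tau_\chi(e_0)\neq\infty_{e_0}$ for some $e_0\le e$. By condition (1) of Definition \ref{def:of-X^ks} (with $e_0\le e$), this is equivalent to $\tau_\chi(e)\neq\infty_e$, i.e.\ $\tau_\chi\in X^\ks_e$; the converse direction just takes $e_0=e$. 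Since $X^\ks_e$ is open and the spectrum of an ideal is an open subset of the spectrum, the homeomorphism restricts as claimed. Gelfand duality for the ideal then gives that $\Phi_e$ is a well-defined isomorphism onto $C_0(X^\ks_e)$: it is the Gelfand transform of $C_0(X)^\ks=C_0(X^\ks)$ restricted to the ideal, whose image is exactly the functions vanishing off $X^\ks_e$.

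For the commutativity of the diagram, both composites are continuous linear maps (in fact $*$-homomorphisms) out of $C_0(X)^\ks_{s^*s}$. By Lemma \ref{lem:closed} and the definition of $C_0(X)^\ks_{s^*s}$ as a closed span, it suffices to check equality on generators $\delta_e\,g$ with $e\le s^*s$ and $g\in C_0(X_e)$. One side is
\[
\Phi_{ss^*}(\alpha^\ks_s(\delta_e\,g))(\tau)=\Phi_{ss^*}(\delta_{ses^*}\,g\circ\theta_{s^*})(\tau)=g\bigl(\theta_{s^*}(\tau(ses^*))\bigr),
\]
interpreted as $0$ if $\tau(ses^*)=\infty_{ses^*}$; here I use $\alpha_s(g)=g\circ\theta_{s^*}$ from Example \ref{ex:induced-action-on-C0}. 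The other side, for $\tau\in X^\ks_{ss^*}$, is $\Phi_{s^*s}(\delta_e\,g)(\theta^\ks_{s^*}(\tau))=g\bigl(\theta^\ks_{s^*}(\tau)(e)\bigr)$. By Theorem \ref{thm:action-on-X^ks}, $\theta^\ks_{s^*}(\tau)(e)=\theta_{es^*}(\tau(ses^*))$ when $\tau(ses^*)\neq\infty$, and $\infty_e$ otherwise. Because $e\le s^*s$, the map $\theta_{es^*}$ agrees with $\theta_{s^*}$ on $X_{ses^*}$, so the two expressions coincide, both being $0$ in the $\infty$ case.

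The main obstacle is bookkeeping rather than substance. One must confirm that $s(s^*)^*\cdot e\cdot$ reduces correctly, i.e.\ that $(s^*)^*es^*=ses^*$ in the formula for $\theta^\ks_{s^*}$. One must also check that $\theta_{es^*}=\theta_{s^*}|_{X_{ses^*}}$, which uses $es^*=s^*(ses^*)$. Finally, the density and continuity argument must be justified properly: $\Phi_e$ is isometric as a $*$-isomorphism, and $-\circ\theta^\ks_{s^*}$ is an isometry from $C_0(X^\ks_{s^*s})$ onto $C_0(X^\ks_{ss^*})$.
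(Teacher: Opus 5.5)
Your proposal is correct and follows essentially the same route as the paper. The paper likewise identifies $\Sp(C_0(X)^\ks_e)$ with $X^\ks_e$ by noting that $\chi$ is nonzero on some generator $\delta_{e_0}\,g$ with $e_0\le e$ exactly when $\tau_\chi(e)\neq\infty_e$, using condition (1) of Definition \ref{def:of-X^ks}. It then checks commutativity of the diagram by the same evaluation on generators $\delta_{e_0}\,g$ with $e_0\le s^*s$, via the formula for $\theta^\ks_{s^*}$; your Gelfand-duality justification of $\Phi_e$ and the density/continuity reduction are details the paper leaves implicit.
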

\begin{proof}
  For the first statement, it suffices to show that
  \begin{enumerate}
     \item $\tau_\chi\in X_e^\ks$ for all $\chi\in\Sp(C_0(X)^\ks_e)$, and
     \item $\chi_\tau\in\Sp(C_0(X)^\ks_e)$ for all $\tau\in X_e^\ks$.
  \end{enumerate}

  For (i), take $\chi\in \Sp(C_0(X)^\ks_e)$. Then $\chi(\delta_{e_0}\, f)\neq 0$ for some $e_0\le e$ and $f\in C_0(X_{e_0})$. This implies $\tau_\chi(e_0)\neq\infty_{e_0}$, and it follows from $e_0\le e$ that $\tau_\chi(e)=\tau_\chi(e_0)$. 

  For (ii), fix $\tau\in X_e^\ks$. Then $\tau(e)\neq\infty_e$ and hence $\chi_\tau(\delta_e\,g)=g(\tau(e))\neq 0$ for some $g\in C_0(X_e)$. Since $\delta_e\, g\in C_0(X)^\ks_e$, we obtain $\chi_\tau\in \Sp(C_0(X)^\ks_e)$, as desired.

  To check the diagram commutes, fix $e_0\le s^*s$ and $g\in C_0(X_{e_0})$. Compute that
  \begin{align*}
    \bigl((-\circ\theta^\ks_{s^*})\circ\Phi_{s^*s}(\delta_{e_0}\,g)\bigr)\,(\tau)
    &=\Phi_{s^*s}(\delta_{e_0}\,g)\,(\theta^\ks_{s^*}(\tau))\\
    &=g(\theta_{s^*}^\ks(\tau)(e_0))\\
    &=\begin{cases}
      g(\theta_{e_0s^*}(\tau(se_0s^*))) & \mathrm{if}\;\tau(se_0s^*)\neq\infty_{se_0s^*}\\
      0 & \mathrm{otherwise}
    \end{cases}\\
    &=\alpha_{se_0}(g)(\tau(se_0s^*))\\
    &=\Phi_{ss^*}(\delta_{se_0s^*}\,\alpha_{se_0}(g))\,(\tau)\\
    &=\Phi_{ss^*}\circ\alpha_s^\ks(\delta_{e_0}\,g)\,(\tau)
  \end{align*}
  for all $\tau\in X^\ks_{ss^*}$. 
\end{proof}

\subsection{Dynamical Paterson's Theorem}
\label{subsec:4.2}

In this subsection, we will prove that
\[
  S\kstimes C_0(X)\cong C^*(S\ltimes X^\ks),\quad\mathrm{and}\quad S\ksr C_0(X)\cong C^*_r(S\ltimes X^\ks),
\]
for any action of an inverse semigroup $S$ on a locally compact Hausdorff space $X$. 
Recall from Remark \ref{rem:C*S} that $C^*(S)= S\kstimes C(\pt)$, $C_r^*(S)= S\ksr C(\pt)$ and $\pt^\ks=\widehat{E(S)}$, where the one-point space $\pt$ is equipped with the $S$-action given by $\pt_e=\pt$.
Hence these isomorphisms can be seen as a generalization of Paterson's theorem, which states that
\[
  C^*(S)\cong C^*(S\ltimes\widehat{E(S)}),\quad C^*_r(S)\cong C^*_r(S\ltimes\widehat{E(S)})
\]
for any inverse semigroup $S$ (see \cite[Theorem 4.4.1]{paterson} for the full case, and see \cite[Theorem 4.4.2]{paterson}, \cite[Theorem 3.5]{ks1} for the reduced case).

Notice that the full case is now clear since
\begin{align}
  S\kstimes C_0(X)\cong S\ltimes C_0(X)^\ks\cong S\ltimes C_0(X^\ks)\cong C^*(S\ltimes X^\ks)
\end{align}
where we use Theorem \ref{thm:isom-of-KS}, Proposition \ref{prop:spectrum}, Proposition \ref{prop:action-on-spectrum} and Theorem \ref{thm:isom}. 
Observe that the composition of these isomorphisms from $S\kstimes C_0(X)$ to $C^*(S\ltimes X^\ks)$ is induced from the $*$-homomorphism $\varphi\colon S\alg C_c(X)\to\mathcal{C}(S\ltimes X^\ks)$ which maps $\delta_s\,f\in S\alg C_c(X)$ to the element $\varphi(\delta_s\,f)\in C_c([s,X^\ks_{s^*s}])\subset\mathcal{C}(S\ltimes X^\ks)$ given by 
\[
  \varphi(\delta_s\,f)([s,\tau])=f(\tau(s^*s))
\]
for all $\tau\in X^\ks_{s^*s}$. In this subsection, we show that $\varphi$ induces the isomorphism $S\ksr C_0(X)\cong C^*_r(S\ltimes X^\ks)$. Since the image of $S\alg C_c(X)$ under $\varphi$ is dense in $C^*_r(S\ltimes X^\ks)$, the essential part of the proof is the equality $\|\varphi(T)\|_r=\|T\|_{\ks,r}$ for all $T\in S\alg C_c(X)$. For this, recall from the definitions of each norm that
\begin{equation}
  \label{eq:norm}
    \begin{aligned}
      \|T\|_{\ks,r}&=\sup\{\|\ell_e(T)\|\mid e\in E(S)\},\;\mathrm{and}\\
      \|\varphi(T)\|_r&=\sup\{\|\lambda_{\tau}(\varphi(T))\|\mid \tau\in X^\ks\}.
    \end{aligned}
\end{equation}

First observe that, for each $e\in E(S)$ and $x\in X_e$, the map 
\[
  \rho_{e,\,x}\colon C_0(X_e)\ni f\mapsto f(x)\in\mathbb{C}=\mathbb{B}(\mathbb{C})
\]
is a non-degenerate representation of $C_0(X_e)$. 
Moreover, the direct sum
\[
  \rho_e:=\bigoplus_{x\in X_e}\rho_{e,\,x}\colon C_0(X_e)\to\mathbb{B}(E_e)
\]
is a non-degenerate, faithful representation of $C_0(X_e)$ on the Hilbert $\bigoplus_{x\in X_e}\mathbb{C}$-module $E_e=\bigoplus_{x\in X_e}\mathbb{C}$. 
From the equality \eqref{eq:rep} in Observation \ref{obs:ind-rep}, we have $\|\ind\rho_e(T)\|=\|\ell_e(T)\|$ for all $T\in S\alg C_0(X)$ and $e\in E(S)$. 
Moreover, since the representation $\ind\rho_e\colon S\alg C_0(X)\to\mathbb{B}(l^2(S_e,E_e))$ is unitarily equivalent to the direct sum 
\[
  \bigoplus_{x\in X_e}\ind\rho_{e,x}\colon S\alg C_0(X)\to \prod_{x\in X_e}\mathbb{B}(l^2(S_e))\subset\mathbb{B}\Bigl(\bigoplus_{x\in X_e}l^2(S_e)\Bigr),
\]
we obtain
\begin{align*}
  \|T\|_{\ks,r}
  &=\sup\{\|\ind\rho_e(T)\|\mid e\in E(S)\}\\
  &=\sup\{\|\ind\rho_{e,\,x}(T)\|\mid e\in E(S),\; x\in X_e\}.
\end{align*} 

Here, we recall that $\ind\rho_{e,\,x}\colon S\alg C_0(X)\to\mathbb{B}(l^2(S_e))$ is the $*$-representation on the Hilbert space $l^2(S_e)$ defined by 
\begin{equation}
  \label{eq:ind}
  \ind\rho_{e,\,x}(\delta_s\,f)\,(\delta_t)=
  \begin{cases}
    f(\theta_t(x))\,\delta_{st} & \mathrm{if}\;s^*st=t\\
    0 & \mathrm{otherwise}
  \end{cases}
\end{equation}
for all $s\in S$, $f\in C_0(X_{s^*s})$ and $t\in S_e$ (see Observation \ref{obs:ind-rep}).

We next show that the second supremum in \eqref{eq:norm} is equal to the supremum over $\{\tau_{e,\,x}\in X^\ks\mid e\in E(S),\;x\in X_e\}$. This will be done by extending the argument in \cite{ks1} to the setting of KS-groupoids.

Let $\tau\in X^\ks$. Observe that the set $\Lambda_\tau=\{e\in E(S)\mid \tau(e)\neq\infty_e\}$ is directed with respect to the partial order $\preceq$ given by $e\preceq f$ if $f\le e$. 

\begin{lem}
  \label{lem:E(S)-is-dense}
  For $\tau\in X^\ks$ and $x=p_X(\tau)$, the net $(\tau_{e,\,x})_{e\in\Lambda_\tau}$ converges to $\tau$ in $X^\ks$. In particular, $\{\tau_{e,\,x}\mid e\in E(S),\;x\in X_e\}$ is dense in $X^\ks$.
\end{lem}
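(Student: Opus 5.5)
Since $X^\ks$ carries the subspace topology of $\prod_{e\in E(S)}\widetilde{X_e}$, convergence of a net in $X^\ks$ is pointwise convergence in each coordinate. The plan is to fix $f\in E(S)$ and show that $(\tau_{e,\,x}(f))_{e\in\Lambda_\tau}$ converges to $\tau(f)$ in $\widetilde{X_f}$. First I record that $\Lambda_\tau$ is indeed directed under $\preceq$: if $e_1,e_2\in\Lambda_\tau$, then $e_1e_2\in\Lambda_\tau$ by Remark \ref{rem:}(ii), and $e_1e_2\le e_1,e_2$. Also, for every $e\in\Lambda_\tau$ we have $\tau(e)=x=p_X(\tau)$ by Remark \ref{rem:}(i), so $x\in X_e$ and $\tau_{e,\,x}$ is defined.

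The coordinatewise verification splits into two cases, and both give eventual constancy, so no topology of $X$ is really needed.

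\textbf{Case 1:} $\tau(f)\neq\infty_f$, i.e.\ $f\in\Lambda_\tau$. For every $e\in\Lambda_\tau$ with $e\succeq f$ (that is, $e\le f$), the definition gives $\tau_{e,\,x}(f)=x=\tau(f)$. Hence the net is eventually equal to $\tau(f)$.

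\textbf{Case 2:} $\tau(f)=\infty_f$. I claim $\tau_{e,\,x}(f)=\infty_f$ for every $e\in\Lambda_\tau$. Indeed, if $e\le f$, then condition (1) of Definition \ref{def:of-X^ks} applied to $\tau$ would force $\tau(f)=\tau(e)\neq\infty_f$, a contradiction. So $e\not\le f$, and therefore $\tau_{e,\,x}(f)=\infty_f$ by definition. Thus this coordinate is constant and equal to $\tau(f)$.

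Together the two cases give pointwise, hence product-topology, convergence of $(\tau_{e,\,x})_{e\in\Lambda_\tau}$ to $\tau$. The density statement follows immediately, since every $\tau\in X^\ks$ is the limit of a net of points of the form $\tau_{e,\,x}$ with $x\in X_e$. No step is a genuine obstacle; the only subtle point is the use of condition (1) in Case 2 to exclude $e\le f$.
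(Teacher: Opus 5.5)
Your proposal is correct and follows the paper's proof essentially verbatim: fix a coordinate $f$, get eventual equality $\tau_{e,\,x}(f)=x$ for $e\le f$ when $f\in\Lambda_\tau$, and get $\tau_{e,\,x}(f)=\infty_f$ for all $e\in\Lambda_\tau$ otherwise, because $e\le f$ would force $\tau(f)\neq\infty_f$ by condition (1). Your added checks that $\Lambda_\tau$ is directed and that $x\in X_e$ for $e\in\Lambda_\tau$ are details the paper leaves implicit.
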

\begin{proof}
  It suffices to show for any $f\in E(S)$ the net $(\tau_{e,\,x}(f))_{e\in\Lambda_\tau}$ converges to $\tau(f)$ in $\widetilde{X_f}$.

  Suppose first that $\tau(f)\neq\infty_f$, that is, $\tau(f)=x$. Then $f\in\Lambda_\tau$ and hence $\tau_{e,\,x}(f)=x$ for all $e\in\Lambda_\tau$ with $f\preceq e$. 

  Suppose next that $\tau(f)=\infty_f$. Then $\tau_{e,\,x}(f)=\infty_f$ for each $e\in\Lambda_\tau$. Indeed, if $\tau_{e,\,x}(f)\neq\infty_f$, then $e\le f$ by the definition of $\tau_{e,\,x}$, but this implies $\tau(f)\neq\infty_f$.
\end{proof}

\begin{prop}
  \label{prop:norm}
  $\|g\|_r=\sup\{\|\lambda_{\tau_{e,\,x}}(g)\|\mid e\in E(S),\;x\in X_e\}$ for any $g\in\mathcal{C}(S\ltimes X^\ks)$.
\end{prop}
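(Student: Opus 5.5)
The plan is to apply Theorem \ref{thm:KS} with $G=S\ltimes X^\ks$ and $Z=\{\tau_{e,\,x}\mid e\in E(S),\;x\in X_e\}$. It then suffices to show that $Z$ is dense in $X^\ks$ for the initial topology induced by the family $\{g|_{X^\ks}\mid g\in C_c(U),\;U\in\bis(S\ltimes X^\ks)\}$. Here $X^\ks$ is identified with the unit space via $[e,\tau]\mapsto\tau$. This topology is the one that matters in the non-Hausdorff setting, and it may be coarser or finer than the original topology of $X^\ks$. So Lemma \ref{lem:E(S)-is-dense}, which gives density in the original topology, is not immediately enough. The goal is to show that the specific net $(\tau_{e,\,x})_{e\in\Lambda_\tau}$ with $x=p_X(\tau)$ converges to $\tau$ in the initial topology. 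This amounts to showing $g(\tau_{e,\,x})\to g(\tau)$ for every $g\in C_c(U)$ and every open bisection $U$.

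First step: reduce to the basic bisections $[s,V]$. By Lemma \ref{lem:partition}, every $g\in C_c(U)$ is a finite sum of functions in $C_c([s,V])$ with $V\subset X^\ks_{s^*s}$ open. Convergence of the evaluations is linear in $g$, so it suffices to treat $g\in C_c([s,V])$. Its restriction to the unit space is
\[
g([e',\sigma])=\begin{cases} g([s,\sigma]) & \text{if }\sigma\in V\text{ and }[s,\sigma]=[e',\sigma]\text{ for some idempotent } e',\\ 0 & \text{otherwise}.\end{cases}
\]
The condition $[s,\sigma]\in(S\ltimes X^\ks)^{(0)}$ holds exactly when there is an $f\in E(S)$ with $f\le s$ (that is, $sf=f$) and $\sigma(f)\neq\infty_f$. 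So $g|_{X^\ks}$ is $\sigma\mapsto g([s,\sigma])$ on the set $W_s\cap V$, where $W_s:=\bigcup_{f\in E(S)\cap s^\downarrow}X^\ks_f$, and $0$ elsewhere. This function is continuous on $V$ except possibly at boundary points of $W_s$.

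Second step: the case analysis at $\tau$, which I expect to be the main obstacle. Suppose $\tau\in W_s\cap V$, witnessed by $f_0\le s$ with $\tau(f_0)\neq\infty$. Then $f_0\in\Lambda_\tau$. For $e\in\Lambda_\tau$ with $e\le f_0$ we have $\tau_{e,\,x}(f_0)=x$, so $\tau_{e,\,x}\in W_s$. Eventually $\tau_{e,\,x}\in V$ by Lemma \ref{lem:E(S)-is-dense}, and continuity of $g$ on $[s,V]$ gives convergence. If $\tau\notin\overline{V}$, or more precisely $\tau$ is outside the compact support of $g\circ(d|_{[s,V]})^{-1}$, convergence holds trivially because the net eventually leaves that closed set. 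The delicate case is $\tau\in V\setminus W_s$. Then $g(\tau)=0$, and I must show $\tau_{e,\,x}\notin W_s$ for all $e\in\Lambda_\tau$. Suppose $\tau_{e,\,x}\in X^\ks_f$ with $f\le s$. Then $e\le f$ by the definition of $\tau_{e,\,x}$, and since $e\in\Lambda_\tau$, condition (1) of Definition \ref{def:of-X^ks} gives $\tau(f)=\tau(e)\neq\infty_f$. This means $\tau\in W_s$, a contradiction. Hence $g(\tau_{e,\,x})=0=g(\tau)$ along the whole net. This special feature of the net $\tau_{e,\,x}$, namely that it "approaches from below" and only enters those $X^\ks_f$ that already contain $\tau$, is the key point.

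Combining these cases, $\tau_{e,\,x}\to\tau$ in the initial topology for every $\tau\in X^\ks$. So $Z$ is dense in that topology, and Theorem \ref{thm:KS} yields $\|g\|_r=\sup_{e,\,x}\|\lambda_{\tau_{e,\,x}}(g)\|$ for all $g\in\mathcal{C}(S\ltimes X^\ks)$.
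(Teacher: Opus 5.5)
Your proposal is correct and follows essentially the same route as the paper. Like the paper, you apply Theorem \ref{thm:KS}, reduce via Lemma \ref{lem:partition} to a single basic bisection, and prove $g(\tau_{e,\,x})\to g(\tau)$ along $(\tau_{e,\,x})_{e\in\Lambda_\tau}$ with the same three-case split, where the key case (in your notation $\tau\in V\setminus W_s$, in the paper's $\tau\in X^\ks_{s^*s}$ but $\tau\notin[s,X^\ks_{s^*s}]$) rests on the same observation that $\tau_{e,\,x}\in X^\ks_f$ forces $e\le f$ and hence $\tau\in X^\ks_f$. The differences are cosmetic: you work with $[s,V]$ instead of $[s,X^\ks_{s^*s}]$, and you handle the ``outside'' case through the closed support of $g$ rather than the paper's direct remark that $\tau(s^*s)=\infty_{s^*s}$ forces $\tau_{e,\,x}(s^*s)=\infty_{s^*s}$.
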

\begin{proof}
  From Theorem \ref{thm:KS}, it suffices to show that for any $\tau\in X^\ks=(S\ltimes X^\ks)^{(0)}$ and $g\in \mathcal{C}(S\ltimes X^\ks)$ the net $(g(\tau_{e,\,x}))_{e\in\Lambda_\tau}$ converges to $g(\tau)$. From Lemma \ref{lem:partition}, we may assume that $g\in C_c([s,X^\ks_{s^*s}])$ for some $s\in S$. 

  We first consider the case $\tau\in [s,X^\ks_{s^*s}]$. Since the net $(\tau_{e,\,x})_{e\in\Lambda_\tau}$ converges to $\tau$ in $X^\ks$, there exists $e_0\in\Lambda_\tau$ such that $\tau_{e,\,x}\in [s,X^\ks_{s^*s}]$ for each $e\in\Lambda_\tau$ with $e_0\preceq e$. Using the continuity of $g|_{[s,X^\ks_{s^*s}]}$, now it is easy to see $(g(\tau_{e,\,x}))_{e\in\Lambda_\tau}$ converges to $g(\tau)$.

  Suppose next that $\tau\in X^\ks_{s^*s}$ but $\tau\notin [s,X^\ks_{s^*s}]$. Then $\tau_{e,\,x}\notin [s,X^\ks_{s^*s}]$ for each $e\in\Lambda_\tau$. Indeed, if $\tau_{e,\,x}\in [s,X^\ks_{s^*s}]$ for some $e\in\Lambda_\tau$, then there exists $f\in E(S)$ such that $ef=sf$ and $\tau_{e,\,x}\in X_f^\ks$. However, this implies $\tau\in X^\ks_f$ and hence $\tau=[e,\tau]=[s,\tau]\in[s,X^\ks_{s^*s}]$, a contradiction. Thus $g(\tau)=0=g(\tau_{e,\,x})$ for each $e\in \Lambda_\tau$.

  It remains to consider the case where $\tau\notin X^\ks_{s^*s}$. Then $e\not\le s^*s$ for each $e\in\Lambda_\tau$ since $\tau(s^*s)=\infty_{s^*s}$. It follows $\tau_{e,\,x}(s^*s)=\infty_{s^*s}$, that is, $\tau_{e,\,x}\notin X^\ks_{s^*s}$ for each $e\in \Lambda_\tau$. Thus $g(\tau)=0=g(\tau_{e,\,x})$ for each $e\in\Lambda_\tau$.
\end{proof}

From the arguments above, the equalities \eqref{eq:norm} are refined to 
\begin{equation}
  \label{eq:norm2}
    \begin{aligned}
      \|T\|_{\ks,r}&=\sup\{\|\ind\rho_{e,\,x}(T)\|\mid e\in E(S),\; x\in X_e\},\;\mathrm{and}\\
      \|\varphi(T)\|_r&=\sup\{\|\lambda_{\tau_{e,\,x}}(\varphi(T))\|\mid e\in E(S),\;x\in X_e\}.
    \end{aligned}
\end{equation}
To show $\|T\|_{\ks,r}=\|\varphi(T)\|_r$ for all $T\in S\alg C_c(X)$, we proceed to prove that $\|\ind\rho_{e,\,x}(T)\|=\|\lambda_{\tau_{e,\,x}}(\varphi(T))\|$ for all $e\in E(S)$ and $x\in X_e$.

Recall the notation $S*X=\{(s,x)\in S\times X\mid x\in X_{s^*s}\}$ from Subsection \ref{subsec:1.2}.

For each $(s,x)\in S*X$, define $\tau_{s,\,x}=[s,\tau_{s^*s,\,x}]\in S\ltimes X^\ks$. 
We note that, if $s\in E(S)$, the two definitions of $\tau_{s,\,x}$ coincide by the identification $X^\ks=(S\ltimes X^\ks)^{(0)}$.

\begin{lem}
  \label{lem:domain}
  For any $e\in E(S)$ and $x\in X_e$, 
  \[
    (S\ltimes X^\ks)_{\tau_{e,\,x}}=\{\tau_{s,\,x}\mid s\in S_e\}.
  \]
\end{lem}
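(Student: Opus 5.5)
We prove the two inclusions directly from the description of the transformation groupoid $S\ltimes X^\ks$ and the definition of $\tau_{e,\,x}$.

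\emph{The inclusion $\supseteq$.} Let $s\in S_e$, so $s^*s=e$. Then $\tau_{s,\,x}=[s,\tau_{e,\,x}]$ is a well-defined element of $S\ltimes X^\ks$, because $\tau_{e,\,x}\in X^\ks_e=X^\ks_{s^*s}$. Moreover $d(\tau_{s,\,x})=\tau_{e,\,x}$, so $\tau_{s,\,x}\in(S\ltimes X^\ks)_{\tau_{e,\,x}}$.

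\emph{The inclusion $\subseteq$.} Let $\gamma\in(S\ltimes X^\ks)_{\tau_{e,\,x}}$. Then $\gamma=[t,\tau_{e,\,x}]$ for some $t\in S$ with $\tau_{e,\,x}\in X^\ks_{t^*t}$, that is, $\tau_{e,\,x}(t^*t)\neq\infty_{t^*t}$. By the definition of $\tau_{e,\,x}$, this means exactly $e\le t^*t$.

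Put $s=te$. Then $s^*s=et^*te=e$, so $s\in S_e$. To see that $[t,\tau_{e,\,x}]=[s,\tau_{e,\,x}]$, we use the equivalence relation defining $S\ltimes X^\ks$ with the idempotent $e$. We have $\tau_{e,\,x}\in X^\ks_e$, and
\[
  se=te\cdot e=te .
\]
Hence $(t,\tau_{e,\,x})\sim(s,\tau_{e,\,x})$, and therefore $\gamma=[s,\tau_{e,\,x}]=\tau_{s,\,x}$.

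The only non-mechanical step is identifying the condition $\tau_{e,\,x}\in X^\ks_{t^*t}$ with $e\le t^*t$; this follows immediately from the definition of $\tau_{e,\,x}$. The rest is bookkeeping with the equivalence relation $\sim$.
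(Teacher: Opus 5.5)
Your proof is correct and follows the paper's argument essentially verbatim: the inclusion $\supseteq$ holds because $d([s,\tau_{e,\,x}])=\tau_{e,\,x}$, and for $\subseteq$ you read $\tau_{e,\,x}\in X^\ks_{t^*t}$ as $e\le t^*t$ and rewrite $[t,\tau_{e,\,x}]=[te,\tau_{e,\,x}]=\tau_{te,\,x}$ with $te\in S_e$. You also spell out the checks $(te)^*(te)=e$ and $(te)e=te$, which the paper leaves implicit.
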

\begin{proof}
  Clearly, the domain of $\tau_{s,\,x}$ is $\tau_{e,\,x}$ whenever $s\in S_e$. Conversely, if $[s,\tau_{e,\,x}]\in (S\ltimes X^\ks)_{\tau_{e,\,x}}$, then $\tau_{e,\,x}\in X^\ks_{s^*s}$, that is, $e\le s^*s$. Then $se\in S_e$ and $[s,\tau_{e,\,x}]=[se,\tau_{e,\,x}]=\tau_{se,\,x}$. 
\end{proof}

\begin{lem}
  The map
  \[
    S*X\ni (s,x)\mapsto \tau_{s,\,x}\in S\ltimes X^\ks
  \]
  is injective and its range is dense in $S\ltimes X^\ks$.
\end{lem}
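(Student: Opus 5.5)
Injectivity and density are handled separately.

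\emph{Injectivity.} Suppose $\tau_{s,\,x}=\tau_{t,\,y}$ for $(s,x),(t,y)\in S*X$. Applying the domain map gives $\tau_{s^*s,\,x}=\tau_{t^*t,\,y}$ in $X^\ks$. Evaluating at $s^*s$ gives $x=\tau_{s^*s,\,x}(s^*s)=\tau_{t^*t,\,y}(s^*s)$, so this value is not $\infty$. Hence $t^*t\le s^*s$ and $y=x$; by symmetry $s^*s=t^*t=:e$. Now $[s,\tau_{e,\,x}]=[t,\tau_{e,\,x}]$ means there is $f\in E(S)$ with $sf=tf$ and $\tau_{e,\,x}\in X^\ks_f$. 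The latter says exactly that $e\le f$. Therefore $s=se=sfe=tfe=te=t$, and the map is injective.

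\emph{Density.} Take a basic open set $[s,U]$ with $U\subset X^\ks_{s^*s}$ open and nonempty, and pick $\tau\in U$. By Lemma \ref{lem:E(S)-is-dense}, with $x=p_X(\tau)$, the net $(\tau_{e,\,x})_{e\in\Lambda_\tau}$ converges to $\tau$. So for some $e\in\Lambda_\tau$ we have $\tau_{e,\,x}\in U\subset X^\ks_{s^*s}$, which forces $e\le s^*s$. Set $u=se$. Then $u^*u=es^*se=e$, and $x\in X_e=X_{u^*u}$, so $(u,x)\in S*X$. Moreover
\[
  \tau_{u,\,x}=[se,\tau_{e,\,x}]=[s,\tau_{e,\,x}]\in[s,U],
\]
where the middle equality uses $\tau_{e,\,x}\in X^\ks_e$ and $(se)e=se$. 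Since the sets $[s,U]$ form a basis for $S\ltimes X^\ks$, the range is dense.

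There is no real obstacle here. The only points needing care are the identification $d(\tau_{s,\,x})=\tau_{s^*s,\,x}$ and the fact that $\tau_{e,\,x}\in X^\ks_f$ holds exactly when $e\le f$; both follow directly from the definition of $\tau_{e,\,x}$.
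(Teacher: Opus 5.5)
Your proof is correct and follows essentially the same route as the paper. For injectivity you compare domains and use $e\le f$ to get $s=se=te=t$; for density you place a point $\tau_{e,\,x}$ inside the domain of a basic open set via Lemma~\ref{lem:E(S)-is-dense}. The only cosmetic difference is that you work with the basic sets $[s,U]$ and carry out the computation $[s,\tau_{e,\,x}]=[se,\tau_{e,\,x}]=\tau_{se,\,x}$ yourself, whereas the paper works with arbitrary open bisections and cites Lemma~\ref{lem:domain} for that step.
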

\begin{proof}
  To show the injectivity, suppose that $\tau_{s,\,x}=\tau_{t,\,y}$. Then $\tau_{s^*s,\,x}=\tau_{t^*t,\,y}$, $se=te$ and $\tau_{s^*s,\,x}=\tau_{t^*t,\,y}\in X^\ks_e$ for some $e\in E(S)$. It follows that $x=y$ and $s^*s=t^*t\le e$. Thus we have $s=se=te=t$, which proves the injectivity.
  
  We next show that the range $\{\tau_{s,\,x}\mid (s,x)\in S*X\}$ is dense in $S\ltimes X^\ks$. Fix any nonempty open bisection $V$ in $S\ltimes X^\ks$. Then $d|_{V}\colon V\to d(V)$ is a homeomorphism and $d(V)$ is open in $(S\ltimes X^\ks)^{(0)}$. From Lemma \ref{lem:E(S)-is-dense}, there exist $e\in E(S)$ and $x\in X_e$ with $\tau_{e,\,x}\in d(V)$. Find $\gamma\in V$ with $d(\gamma)=\tau_{e,\,x}$. By the previous lemma, we can write $\gamma=\tau_{s,\,x}$ for some $s\in S_e$. Since $V\in\bis(S\ltimes X^\ks)\setminus\{\emptyset\}$ is arbitrary and $\bis(S\ltimes X^\ks)$ forms an open basis for $S\ltimes X^\ks$, we are done.
\end{proof}

Using the previous two lemmas, we observe that the map
\[
  u_{e,\,x}\colon l^2((S\ltimes X^\ks)_{\tau_{e,\,x}})\ni\delta_{\tau_{t,\,x}}\mapsto \delta_t\in l^2(S_e)
\]
is a unitary map for each $e\in E(S)$ and $x\in X_e$. We proceed to show that $u_{e,\,x}$ gives a unitary equivalence of two representations $\lambda_{\tau_{e,\,x}}\circ\varphi\colon S\alg C_c(X)\to \mathbb{B}(l^2((S\ltimes X^\ks)_{\tau_{e,\,x}}))$ and $\ind\rho_{e,\,x}\colon S\alg C_c(X)\to \mathbb{B}(l^2(S_e))$, which proves $\|\varphi(T)\|_r=\|T\|_{\ks,r}$ by the equalities \eqref{eq:norm2}.

\begin{lem}
  For any $t\in S$ and $x\in X_{t^*t}$,
  \[
    \theta_t^\ks(\tau_{t^*t,\,x})=\tau_{tt^*,\,\theta_t(x)}.
  \]
  In particular, $r(\tau_{t,\,x})=\tau_{tt^*,\,\theta_t(x)}$.
\end{lem}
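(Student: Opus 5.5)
The plan is to verify the identity coordinatewise, comparing $\theta_t^\ks(\tau_{t^*t,\,x})(e)$ with $\tau_{tt^*,\,\theta_t(x)}(e)$ for an arbitrary $e\in E(S)$. Recall that $\tau_{t^*t,\,x}\in X^\ks_{t^*t}$, so $\theta_t^\ks$ is defined at this point. By the formula of Theorem \ref{thm:action-on-X^ks},
\[
  \theta_t^\ks(\tau_{t^*t,\,x})(e)=
  \begin{cases}
    \theta_{et}(x) & \mathrm{if}\;t^*t\le t^*et\\
    \infty_e & \mathrm{otherwise},
  \end{cases}
\]
since $\tau_{t^*t,\,x}(t^*et)\neq\infty_{t^*et}$ exactly when $t^*t\le t^*et$, and in that case the value is $x$. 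On the other side, $\tau_{tt^*,\,\theta_t(x)}(e)$ equals $\theta_t(x)$ if $tt^*\le e$, and $\infty_e$ otherwise.

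The key step is the equivalence $t^*t\le t^*et\iff tt^*\le e$. For the forward direction, conjugate by $t$: from $t^*t=t^*t\,t^*et=t^*et$ we get $tt^*=t(t^*t)t^*=t(t^*et)t^*=tt^*\,e\,tt^*=tt^*e$, so $tt^*\le e$. Conversely, if $tt^*e=tt^*$, then $t^*et=t^*(tt^*e)t=t^*tt^*\,e\,t=t^*(tt^*)t$ after commuting idempotents, which equals $t^*t$. When these conditions hold, $et=e\,tt^*\,t=tt^*t=t$, so $\theta_{et}(x)=\theta_t(x)$. The two coordinates therefore agree for every $e$, which proves the identity.

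For the final assertion, $r(\tau_{t,\,x})=r([t,\tau_{t^*t,\,x}])=\theta_t^\ks(\tau_{t^*t,\,x})$ by the definition of the range map in the transformation groupoid $S\ltimes X^\ks$, and this equals $\tau_{tt^*,\,\theta_t(x)}$ by the first part. There is no real obstacle; the only care needed is the idempotent manipulation in the equivalence above.
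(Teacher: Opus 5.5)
Your proposal is correct and is exactly the routine coordinatewise verification the paper has in mind; the paper simply states ``It is routine to verify this.'' Your key equivalence $t^*t\le t^*et\iff tt^*\le e$ and the observation that $et=t$ in that case are precisely the details the paper leaves to the reader.
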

\begin{proof}
  It is routine to verify this.
\end{proof}

\begin{lem}
  For $s,t\in S$ and $x\in X_{t^*t}$, the following are equivalent.
  \begin{enumerate}
    \item $(S\ltimes X^\ks)_{r(\tau_{t,\,x})}\cap [s,X^\ks_{s^*s}]\neq\emptyset$.
    \item $s^*s\ge tt^*$.
  \end{enumerate}
\end{lem}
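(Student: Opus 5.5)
The plan is to compute everything explicitly using the two preceding lemmas. Set $\tau':=r(\tau_{t,\,x})$. By the previous lemma, $\tau'=\tau_{tt^*,\,\theta_t(x)}$. By Lemma \ref{lem:domain}, applied with $e=tt^*$ and the point $\theta_t(x)\in X_{tt^*}$, the fibre is
\[
  (S\ltimes X^\ks)_{\tau'}=\{\tau_{u,\,\theta_t(x)}\mid u\in S_{tt^*}\}=\{[u,\tau']\mid u\in S_{tt^*}\}.
\]
Condition (i) therefore asks whether some element $[u,\tau']$ with $u^*u=tt^*$ lies in the set $[s,X^\ks_{s^*s}]$.

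(ii)$\Rightarrow$(i): Assume $s^*s\ge tt^*$. By the definition of $\tau_{tt^*,\,\theta_t(x)}$, we have $\tau'(s^*s)=\theta_t(x)\neq\infty_{s^*s}$, so $\tau'\in X^\ks_{s^*s}$. Hence $[s,\tau']\in[s,X^\ks_{s^*s}]$. Put $u:=stt^*$. Then $u^*u=tt^*s^*stt^*=tt^*$, so $u\in S_{tt^*}$. Moreover $[s,\tau']=[stt^*,\tau']$, because $\tau'\in X^\ks_{tt^*}$ and $s\cdot tt^*=u\cdot tt^*$. Thus $[s,\tau']$ lies in the fibre, and (i) holds.

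(i)$\Rightarrow$(ii): Take $\gamma$ in the intersection. Since $\gamma$ lies in the fibre over $\tau'$, we have $d(\gamma)=\tau'$. Since $\gamma\in[s,X^\ks_{s^*s}]$, we can write $\gamma=[s,\sigma]$ with $\sigma\in X^\ks_{s^*s}$. Comparing domains gives $\sigma=d(\gamma)=\tau'$, so $\tau'\in X^\ks_{s^*s}$, that is, $\tau_{tt^*,\,\theta_t(x)}(s^*s)\neq\infty_{s^*s}$. By the definition of $\tau_{e,\,x}$ this forces $tt^*\le s^*s$.

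The only point needing care is the identification of the element of $[s,X^\ks_{s^*s}]$ with domain $\tau'$ as $[s,\tau']$. This follows from the fact that $d|_{[s,X^\ks_{s^*s}]}$ is the identity on the second coordinate. The description of the fibre is not actually needed for (i)$\Rightarrow$(ii); it only makes (ii)$\Rightarrow$(i) explicit. I expect no real obstacle: the whole argument reduces to reading off the definition of $\tau_{e,\,x}$.
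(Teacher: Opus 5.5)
Your proposal is correct and follows the paper's own argument, which simply notes that both conditions are equivalent to $\tau_{tt^*,\,\theta_t(x)}\in X^\ks_{s^*s}$; you have written out the two directions explicitly. The detour through the explicit fibre description and $u=stt^*$ is harmless but unnecessary, since $d([s,\tau'])=\tau'$ already places $[s,\tau']$ in the fibre.
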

\begin{proof}
  Notice that both conditions are equivalent to $\tau_{tt^*,\,\theta_t(x)}\in X^\ks_{s^*s}$.
\end{proof}

\begin{lem}
  \label{lem:unitary-equiv}
  For all $e\in E(S)$ and $x\in X_e$, the diagram 
  \begin{center}
    \begin{tikzpicture}[auto]
      \node(1) at (-0.15,1.5) {$S\alg C_c(X)$};
      \node (2) at (4,1.5) {$\mathcal{C}(S\ltimes X^\ks)$};
      \node (3) at (0,0) {$\mathbb{B}(l^2(S_e))$};
      \node (4) at (4.5,0) {$\mathbb{B}(l^2((S\ltimes X^\ks)_{\tau_{e,\,x}}))$};
      \draw[->] (1) to node {$\scriptstyle\varphi$} (2);
      \draw[->] (0,1.1) to node[swap] {$\scriptstyle\ind\rho_{e,\,x}$} (3);
      \draw[->] (3) to node[swap] {$\scriptstyle\mathrm{Ad}\,u^*_{e,\,x} $} (4);
      \draw[->] (2) to node {$\scriptstyle\lambda_{\tau_{e,\,x}}$} (4,0.35);
    \end{tikzpicture}
  \end{center}
  commutes. In particular, $\|\varphi(T)\|_r=\|T\|_{\ks,r}$ for all $T\in S\alg C_c(X)$.
\end{lem}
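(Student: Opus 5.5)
By linearity, it suffices to check the diagram on generators $\delta_s\,f$ with $s\in S$ and $f\in C_c(X_{s^*s})$. Since $u_{e,\,x}$ is unitary, it is enough to evaluate both sides on the orthonormal basis vectors. By Lemma \ref{lem:domain}, these are $\delta_{\tau_{t,\,x}}$ with $t\in S_e$, and the claim reduces to
\[
  u_{e,\,x}\,\lambda_{\tau_{e,\,x}}(\varphi(\delta_s\,f))\,\delta_{\tau_{t,\,x}}=\ind\rho_{e,\,x}(\delta_s\,f)\,\delta_t
\]
for all $t\in S_e$.

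First compute the left-hand side from the definition of $\lambda$:
\[
  \lambda_{\tau_{e,\,x}}(g)\delta_{\tau_{t,\,x}}=\sum_{\beta\in (S\ltimes X^\ks)_{r(\tau_{t,\,x})}} g(\beta)\,\delta_{\beta\tau_{t,\,x}}.
\]
Here $g=\varphi(\delta_s\,f)$ is supported in the bisection $[s,X^\ks_{s^*s}]$, so at most one $\beta$ contributes, namely $\beta=[s,r(\tau_{t,\,x})]$. The previous lemma on $r(\tau_{t,\,x})$ gives $r(\tau_{t,\,x})=\tau_{tt^*,\,\theta_t(x)}$. The last lemma before the statement says that $\beta$ exists if and only if $s^*s\ge tt^*$, and this is equivalent to $s^*st=t$. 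We split into two cases.

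If $s^*st\neq t$, there is no such $\beta$, so the left side is $0$. This agrees with \eqref{eq:ind}.

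If $s^*st=t$, then
\[
  g(\beta)=f\bigl(\tau_{tt^*,\,\theta_t(x)}(s^*s)\bigr)=f(\theta_t(x)),
\]
since $tt^*\le s^*s$. The product is $\beta\tau_{t,\,x}=[s,\tau_{tt^*,\,\theta_t(x)}][t,\tau_{e,\,x}]=[st,\tau_{e,\,x}]$. Since $(st)^*(st)=t^*s^*st=t^*t=e$, we have $st\in S_e$, and this element equals $\tau_{st,\,x}$. Applying $u_{e,\,x}$ gives $f(\theta_t(x))\,\delta_{st}$, which matches \eqref{eq:ind}.

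The only delicate point is the bookkeeping in the identification $\beta\tau_{t,\,x}=\tau_{st,\,x}$ and the evaluation $\varphi(\delta_s\,f)(\beta)=f(\theta_t(x))$. Both follow directly from the definitions of $\varphi$, $\tau_{e,\,x}$ and the multiplication in $S\ltimes X^\ks$.

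For the final assertion, unitary equivalence gives $\|\lambda_{\tau_{e,\,x}}(\varphi(T))\|=\|\ind\rho_{e,\,x}(T)\|$ for every $e\in E(S)$ and $x\in X_e$. Taking suprema in \eqref{eq:norm2}, which rests on Proposition \ref{prop:norm} and Observation \ref{obs:ind-rep}, yields $\|\varphi(T)\|_r=\|T\|_{\ks,r}$.
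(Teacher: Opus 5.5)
Your proposal is correct and follows essentially the same route as the paper. You reduce to the basis vectors $\delta_{\tau_{t,\,x}}$ ($t\in S_e$) and use the preceding lemma to see that the unique contributing arrow $[s,\tau_{tt^*,\,\theta_t(x)}]$ exists exactly when $s^*s\ge tt^*$ (equivalently $s^*st=t$). You then compute both sides as $f(\theta_t(x))\,\delta_{st}$ or $0$ and take suprema in \eqref{eq:norm2}; your explicit checks that $s^*s\ge tt^*\iff s^*st=t$ and that $st\in S_e$ fill in small steps the paper leaves implicit.
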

\begin{proof}
  It suffices to show
  \[
    u_{e,\,x}\bigl(\lambda_{\tau_{e,\,x}}(\varphi(\delta_s\,f))\,(\delta_{\tau_{t,\,x}})\bigr)=\ind\rho_{e,\,x}(\delta_s\,f)\, \bigl(u_{e,\,x}\delta_{\tau_{t,\,x}}\bigr)
  \]
  for all $s\in S$, $f\in C_c(X_{s^*s})$ and $t\in S_e$. Recall from the previous lemma that the two conditions
  \begin{enumerate}
    \item $(S\ltimes X^\ks)_{r(\tau_{t,\,x})}\cap [s,X^\ks_{s^*s}]\neq\emptyset$.
    \item $s^*s\ge tt^*$.
  \end{enumerate}
  are equivalent. 
  
  First suppose that these equivalent conditions hold. Then the unique element $\alpha$ in $(S\ltimes X^\ks)_{r(\tau_{t,\,x})}\cap [s,X^\ks_{s^*s}]$ can be written as $\alpha=[s,\,\theta^\ks_t(\tau_{t^*t,\,x})]=[s,\tau_{tt^*,\,\theta_t(x)}]$. Using this, we compute that
  \begin{align*}
    u_{e,\,x}\bigl(\lambda_{\tau_{e,\,x}}(\varphi(\delta_s\,f))\,(\delta_{\tau_{t,\,x}})\bigr)
    &=u_{e,\,x}\bigl(\varphi(\delta_s\,f)(\alpha)\,\delta_{\alpha\,\cdot\,\tau_{t,\,x}}\bigr)\\
    &=f(\theta_t(x))\,u_{e,\,x}(\delta_{[st,\,\tau_{t^*t,\,x}]})\\
    &=f(\theta_t(x))\,u_{e,\,x}(\delta_{\tau_{st,\,x}})\\
    &=f(\theta_t(x))\,\delta_{st}.
  \end{align*}
  On the other hand, the equality \eqref{eq:ind} gives 
  \[
    \ind\rho_{e,\,x}(\delta_s\,f)\, \bigl(u_{e,\,x}\delta_{\tau_{t,\,x}}\bigr)=f(\theta_t(x))\,\delta_{st}.
  \]

  If the equivalent conditions do not hold, then we have
  \[
    u_{e,\,x}\bigl(\lambda_{\tau_{e,\,x}}(\varphi(\delta_s\,f))\,(\delta_{\tau_{t,\,x}})\bigr)=0=\ind\rho_{e,\,x}(\delta_s\,f)\, \bigl(u_{e,\,x}\delta_{\tau_{t,\,x}}\bigr).
  \]
  The last statement follows from the equalities \eqref{eq:norm2}.
\end{proof}

\begin{thm}[The Reduced Case of Dynamical Paterson's Theorem]
  \[
    S\ksr C_0(X)\cong C^*_r(S\ltimes X^\ks).
  \]
\end{thm}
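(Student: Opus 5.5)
The plan is to show that the $*$-homomorphism $\varphi\colon S\alg C_c(X)\to\mathcal{C}(S\ltimes X^\ks)$ extends to an isometric isomorphism of completions. By Lemma \ref{lem:unitary-equiv}, we have $\|\varphi(T)\|_r=\|T\|_{\ks,r}$ for every $T\in S\alg C_c(X)$. So $\varphi$ extends uniquely to an isometric $*$-homomorphism from the closure of $S\alg C_c(X)$ in $S\ksr C_0(X)$ into $C^*_r(S\ltimes X^\ks)$.

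The first step is to check that $S\alg C_c(X)$ is dense in $S\ksr C_0(X)$. Each $C_c(X_{s^*s})$ is dense in $C_0(X_{s^*s})$ in sup norm. By Lemma \ref{lem:contractive} applied to the left regular representation $\ell$, the map $C_0(X_{s^*s})\ni f\mapsto\delta_s\,f\in S\ksr C_0(X)$ is contractive, so the elements $\delta_s\,f$ with $f\in C_c(X_{s^*s})$ approximate every $\delta_s\,f$ with $f\in C_0(X_{s^*s})$. Their span is therefore dense. Hence $\varphi$ extends to an isometric $*$-homomorphism $\overline{\varphi}\colon S\ksr C_0(X)\to C^*_r(S\ltimes X^\ks)$, which is injective and has closed range.

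It remains to show that the range is dense, i.e.\ that $\varphi(S\alg C_c(X))$ is $\|\cdot\|_r$-dense in $\mathcal{C}(S\ltimes X^\ks)$. This is the main point, since the functions $\varphi(\delta_s\,f)$ only depend on $\tau$ through $\tau(s^*s)$.

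\begin{itemize}
\item \textbf{Reduction to one bisection.} By Lemma \ref{lem:partition}, applied to the cover $\{[s,X^\ks_{s^*s}]\}_{s\in S}$, it suffices to approximate each $g\in C_c([s,X^\ks_{s^*s}])$. Since $\|h\|_r\le\|h\|_\infty$ for $h$ supported in a bisection, sup-norm approximation is enough. Identifying $[s,X^\ks_{s^*s}]$ with $X^\ks_{s^*s}$ via $d$, the task becomes approximating functions in $C_c(X^\ks_{s^*s})$ uniformly.
\item \textbf{The approximating algebra.} Use products $\varphi(\delta_s\,f)*\varphi(\delta_{e_0}\,h)=\varphi(\delta_{se_0}\,\cdots)$, or equivalently, on the unit space, the functions $\tau\mapsto h(\tau(e_0))$ for $e_0\le s^*s$ and $h\in C_c(X_{e_0})$, multiplied together. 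These are exactly $\Phi_{s^*s}$ of elements of $E(S)\alg C_c(X)$ supported below $s^*s$.
\item \textbf{Stone--Weierstrass.} By Proposition \ref{prop:action-on-spectrum}, $\Phi_{s^*s}$ is an isomorphism $C_0(X)^\ks_{s^*s}\cong C_0(X^\ks_{s^*s})$, and the $\delta_{e_0}\,h$ with $h\in C_c$ are dense in $C_0(X)^\ks_{s^*s}$. Hence the functions $\tau\mapsto h(\tau(e_0))$ span a dense subalgebra of $C_0(X^\ks_{s^*s})$.
\item \textbf{Finishing the approximation.} Translating $\delta_{e_0}\,h$ with $e_0\le s^*s$ along $s$ via $\varphi(\delta_{se_0}\,h)$, which lies in $C_c([se_0,\cdot])\subset C_c([s,X^\ks_{s^*s}])$ and agrees with $h(\tau(e_0))$ there, gives sup-norm approximation of $g$. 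The one technical point to handle is that $g$ has compact support inside the open set, so a cutoff keeps supports within $[s,X^\ks_{s^*s}]$.
\end{itemize}

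Thus $\overline{\varphi}$ is a surjective isometric $*$-isomorphism $S\ksr C_0(X)\cong C^*_r(S\ltimes X^\ks)$, compatible with the full-case isomorphism.
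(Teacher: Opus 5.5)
Your proof is correct and follows the paper's argument: the isometry comes from Lemma \ref{lem:unitary-equiv}, and the rest is density on both sides. The only difference is how you get density of $\varphi(S\alg C_c(X))$ in $C^*_r(S\ltimes X^\ks)$. You check it directly through the isomorphism $\Phi_{s^*s}$ of Proposition \ref{prop:action-on-spectrum}; no Stone--Weierstrass step or cutoff is actually needed, since each $\varphi(\delta_{se_0}\,h)$ already lies in $C_c([s,X^\ks_{s^*s}])$. The paper instead reads density off the full case: $\varphi$ induces the isomorphism $S\kstimes C_0(X)\cong C^*(S\ltimes X^\ks)$, so its image is dense in $C^*$ and hence in $C^*_r$.
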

\begin{proof}
  Since $S\alg C_c(X)$ is dense in $S\ksr C_0(X)$ and $\varphi(S\alg C_c(X))$ is dense in $C^*_r(S\ltimes X^\ks)$, the isomorphism follows from the previous lemma.
\end{proof}

\subsection{Basic Properties of KS-Groupoids and KS-Crossed Products}
\label{subsec:4.3}

Dynamical Paterson's theorem proved in the previous subsection shows that the full and reduced KS-crossed products are naturally realized as the full and reduced groupoid C*-algebras of the associated KS-groupoid, respectively. 
Therefore, one can study C*-algebraic properties of KS-crossed products through corresponding topological properties of KS-groupoids. 
In this subsection, we apply this point of view to characterize some basic properties of KS-crossed products, such as separability, unitality, and $\sigma$-unitality, in terms of the associated KS-groupoids and the underlying inverse semigroup actions.

\noindent\textbf{Separability of KS-Crossed Products.}
It is well known that, for \'etale groupoids, second countability is equivalent to separability of the associated full and reduced groupoid C*-algebras. For readers' convenience, we first review this fact.

\begin{lem}
  Let $G$ be an \'etale groupoid. Then the following are equivalent.
  \begin{enumerate}
    \item $G$ is second countable.
    \item $C^*(G)$ is separable.
    \item $C_r^*(G)$ is separable.
    \item $G^{(0)}$ is second countable and $G$ is $\sigma$-compact.
  \end{enumerate}
\end{lem}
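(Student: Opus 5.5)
I would prove the cycle (i)$\Rightarrow$(iv)$\Rightarrow$(i), then (i)$\Rightarrow$(ii)$\Rightarrow$(iii)$\Rightarrow$(i).

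\textbf{(i)$\Leftrightarrow$(iv).} If $G$ is second countable, then $G^{(0)}$ is second countable as a subspace. Moreover $G$ is covered by countably many open bisections from a countable basis. Each such bisection is homeomorphic to an open subset of the locally compact, second countable Hausdorff space $G^{(0)}$, hence is $\sigma$-compact, so $G$ is $\sigma$-compact. Conversely, assume (iv). Each $\gamma\in G$ has an open bisection neighbourhood homeomorphic to an open subset of $G^{(0)}$, hence second countable. By $\sigma$-compactness, countably many such bisections cover $G$, and the union of their countable bases is a countable basis of $G$.

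\textbf{(i)$\Rightarrow$(ii).} Fix a countable basis $\mathcal{B}$ of open bisections, which exists because $\bis(G)$ is a basis. For $U\in\mathcal{B}$, the space $C_c(U)$ is separable in $\|\cdot\|_\infty$, since $U$ is locally compact, Hausdorff and second countable. By Lemma~\ref{lem:partition}, $\mathcal{C}(G)=\Span\bigcup_{U\in\mathcal{B}}C_c(U)$. Every $*$-homomorphism satisfies $\|\varphi(f)\|\le\|f\|_\infty$ for $f\in C_c(U)$, so rational combinations of countable dense subsets of the $C_c(U)$ are dense in $C^*(G)$.

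\textbf{(ii)$\Rightarrow$(iii).} $C^*_r(G)$ is a quotient of $C^*(G)$, so it is separable.

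\textbf{(iii)$\Rightarrow$(i).} This is the main obstacle. I would use the contractive conditional expectation $E\colon C^*_r(G)\to\ell^\infty$-type functions on $G^{(0)}$ given by restriction, $f\mapsto f|_{G^{(0)}}$, with $\|f|_{G^{(0)}}\|_\infty\le\|f\|_r$. More generally I would use the evaluation map $j\colon C^*_r(G)\to$ bounded functions on $G$, which satisfies $\|j(a)\|_\infty\le\|a\|_r$; this holds since $a(\gamma)=\langle\lambda_{d(\gamma)}(a)\delta_{d(\gamma)},\delta_\gamma\rangle$.

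Take a countable dense set $\{a_n\}\subset\mathcal{C}(G)$ of $C^*_r(G)$. Each $a_n$ is a finite sum of functions in $C_c(U_{n,k})$ for open bisections $U_{n,k}$. I claim the countably many bisections $U_{n,k}$ cover $G$. Indeed, for $\gamma\in G$ pick $g\in C_c(U)$ with $g(\gamma)=1$, and choose $a_n$ with $\|a_n-g\|_r<1/2$. Then $|a_n(\gamma)-1|<1/2$, so $\gamma\in\bigcup_k U_{n,k}$.

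Each $U_{n,k}$ is homeomorphic to $d(U_{n,k})$, so it remains to show that $G^{(0)}$ is second countable. The algebra $C_0(G^{(0)})$ is the image of the expectation $E$ restricted to $C_c(G^{(0)})\subset\mathcal{C}(G)$. The functions $E(a_n)$ might be non-continuous in the non-Hausdorff case. I would instead argue as follows. The sup-norm-closed subspace $C_0(G^{(0)})$ sits isometrically inside $C^*_r(G)$, and a subspace of a separable metric space is separable. Hence $C_0(G^{(0)})$ is separable, which gives second countability of $G^{(0)}$ by Gelfand duality. The covering argument then gives a countable basis of $G$. I expect the isometric inclusion $C_0(G^{(0)})\hookrightarrow C^*_r(G)$ (via $\lambda_x$ acting diagonally) and the evaluation bound to be the points needing care.
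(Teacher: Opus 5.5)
Your proof is correct and follows essentially the paper's route. You use the same countable bisection basis and sup-norm bound for (i)$\Rightarrow$(ii), and for (iii) the same two ingredients: the isometric copy of $C_0(G^{(0)})$ in $C^*_r(G)$, and the evaluation bound $|h(\gamma)|\le\|h\|_r$ applied to a countable dense subset of $\mathcal{C}(G)$.

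The only difference is organisational. You cover $G$ by the open bisections carrying the dense sequence and conclude (i) directly, whereas the paper covers $G$ by the compact supports of that sequence to get (iv) and then uses (iv)$\Rightarrow$(i). Your digression about the conditional expectation $E$ is never used and can be deleted.
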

\begin{proof}
  (i)$\Rightarrow$(ii): Since $\bis(G)$ forms an open basis for $G$, we can find a countable open basis $\mathcal{O}$ with $\mathcal{O}\subset\bis(G)$. 
  From Lemma \ref{lem:partition}, we have $\mathcal{C}(G)=\Span \bigcup_{U\in\mathcal{O}}C_c(U)$. Since the full norm $\|\cdot\|$ on $C_c(U)\subset C^*(G)$ coincides with the sup norm $\|\cdot\|_\infty$ and $U$ is second countable for each $U\in\mathcal{O}$, the subset $C_c(U)$ is separable in $C^*(G)$ for each $U\in\mathcal{O}$. 
  Together with $C^*(G)=\overline{\Span} \bigcup_{U\in\mathcal{O}}C_c(U)$, we deduce that $C^*(G)$ is separable.

  \noindent
  (ii)$\Rightarrow$(iii): This follows by considering the canonical surjection from $C^*(G)$ onto $C^*_r(G)$.

  \noindent
  (iii)$\Rightarrow$(iv): First note that $C_0(G^{(0)})\subset C^*_r(G)$ is separable and hence the unit space $G^{(0)}$ is second countable.
  
  Since $\mathcal{C}(G)$ is dense in $C^*_r(G)$, we can find a countable subset $\{f_n\}_n\subset\mathcal{C}(G)$ such that $\{f_n\}_n$ is dense in $C^*_r(G)$. For each $n$, find a compact subset $K_n$ in $G$ satisfying $f_n(\gamma)=0$ for all $\gamma\in G\setminus K_n$. Then $G=\bigcup_n K_n$. Indeed, if $\gamma\in G\setminus \bigcup_nK_n$, then for any $g\in\mathcal{C}(G)$ with $g(\gamma)=1$, we have
  \[
    \|g-f_n\|_r\ge |\langle\delta_\gamma\mid \lambda_{d(\gamma)}(g-f_n)\,\delta_{d(\gamma)}\rangle|=|g(\gamma)-f_n(\gamma)|=1,
  \]
  where $\lambda_{d(\gamma)}\colon\mathcal{C}(G)\to\mathbb{B}(l^2(G_{d(\gamma)}))$ is the left regular representation of $\mathcal{C}(G)$ at $d(\gamma)\in G^{(0)}$.
  This contradicts the assumption that $\{f_n\}_n$ is dense in $C^*_r(G)$.

  (iv)$\Rightarrow$(i): Since $G$ is $\sigma$-compact, $G$ can be covered by countably many open bisections. On the other hand, every open bisection is second countable as it is homeomorphic to an open subset in the unit space $G^{(0)}$, which is assumed to be second countable. This proves the desired implication.
\end{proof}

From dynamical Paterson's theorem and the lemma above, separability of KS-crossed products is characterized by second countability of the associated KS-groupoid. In Proposition \ref{prop:2nd-ctbl}, the latter condition is expressed in terms of the underlying inverse semigroup action. We summarize this result.

\begin{cor}
  Let $\theta$ be an action of an inverse semigroup $S$ on a locally compact Hausdorff space $X$. Then the following are equivalent.
  \begin{enumerate}
    \item $X$ is second countable and $S^X=\{s\in S\mid X_{s^*s}\neq\emptyset\}$ is countable.
    \item $S\ltimes X^\ks$ is second countable.
    \item $S\kstimes C_0(X)$ is separable.
    \item $S\ksr C_0(X)$ is separable.
  \end{enumerate}
\end{cor}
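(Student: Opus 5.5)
This corollary is a chain of equivalences assembled from three earlier results: Proposition \ref{prop:2nd-ctbl}, the lemma just proved on separability of \'etale groupoid C*-algebras, and dynamical Paterson's theorem from Subsection \ref{subsec:4.2}. No new argument is needed; the work is to connect these in the right order.

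First, (i)$\Leftrightarrow$(ii) is exactly Proposition \ref{prop:2nd-ctbl}, applied to the given action $\theta$ of $S$ on $X$.

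Second, I apply the preceding lemma to the \'etale groupoid $G=S\ltimes X^\ks$. Its unit space $X^\ks$ is locally compact Hausdorff by Lemma \ref{lem:X^ks-is-lch}, so the standing hypotheses of that lemma are met. The lemma then gives that $S\ltimes X^\ks$ is second countable if and only if $C^*(S\ltimes X^\ks)$ is separable, if and only if $C^*_r(S\ltimes X^\ks)$ is separable.

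Third, I transport separability across the isomorphisms of dynamical Paterson's theorem. The full case is $S\kstimes C_0(X)\cong C^*(S\ltimes X^\ks)$, obtained at the start of Subsection \ref{subsec:4.2}. The reduced case is $S\ksr C_0(X)\cong C^*_r(S\ltimes X^\ks)$, the reduced-case theorem proved there. Since separability is invariant under $*$-isomorphism, this yields (ii)$\Leftrightarrow$(iii) and (ii)$\Leftrightarrow$(iv), completing the chain.

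The only step requiring care is confirming that the previous lemma applies to $S\ltimes X^\ks$ without a Hausdorffness assumption, since KS-groupoids need not be Hausdorff. That lemma was stated for arbitrary \'etale groupoids under the paper's convention of a locally compact Hausdorff unit space. Its proof uses only $\mathcal{C}(G)$, Lemma \ref{lem:partition} and the regular representations $\lambda_x$, none of which require $G$ to be Hausdorff, so it applies here.
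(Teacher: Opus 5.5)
Your proposal is correct and matches the paper's own argument: (i)$\Leftrightarrow$(ii) is Proposition~\ref{prop:2nd-ctbl}, and (ii)$\Leftrightarrow$(iii)$\Leftrightarrow$(iv) follows from the separability lemma for \'etale groupoids, transported along the full and reduced isomorphisms of dynamical Paterson's theorem. Your remark that this lemma does not need $G$ to be Hausdorff also agrees with the paper's standing convention for \'etale groupoids.
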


\noindent\textbf{Unitality and $\sigma$-Unitality of KS-Crossed Products.}
It is also well known that, for \'etale groupoids, ($\sigma$-)compactness of the unit space is equivalent to ($\sigma$-)unitality of the associated full and reduced groupoid C*-algebras. For readers' convenience, we present a proof.

Recall first that the inclusion of $C_0(G^{(0)})$ into $C^*(G)$ (resp.\ $C^*_r(G)$) is non-degenerate, that is, any approximate unit for $C_0(G^{(0)})$ is also an approximate unit for $C^*(G)$ (resp.\ $C^*_r(G)$). 
See \cite[Proposition 3.18]{exel2008}.

\begin{lem}
  Let $G$ be an \'etale groupoid. Then the following are equivalent.
  \begin{enumerate}
    \item $G^{(0)}$ is compact (resp.\ $\sigma$-compact).
    \item $C^*(G)$ is unital (resp.\ $\sigma$-unital).
    \item $C^*_r(G)$ is unital (resp.\ $\sigma$-unital).
  \end{enumerate}
\end{lem}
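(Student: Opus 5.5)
I would establish the cycle (i)$\Rightarrow$(ii)$\Rightarrow$(iii)$\Rightarrow$(i), treating the unital and $\sigma$-unital cases in parallel. The main tool is the fact recalled just above: the inclusion $C_0(G^{(0)})\subset C^*(G)$ (resp.\ $C^*_r(G)$) is non-degenerate, so an approximate unit of $C_0(G^{(0)})$ is an approximate unit of the larger algebra.

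For (i)$\Rightarrow$(ii), suppose first that $G^{(0)}$ is compact. Then $1_{G^{(0)}}\in C_c(G^{(0)})$, and $G^{(0)}$ is an open bisection. Its image in $C^*(G)$ is a projection $p$. By non-degeneracy, $pa=\lim p\,e_\lambda a=\lim e_\lambda a=a$ for any approximate unit $(e_\lambda)$ of $C_0(G^{(0)})$, and similarly $ap=a$. Hence $p$ is a unit. If $G^{(0)}$ is only $\sigma$-compact, then $C_0(G^{(0)})$ is $\sigma$-unital and has a countable approximate unit. By non-degeneracy this is a countable approximate unit for $C^*(G)$. The implication (ii)$\Rightarrow$(iii) is immediate, since the canonical surjection $C^*(G)\to C^*_r(G)$ maps a unit (resp.\ a countable approximate unit) to a unit (resp.\ a countable approximate unit).

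The substantive step is (iii)$\Rightarrow$(i). In the unital case, let $1$ be the unit of $C^*_r(G)$. Since $\mathcal{C}(G)$ is dense, pick $g\in\mathcal{C}(G)$ with $\|1-g\|_r<1$, and let $K\subset G$ be a compact set outside which $g$ vanishes. Then $d(K)$ is compact, and I claim $d(K)\supset G^{(0)}$. Take $x\in G^{(0)}\setminus d(K)$ and $h\in C_c(G^{(0)})$ with $h(x)=1$ and $\|h\|_\infty=1$. Using the left regular representation $\lambda_x$ and the vector $\delta_x$, we get $\langle\delta_x\mid\lambda_x(g)\delta_x\rangle=g(x)=0$ because $x\notin K$. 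On the other hand, $\langle\delta_x\mid\lambda_x(1)\delta_x\rangle=1$, which I verify via $1\cdot h=h$ and $\lambda_x(h)\delta_x=\delta_x$. Together these give $\|1-g\|_r\ge 1$, a contradiction. Hence $G^{(0)}=d(K)\cap G^{(0)}$ is compact, being a closed subset of the compact set $d(K)$; here I use that $G^{(0)}$ is closed in $G$ in the relevant sense, or, more directly, I replace $d(K)$ by $d(K\cap d^{-1}(G^{(0)}))$.

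In the $\sigma$-unital case, I would use the same argument as in the separability lemma. Take a countable approximate unit $(a_n)$ of $C^*_r(G)$ and, for each $n$, a countable family in $\mathcal{C}(G)$ approximating $a_n$ within $1/m$. This gives a countable set $\{g_k\}\subset\mathcal{C}(G)$ with compact "supports" $K_k$. I then claim $G^{(0)}\subset\bigcup_k d(K_k)$. If $x$ lies outside this union, pick $h$ as before. For every $k$ we have $\langle\delta_x\mid\lambda_x(g_k h)\delta_x\rangle=g_k(x)h(x)=0$, whereas $a_nh\to h$ forces $\langle\delta_x\mid\lambda_x(a_nh)\delta_x\rangle\to1$, a contradiction. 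Each $d(K_k)$ is compact, so $G^{(0)}$ is $\sigma$-compact.

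I expect the main obstacle to be the bookkeeping with non-Hausdorff $G$. Functions in $\mathcal{C}(G)$ need not be continuous, so the "support" must be taken as a finite union of compact subsets of bisections rather than a closed support. I would handle this by writing $g=\sum_i g_i$ with $g_i\in C_c(U_i)$ and setting $K=\bigcup_i\supp_{U_i}(g_i)$, whose image under $d$ lands compactly in $G^{(0)}$.
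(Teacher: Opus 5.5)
Your proof is correct and follows essentially the paper's route: (i)$\Rightarrow$(ii) via non-degeneracy of $C_0(G^{(0)})\subset C^*(G)$, (ii)$\Rightarrow$(iii) via the canonical surjection, and the $\sigma$-unital half of (iii)$\Rightarrow$(i) by the same device of approximating a countable approximate unit by elements of $\mathcal{C}(G)$ vanishing off compact sets $K_n$, then testing against $\delta_x$ in $\lambda_x$ to force $G^{(0)}=\bigcup_n d(K_n)$. The only real difference is the unital case, where the paper argues more quickly that the unit $1=\lim_\lambda u_\lambda$ lies in the closed subalgebra $C_0(G^{(0)})$ by non-degeneracy; also, your aside about $G^{(0)}$ being closed in $G$ is unnecessary (and it fails for non-Hausdorff $G$), since $d(K)\subseteq G^{(0)}$ automatically, so your claim already gives $G^{(0)}=d(K)$, which is compact.
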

\begin{proof}
  (i)$\Rightarrow$(ii): Use the fact that the inclusion $C_0(G^{(0)})\subset C^*(G)$ is non-degenerate.

  \noindent
  (ii)$\Rightarrow$(iii): This follows by considering the canonical surjection from $C^*(G)$ onto $C^*_r(G)$.

  \noindent
  (iii)$\Rightarrow$(i): If $C^*_r(G)$ is unital, then the unit belongs to $C_0(G^{(0)})$ since the inclusion $C_0(G^{(0)})\subset C^*_r(G)$ is non-degenerate. Hence $G^{(0)}$ is compact.

  Suppose that $C^*_r(G)$ is $\sigma$-unital. By approximating a countable approximate unit for $C_r^*(G)$ by elements of $\mathcal C(G)$, we may find a sequence $(f_n)_n\subset\mathcal{C}(G)$ such that $\|g*f_n-g\|_r\to 0$ for all $g\in\mathcal{C}(G)$. For each $n$, find a compact subset $K_n\subset G$ such that $f_n(\gamma)=0$ for all $\gamma\in G\setminus K_n$. We claim that $G^{(0)}=\bigcup_nd(K_n)$. 

  Let $x\in G^{(0)}$ and fix $g\in C_c(G^{(0)})\subset\mathcal{C}(G)$ with $g(x)=1$. Then there exists $n$ such that $\|g*f_n-g\|_r<1/2$. Now we observe that $g*f_n(x)=f_n(x)$ and 
  \[
    |f_n(x)-1|=|g*f_n(x)-g(x)|=|\langle\delta_x\mid \lambda_x(g*f_n-g)\,\delta_x\rangle|\le\|g*f_n-g\|_r<1/2.
  \]
  Thus $f_n(x)\neq 0$, in particular, $x\in d(K_n)$. This proves $G^{(0)}=\bigcup_nd(K_n)$.
\end{proof}

From dynamical Paterson's theorem and the lemma above, ($\sigma$-)unitality of the full and reduced KS-crossed products $S\kstimes C_0(X)$, $S\ksr C_0(X)$ is characterized by ($\sigma$-)compactness of the space $X^\ks$. In Theorem \ref{thm:sigma-cpt}, the latter condition is expressed in terms of the underlying inverse semigroup action. We summarize this result.

\begin{cor}
  Let $\theta$ be an action of an inverse semigroup $S$ on a locally compact Hausdorff space $X$. Then the following are equivalent.
  \begin{enumerate}
    \item There exists a finite (resp.\ countable) family of pairs $\{(f_i,K_i)\}_{i\in I}$ such that $f_i\in E(S)$, $K_i\subset X_{f_i}$ is compact for each $i\in I$ and $X_e\subset\bigcup_{i\in I,\, f_i\ge e} K_i$ for each $e\in E(S)$.
    \item $X^\ks$ is compact (resp.\ $\sigma$-compact).
    \item $S\kstimes C_0(X)$ is unital (resp.\ $\sigma$-unital).
    \item $S\ksr C_0(X)$ is unital (resp.\ $\sigma$-unital).
  \end{enumerate}
\end{cor}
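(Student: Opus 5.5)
This corollary is obtained by chaining three results already established, so the plan is to assemble them rather than prove anything new. The equivalence (i)$\Leftrightarrow$(ii) is exactly Theorem \ref{thm:sigma-cpt}, applied verbatim in both the finite/compact and the countable/$\sigma$-compact versions. For the remaining equivalences, note that the unit space of the KS-groupoid $S\ltimes X^\ks$ is $X^\ks$ under the identification $(S\ltimes X^\ks)^{(0)}=X^\ks$. By the preceding lemma on \'etale groupoids, applied to $G=S\ltimes X^\ks$, (ii) is therefore equivalent to $C^*(S\ltimes X^\ks)$ being unital (resp.\ $\sigma$-unital), and also to $C^*_r(S\ltimes X^\ks)$ being unital (resp.\ $\sigma$-unital).

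The last step transfers these statements to the KS-crossed products via dynamical Paterson's theorem. The full case gives $S\kstimes C_0(X)\cong C^*(S\ltimes X^\ks)$, and the reduced case (the theorem just proved) gives $S\ksr C_0(X)\cong C^*_r(S\ltimes X^\ks)$. Unitality and $\sigma$-unitality are invariant under $*$-isomorphism, so (iii) and (iv) are each equivalent to (ii).

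The only point requiring care is that the lemma on \'etale groupoids needs $G^{(0)}$ to be locally compact Hausdorff, which is our standing assumption. Here this holds by Lemma \ref{lem:X^ks-is-lch}, while $S\ltimes X^\ks$ itself need not be Hausdorff, and the lemma does not require that. With this checked, no step poses a real obstacle, and the proof is a direct concatenation of the cited results.
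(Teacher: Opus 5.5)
Your proposal is correct and is exactly the paper's argument: (i)$\Leftrightarrow$(ii) is Theorem~\ref{thm:sigma-cpt}. The remaining equivalences (ii)$\Leftrightarrow$(iii)$\Leftrightarrow$(iv) follow by applying the preceding \'etale-groupoid lemma on ($\sigma$-)unitality to $S\ltimes X^\ks$, whose unit space is $X^\ks$, and transferring the result through the full and reduced cases of dynamical Paterson's theorem.
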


\appendix
\section{Exel's Reduced Crossed Products \texorpdfstring{$S\ltimesr A$}{}}
\label{sec:A}

In this section, we review the construction of the reduced crossed product $S\ltimesr A$ due to Exel in \cite{exel2011}. This section also contains a detailed argument for the existence of appropriate cyclic vectors of GNS-representations on the $*$-algebra $S\alg A$.

\begin{lem}
  \label{lem:support}
  Let $A$ be a C*-algebra, $J\triangleleft A$ and $\varphi\in A^*$. The following are equivalent.
  \begin{enumerate}
    \item There exist $\psi\in J^*$ and $x\in J$ such that $\varphi(a)=\psi(xa)$ for all $a\in A$.
    \item There exist $\psi\in J^*$ and $x\in J$ such that $\varphi(a)=\psi(ax)$ for all $a\in A$.
    \item For any approximate unit $(u_\lambda)$ for $J$ and for all $a\in A$, we have
    \[
      \varphi(a)=\lim_\lambda\varphi(u_\lambda a).
    \]
    \item For any approximate unit $(u_\lambda)$ for $J$ and for all $a\in A$, we have
    \[
      \varphi(a)=\lim_\lambda\varphi(au_\lambda).
    \]
  \end{enumerate}
  If $\varphi$ is a state on $A$, these are equivalent to 
  \begin{enumerate}
    \item[(v)] $\pi_\varphi|_J\colon J\to\mathbb{B}(\mathcal{H}_\varphi)$ is non-degenerate, where $(\pi_\varphi,\mathcal{H}_\varphi)$ is the GNS-representation for $\varphi$.
  \end{enumerate}
  If, moreover, $\varphi$ is a pure state on $A$, these are equivalent to $\varphi|_J\neq 0$.
\end{lem}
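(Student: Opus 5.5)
The plan is to prove the cycle (i)$\Rightarrow$(iii)$\Rightarrow$(i), then the symmetric cycle (ii)$\Leftrightarrow$(iv). After that we link the two cycles by passing through (v) in the state case and through an adjoint argument in general. The last step handles pure states.

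\textbf{(i)$\Rightarrow$(iii).} Suppose $\varphi(a)=\psi(xa)$ with $x\in J$. Then $\varphi(u_\lambda a)=\psi(xu_\lambda a)$. Since $xu_\lambda\to x$ in norm, we get $\varphi(u_\lambda a)\to\psi(xa)=\varphi(a)$.

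\textbf{(iii)$\Rightarrow$(i).} Consider $J$ as a C*-algebra and the functional $\varphi_a\colon J\ni y\mapsto\varphi(ya)$. The cleaner route is a factorization theorem. Define $\varphi'\in J^*$ by $\varphi'(y)=\varphi(y)$, and regard $A^*$ as a right Banach $A$-module via $(\varphi\cdot b)(a)=\varphi(ba)$. Condition (iii) says that $\varphi$ lies in the essential part of $A^*$ as a left Banach $J$-module, i.e.\ $\varphi=\lim\,\varphi\cdot u_\lambda$ in the weak* sense, where $(\varphi\cdot u)(a)=\varphi(ua)$.

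Weak* convergence alone is not enough for Cohen--Hewitt, so I first upgrade it to norm convergence. For a bounded net, the maps $\varphi\cdot u_\lambda$ converge pointwise to $\varphi$. Norm convergence follows from the fact that $\|\varphi-\varphi\cdot u_\lambda\|$ can be estimated with $u_\lambda$ increasing: take $(u_\lambda)$ to be the canonical increasing approximate unit, and write $\varphi=\sum$ of positive functionals via the Jordan decomposition. For positive $\omega$, Cauchy--Schwarz gives
\[
  |\omega((1-u_\lambda)a)|^2\le\omega((1-u_\lambda)^2)\,\|a\|^2\le\omega(1-u_\lambda)\,\|a\|^2,
\]
in $\tilde A$. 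Here $\omega(1-u_\lambda)\to0$ holds by monotone convergence once we know $\lim\omega(u_\lambda)=\|\omega\|$. The delicate point is that (iii) for $\varphi$ must pass to its positive parts. I would instead argue directly using the polar decomposition $\varphi=\omega(v\,\cdot)$ in $A^{**}$, where $v$ is a partial isometry and $\omega=|\varphi|$ with $\|\omega\|=\|\varphi\|$. Condition (iii) says that the central support projection $p_J\in A^{**}$ of $J$ satisfies $\varphi(p_Ja)=\varphi(a)$, hence $\varphi=\varphi\cdot p_J$. Then $|\varphi|=|\varphi|\cdot p_J$ by uniqueness of polar decomposition, since $p_J$ is central. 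This yields $\|\varphi-\varphi\cdot u_\lambda\|\to0$.

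Once norm convergence is available, the Cohen--Hewitt factorization theorem for the left Banach $J$-module $A^*$ produces $x\in J$ and $\varphi_0\in A^*$ with $\varphi=\varphi_0\cdot x$, that is, $\varphi(a)=\varphi_0(xa)$. We then set $\psi=\varphi_0|_J$ and note $xa\in J$. The case (ii)$\Leftrightarrow$(iv) is the mirror argument with right multiplication, where again $p_J$ being central is what makes the argument symmetric.

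\textbf{Linking the two sides.} The central projection $p_J$ also shows (iii)$\Leftrightarrow$(iv) at once: both say $\varphi=\varphi\cdot p_J=p_J\cdot\varphi$ as normal functionals on $A^{**}$, since $u_\lambda\to p_J$ strictly/weak*.

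\textbf{States and pure states.} For a state $\varphi$, non-degeneracy of $\pi_\varphi|_J$ means $\pi_\varphi(u_\lambda)\to1$ strongly. Testing against the cyclic vector $\xi_\varphi$ gives $\varphi(u_\lambda a)=\langle\xi_\varphi,\pi_\varphi(u_\lambda a)\xi_\varphi\rangle\to\varphi(a)$, which is (iii). Conversely, from (iii) we get $\|\pi(u_\lambda)\pi(a)\xi-\pi(a)\xi\|^2\le 2\,\mathrm{Re}\,\varphi(a^*(1-u_\lambda)a)\to0$, which gives density. Finally, for a pure state $\pi_\varphi$ is irreducible. The subspace $\overline{\pi(J)\mathcal H}$ is invariant, so it is either $0$ or everything. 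It is nonzero exactly when $\pi_\varphi|_J\neq0$, and this holds iff $\varphi|_J\neq0$ (using $\varphi(y^*y)=\|\pi(y)\xi\|^2$ together with the fact that $J\xi$ is dense whenever it is nonzero).

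I expect the main obstacle to be the norm-convergence upgrade needed for Cohen--Hewitt. I will handle it via $A^{**}$ and the central projection $p_J$.
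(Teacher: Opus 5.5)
Your proposal is correct. It necessarily takes a different route from the paper, which gives no argument of its own and simply refers to Propositions 5.1, 5.4 and 5.5 of \cite{exel2011}.

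\textbf{Your route.} You make the lemma self-contained by passing to $A^{**}$. Any approximate unit of $J$ converges weak$^*$ to the central cover $p_J$, so (iii) and (iv) both become $\tilde\varphi=\tilde\varphi(p_J\,\cdot\,)=\tilde\varphi(\,\cdot\,p_J)$. This gives (iii)$\Leftrightarrow$(iv) at once and explains the left/right symmetry conceptually. Polar decomposition of $\tilde\varphi$ then upgrades the pointwise convergence in (iii) to $\|\varphi(u_\lambda\,\cdot\,)-\varphi\|\to0$, and Cohen--Hewitt in $A^*$ yields (i).

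\textbf{A lighter alternative.} The bidual is not actually needed, and the ``delicate point'' you flagged disappears if you factor $\varphi|_J\in J^*$ instead of $\varphi\in A^*$. $J^*$ is an essential Banach $J$-module, by the Jordan decomposition \emph{on $J$} together with $\omega(u_\lambda)\to\|\omega\|$ for positive $\omega\in J^*$. Cohen--Hewitt then gives $\psi\in J^*$ and $x\in J$ with $\varphi(y)=\psi(xy)$ for all $y\in J$. Using (iii) once, $\varphi(a)=\lim_\lambda\varphi(u_\lambda a)=\lim_\lambda\psi(xu_\lambda a)=\psi(xa)$. Moreover, (i)$\Rightarrow$(iv) is immediate, since $xa\in J$ gives $\psi(xau_\lambda)\to\psi(xa)$. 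Symmetrically (ii)$\Rightarrow$(iii), so all of (i)--(iv) follow without von Neumann algebra tools.

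\textbf{Two details to repair in your write-up.}
\begin{enumerate}
  \item With $\tilde\varphi=\omega(v\,\cdot\,)$, your displayed Cauchy--Schwarz estimate does not apply as written. Use instead $\tilde\varphi(z)=|\tilde\varphi|(zv)$. Then, using (iii) in the form $\tilde\varphi(a)=\tilde\varphi(p_Ja)$,
  $|\varphi(u_\lambda a)-\varphi(a)|^2\le|\tilde\varphi|((p_J-u_\lambda)^2)\,\|\varphi\|\,\|a\|^2\le|\tilde\varphi|(p_J-u_\lambda)\,\|\varphi\|\,\|a\|^2\to0$.
  This does not even need $|\tilde\varphi|=|\tilde\varphi|(p_J\,\cdot\,)$. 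Justifying that identity by ``uniqueness of polar decomposition'' would also require checking the support condition.
  \item In (iii)$\Rightarrow$(v), the convergence $\varphi(a^*u_\lambda a)\to\varphi(a^*a)$ is not literally (iii), because $u_\lambda$ sits in the middle. It follows from (i), since $xa^*\in J$, or from the centrality of $p_J$.
\end{enumerate}
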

\begin{proof}
  See Proposition 5.1, 5.4 and 5.5 of \cite{exel2011}.
\end{proof}

\begin{lem}[{\cite[Proposition 5.6]{exel2011}}]
  Let $A$ be an $S$-C*-algebra and $\varphi$ be a pure state on $A$. Set
  \[
    \supp\varphi=\{e\in E(S)\mid \varphi|_{A_e}\neq 0\}.
  \]
  Then $\supp\varphi$ is directed with respect to the preorder $\preceq$ defined by $e\preceq f$ if and only if $f\le e$. 
\end{lem}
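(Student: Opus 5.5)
The statement is that for a pure state $\varphi$ on the $S$-C*-algebra $A$, the set $\supp\varphi$ is directed with respect to $\preceq$ (reverse of the natural order on $E(S)$). Since $\preceq$ is clearly a preorder, directedness amounts to two claims. First, $\supp\varphi$ is non-empty. Second, for $e,f\in\supp\varphi$ the product $ef$ again lies in $\supp\varphi$. The second claim suffices because $ef\le e$ and $ef\le f$, i.e.\ $e\preceq ef$ and $f\preceq ef$, so $ef$ is an upper bound.

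For non-emptiness, I will use that $\sum_{e\in E(S)}A_e$ is dense in $A$. If $\varphi|_{A_e}=0$ for every $e$, then $\varphi$ vanishes on a dense subspace. By continuity $\varphi=0$, contradicting that $\varphi$ is a state.

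For closure under products, fix $e,f\in\supp\varphi$. The key observation is that $A_{ef}=A_e\cap A_f=A_eA_f$. This holds because $\alpha_{ef}=\alpha_e\circ\alpha_f$ is the identity exactly on $A_e\cap A_f$, and the intersection of two closed ideals equals their product. By Lemma \ref{lem:support}, since $\varphi$ is pure and $\varphi|_{A_e}\neq0$, condition (iii) holds for $J=A_e$. So for an approximate unit $(u_\lambda)$ of $A_e$ we have $\varphi(a)=\lim_\lambda\varphi(u_\lambda a)$ for all $a\in A$.

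Now choose $b\in A_f$ with $\varphi(b)\neq0$, which is possible since $f\in\supp\varphi$. Then $\varphi(u_\lambda b)\to\varphi(b)\neq0$, so $\varphi(u_\lambda b)\neq0$ for some $\lambda$. Since $u_\lambda b\in A_eA_f\subset A_{ef}$, this gives $\varphi|_{A_{ef}}\neq0$, i.e.\ $ef\in\supp\varphi$.

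The only nontrivial input is the equivalence in Lemma \ref{lem:support} between $\varphi|_J\neq0$ and condition (iii) for pure states. That equivalence is where purity is used, via irreducibility of the GNS representation: the essential subspace of $\pi_\varphi(J)$ is invariant, so it is either zero or everything. Since that lemma is already available, I expect no real obstacle beyond correctly identifying $A_{ef}$ with $A_e\cap A_f$.
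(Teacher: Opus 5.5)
Your proof is correct and follows the same route as the paper, which reduces directedness to showing $\varphi|_{A_e\cap A_f}\neq 0$ whenever $\varphi|_{A_e}\neq 0$ and $\varphi|_{A_f}\neq 0$, and cites Lemma \ref{lem:support}. You write out the approximate-unit step and the identification $A_{ef}=A_e\cap A_f$ explicitly, and you add the non-emptiness check via density of $\sum_{e}A_e$, which the paper leaves implicit.
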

\begin{proof}
  It suffices to show that if $\varphi|_I,\varphi|_J\neq 0$, then $\varphi|_{I\cap J}\neq 0$. But this is clear from the lemma above.
\end{proof}

\begin{lem}
  For $s,t\in S$ and $e\in E(S)$, the following are equivalent.
  \begin{enumerate}
    \item $e\le s^*t$.
    \item $e\le s^*s$ and $se=te$.
  \end{enumerate}
\end{lem}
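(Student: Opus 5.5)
We prove the two implications directly, using only the definition $s\le t \iff ts^*s=s$, the commutativity of idempotents in $E(S)$, and the fact that $f\le g$ for idempotents means $f=fg=gf$.

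For (ii)$\Rightarrow$(i), assume $e\le s^*s$ and $se=te$. Since $e$ is idempotent, it suffices to show $e=s^*te$, because then $e=(s^*t)e$ with $e=e^*e$, which is exactly $e\le s^*t$. Using $se=te$ we get
\[
s^*te=s^*se=e,
\]
where the last equality is $e\le s^*s$.

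For (i)$\Rightarrow$(ii), assume $e\le s^*t$, i.e.\ $s^*te=e$. Write $e=s^*te$. Then $s^*se=s^*s\,s^*te=s^*te=e$, since $s^*ss^*=s^*$, so $e\le s^*s$.

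It remains to show $se=te$. First, $se=ss^*te$. Since $ss^*$ and $tet^*$ are idempotents and commute, and $te=tet^*t\cdot\ldots$ needs care, it is cleaner to use the following steps.

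First, $e^*=e$ gives $e=(s^*te)^*=et^*s$. Hence
\[
e=e\,e=et^*s\,s^*te,
\]
so $e\le t^*ss^*t$, and in particular $e\le t^*t$, because $t^*ss^*t\le t^*t$.

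Next,
\[
te=t\,(et^*s)=(tet^*)\,s.
\]
Since $tet^*$ is idempotent, $te=tet^*s\le s$, and its domain is $(te)^*(te)=et^*te=e$, using $e\le t^*t$.

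An element $u\le s$ with $u^*u=e$ must equal $se$, because $u=su^*u=se$. Taking $u=te$ gives $te=se$.

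The only real obstacle is organizing the manipulation for $se=te$. The key idea is to show that $te$ lies below $s$ with domain idempotent $e$, which avoids a direct calculation.
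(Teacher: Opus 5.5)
Your proof is correct; it differs from the paper's only in how you obtain $se=te$. The implication (ii)$\Rightarrow$(i) and the derivation $s^*se=s^*ss^*te=s^*te=e$ are the same as in the paper.

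The paper reads $e\le s^*t$ in both one-sided forms, $s^*te=e=t^*se$. The second form needs the standard fact that $e\le u$ also gives $e=eu$, followed by taking adjoints. It then finishes in one line:
\[
se=ss^*te=tt^*ss^*te=tt^*se=te,
\]
where the middle step is $ss^*t=tt^*ss^*t$. This is the computation you started with ``$se=ss^*te$'' and then abandoned. That unfinished sentence should be deleted from your write-up.

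Your route uses only $s^*te=e$ and its adjoint $e=et^*s$. You first derive $e\le t^*ss^*t\le t^*t$, which you need to get $(te)^*(te)=e$. You then write $te=(tet^*)s$, so $te\le s$, and conclude $te=se$ because $u\le s$ forces $u=s\,u^*u$. This costs one extra step compared with the paper's computation. In exchange, it avoids the two-sided characterization of the natural partial order and makes the structural reason visible: an element below $s$ is determined by its source idempotent.
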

\begin{proof}
  (i)$\Rightarrow$(ii): We have $s^*te=e=t^*se$. Using this, we compute that
  \begin{align*}
    se&=ss^*te=tt^*ss^*te=tt^*se=te\\
    s^*se&=s^*ss^*te=s^*te=e.
  \end{align*}
  (ii)$\Rightarrow$(i): We have $s^*te=s^*se=e$.
\end{proof}

\begin{prop}[cf.\ Proposition 6.9 of \cite{exel2011}]
  Let $\varphi$ be a pure state on $A$ and $e\in\supp\varphi$. Define a linear functional $\varphi_e\colon S\alg A\to\mathbb{C}$ by
  \begin{align*}
    \varphi_e(\delta_s\,x)=
    \begin{cases}
      \varphi(x) & \mathrm{if}\;s\ge e\\
      0 & \mathrm{otherwise}.
    \end{cases}
  \end{align*}
  Then the following hold.
  \begin{enumerate}
    \item $\varphi_e(T^*T)\ge 0$ for any $T\in S\alg A$.
    \item $\varphi_e$ is contractive with respect to the $L^1$-norm on $S\alg A$.
    \item $\varphi_e(\delta_t\,a)=\varphi_e(\delta_s\,a)$ for any $e\le s\le t$ and $a\in A_{s^*s}$.
  \end{enumerate}
\end{prop}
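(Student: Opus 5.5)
I would prove the three items in the order (iii), (ii), (i). Item (i) is the only substantial one.

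Item (iii) is immediate from the definition of $\varphi_e$. If $e\le s\le t$, then $t\ge e$ holds automatically. So both $\varphi_e(\delta_s\,a)$ and $\varphi_e(\delta_t\,a)$ equal $\varphi(a)$, with $a\in A_{s^*s}\subset A_{t^*t}$.

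Item (ii) follows because $\varphi$ is a state, so $|\varphi_e(\delta_s\,x)|\le|\varphi(x)|\le\|x\|$. Summing over the components of $T=\sum_i\delta_{s_i}x_i$ with distinct $s_i$ gives $|\varphi_e(T)|\le\|T\|_1$.

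For item (i), write $T=\sum_{i=1}^n\delta_{s_i}\,x_i$ and expand:
\[
T^*T=\sum_{i,j}\delta_{s_i^*s_j}\,\alpha_{s_j^*}\bigl(\alpha_{s_i}(x_i^*)\alpha_{s_j}(x_j)\bigr)=\sum_{i,j}\delta_{s_i^*s_j}\,q_{s_j^*}(\alpha_{s_i}(x_i^*))x_j.
\]
Using the preceding lemma, $s_i^*s_j\ge e$ holds if and only if $e\le s_i^*s_i$, $e\le s_j^*s_j$ and $s_ie=s_je$. Set $I=\{i\mid e\le s_i^*s_i\}$ and define an equivalence relation on $I$ by $i\sim j$ iff $s_ie=s_je$. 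Then
\[
\varphi_e(T^*T)=\sum_{C}\ \sum_{i,j\in C}\varphi\bigl(\alpha_{s_j^*}(\alpha_{s_i}(x_i^*)\alpha_{s_j}(x_j))\bigr),
\]
where $C$ runs over the equivalence classes of $\sim$. It therefore suffices to show each class sum is nonnegative.

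The key step is to simplify the class sum using $e\in\supp\varphi$. By Lemma \ref{lem:support}, since $\varphi$ is pure with $\varphi|_{A_e}\neq0$, we have $\varphi(a)=\lim_\lambda\varphi(u_\lambda a u_\lambda)$ for an approximate unit $(u_\lambda)$ of $A_e$; this uses (iii) and (iv) together. Now fix $i,j\in C$ and put $u=s_ie=s_je$. For $y\in A_e$, the identity $q_s(\alpha_t(x))y=q_{st}(x)y$ gives
\[
u_\lambda\,\alpha_{s_j^*}(\alpha_{s_i}(x_i^*)\alpha_{s_j}(x_j))\,u_\lambda=u_\lambda\,\alpha_{u^*}(\alpha_{u}(u_\lambda x_i)^*\alpha_{u}(x_ju_\lambda))=(x_iu_\lambda)^*(x_ju_\lambda).
\]
This holds because $\alpha_{u^*}\circ\alpha_u$ is the identity on $A_e$. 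Hence the class sum equals $\lim_\lambda\varphi(b_\lambda^*b_\lambda)\ge0$, where $b_\lambda=\sum_{i\in C}x_iu_\lambda$.

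I expect the main obstacle to be justifying this last computation rigorously. The difficulty is tracking which ideal each factor lies in, so that $\alpha_{s_j^*}$ and $\alpha_u$ act on the right elements, and checking that the cut-down by $u_\lambda$ legitimately converts $s_i,s_j$ into the common element $u$.
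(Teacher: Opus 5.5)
Your proof is correct and is essentially the paper's: you use the same partition of $\{i\mid e\le s_i^*s_i\}$ by $s_ie=s_je$, and the same use of $e\in\supp\varphi$ to replace $q_{s_j^*}(\alpha_{s_i}(x_i^*))$ by $x_i^*$ after cutting down by $A_e$ (the paper cuts down with the factorization $\varphi=\psi(x\,\cdot\,)$, $x\in A_e$, from Lemma \ref{lem:support}(i), so each class sum is exactly $\varphi(b^*b)$ with $b=\sum_{i\in C}x_i$ and no limit is needed). Do repair the displayed chain, though: your middle term actually equals $u_\lambda x_i^*u_\lambda x_ju_\lambda$ and should read $\alpha_{u^*}\bigl(\alpha_u(x_iu_\lambda)^*\alpha_u(x_ju_\lambda)\bigr)$ with no outer $u_\lambda$; also, the diagonal limit $\varphi(a)=\lim_\lambda\varphi(u_\lambda au_\lambda)$ is not a direct combination of items (iii) and (iv) of Lemma \ref{lem:support} (it needs, e.g., (i) together with $\|xu_\lambda-x\|\to0$), and it is simpler to use only $\varphi(a)=\lim_\lambda\varphi(u_\lambda a)$ together with $u_\lambda\,q_{s_j^*}(\alpha_{s_i}(x_i^*))=u_\lambda x_i^*$.
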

\begin{proof}
  (ii) and (iii) are trivial.

  For (i), write $T=\sum_{s\in F}\delta_s\,a_s$ for some finite subset $F\subset S$. Let $\{F_i\mid i\in I\}$ be the set of all equivalence classes of $F':=\{s\in F\mid s^*s\ge e\}$ with respect to the equivalence relation given by $s\sim t$ if and only if $se=te$.

  From $e\in\supp\varphi$, we can find a bounded linear functional $\psi$ on $A_e$ and $x\in A_e$ such that $\varphi(a)=\psi(xa)$ for all $a\in A$. 
  
  Using the lemma above, we compute that
  \begin{align*}
    \varphi_e(T^*T)
    &=\varphi_e\Bigl(\sum_{s,t\in F}(\delta_s\,a_s)^*(\delta_t\,a_t)\Bigr)\\
    &=\sum_{\substack{s,t\in F\\ s^*t\ge e}}\varphi(q_{t^*s}(a_s^*)a_t)\\
    &=\sum_{\substack{s,t\in F\\ s^*t\ge e}}\psi(xq_{t^*s}(a_s^*)a_t)\\
    &=\sum_{\substack{s,t\in F\\ s^*t\ge e}}\psi(xa_s^*a_t)\\
    &=\sum_{i\in I}\sum_{s,t\in F_i}\varphi(a_s^*a_t)\\
    &=\sum_{i\in I}\varphi\Bigl(\Bigl(\sum_{s\in F_i}a_s\Bigr)^*\Bigl(\sum_{t\in F_i}a_t\Bigr)\Bigr)\ge 0,
  \end{align*}
  where we used the observation in the fourth equality that $q_{t^*s}(y)=y$ for all $y\in A_e$.
\end{proof}

\begin{prop}[cf.\ Proposition 7.4 of \cite{exel2011}]
  \label{prop:pure-state}
  Let $\varphi$ be a pure state on $A$ and define a linear functional $\overline{\varphi}\colon S\alg A\to\mathbb{C}$ by
  \begin{align*}
    \overline{\varphi}(\delta_s\,a)=
    \begin{cases}
      \varphi(a) & \mathrm{if}\;e\le s\;\mathrm{for\;some\;}e\in\supp\varphi\\
      0 & \mathrm{otherwise}.
    \end{cases}
  \end{align*}
  Then the following hold.
  \begin{enumerate}
    \item The net $(\varphi_e)_{e\in\supp\varphi}$ converges pointwise to $\overline{\varphi}$.
    \item $\overline{\varphi}(T^*T)\ge 0$ for all $T\in S\alg A$.
    \item $\overline{\varphi}$ is contractive with respect to the $L^1$-norm on $S\alg A$.
    \item $\mathcal{N}_A\subset\ker\overline{\varphi}$.
  \end{enumerate}
\end{prop}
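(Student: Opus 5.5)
The plan is to prove the four items in the order (i), (iii), (ii), (iv). Items (ii)--(iv) will be derived from (i) together with the preceding proposition on the functionals $\varphi_e$.

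For (i), fix $s\in S$ and $a\in A_{s^*s}$. If there is some $e_0\in\supp\varphi$ with $e_0\le s$, then for every $e\in\supp\varphi$ with $e_0\preceq e$ (that is, $e\le e_0$) we have $e\le e_0\le s$. Hence $\varphi_e(\delta_s\,a)=\varphi(a)=\overline{\varphi}(\delta_s\,a)$ eventually. If no element of $\supp\varphi$ lies below $s$, then $\varphi_e(\delta_s\,a)=0=\overline{\varphi}(\delta_s\,a)$ for all $e\in\supp\varphi$. Directedness of $\supp\varphi$ comes from the earlier lemma (Proposition 5.6 of \cite{exel2011}). Pointwise convergence on generators then extends to all of $S\alg A$ by linearity.

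Item (ii) follows from (i) and part (i) of the previous proposition: $\overline{\varphi}(T^*T)=\lim_e\varphi_e(T^*T)$ is a limit of nonnegative reals. Item (iii) follows the same way, since $|\varphi_e(T)|\le\|T\|_1$ for each $e$ and the bound passes to the pointwise limit. One could also check it directly from $|\varphi(a)|\le\|a\|$.

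For (iv), it suffices to show $\overline{\varphi}(\delta_t\,a)=\overline{\varphi}(\delta_s\,a)$ whenever $s\le t$ and $a\in A_{s^*s}$, because these differences span $\mathcal{N}_A$. By part (iii) of the previous proposition, $\varphi_e(\delta_t\,a)=\varphi_e(\delta_s\,a)$ whenever $e\le s\le t$. So I take limits along the cofinal set of $e\in\supp\varphi$ with $e\le s$, which is nonempty whenever the left side is nonzero.

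The main obstacle is this case split, because the net may never be eventually below $s$. If some $e_0\in\supp\varphi$ satisfies $e_0\le s$, then $e\le e_0\le s$ is eventually true and the equality passes to the limit. Otherwise $\overline{\varphi}(\delta_s\,a)=0$, and we must show $\overline{\varphi}(\delta_t\,a)=0$ too. Here I would use that $a\in A_{s^*s}$. Suppose $e\le t$ with $e\in\supp\varphi$. Then $e':=e\,s^*s$ lies in $\supp\varphi$ if $\varphi|_{A_{s^*s}}\ne0$, and $e'\le s$ since $e'=t e s^*s$, which by $s\le t$ equals $s e$-type products below $s$. This contradicts the assumption, so the claim holds in that case. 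If instead $\varphi|_{A_{s^*s}}=0$, then $\varphi(a)=0$ directly, and both sides vanish.
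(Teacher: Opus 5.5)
Your proposal is correct and follows the paper's proof: (i) uses the same two-case check on generators, (ii)--(iii) are obtained as limits of the corresponding properties of the $\varphi_e$, and (iv) uses the same case split according to whether some element of $\supp\varphi$ lies below $s$, or only below $t$, or below neither. The only difference is in the middle case of (iv), and it is cosmetic: the paper shows $\varphi(a)=\lim_\lambda\varphi(au_\lambda)=0$ using an approximate unit $(u_\lambda)$ of $A_e$ (since $au_\lambda\in A_{es^*s}$ and $es^*s\le s$), whereas you use the equivalent intersection property of $\supp\varphi$ from the directedness lemma; your step $e'\le s$ should be justified by noting that $e\le t$ gives $e=te$, so $es^*s=ts^*se=se\le s$.
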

\begin{proof}
  For (i), it suffices to show that
  \[
    \overline{\varphi}(\delta_s\,a)=\lim_e\varphi_e(\delta_s\,a)
  \]
  for all $s\in S$ and $a\in A_{s^*s}$. 

  Consider first the case $e\not\le s$ for any $e\in\supp\varphi$. Then we have
  \[
    \overline{\varphi}(\delta_s\,a)=0=\varphi_e(\delta_s\,a)
  \]
  for each $e\in\supp\varphi$.

  Suppose next that $e\le s$ for some $e\in\supp\varphi$. Then $\overline{\varphi}(\delta_s\,a)=\varphi(a)$ and 
  \[
    \varphi_f(\delta_s\,a)=\varphi(a)=\varphi_e(\delta_s\,a)
  \]
  for each $f\in\supp\varphi$ with $e\preceq f$. This proves (i).

  (ii) and (iii) follow immediately from (i) and the proposition above.

  We now show (iv). Fix $t\ge s$ and $a\in A_{s^*s}$. We check that $\overline{\varphi}(\delta_t\,a-\delta_s\,a)=0$.

  \noindent\textbf{Case 1}\quad $e\le s\le t$ for some $e\in\supp\varphi$.

  In this case, $\overline{\varphi}(\delta_t\,a-\delta_s\,a)=\varphi(a)-\varphi(a)=0$.

  \noindent\textbf{Case 2}\quad $f\not\le s$ for any $f\in\supp\varphi$, but $e\le t$ for some $e\in\supp\varphi$.

  Then, by Lemma \ref{lem:support}, $\overline{\varphi}(\delta_t\,a-\delta_s\,a)=\varphi(a)=\lim_\lambda\varphi(au_\lambda)$, where $(u_\lambda)_\lambda$ is an approximate unit for $A_e$. By using $e,s\le t$, we have
  \[
    es=ets^*s=es^*s\in E(S)
  \]
  and $es\le s$. From the assumption, it follows $es\notin\supp\varphi$, that is, $\varphi$ vanishes on $A_{es}=A_{es^*s}$. Since $au_\lambda\in A_{es^*s}$, we obtain
  \[
    \overline{\varphi}(\delta_t\,a-\delta_s\,a)=\varphi(a)=\lim_\lambda\varphi(au_\lambda)=0,
  \]
  as desired.

  \noindent\textbf{Case 3}\quad $f\not\le t$ for any $f\in \supp\varphi$.

  Then $\overline{\varphi}(\delta_t\,a-\delta_s\,a)=0-0=0$.
\end{proof}

Since $\overline{\varphi}$ is contractive with respect to the $L^1$-norm on $S\alg A$, it follows from VI.18.14 of \cite{fell-doran} that
\[
  \overline{\varphi}(T_2^*T_1^*T_1T_2)\le\|T^*_1T_1\|_{L^1}\overline{\varphi}(T_2^*T_2)
\]
for all $T_1,T_2\in S\alg A$. Hence we can consider the GNS-representation $(\pi_{\overline{\varphi}},\mathcal{H}_{\overline{\varphi}})$ for $\overline{\varphi}$ on $S\alg A$ (see VI.19.3 of \cite{fell-doran}). From $\mathcal{N}_A\subset \ker\overline{\varphi}$, it follows that $\pi_{\overline{\varphi}}$ is an admissible representation for $S\alg A$.

For the proof of the reduced case of Theorem \ref{thm:isom}, we show that the GNS-representation $(\pi_{\overline{\varphi}},\mathcal{H}_{\overline{\varphi}})$ for $\overline{\varphi}$ has a cyclic vector $\xi_{\overline{\varphi}}$ satisfying
\[
  \overline{\varphi}(T)=\langle\xi_{\overline{\varphi}}|\pi_{\overline{\varphi}}(T)\xi_{\overline{\varphi}}\rangle
\]
for all $T\in S\alg A$. For this, in view of VI.18.7 and VI.19.6 of \cite{fell-doran}, it suffices to check that
\begin{enumerate}
  \item the functional $\overline{\varphi}$ is self-adjoint, that is, $\overline{\overline{\varphi}(T)}=\overline{\varphi}(T^*)$ for all $T\in S\alg A$, and
  \item we have $|\overline{\varphi}(T)|^2\le\overline{\varphi}(T^*T)$ for all $T\in S\alg A$.
\end{enumerate}

\begin{lem}
  For any pure state $\varphi$ on $A$ and $e\in\supp\varphi$, the functional $\varphi_e$ is self-adjoint. In particular, $\overline{\varphi}$ is self-adjoint.
\end{lem}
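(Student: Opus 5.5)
It suffices to verify self-adjointness of $\varphi_e$ on elementary tensors $\delta_s\,x$ with $s\in S$ and $x\in A_{s^*s}$, since both sides of $\overline{\varphi_e(T)}=\varphi_e(T^*)$ are conjugate-linear in $T$. The statement for $\overline{\varphi}$ then follows by taking pointwise limits, using (i) of Proposition \ref{prop:pure-state}: the net $(\varphi_e)_{e\in\supp\varphi}$ converges pointwise to $\overline{\varphi}$, and the self-adjointness identity is preserved under pointwise limits.

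Fix $e\in\supp\varphi$. Since $(\delta_s\,x)^*=\delta_{s^*}\,\alpha_s(x^*)$, we must show
\[
  \overline{\varphi_e(\delta_s\,x)}=\varphi_e(\delta_{s^*}\,\alpha_s(x^*)).
\]
The first observation is that $s\ge e$ if and only if $s^*\ge e$. Indeed, if $e\le s$, then $e=se$ is an idempotent below $s$. Idempotents below $s$ are exactly those $f$ with $f\le s^*s$ and $sf=f$. Taking adjoints gives $e=e^*\le s^*$. The converse follows by symmetry.

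\textbf{Case $s\not\ge e$.} Then $s^*\not\ge e$ as well, and both sides of the identity are zero.

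\textbf{Case $s\ge e$.} We need $\overline{\varphi(x)}=\varphi(\alpha_s(x^*))$. Since $\varphi$ is a state, $\overline{\varphi(x)}=\varphi(x^*)$, so it suffices to show that $\varphi(\alpha_s(y))=\varphi(y)$ for $y=x^*\in A_{s^*s}$. Here we use Lemma \ref{lem:support}. Since $e\in\supp\varphi$, for an approximate unit $(u_\lambda)$ of $A_e$ we have
\[
  \varphi(a)=\lim_\lambda\varphi(u_\lambda a u_\lambda)
\]
for all $a$. This follows by combining (iii) and (iv): write $\varphi(a)=\lim_\lambda \varphi(au_\lambda)$, and note that $\lim_\mu\varphi(u_\mu b)=\varphi(b)$ uniformly in bounded $b$, which is obtained via the representation $\varphi=\psi(x_0\,\cdot)$ from (i).

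Now $u_\lambda\alpha_s(y)u_\lambda\in A_e$, since $e\le ss^*$. The key point is that $\alpha_s$ restricted to $A_e\cap A_{s^*s}$ is $\alpha_{se}=\alpha_e=\mathrm{id}$ on $A_e$. More precisely, $e\le s$ implies $se=e$, and $ses^*=e$ because $ses^*=es^*=(se)^*=e$. Hence
\[
  u_\lambda\alpha_s(y)u_\lambda=\alpha_s(\alpha_{s^*}(u_\lambda)\,y\,\alpha_{s^*}(u_\lambda))=\alpha_s(u_\lambda y u_\lambda)=u_\lambda y u_\lambda,
\]
where the last two equalities use that $\alpha_{s^*}$ and $\alpha_s$ act as the identity on $A_e$, since $\alpha_{s^*e}=\alpha_{e}$. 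Applying $\varphi$ and taking limits gives $\varphi(\alpha_s(y))=\varphi(y)$.

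The main obstacle is to justify cleanly that $\alpha_s$ is the identity on $A_e$ when $e\le s$, and that the two-sided cutdown $\varphi(a)=\lim_\lambda\varphi(u_\lambda a u_\lambda)$ holds. Both are routine: the first comes from $\alpha_s\circ\alpha_e=\alpha_{se}=\alpha_e$ as partial maps, and the second from Lemma \ref{lem:support}.
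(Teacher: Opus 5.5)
Your proof is correct and follows essentially the same approach as the paper: reduce to $\delta_s\,x$, use $e\le s\iff e\le s^*$, and in the nontrivial case absorb $\alpha_s$ using that it acts as the identity on $A_e$ together with the fact that $\varphi$ is supported on $A_e$ (Lemma \ref{lem:support}). The only difference is cosmetic. The paper uses the one-sided factorization $\varphi(b)=\psi(bx)$ with $x\in A_e$ from (ii), which gives $\psi(\alpha_s(a^*)x)=\psi(a^*x)$ directly; your two-sided cutdown $\varphi(a)=\lim_\lambda\varphi(u_\lambda a u_\lambda)$ is valid but more than needed, since $\varphi(a)=\lim_\lambda\varphi(au_\lambda)$ together with $\alpha_s(y)u_\lambda=yu_\lambda$ already suffices.
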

\begin{proof}
  It suffices to show $\overline{\varphi_e(\delta_s\,a)}=\varphi_e((\delta_s\,a)^*)$ for all $s\in S$ and $a\in A_{s^*s}$. 

  Compute that
  \begin{align*}
    \overline{\varphi_e((\delta_s\,a))}&=
    \begin{cases}
      \varphi(a^*)& \mathrm{if}\;s\ge e\\
      0 & \mathrm{otherwise}
    \end{cases}\\
    \varphi_e((\delta_s\,a)^*)
    &=\varphi_e(\delta_{s^*}\,\alpha_s(a^*))\\
    &=\begin{cases}
      \varphi(\alpha_s(a^*)) & \mathrm{if}\;s^*\ge e\\
      0 & \mathrm{otherwise}.
    \end{cases}
  \end{align*}
  Note that $s\ge e$ if and only if $s^*\ge e$. Since $e\in\supp\varphi$, we can find a bounded linear functional $\psi$ on $A_e$ and $x\in A_e$ such that $\varphi(b)=\psi(bx)$ for all $b\in A$. Then we have
  \[
    \varphi(\alpha_s(a^*))
    =\psi(\alpha_s(a^*)x)
    =\psi(a^*x)
    =\varphi(a^*).\qedhere
  \]
\end{proof}

\begin{lem}
  For any pure state $\varphi$ on $A$ and $e\in\supp\varphi$,
  \[
    |\varphi_e(T)|^2\le \varphi_e(T^*T)
  \]
  for all $T\in S\alg A$. In particular, the same inequality holds for $\overline{\varphi}$.
\end{lem}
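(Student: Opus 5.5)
We prove the inequality for $\varphi_e$ directly, reusing the block decomposition from the proof of positivity of $\varphi_e$ (Proposition cf.\ 6.9 of \cite{exel2011}). The statement for $\overline{\varphi}$ then follows by passing to the pointwise limit along $\supp\varphi$, since $\overline{\varphi}=\lim_e\varphi_e$ pointwise by Proposition \ref{prop:pure-state}(i), and both sides of the inequality depend continuously on the functional evaluated at the two fixed elements $T$ and $T^*T$.

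Write $T=\sum_{s\in F}\delta_s\,a_s$ with $F$ finite. Only the terms with $s\ge e$ contribute to $\varphi_e(T)$, and $\varphi_e(T)=\sum_{s\in F,\,s\ge e}\varphi(a_s)$. Let $F'=\{s\in F\mid s^*s\ge e\}$ and partition $F'$ into the classes $F_i$ of the relation $s\sim t$ iff $se=te$, as before. That computation gives
\[
  \varphi_e(T^*T)=\sum_{i\in I}\varphi(b_i^*b_i),\qquad b_i:=\sum_{s\in F_i}a_s .
\]
The key observation is that $s\ge e$ means $se=e$ (equivalently $e\le s$), so the set $\{s\in F\mid s\ge e\}$ is exactly one equivalence class of $F'$, namely the class $F_{i_0}$ of elements $s$ with $se=e\cdot e=e$. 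If no such $s$ exists, then $\varphi_e(T)=0$ and there is nothing to prove.

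Otherwise $\varphi_e(T)=\varphi(b_{i_0})$. Cauchy--Schwarz for the state $\varphi$ on $A$ gives $|\varphi(b_{i_0})|^2\le\varphi(b_{i_0}^*b_{i_0})\varphi(1)$, or in the non-unital case $|\varphi(b)|^2\le\|\varphi\|\,\varphi(b^*b)=\varphi(b^*b)$. Combined with $\varphi(b_i^*b_i)\ge0$ for the remaining $i$, this yields
\[
  |\varphi_e(T)|^2\le\varphi(b_{i_0}^*b_{i_0})\le\sum_i\varphi(b_i^*b_i)=\varphi_e(T^*T).
\]

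The only delicate point is confirming that $\{s\ge e\}$ coincides with a single $\sim$-class inside $F'$. One direction: if $s\ge e$ then $s^*s\ge e$ and $se=e$. Conversely, if $s^*s\ge e$ and $se=e$, then $s\ge e$ by the definition of the natural partial order. Everything else has already been established in the positivity computation.
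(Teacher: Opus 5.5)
Your proof is correct, but it takes a different route from the paper's.

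\textbf{The paper's route.} The paper does not reopen the block computation from the positivity proof. It first shows that, for an approximate unit $(u_\lambda)$ of $A_e$, the elements $\delta_e\,u_\lambda$ act as an approximate right unit for $\varphi_e$, i.e.\ $\varphi_e(T)=\lim_\lambda\varphi_e(T\cdot\delta_e\,u_\lambda)$; this holds because $s\ge e$ if and only if $se\ge e$. It then combines the self-adjointness of $\varphi_e$ (the preceding lemma) with the Cauchy--Schwarz inequality for the positive functional $\varphi_e$ on the $*$-algebra $S\alg A$. This gives $|\varphi_e(T^*)|^2\le\varphi_e(T^*T)\,\varphi(u_\lambda^2)\le\varphi_e(T^*T)$.

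\textbf{Your route.} You use the explicit diagonalization $\varphi_e(T^*T)=\sum_i\varphi(b_i^*b_i)$. You observe that $\{s\in F\mid s\ge e\}$ is exactly the class in $F'$ of elements with $se=e$, so $\varphi_e(T)=\varphi(b_{i_0})$. Cauchy--Schwarz for the state $\varphi$ on $A$, together with dropping the other nonnegative blocks, then finishes the argument.

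\textbf{What each buys.} Your argument is more elementary for this lemma: it needs neither the self-adjointness of $\varphi_e$ nor Cauchy--Schwarz on $S\alg A$. It also yields the sharper intermediate bound $|\varphi_e(T)|^2\le\varphi(b_{i_0}^*b_{i_0})$. Self-adjointness is still needed elsewhere for the Fell--Doran cyclic-vector criterion, so this saving is local to the lemma. The paper's argument is the standard approximate-unit mechanism behind that criterion. It uses the positivity of $\varphi_e$ only as a black box, not through its explicit formula. The passage from $\varphi_e$ to $\overline{\varphi}$ by pointwise convergence along $\supp\varphi$ is the same in both.
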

\begin{proof}
  We first claim that for any approximate unit $(u_\lambda)_\lambda$ for $A_e$, we have 
  \[
    \varphi_e(T)=\lim_\lambda\varphi_e(T\cdot\delta_e\,u_\lambda).
  \]
  To see this, it suffices to check this for $T=\delta_s\,a$, where $s\in S$ and $a\in A_{s^*s}$. Observe that
  \begin{align*}
    \varphi_e(\delta_s\,a\cdot\delta_e\,u_\lambda)
    &=\varphi_e(\delta_{se}\,au_\lambda)\\
    &=\begin{cases}
      \varphi(au_\lambda) & \mathrm{if}\;se\ge e\\
      0 & \mathrm{otherwise}.
    \end{cases}
  \end{align*}
  Note that $s\ge e$ if and only if $se\ge e$. Hence this proves the claim.

  Using the equality which we have shown above, compute that
  \begin{align*}
    |\varphi_e(T)|^2
    &=|\varphi_e(T^*)|^2\\
    &=\lim_\lambda|\varphi_e(T^*\cdot\delta_e\,u_\lambda)|^2\\
    &\le \sup_\lambda\varphi_e(T^*T)\varphi_e((\delta_e\,u_\lambda)^*(\delta_e\,u_\lambda))\\
    &=\sup_\lambda\varphi_e(T^*T)\varphi(u_\lambda^2)\\
    &\le \varphi_e(T^*T).\qedhere
  \end{align*}
\end{proof}

Now we obtain:

\begin{prop}
  \label{prop:cyclic}
  Let $S$ be an inverse semigroup, $(A,\alpha)$ be an $S$-C*-algebra and let $\varphi$ be a pure state on $A$. Then the GNS-representation $(\pi_{\overline{\varphi}},\mathcal{H}_{\overline{\varphi}})$ for $\overline{\varphi}\colon S\alg A\to\mathbb{C}$ has a cyclic vector $\xi_{\overline{\varphi}}$ satisfying
  \[
    \overline{\varphi}(T)=\langle\xi_{\overline{\varphi}}|\pi_{\overline{\varphi}}(T)\xi_{\overline{\varphi}}\rangle
  \]
  for all $T\in S\alg A$.
\end{prop}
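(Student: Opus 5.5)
The plan is to run the GNS construction for $\overline{\varphi}$ on the $*$-algebra $S\alg A$ in the abstract framework of Fell--Doran, and to check the two hypotheses singled out just before the statement. By VI.18.7 and VI.19.6 of \cite{fell-doran}, a positive functional $p$ on a $*$-algebra that satisfies the boundedness condition $p(T_2^*T_1^*T_1T_2)\le \|T_1^*T_1\|_{L^1}\,p(T_2^*T_2)$ has a GNS representation with a cyclic vector $\xi_p$ such that $p(T)=\langle\xi_p|\pi_p(T)\xi_p\rangle$. This holds exactly when $p$ is self-adjoint and satisfies $|p(T)|^2\le p(T^*T)$; in other words, $p$ extends to a state on the unitization. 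So the proof consists of verifying these hypotheses for $\overline{\varphi}$ and quoting the cited results.

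In order, the steps are as follows. First, positivity of $\overline{\varphi}$ and its contractivity for the $L^1$-norm come from Proposition \ref{prop:pure-state} (ii) and (iii). The boundedness inequality then follows from VI.18.14 of \cite{fell-doran}, as recorded just after that proposition, so the GNS representation $(\pi_{\overline{\varphi}},\mathcal{H}_{\overline{\varphi}})$ exists. Second, self-adjointness of $\overline{\varphi}$ is the first of the two preceding lemmas. That lemma proves self-adjointness for each $\varphi_e$ and passes to $\overline{\varphi}$ through the pointwise convergence $\varphi_e\to\overline{\varphi}$ of Proposition \ref{prop:pure-state} (i). Third, the inequality $|\overline{\varphi}(T)|^2\le\overline{\varphi}(T^*T)$ is the second preceding lemma. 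It is obtained for each $\varphi_e$ by inserting an approximate unit $\delta_e\,u_\lambda$ and applying Cauchy--Schwarz, and then passing to the limit along the directed set $\supp\varphi$.

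With these in hand, VI.19.6 of \cite{fell-doran} yields a vector $\xi_{\overline{\varphi}}\in\mathcal{H}_{\overline{\varphi}}$ with $\langle\xi_{\overline{\varphi}}|\pi_{\overline{\varphi}}(T)\xi_{\overline{\varphi}}\rangle=\overline{\varphi}(T)$. Concretely, it is the image of the unit in the GNS space of the extension of $\overline{\varphi}$ to the unitization. Cyclicity holds because $\pi_{\overline{\varphi}}(S\alg A)\xi_{\overline{\varphi}}$ contains the image of $S\alg A$ in the quotient $S\alg A/\mathcal{N}_{\overline{\varphi}}$, which is dense by construction.

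I expect the main obstacle to be the last point: making sure that $\xi_{\overline{\varphi}}$ lies in the closure of $\pi_{\overline{\varphi}}(S\alg A)\xi_{\overline{\varphi}}$ and not merely in the unitized GNS space. To settle it, I would use the approximate units $\delta_e\,u_\lambda$ with $e\in\supp\varphi$. The key computation is
\[
\|\pi_{\overline{\varphi}}(\delta_e\,u_\lambda)\xi_{\overline{\varphi}}-\xi_{\overline{\varphi}}\|^2=\overline{\varphi}(\delta_e\,u_\lambda^2)-2\,\mathrm{Re}\,\overline{\varphi}(\delta_e\,u_\lambda)+1=\varphi(u_\lambda^2)-2\,\mathrm{Re}\,\varphi(u_\lambda)+1,
\]
which tends to $0$ because $\varphi(u_\lambda)\to 1$ by Lemma \ref{lem:support}, as $\varphi|_{A_e}\neq0$ and $\varphi$ is a state. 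Hence $\xi_{\overline{\varphi}}$ is a limit of vectors in $\pi_{\overline{\varphi}}(S\alg A)\xi_{\overline{\varphi}}$, and cyclicity follows.
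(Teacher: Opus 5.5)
Your proposal is correct and follows the paper's route: the paper also has the GNS representation from Proposition \ref{prop:pure-state} and VI.18.14, and it also reduces the statement, via VI.18.7 and VI.19.6 of \cite{fell-doran}, to two facts — self-adjointness of $\overline{\varphi}$ and the inequality $|\overline{\varphi}(T)|^2\le\overline{\varphi}(T^*T)$ — each proved for $\varphi_e$ and passed to the limit. Your extra approximate-unit computation, showing that the unitized vector already lies in $\mathcal{H}_{\overline{\varphi}}$, is valid: Lemma \ref{lem:support}(v) gives $\varphi(u_\lambda)\to1$ and $\varphi(u_\lambda^2)\to1$. It is redundant, though, because the paper simply takes the cyclic vector in $\mathcal{H}_{\overline{\varphi}}$ from VI.19.6.
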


As pointed out in \cite[Corollary 4.6]{ns}, for any $E(S)$-C*-algebra $A$, the full crossed product $E(S)\ltimes A$ coincides with $A$. Although this is not needed in this paper, we present an alternative proof together with the reduced case of this isomorphism as it provides an interesting comparison between crossed products and KS-crossed products (cf.\ Proposition \ref{prop:spectrum}).

Observe that
\[
  A^0:=\sum_{e\in E(S)}A_e
\]
is a dense $*$-ideal of $A$.

\begin{prop}
  The map $\pi^\ks\colon E(S)\alg A\ni\delta_e\,x\mapsto x\in A^0$ is a $*$-homomorphism, whose kernel is 
  \[
    \mathcal{N}_A=\Span\{\delta_e\,x-\delta_f\,x\mid e\ge f,\;x\in A_f\}.
  \]
\end{prop}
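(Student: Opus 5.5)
That $\pi^\ks$ is a $*$-homomorphism is a direct check on generators. The product in $E(S)\alg A$ is $(\delta_e\,x)(\delta_f\,y)=\delta_{ef}\,\alpha_f(x\alpha_f(y))$. Since $y\in A_f$, we have $\alpha_f(y)=y$. Moreover $xy\in A_e\cap A_f=A_{ef}$, and $\alpha_f$ is the identity on $A_{ef}\subset A_f$, so the product equals $\delta_{ef}\,xy$. Its image is therefore $xy=\pi^\ks(\delta_e\,x)\pi^\ks(\delta_f\,y)$. For the involution, $(\delta_e\,x)^*=\delta_e\,\alpha_e(x^*)=\delta_e\,x^*$. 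Linearity and the fact that the range lies in $A^0$ are clear, and in fact $\pi^\ks$ is onto $A^0$.

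For the kernel, the inclusion $\mathcal{N}_A\subset\ker\pi^\ks$ is immediate, since each $\delta_e\,x-\delta_f\,x$ is sent to $x-x=0$. For the converse, I would take $T=\sum_{e\in F}\delta_e\,x_e$ with $F$ finite and $\sum_e x_e=0$, and show $T\in\mathcal{N}_A$ by induction on $|F|$. It is convenient first to enlarge $F$ to the finite subsemigroup $E_0$ generated by $F$, setting $x_e=0$ for the new elements.

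The induction step works as follows. Pick $e_0\in E_0$ minimal. I would like to push all mass onto $e_0$, but $x_e$ need not lie in $A_{e_0}$, so a direct push fails. Instead, I would go the other way and move mass \emph{up}, replacing $\delta_e\,x_e$ with $\delta_{f}\,x_e$ for $f\ge e$ modulo $\mathcal{N}_A$. In the commutative case $A^\ks=E(S)\kstimes A$ the relevant picture is the decomposition used in the proof of Lemma \ref{lem:closed}.

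The cleaner route is as follows. Choose a maximal element $e_0\in E_0$, that is, one with $e_0\not<e$ for every $e\in E_0$. Set $E_0'=E_0\setminus\{e_0\}$, which is a subsemigroup, exactly as in Lemma \ref{lem:closed}. I would then use ideals. Each $x_e$ with $e\ne e_0$ lies in $A_e$, and the relation $\sum_e x_e=0$ gives $x_{e_0}=-\sum_{e\neq e_0}x_e\in A_{e_0}\cap\sum_{e\in E_0'}A_e$.

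The key claim is that $A_{e_0}\cap\sum_{e\in E_0'}A_e=\sum_{e\in E_0'}A_{e_0e}$. This is the main obstacle, because sums of closed ideals in a C*-algebra behave well here: a sum of finitely many closed ideals is closed, and for closed ideals $I\cap J=IJ$. Using these facts, $A_{e_0}\cap(\sum_e A_e)=A_{e_0}\cdot\sum_e A_e=\sum_e A_{e_0}A_e=\sum_e A_{e_0e}$.

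With the claim in hand, write $x_{e_0}=\sum_{e\in E_0'}y_e$ with $y_e\in A_{e_0e}$, noting that $e_0e\in E_0'$ because $e_0e\le e_0$ and $e_0e\neq e_0$ for $e\neq e_0$ by maximality. Modulo $\mathcal{N}_A$, each term satisfies $\delta_{e_0}\,y_e\equiv\delta_{e_0e}\,y_e$ since $e_0\ge e_0e$. Hence $T$ is congruent to an element supported on $E_0'$ whose coefficients still sum to zero, and the induction hypothesis on $|E_0'|$ finishes the argument. The base case $|E_0|=1$ is trivial, since then $T=\delta_e\,x$ with $x=0$.
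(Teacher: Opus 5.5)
Your argument is correct and is essentially the paper's: both rest on the identity $A_{e_0}\cap\sum_{e}A_e=\sum_{e}A_{e_0e}$ for finitely many closed ideals, and both remove one index at a time by induction. The only difference is bookkeeping. You push the pieces $y_e$ down to $e_0e$, which forces the detour through a finite subsemigroup and a maximal element. The paper instead moves each piece $a_i\in A_{e_{k+1}}\cap A_{e_i}$ directly onto $e_i$ using $\delta_{e_{k+1}}a_i-\delta_{e_i}a_i\in\mathcal{N}_A$, so a plain induction on the number of terms suffices, and your abandoned ``minimal element'' paragraph can simply be dropped.
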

\begin{proof}
  It is routine to verify that $\pi^\ks$ is a $*$-homomorphism whose kernel contains $\mathcal{N}_A$. It suffices to show that 
  \[
    \sum_{i=1}^n\delta_{e_i}\,x_i\in\mathcal{N}_A
  \]
  for any $n\ge 1$, $e_1,\ldots,e_n\in E(S)$ and $x_i\in A_{e_i}\;(i=1,\ldots,n)$ with $\sum_{i=1}^n\delta_{e_i}\,x_i\in\ker\pi^\ks$.

  We prove this by induction on $n\ge 1$.

  If $n=1$, the claim is trivially true.

  Suppose the claim is true for $n=k\ge 1$. Consider the case where $n=k+1$. First notice that 
  \[
    x_{k+1}=-\sum_{i=1}^kx_i\in A_{e_{k+1}}\cap\Bigl(\sum_{i=1}^kA_{e_i}\Bigr)=\sum_{i=1}^kA_{e_{k+1}}\cap A_{e_i}.
  \]
  Hence we can write
  \[
    x_{k+1}=\sum_{i=1}^ka_i,\quad a_i\in A_{e_{k+1}}\cap A_{e_i}\;(i=1,\ldots,k).
  \]
  Then $\sum_{i=1}^k(x_i+a_i)=-x_{k+1}+x_{k+1}=0$ and $x_i+a_i\in A_{e_i}$ for each $i=1,\ldots,k$. Thus, by the assumption of the induction, we obtain $\sum_{i=1}^k\delta_{e_i}(x_i+a_i)\in\mathcal{N}_A$. Since $\delta_{e_{k+1}}\,a_i-\delta_{e_i}\,a_i\in\mathcal{N}_A$ for each $i=1,\ldots,k$, it follows that
  \begin{align*}
    \sum_{i=1}^{k+1}\delta_{e_i}\,x_i
    &=\sum_{i=1}^k\delta_{e_i}\,x_i+\sum_{i=1}^k\delta_{e_{k+1}}a_i\\
    &=\sum_{i=1}^k\delta_{e_i}\,(x_i+a_i)+\sum_{i=1}^k(\delta_{e_{k+1}}a_i-\delta_{e_i}\,a_i)\in\mathcal{N}_A.\qedhere
  \end{align*}
\end{proof}

\begin{lem}
  Let $B$ be a C*-algebra and let $\pi\colon A^0\to B$ be a $*$-homomorphism.
  If $\pi|_{A_e}\colon A_e\to B$ is injective for each $e\in E(S)$, then $\pi\colon A^0\to B$ is injective.
\end{lem}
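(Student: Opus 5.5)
The idea is to reduce to finitely many ideals and argue by induction on their number, using the elementary fact that a closed ideal of a C*-algebra with zero square is zero. Suppose $a\in A^0$ satisfies $\pi(a)=0$. Write $a=\sum_{i=1}^n x_i$ with $x_i\in A_{e_i}$ for some $e_1,\ldots,e_n\in E(S)$. It then suffices to show that $\pi$ is injective on
\[
  I_n:=A_{e_1}+\cdots+A_{e_n}\subset A^0 .
\]
We prove this by induction on $n$. The case $n=1$ is exactly the hypothesis.

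We first record two standard facts.
\begin{enumerate}
  \item A finite sum of closed two-sided ideals of a C*-algebra is again a closed two-sided ideal. Hence each $I_k$ is a C*-subalgebra of $A$ and an ideal of $A$.
  \item A $*$-homomorphism defined on a C*-algebra is automatically contractive, so its kernel is a closed two-sided ideal.
\end{enumerate}
Since $I_n\subset A^0$, the restriction $\pi|_{I_n}\colon I_n\to B$ is a $*$-homomorphism from a C*-algebra into $B$. Therefore $L:=\ker(\pi|_{I_n})$ is a closed ideal of $I_n$.

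For the inductive step, put $K=I_{n-1}$ and $J=A_{e_n}$, so that $I_n=J+K$. By the inductive hypothesis $\pi$ is injective on $K$, and by assumption it is injective on $J$. Hence $L\cap K=0$ and $L\cap J=0$.

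Both $J$ and $K$ are ideals of $A$, so they are ideals of $I_n$. Since $L$ is also an ideal of $I_n$, we get $LJ\subset L\cap J=0$ and $LK\subset L\cap K=0$. Consequently $L\,I_n=L(J+K)=0$, and in particular $L^*L\subset L\,I_n=0$. For every $l\in L$ this gives $\|l\|^2=\|l^*l\|=0$, so $L=0$. Thus $\pi$ is injective on $I_n$, which completes the induction, and so $a=0$.

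The only point that needs care is that $\pi$ is a priori defined only on the non-closed ideal $A^0$. Passing to the finite sums $I_n$, which are closed and contained in $A^0$, is what lets us use automatic continuity and the C*-identity. The ideal-product argument then replaces any analysis of the quotient $(J+K)/K\cong J/(J\cap K)$.
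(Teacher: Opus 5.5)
Your proof is correct and rests on the same mechanism as the paper's: multiplying an element of $\ker\pi$ by the ideal $A_e$ lands in $\ker\pi\cap A_e=0$, and the C*-identity then forces that element to vanish. The paper does this in one step: writing $x=\sum_j a_j$ with $a_j\in A_{e_j}$, one has $a_j^*x\in A_{e_j}\cap\ker\pi=0$, hence $x^*x=\sum_j a_j^*x=0$. This shows that your induction, the closedness of $I_n$, and automatic continuity are all unnecessary --- contrary to your closing remark, $\ker\pi$ is already an algebraic $*$-ideal of $A^0$ and the C*-identity holds in all of $A$.
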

\begin{proof}
  Suppose $\pi(x)=0$ for some $x\in A^0$.
  Write $x=\sum_{i=1}^na_i$ for some $a_i\in A_{e_i}\,(i=1,\ldots,n)$.
  Then $\pi(a_j^*x)=0$ for each $j=1,\ldots,n$.
  Since $a_j^*x\in A_{e_j}$ and $\pi|_{A_{e_j}}$ is injective, we obtain $a_j^*x=0$ for each $j$.
  Thus $x^*x=0$ in $A^0\subset A$, from which it follows $x=0$.
\end{proof}

\begin{cor}
  \[
    E(S)\ltimes A\cong A\cong E(S)\ltimesr A
  \]
\end{cor}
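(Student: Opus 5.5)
We prove both isomorphisms using the map $\pi^\ks\colon E(S)\alg A\to A^0$ from the preceding proposition. Its kernel is exactly $\mathcal{N}_A$, so it induces a $*$-isomorphism of $*$-algebras $(E(S)\alg A)/\mathcal{N}_A\cong A^0$. Since $E(S)\ltimes A$ is the universal enveloping C*-algebra of $(E(S)\alg A)/\mathcal{N}_A$, the full case reduces to showing that the universal C*-norm on $A^0$ is the norm inherited from $A$. The reduced case reduces to showing that the seminorm $\|\cdot\|_r$, transported to $A^0$, is also the norm of $A$. As $A^0$ is dense in $A$, each case then yields an isomorphism onto $A$ extending $\delta_e\,x\mapsto x$.

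For the full case, let $\pi\colon A^0\to B$ be any $*$-homomorphism into a C*-algebra. For each $e$, the restriction $\pi|_{A_e}$ is a $*$-homomorphism from the C*-algebra $A_e$, hence contractive. I want $\|\pi(a)\|\le\|a\|$ for every $a\in A^0$. The plan is to bound the universal norm on $A^0$ by that of $A$ through a C*-norm-uniqueness argument. The universal C*-seminorm $\|\cdot\|_u$ on $A^0$ dominates the $A$-norm, since the inclusion $A^0\subset A$ is a representation. It is finite by the $L^1$ bound, and it is a norm because it dominates a norm. Let $C$ be the completion of $A^0$ under $\|\cdot\|_u$. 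The identity $A^0\to A$ extends to a $*$-homomorphism $\Psi\colon C\to A$. We show $\Psi$ is injective, after which it is isometric and the two norms agree. The closure $\overline{A_e}^{u}$ is the image of $A_e$ in $C$ and is already closed, being the image of a C*-algebra; $\Psi$ is injective on it, since on $A_e$ it is the identity. The kernel $\ker\Psi$ is an ideal of $C$. If it is nonzero, it meets some $A_e$ nontrivially: $\sum_e A_e$ is dense in $C$, so the ideals $A_e$ of $C$ generate $C$, and a nonzero ideal $I$ satisfies $IA_e\neq0$ for some $e$, with $IA_e\subset I\cap A_e$. This contradicts injectivity on $A_e$, so $\Psi$ is injective. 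The last lemma of the paper is the algebraic shadow of exactly this argument.

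For the reduced case, it suffices to show that $\sup_\varphi\|\pi_{\overline{\varphi}}(\cdot)\|$ restricted to $A^0$ is at least $\|\cdot\|_A$; the reverse inequality follows from the full case, because each $\pi_{\overline{\varphi}}$ is admissible. Take $a\in A^0$ and a pure state $\varphi$ on $A$ with $\varphi(a^*a)=\|a\|^2$. Using the cyclic vector from Proposition \ref{prop:cyclic}, we get $\|\pi_{\overline{\varphi}}(T)\xi\|^2=\overline{\varphi}(T^*T)$. Choose $T=\sum\delta_{e_i}x_i$ representing $a$. Then $T^*T$ represents $a^*a$ modulo $\mathcal{N}_A\subset\ker\overline{\varphi}$. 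We also need $\overline{\varphi}(\delta_e\,y)=\varphi(y)$ for all $y\in A_e$. This holds: if $\varphi|_{A_e}\neq0$, then $e\in\supp\varphi$; if $\varphi|_{A_e}=0$, then $\varphi(y)=0$, and $\overline{\varphi}(\delta_e\,y)$ vanishes too, because $f\le e$ with $f\in\supp\varphi$ would force $e\in\supp\varphi$. Hence $\overline{\varphi}(T^*T)=\varphi(a^*a)=\|a\|^2$, so $\|\pi_{\overline{\varphi}}(T)\|\ge\|a\|$, which gives the reverse inequality.

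The main obstacle is the ideal-intersection step in the full case, namely making precise that $A_e$ sits as a closed ideal inside $C$ and generates it. Everything else is bookkeeping.
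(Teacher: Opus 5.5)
Your proof is correct, but it reaches the result by a different route than the paper.

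\textbf{The paper's route.} The paper treats both cases at once. It uses the diagram $A^0\cong E(S)\alg A/\mathcal{N}_A\to E(S)\ltimes A\twoheadrightarrow E(S)\ltimesr A$ and extracts from Proposition~\ref{prop:pure-state} only that each map $A_e\to E(S)\ltimesr A$ is injective. The algebraic lemma just before the corollary (injective on every $A_e$ implies injective on $A^0$) then gives injectivity of $A^0\to E(S)\ltimesr A$, and hence also of $A^0\to E(S)\ltimes A$. The step from injectivity to equality of norms is not spelled out. It rests on $A^0$ being the directed union of the C*-algebras $\sum_{e\in E_0}A_e$ with $E_0\subset E(S)$ finite (finite sums of closed ideals are closed). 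Injective $*$-homomorphisms on such C*-algebras are isometric.

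\textbf{Your route.} You prove isometry directly in each case.
\begin{itemize}
\item In the full case, your ideal-intersection argument in the completion $C$ is the C*-analogue of that algebraic lemma, and it avoids the implicit step. The directed-union remark would shorten it further, since it makes every $*$-homomorphism on $A^0$ automatically $\|\cdot\|_A$-contractive.
\item In the reduced case, you observe that $\overline{\varphi}=\varphi\circ\pi^\ks$ on $E(S)\alg A$ and combine this with a pure state norming $a^*a$. This computes $\|\cdot\|_r=\|\cdot\|_A$ on $A^0$ outright, rather than only showing that $\|\cdot\|_r$ is a norm.
\end{itemize}

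\textbf{What it costs.} You need the cyclic vector of Proposition~\ref{prop:cyclic} together with $\|\xi\|\le 1$. You use the latter tacitly when passing from $\|\pi_{\overline{\varphi}}(T)\xi\|=\|a\|$ to $\|\pi_{\overline{\varphi}}(T)\|\ge\|a\|$, and you should state it. The inequality $|\overline{\varphi}(T)|^2\le\overline{\varphi}(T^*T)$ reads $|\langle\xi|\pi_{\overline{\varphi}}(T)\xi\rangle|\le\|\pi_{\overline{\varphi}}(T)\xi\|$. By cyclicity you can choose $T$ with $\pi_{\overline{\varphi}}(T)\xi\to\xi$, which gives $\|\xi\|^2\le\|\xi\|$.
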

\begin{proof}
  Now we have the commutative diagram
  \begin{center}
    \begin{tikzpicture}[auto]
      \node (0) at (0,1.55) {$A^0$};
      \node (1) at (2.5,1.5) {$E(S)\alg A\,/\,\mathcal{N}_A$};
      \node (2) at (5.9,1.5) {$E(S)\ltimes A$};
      \node (3) at (5.9,0) {$E(S)\ltimesr A$};
      \node (0-1) at (0.7,1.5) {$\cong$};
      \draw[->] (1) to node {} (2);
      \draw[->] (1) to node[swap] {} (3);
      \draw[->>] (2) to node {} (3);
    \end{tikzpicture}
  \end{center}
  From Proposition \ref{prop:pure-state}, the map $A_e\ni x\mapsto [\delta_e\,x]\in E(S)\ltimesr A$ is injective for each $e\in E(S)$. Since $A_e$ is a $*$-ideal in $A^0$, it follows that the map from $A^0$ to $E(S)\ltimesr A$ is injective.
\end{proof}

\section{Proof of Theorem \ref{thm:isom-of-KS}}
\label{sec:B}

In this appendix, we give a direct proof of Theorem \ref{thm:isom-of-KS} without using the theory of covariant representations for KS-crossed products. The proof proceeds by constructing mutually inverse $*$-homomorphisms on the algebraic crossed products and then passing to the corresponding C*-algebras. For this purpose, the following elementary lemma is useful for treating KS-crossed products in a purely algebraic way.

\begin{lem}
  \label{lem:alg-cp}
  Let $(B,\beta)$ be an $S$-C*-algebra and let $B^0$ be a $*$-subalgebra of $B$ satisfying
  \begin{enumerate}
    \item for each $e\in E(S)$, $B^0$ has a $*$-ideal $B^0_e$ which is dense in $B_e$,
    \item $B^0_e\subset B^0_f$ for each $e,f\in E(S)$ with $e\le f$,
    \item $B^0_eB^0_f\subset B^0_{ef}$ for each $e,f\in E(S)$,
    \item $p(b)\le \|b\|$ for any C*-seminorm $p$ on $B^0_e$ and $b\in B^0_e$, and
    \item $\beta_s(B^0_{s^*s})\subset B^0_{ss^*}$ for each $s\in S$.
  \end{enumerate}
  Then $S\alg B^0:=\Span\{\delta_s\,x\mid x\in B^0_{s^*s}\}$ is a $*$-subalgebra of $S\alg B$. 
  
  For any C*-algebra $D$ and $*$-homomorphism $\pi^0\colon S\alg B^0\to D$, there exists a unique $*$-homomorphism $\pi\colon S\alg B\to D$ such that the diagram 
  \begin{center}
    \begin{tikzpicture}[auto]
      \node (1) at (0,1.2) {$S\alg B^0$};
      \node (2) at (2.2,1.2) {$S\alg B$};
      \node (3) at (2.2,0) {$D$};
      \draw[->] (1) to node {} (2);
      \draw[->] (1) to node[swap] {$\scriptstyle\pi^0$} (3);
      \draw[->] (2) to node {$\scriptstyle\pi$} (3);
    \end{tikzpicture}
  \end{center}
  commutes. If, moreover, $\pi^0\colon S\alg B^0\to D$ satisfies $\pi^0(\delta_t\,x-\delta_s\,x)=0$ for all $t\ge s$ and $x\in B^0_{s^*s}$, then the above $*$-homomorphism $\pi\colon S\alg B\to D$ is admissible.
\end{lem}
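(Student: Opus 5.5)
First, $S\alg B^0$ is a $*$-subalgebra of $S\alg B$. For $x\in B^0_{s^*s}$ and $y\in B^0_{t^*t}$ we have
\[
(\delta_s\,x)(\delta_t\,y)=\delta_{st}\,\beta_{t^*}(x\beta_t(y)).
\]
By (v), $\beta_t(y)\in B^0_{tt^*}$. By (iii), $x\beta_t(y)\in B^0_{s^*s}B^0_{tt^*}\subset B^0_{s^*stt^*}$, and $s^*stt^*=t(st)^*(st)t^*$. Applying (v) to the element $t(st)^*(st)$, whose source idempotent is $(st)^*(st)\le t^*t$, we get $\beta_{t^*}(x\beta_t(y))\in B^0_{(st)^*(st)}$. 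The involution is handled the same way: $\beta_s(x^*)\in B^0_{ss^*}$ by (v), since each $B^0_{s^*s}$ is a $*$-ideal. One point needs care here. Applying (v) to $\beta_{t^*}$ restricted to the smaller ideal $B^0_{s^*stt^*}$ is what lets us land in $B^0_{(st)^*(st)}$. To justify this I will use $\beta_{t^*}=\beta_{t^*e}$ on $B_e$ for $e=s^*stt^*\le tt^*$, together with (v) applied to $t^*e$.

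The key construction is the extension of $\pi^0$. Fix $s\in S$ and consider the map $B^0_{s^*s}\ni x\mapsto \pi^0(\delta_s\,x)\in D$. I bound its norm with the C*-identity. The map $y\mapsto\pi^0(\delta_{s^*s}\,y)$ is a $*$-homomorphism from the $*$-algebra $B^0_{s^*s}$ into $D$, because $\delta_{s^*s}\,y\cdot\delta_{s^*s}\,z=\delta_{s^*s}\,yz$ (since $\beta_{s^*s}$ is the identity). Hence $p(y)=\|\pi^0(\delta_{s^*s}\,y)\|$ is a C*-seminorm on $B^0_{s^*s}$, and (iv) gives $p(y)\le\|y\|$. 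Then
\[
\|\pi^0(\delta_s\,x)\|^2=\|\pi^0(\delta_{s^*s}\,x^*x)\|\le\|x^*x\|=\|x\|^2,
\]
where the equality uses $(\delta_s\,x)^*(\delta_s\,x)=\delta_{s^*s}\,x^*x$, as in Lemma \ref{lem:contractive}. So the map is contractive in the norm of $B$. Since $B^0_{s^*s}$ is dense in $B_{s^*s}$ by (i), it extends uniquely to a bounded linear map $\pi_s\colon B_{s^*s}\to D$. I then define $\pi(\sum\delta_{s_i}\,x_i)=\sum\pi_{s_i}(x_i)$ on the direct sum $S\alg B$; this is linear and extends $\pi^0$.

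It remains to check that $\pi$ is a $*$-homomorphism. Multiplication $(x,y)\mapsto\beta_{t^*}(x\beta_t(y))$ and the involution on $S\alg B$ are jointly norm-continuous in each homogeneous component, and both $\pi(\delta_s\,x)\pi(\delta_t\,y)$ and $\pi((\delta_s\,x)(\delta_t\,y))$ are continuous in $(x,y)\in B_{s^*s}\times B_{t^*t}$. They agree on the dense subset $B^0_{s^*s}\times B^0_{t^*t}$, so they agree everywhere, and the same argument works for $*$. Uniqueness holds because any $*$-homomorphism $S\alg B\to D$ is contractive on each $\delta_s B_{s^*s}$ by Lemma \ref{lem:contractive}, hence continuous there, and $B^0_{s^*s}$ is dense.

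For admissibility, suppose $t\ge s$. Then $x\mapsto\pi(\delta_t\,x)-\pi(\delta_s\,x)$ is continuous on $B_{s^*s}\subset B_{t^*t}$ and vanishes on the dense subspace $B^0_{s^*s}$, so it vanishes identically. I expect the main obstacle to be the $\beta$-invariance bookkeeping in the first step; the rest is a routine density argument.
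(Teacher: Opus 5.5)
Your proof is correct and follows the paper's argument. You bound $\|\pi^0(\delta_s\,x)\|$ via the C*-identity and condition (iv), extend each map $x\mapsto\pi^0(\delta_s\,x)$ by density to $\pi_s$, and obtain multiplicativity, the involution and admissibility by continuity, filling in details the paper calls straightforward. One cosmetic slip: (v) must be applied to $t^*e=(t(st)^*(st))^*$, not to $t(st)^*(st)$ itself, which maps in the wrong direction; your follow-up justification already does this correctly.
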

\begin{proof}
  It is straightforward to check that $S\alg B^0$ is a $*$-algebra.

  Since $B^0_e\ni y\mapsto \pi^0(\delta_e\,y)\in D$ is a $*$-homomorphism, the condition (iv) implies that $\|\pi^0(\delta_e\,y)\|\le \|y\|$ for each $e\in E(S)$ and $y\in B_e^0$. Using this, we have
  \[
    \|\pi^0(\delta_s\,x)\|^2=\|\pi^0(\delta_{s^*s}\,x^*x)\|\le \|x^*x\|=\|x\|^2
  \]
  for all $s\in S$ and $x\in B^0_{s^*s}$. Hence $B^0_{s^*s}\ni x\mapsto \pi^0(\delta_s\,x)\in D$ uniquely extends to the map $\pi_s\colon B_{s^*s}\to D$ by (i). Let
  \[
    \pi\colon S\alg B\ni\delta_s\,x\mapsto \pi_s(x)\in D.
  \]
  By (v), it is easily verified that this is a $*$-homomorphism. 

  The last statement is clear from the construction of the $*$-homomorphism $\pi$.
\end{proof}

\begin{lem}
  Define a linear map $\Phi\colon S\alg A\to S\alg A^\ks$ by
  \[
    \Phi(\delta_s\,x)=\delta_s\,(\delta_{s^*s}\,x),
  \]
  for all $s\in S$ and $x\in A_{s^*s}$. Then $\Phi$ is a $*$-homomorphism.
\end{lem}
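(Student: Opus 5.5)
We must show that $\Phi(\delta_s\,x)=\delta_s\,(\delta_{s^*s}\,x)$ is well defined, $*$-preserving and multiplicative. Well-definedness is immediate. The element $\delta_{s^*s}\,x$ lies in $A^\ks_{s^*s}$ by definition of $A^\ks_{s^*s}$ in Proposition \ref{prop:action-on-A^ks}, so $\delta_s\,(\delta_{s^*s}\,x)$ is a legitimate element of $S\alg A^\ks$. Linearity is clear. It then suffices to check the two identities on generators $\delta_s\,x$, $\delta_t\,y$ with $x\in A_{s^*s}$, $y\in A_{t^*t}$.

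\textbf{Involution.} We compute
\[
  \Phi(\delta_s\,x)^*=\delta_{s^*}\,\alpha^\ks_s\bigl((\delta_{s^*s}\,x)^*\bigr)=\delta_{s^*}\,\alpha^\ks_s(\delta_{s^*s}\,x^*).
\]
By the formula in Proposition \ref{prop:action-on-A^ks} with $e=s^*s$, this equals $\delta_{s^*}\,(\delta_{ss^*}\,\alpha_s(x^*))$. This is exactly $\Phi(\delta_{s^*}\,\alpha_s(x^*))=\Phi((\delta_s\,x)^*)$.

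\textbf{Multiplication.} We have $(\delta_s\,x)(\delta_t\,y)=\delta_{st}\,\alpha_{t^*}(x\alpha_t(y))$, whose image under $\Phi$ is $\delta_{st}\,(\delta_{(st)^*(st)}\,\alpha_{t^*}(x\alpha_t(y)))$. On the other side,
\[
  \Phi(\delta_s\,x)\Phi(\delta_t\,y)=\delta_{st}\,\alpha^\ks_{t^*}\bigl((\delta_{s^*s}\,x)\,\alpha^\ks_t(\delta_{t^*t}\,y)\bigr).
\]
Here $\alpha^\ks_t(\delta_{t^*t}\,y)=\delta_{tt^*}\,\alpha_t(y)$. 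The product inside $E(S)\alg A$ is then $\delta_{s^*s\,tt^*}\,x\alpha_t(y)$, which lies in $A^\ks_{tt^*}$ since $s^*stt^*\le tt^*$. Applying $\alpha^\ks_{t^*}$ with $e=s^*stt^*\le tt^*$ gives
\[
  \delta_{t^*s^*stt^*t}\,\alpha_{t^*}(x\alpha_t(y))=\delta_{(st)^*(st)}\,\alpha_{t^*}(x\alpha_t(y)).
\]
The last step uses $t^*s^*st\le t^*t$. This matches $\Phi$ of the product.

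\textbf{Main obstacle.} The only real care is bookkeeping. One must confirm that each intermediate element lies in the correct ideal $A^\ks_e$, so that the formula $\alpha^\ks_s(\delta_e\,x)=\delta_{ses^*}\,\alpha_s(x)$ applies with $e\le s^*s$. One must also check that $\alpha_{t^*}(x\alpha_t(y))$ is taken in $A_{(st)^*(st)}$, consistent with $\alpha_{t^*}$ restricted to $A_{s^*stt^*}$. The idempotent identities $t^*s^*st\,t^*t=t^*s^*st$ and $t^*(s^*stt^*)t=(st)^*(st)$ settle this.
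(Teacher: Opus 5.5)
Your proof is correct and follows the same route as the paper: a direct check on generators $\delta_s\,x$, $\delta_t\,y$, using $\alpha^\ks_t(\delta_{t^*t}\,y)=\delta_{tt^*}\,\alpha_t(y)$, the product $\delta_{s^*stt^*}\,x\alpha_t(y)$ in $A^\ks$, and the identity $t^*s^*stt^*t=t^*s^*st$ for multiplicativity, together with $\alpha^\ks_s(\delta_{s^*s}\,x^*)=\delta_{ss^*}\,\alpha_s(x^*)$ for the involution.
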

\begin{proof}
  Compute that
  \begin{align*}
    \Phi(\delta_s\,x\cdot \delta_t\,y)
    &=\Phi(\delta_{st}\,\alpha_{t^*}(x\alpha_t(y)))\\
    &=\delta_{st}\,(\delta_{t^*s^*st}\,\alpha_{t^*}(x\alpha_t(y))),\\
    \Phi(\delta_s\,x)\Phi(\delta_t\,y)
    &=\delta_s\,(\delta_{s^*s}\,x)\cdot\delta_t\,(\delta_{t^*t}\,y)\\
    &=\delta_{st}\,(\alpha_{t^*}^\ks(\delta_{s^*s}\,x\cdot\alpha_t^\ks(\delta_{t^*t}\,y)))\\
    &=\delta_{st}\,(\alpha_{t^*}^\ks(\delta_{s^*s}\,x\cdot\delta_{tt^*}\,\alpha_t(y)))\\
    &=\delta_{st}\,(\alpha_{t^*}^\ks(\delta_{s^*stt^*}\,x\alpha_t(y)))\\
    &=\delta_{st}\,(\delta_{t^*s^*stt^*t}\,\alpha_{t^*}(x\alpha_t(y)))\\
    &=\delta_{st}\,(\delta_{t^*s^*st}\,\alpha_{t^*}(x\alpha_t(y)))
  \end{align*}
  and
  \begin{align*}
    \Phi((\delta_s\,x)^*)
    &=\Phi(\delta_{s^*}\,\alpha_s(x^*))
    =\delta_{s^*}\,(\delta_{ss^*}\,\alpha_s(x^*)),\\
    \Phi(\delta_s\,x)^*
    &=(\delta_s\,(\delta_{s^*s}\,x))^*
    =\delta_{s^*}\,(\alpha_{s}^\ks(\delta_{s^*s}\,x^*))
    =\delta_{s^*}\,(\delta_{ss^*}\,\alpha_s(x^*)).
  \end{align*}
  Hence $\Phi$ is a $*$-homomorphism.
\end{proof}

\begin{rem}
  From the universality of $S\kstimes A$, the map $\Phi$ above induces a $*$-homomorphism $\overline{\Phi}\colon S\kstimes A\to S\ltimes A^\ks$ such that
  \[
    \overline{\Phi}(\delta_s\,x)=[\delta_s\,(\delta_{s^*s}\,x)]
  \]
  for all $s\in S$ and $x\in A_{s^*s}$.
\end{rem}

We next construct an admissible $*$-homomorphism $\Psi$ from $S\alg A^\ks$ to $S\kstimes A$ which induces an inverse for $\overline{\Phi}$.

\begin{lem}
  There exists a unique $*$-homomorphism $\Psi:S\alg A^\ks\to S\kstimes A$ such that
  \[
    \Psi(\delta_s\,(\delta_e\,x))=\delta_{se}\,x,
  \]
  for all $e\le s^*s$ and $x\in A_e$. Moreover, this $*$-homomorphism is admissible.
\end{lem}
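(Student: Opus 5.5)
The plan is to apply Lemma \ref{lem:alg-cp} with $B=A^\ks$ and the dense $*$-subalgebra $B^0=E(S)\alg A$ (viewed inside $A^\ks=E(S)\kstimes A$ via the injective canonical map). I take $B^0_e=\Span\{\delta_f\,x\mid f\le e,\;x\in A_f\}$. This is a $*$-ideal of $B^0$ and is dense in $A^\ks_e$ by definition. Conditions (ii) and (iii) are immediate, since $f\le e$ and $g\le e'$ give $fg\le ee'$. Condition (v) follows from the formula $\alpha^\ks_s(\delta_f\,x)=\delta_{sfs^*}\,\alpha_s(x)$ in Proposition \ref{prop:action-on-A^ks}.

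Condition (iv) says that every C*-seminorm on $B^0_e$ is dominated by the norm of $A^\ks$. I would get it from Lemma \ref{lem:closed}. Any element $b$ of $B^0_e$ lies in $E_0\alg A$ for some finite subsemigroup $E_0\subset\{f\le e\}$ (close a finite set under products). By Lemma \ref{lem:closed}, $E_0\alg A$ is a C*-algebra in the norm of $A^\ks$. A C*-seminorm $p$ restricted to it is the pullback of the norm of the Hausdorff completion. That gives a $*$-homomorphism from a C*-algebra into a C*-algebra, so it is contractive and $p(b)\le\|b\|$.

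Next I define $\Psi^0\colon S\alg B^0\to S\kstimes A$ on generators by $\Psi^0(\delta_s\,(\delta_e\,x))=\delta_{se}\,x$ for $e\le s^*s$ and $x\in A_e$. This is well defined, since $(se)^*(se)=e$ gives $x\in A_{(se)^*(se)}$, and it extends linearly because $\{\delta_e\,x\}$ spans $B^0_{s^*s}$ as a direct sum over $e$. For the $*$-preserving property, I compute $\Psi^0((\delta_s\,(\delta_e\,x))^*)=\Psi^0(\delta_{s^*}\,(\delta_{ses^*}\,\alpha_s(x^*)))=\delta_{s^*ses^*}\,\alpha_s(x^*)=\delta_{es^*}\,\alpha_{se}(x^*)$. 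This equals $(\delta_{se}\,x)^*$.

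For multiplicativity, I multiply in $S\alg A^\ks$:
$(\delta_s\,(\delta_e\,x))(\delta_t\,(\delta_f\,y))=\delta_{st}\,\alpha^\ks_{t^*}\bigl(\delta_e\,x\cdot\delta_{tft^*}\,\alpha_t(y)\bigr)=\delta_{st}\,(\delta_{t^*etf}\,\alpha_{t^*}(x\alpha_t(y)))$,
using $t^*etft^*t=t^*etf$. Its image under $\Psi^0$ is $\delta_{stt^*etf}\,\alpha_{t^*}(x\alpha_t(y))$. On the other side, $(\delta_{se}\,x)(\delta_{tf}\,y)=\delta_{setf}\,\alpha_{(tf)^*}(x\alpha_{tf}(y))$. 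Since $stt^*et=set$ and $se=s\cdot e$, the two semigroup indices agree, and the coefficients agree because the relevant elements live in the appropriate ideals. Checking this coefficient identity carefully, with the $q$-maps, is the main bookkeeping obstacle.

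Lemma \ref{lem:alg-cp} then gives a unique extension $\Psi$ on $S\alg A^\ks$. Uniqueness also holds because the elements $\delta_s\,(\delta_e\,x)$ span a subalgebra whose image is dense in each $\delta_s\,A^\ks_{s^*s}$, and the extension is bounded by the contractivity estimate.

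For admissibility, by the last clause of Lemma \ref{lem:alg-cp} it suffices to check $\Psi^0(\delta_t\,b)=\Psi^0(\delta_s\,b)$ for $t\ge s$ and $b=\delta_e\,x$ with $e\le s^*s$. This holds because $te=ts^*se=se$.
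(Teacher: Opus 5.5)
Your proposal is correct and follows the paper's proof. Both define $\Psi_0$ on $S\alg(E(S)\alg A)$ by $\delta_s\,(\delta_e\,x)\mapsto\delta_{se}\,x$, check that it is a $*$-homomorphism, extend it via Lemma~\ref{lem:alg-cp} (with condition (iv) supplied by Lemma~\ref{lem:closed}, as you argue), and obtain admissibility from $te=ts^*se=se$.

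The coefficient identity you flag as bookkeeping is immediate. Since $y\in A_f$, you have $\alpha_{tf}(y)=\alpha_t(y)$, and $\alpha_{t^*}(x\alpha_t(y))\in A_{t^*etf}\subset A_f$, so $\alpha_{ft^*}$ acts as $\alpha_{t^*}$ on $x\alpha_t(y)$.
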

\begin{proof}
  We first show that the linear map 
  \[
    \Psi_0\colon S\alg (E(S)\alg A)\ni\delta_s\,(\delta_e\,x)\mapsto \delta_{se}\,x\in S\kstimes A
  \]
  is a $*$-homomorphism.

  Compute that
  \begin{align*}
    \Psi_0(\delta_s\,(\delta_e\,x)\cdot\delta_t\,(\delta_f\,y))
    &=\Psi_0(\delta_{st}\,\alpha_{t^*}^\ks(\delta_e\,x\cdot\alpha_t^\ks(\delta_f\,y)))\\
    &=\Psi_0(\delta_{st}\,\alpha_{t^*}^\ks(\delta_e\,x\cdot\delta_{tft^*}\,\alpha_t(y)))\\
    &=\Psi_0(\delta_{st}\,\alpha_{t^*}^\ks(\delta_{etft^*}\,x\alpha_t(y)))\\
    &=\Psi_0(\delta_{st}\,(\delta_{t^*etft^*t}\,\alpha_{t^*}(x\alpha_t(y))))\\
    &=\Psi_0(\delta_{st}\,(\delta_{t^*etf}\,\alpha_{t^*}(x\alpha_t(y))))\\
    &=\delta_{stt^*etf}\,\alpha_{t^*}(x\alpha_t(y))\\
    &=\delta_{setf}\,\alpha_{t^*}(x\alpha_t(y))\\
    \Psi_0(\delta_s\,(\delta_e\,x))\Psi_0(\delta_t\,(\delta_f\,y))
    &=\delta_{se}\,x\cdot\delta_{tf}\,y\\
    &=\delta_{setf}\,\alpha_{ft^*}(x\alpha_{tf}(y))\\
    &=\delta_{setf}\,\alpha_{t^*}(x\alpha_{t}(y)).
  \end{align*}
  and
  \begin{align*}
    \Psi_0((\delta_s\,(\delta_e\,x))^*)
    &=\Psi_0(\delta_{s^*}\,(\delta_{ses^*}\,\alpha_{s}(x^*)))\\
    &=\delta_{s^*ses^*}\,\alpha_s(x^*)\\
    &=\delta_{es^*}\,\alpha_s(x^*)\\
    \Psi_0((\delta_s\,(\delta_e\,x)))^*
    &=(\delta_{se}\,x)^*\\
    &=\delta_{es^*}\,\alpha_{se}(x^*)\\
    &=\delta_{es^*}\,\alpha_s(x^*).
  \end{align*}
  Hence $\Psi_0$ is a $*$-homomorphism. 
  From Lemmas \ref{lem:alg-cp} and \ref{lem:closed}, it uniquely extends to a $*$-homomorphism $\Psi\colon S\alg A^\ks\to S\kstimes A$.

  To show that $\mathcal{N}_{A^\ks}=\Span\{\delta_t\,z-\delta_s\,z\mid t\ge s,\;z\in A^\ks_{s^*s}\}\subset\ker\Psi$, it suffices to show that
  \[
    \Psi(\delta_t\,(\delta_e\,x)-\delta_s\,(\delta_e\,x))=0
  \]
  for all $s\le t$, $e\le s^*s$ and $x\in A_e$. But this is clear since $s\le t$ implies $se=(ts^*s)e=te$.
\end{proof}

\begin{rem}
  From the universality of $S\ltimes A^\ks$, there exists a unique $*$-homomorphism $\overline{\Psi}\colon S\ltimes A^\ks\to S\kstimes A$ such that $\overline{\Psi}([\delta_s\,(\delta_e\,x)])=\delta_{se}\,x$ for all $e\le s^*s$ and $x\in A_e$.
\end{rem}

\noindent\textbf{Proof of Theorem \ref{thm:isom-of-KS}:}

  For $s\in S$ and $a\in A_{s^*s}$, compute that
  \begin{align*}
    \overline{\Psi}\circ\overline{\Phi}(\delta_s\,a)
    &=\overline{\Psi}([\delta_s\,(\delta_{s^*s}\,a)])\\
    &=\delta_{ss^*s}\,a\\
    &=\delta_s\,a.
  \end{align*}
  Hence $\overline{\Psi}\circ\overline{\Phi}=\mathrm{id}_{S\kstimes A}$.

  For $s\in S$, $e\le s^*s$ and $x\in A_e$, compute that 
  \begin{align*}
    \overline{\Phi}\circ\overline{\Psi}([\delta_s\,(\delta_e\,x)])
    &=\overline{\Phi}(\delta_{se}\,x)\\
    &=[\delta_{se}\,(\delta_{es^*se}\,x)]\\
    &=[\delta_s\,(\delta_e\,x)].
  \end{align*}
  Since 
  \[
    \Span\{[\delta_s\,(\delta_e\,x)]\mid s\in S,\;e\le s^*s,\;x\in A_e\}
  \]
  is dense in $S\ltimes A^\ks$, it follows $\overline{\Phi}\circ\overline{\Psi}=\mathrm{id}_{S\ltimes A^\ks}$. 
\qed

\section{Proof of Theorem \ref{thm:isom}}
\label{sec:C}

Let $\theta$ be an action of an inverse semigroup $S$ on a locally compact Hausdorff space $X$.
Recall that the subset 
\[
  \Theta_s:=[s,X_{s^*s}]=\{[s,x]\mid x\in X_{s^*s}\}
\]
is an open bisection in $S\ltimes X$. Hence
\[
  d_s:=d|_{\Theta_s}\colon \Theta_s\to X_{s^*s}
\]
is a homeomorphism. 

Notice that the assignments $B=C_0(X)$, $B^0=C_c(X)$ and $B^0_e=C_c(X_e)$ for each $e\in E(S)$ satisfy the assumptions of Lemma \ref{lem:alg-cp}. Hence we can consider the $*$-subalgebra $S\alg C_c(X)$ of $S\alg C_0(X)$. The following lemma is a restatement of the argument in Section 7 of \cite{exel2008}.

\begin{lem}
  Define a linear map $\Phi_0\colon S\alg C_c(X)\to \mathcal{C}(S\ltimes X)$ by
  \[
    \Phi_0(\delta_s\,f) = f\circ d_s
  \]
  for all $s\in S$ and $f\in C_c(X_{s^*s})$. Then $\Phi_0$ is a $*$-homomorphism.
\end{lem}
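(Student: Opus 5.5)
We first check that $\Phi_0$ is well defined with values in $\mathcal{C}(S\ltimes X)$. Since $d_s\colon\Theta_s\to X_{s^*s}$ is a homeomorphism and $f\in C_c(X_{s^*s})$, the function $f\circ d_s$ lies in $C_c(\Theta_s)$, extended by zero outside $\Theta_s$. As $\Theta_s$ is an open bisection, $f\circ d_s\in\mathcal{C}(S\ltimes X)$. Linearity is automatic from the direct-sum definition of $S\alg C_c(X)$. It then suffices to verify multiplicativity and $*$-preservation on generators $\delta_s\,f$ and $\delta_t\,g$, with $f\in C_c(X_{s^*s})$ and $g\in C_c(X_{t^*t})$.

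For multiplicativity we use the fact recalled before Lemma \ref{lem:partition}: if $a\in C_c(U)$ and $b\in C_c(V)$ for open bisections $U,V$, then $a*b\in C_c(UV)$. We also need $\Theta_s\Theta_t=\Theta_{st}$. This follows because $[s,y][t,x]=[st,x]$ whenever $y=\theta_t(x)\in X_{s^*s}$, which happens exactly when $x\in X_{(st)^*st}$. For $\gamma=[st,x]\in\Theta_{st}$, the convolution sum $\sum_{\beta}(f\circ d_s)(\gamma\beta^{-1})(g\circ d_t)(\beta)$ has at most one nonzero term. That term comes from $\beta=[t,x]$, since $\beta$ must lie in $\Theta_t$ with $d(\beta)=x$, and $\Theta_t$ is a bisection. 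Then $\gamma\beta^{-1}=[s,\theta_t(x)]$, so
\[
  (f\circ d_s)*(g\circ d_t)([st,x])=f(\theta_t(x))\,g(x).
\]
On the other side, $\Phi_0(\delta_{st}\,\alpha_{t^*}(f\alpha_t(g)))$ evaluated at $[st,x]$ equals $\alpha_{t^*}(f\alpha_t(g))(x)$. Since $\alpha_{t^*}(h)=h\circ\theta_t$, this is $(f\circ\theta_t)(x)\,g(x)$, which matches. Both functions vanish off $\Theta_{st}$, so $\Phi_0$ is multiplicative.

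For the involution, $(f\circ d_s)^*(\gamma)=\overline{f(d_s(\gamma^{-1}))}$ is supported on $\Theta_s^{-1}=\Theta_{s^*}$. At $\gamma=[s^*,y]$ we have $\gamma^{-1}=[s,\theta_{s^*}(y)]$, so the value is $\overline{f}(\theta_{s^*}(y))=\alpha_s(f^*)(y)$, which is $\Phi_0(\delta_{s^*}\,\alpha_s(f^*))(\gamma)$.

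The main subtlety is that $S\ltimes X$ need not be Hausdorff, so convolution must be handled pointwise via the bisection structure rather than by continuity arguments. The step that could hide a gap is the claim that the convolution sum reduces to the single term $\beta=[t,x]$: an element $\gamma\in\Theta_{st}$ might also be representable with a different $\beta$. This is handled by noting that $g\circ d_t$ is supported in the bisection $\Theta_t$, which has exactly one element with domain $x$. In addition, evaluation at $\gamma$ depends only on $\gamma$ as an equivalence class, so there is no representative dependence.
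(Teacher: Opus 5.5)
Your proposal is correct and follows essentially the same route as the paper's proof. Both arguments use support considerations to place both sides in $C_c(\Theta_{st})$ (respectively $C_c(\Theta_{s^*})$), then evaluate pointwise at $[st,x]$ (respectively $[s^*,y]$) to get $f(\theta_t(x))g(x)$ (respectively $\overline{f(\theta_{s^*}(y))}$) on each side; your explicit justification that the convolution sum has the single term $\beta=[t,x]$, via the bisection property of $\Theta_t$, is a detail the paper leaves implicit.
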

\begin{proof}
  Since $d_s=d|_{\Theta_s}\colon \Theta_s\to X_{s^*s}$ is a homeomorphism for each $s\in S$, $\Phi_0$ is well-defined. 

  We next show that $\Phi_0$ is multiplicative. To this end, it suffices to show that 
  \[
    \Phi_0(\delta_s\,f)*\Phi_0(\delta_t\,g)=\Phi_0(\delta_s\,f\cdot\delta_t\,g)
  \]
  for each $s,t\in S$, $f\in C_c(X_{s^*s})$ and $g\in C_c(X_{t^*t})$. Note first that since $\Phi_0(\delta_s\,f)\in C_c(\Theta_s)$ and $\Phi_0(\delta_t\,g)\in C_c(\Theta_t)$, we have 
  \[
    \Phi_0(\delta_s\,f)*\Phi_0(\delta_t\,g),\,\Phi_0(\delta_s\,f\cdot\delta_t\,g)\,\in\, C_c(\Theta_{st}).
  \]
  For $[st,x]\in \Theta_{st}$, compute that
  \begin{align*}
    (\Phi_0(\delta_s\,f)*\Phi_0(\delta_t\,g))([st,x])
    &=(f\circ d_s\,*\, g\circ d_t)([st,x])\\
    &=f\circ d_s([s,\theta_t(x)])\cdot g\circ d_t([t,x])\\
    &=f(\theta_t(x))g(x)\\
    \Phi_0(\delta_s\,f\cdot \delta_t\,g)([st,x])
    &=(\alpha_{t^*}(f\alpha_t(g))\circ d_{st})([st,x])\\
    &=\alpha_{t^*}(f\alpha_t(g))(x)\\
    &=f(\theta_t(x))g(x).
  \end{align*}
  It only remains to prove that $\Phi_0$ preserves the involution, that is, $\Phi_0(\delta_s\,f)^*=\Phi_0((\delta_s\,f)^*)$ for each $s\in S$ and $f\in C_c(X_{s^*s})$. First note that $\Phi_0(\delta_s\,f)^*,\Phi_0((\delta_s\,f)^*)\in C_c(\Theta_{s^*})$. For $[s^*,x]\in \Theta_{s^*}$, compute that
  \begin{align*}
    \Phi_0(\delta_s\,f)^*([s^*,x])
    &=(f\circ d_s)^*([s^*,x])\\
    &=\overline{(f\circ d_s)([s^*,x]^{-1})}\\
    &=\overline{(f\circ d_s)([s,\theta_{s^*}(x)])}\\
    &=\overline{f(\theta_{s^*}(x))}
  \end{align*}
  and
  \begin{align*}
    \Phi_0((\delta_s\,f)^*)([s^*,x])
    &=\Phi_0(\delta_{s^*}\,\overline{\alpha_s(f)})([s^*,x])\\
    &=(\overline{\alpha_s(f)\circ d_{s^*}})([s^*,x])\\
    &=\overline{\alpha_s(f)(x)}\\
    &=\overline{f(\theta_{s^*}(x))}.\qedhere
  \end{align*}
\end{proof}

\begin{rem}
  Let  $s\ge t$ and $f\in C_c(X_{t^*t})$. Then since $d_s\colon \Theta_{s}\to X_{s^*s}$ is an extension of $d_t\colon\Theta_{t}\to X_{t^*t}$, it follows that
  \[
    \Phi_0(\delta_s\,f)-\Phi_0(\delta_t\,f)=f\circ d_s-f\circ d_t=0 \quad\mathrm{in}\quad \mathcal{C}(S\ltimes X).
  \]
  Using this equality together with Lemma \ref{lem:alg-cp}, the $*$-homomorphism $\Phi_0\colon S\alg C_c(X)\to \mathcal{C}(S\ltimes X)\subset C^*(S\ltimes X)$ induces an admissible $*$-homomorphism from $S\alg C_0(X)$ to $C^*(S\ltimes X)$. Moreover, from the universality of $S\ltimes C_0(X)$, we obtain a $*$-homomorphism $\Phi\colon S\ltimes C_0(X)\to C^*(S\ltimes X)$ such that
  \[
    \Phi([\delta_s\,f])=f\circ d_s
  \]
  for all $s\in S$ and $f\in C_c(X_{s^*s})$.
\end{rem}

It is natural to expect that the family of linear maps $\{\Psi_s\}_{s\in S}$, given by 
\[
  \Psi_s\colon C_c([s,X_{s^*s}])\ni g\mapsto [\delta_s\, g\circ (d_s)^{-1}]\in S\ltimes C_0(X)
\]
for each $s\in S$, gives rise to a $*$-homomorphism $\Psi$ from $C^*(S\ltimes X)$ to $S\ltimes C_0(X)$, which is an inverse of $\Phi$. However, it is somewhat non-trivial that the family of maps $\{\Psi_s\}$ defines a well-defined map from $\mathcal{C}(S\ltimes X)$ to $S\ltimes C_0(X)$. For this difficulty, we need the following technical lemma.

\begin{prop}
  \label{prop:technical-lemma}
  Suppose that $G$ is an \'{e}tale groupoid, $S$ is an inverse semigroup and there is a semigroup homomorphism $S\ni s\mapsto U_s\in\bis(G)$ satisfying the following conditions.
\begin{enumerate}
  \item $\bigcup_{s\in S}U_s=G$.
  \item For any $s,t\in S$ and $\gamma\in U_s\cap U_t$, there exists $w\in S$ such that $\gamma\in U_w$ and $w\le s,t$. In other words, 
  \[
    U_s\cap U_t=\bigcup_{w\le s,t}U_w
  \]
  for any $s,t\in S$.
\end{enumerate}
  Let $A$ be a C*-algebra and let $\varphi_s\colon C_c(U_s)\to A$ be a linear map for each $s\in S$.

  Suppose that the following conditions hold.
  \begin{enumerate}
    \item[(1)] $\varphi_{e_2}|_{C_c(U_{e_1})}=\varphi_{e_1}$ for any $e_1\le e_2$ in $E(S)$.
    \item[(2)] $\varphi_{st}(f*g)=\varphi_s(f)\varphi_t(g)$ for any $s,t\in S$, $f\in C_c(U_s)$ and $g\in C_c(U_t)$.
    \item[(3)] $\varphi_{s^*}(f^*)=\varphi_s(f)^*$ for any $s\in S$ and $f\in C_c(U_s)$.
  \end{enumerate}
  Then there exists a unique $*$-homomorphism $\varphi\colon C^*(G)\to A$ extending each $\varphi_s$.
\end{prop}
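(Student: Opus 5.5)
The plan is to glue the maps $\varphi_s$ into a single linear map $\varphi^0\colon\mathcal{C}(G)\to A$, check that it is a $*$-homomorphism, and then pass to $C^*(G)$ by universality. By hypothesis (i) and Lemma~\ref{lem:partition}, every $g\in\mathcal{C}(G)$ can be written as $g=\sum_i f_i$ with $f_i\in C_c(U_{s_i})$. We would define $\varphi^0(g)=\sum_i\varphi_{s_i}(f_i)$. Once $\varphi^0$ is known to be well defined, conditions (2) and (3) give multiplicativity and $*$-preservation on generators, hence on all of $\mathcal{C}(G)$ by bilinearity. Since $C^*(G)$ is the enveloping C*-algebra of $\mathcal{C}(G)$, $\varphi^0$ then extends uniquely to $\varphi$. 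Uniqueness of $\varphi$ is immediate because the spaces $C_c(U_s)$ span a dense subspace of $C^*(G)$.

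The preliminary steps come first. (a) Compatibility under the order: if $w\le s$ and $f\in C_c(U_w)$, then $\varphi_s(f)=\varphi_w(f)$. To see this, write $w=s\,w^*w$. Then $f=f'*h$, where $f'\in C_c(U_s)$ agrees with $f$ on $U_w$ and $h\in C_c(U_{w^*w})$ is a function on the unit space equal to $1$ on $d(\operatorname{supp} f)$. Conditions (2) and (1), together with an identity argument on $C_c(U_{w^*w})\subset C_0(G^{(0)})$, should reduce $\varphi_s(f)$ to $\varphi_w(f)$. More concretely, we would use $\varphi_{s}(f)=\varphi_{s}(f*h)$ with $f*h$ computed in $U_s U_{w^*w}=U_{w}$, and $\varphi_{s\cdot w^*w}(f*h)=\varphi_s(f)\varphi_{w^*w}(h)$. (b) For $s,t$ and $f\in C_c(U_s\cap U_t)$, we want $\varphi_s(f)=\varphi_t(f)$. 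By hypothesis (ii), $U_s\cap U_t$ is covered by the $U_w$ with $w\le s,t$. A partition of unity decomposes $f=\sum_j f_j$ with $f_j\in C_c(U_{w_j})$, and then (a) gives $\varphi_s(f)=\sum_j\varphi_{w_j}(f_j)=\varphi_t(f)$.

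The main obstacle is well-definedness: we must show that $\sum_i f_i=0$ with $f_i\in C_c(U_{s_i})$ implies $\sum_i\varphi_{s_i}(f_i)=0$. Pointwise cancellation on overlaps of non-Hausdorff bisections is delicate, since the $f_i$ need not be continuous on $G$. I would argue by norms. Let $a=\sum_i\varphi_{s_i}(f_i)$. It suffices to show $a^*a=0$, or at least that $a^*\varphi_t(g)=0$ for enough $g$ to recover $a$. We would compute
\[
a^*a=\sum_{i,j}\varphi_{s_i^*}(f_i^*)\varphi_{s_j}(f_j)=\sum_{i,j}\varphi_{s_i^*s_j}(f_i^**f_j),
\]
using (2) and (3). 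This expresses $a^*a$ through terms indexed by pairs. Here I would try to use that $f_i^**f_j$ is supported on $U_{s_i^*s_j}$, and to group the terms by an approximate-unit trick: multiply on the right by $\varphi_e(h)$ for $h\in C_c(U_e)$, $e\in E(S)$. This localizes everything to compact sets covered by finitely many $U_w$, where (b) allows all terms to be rewritten inside a common bisection and the relation $\sum_i f_i=0$ becomes usable. Failing that, I would fall back on the approach in Exel's Proposition~9.7 of \cite{exel2008}, which proves that $\mathcal{C}(G)$ is the quotient of $\bigoplus_s C_c(U_s)$ by the span of the elements $f\delta_s-f\delta_t$ with $f\in C_c(U_s\cap U_t)$, essentially by induction on the number of summands using partitions of unity. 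Combined with (b), that yields well-definedness directly.

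Once $\varphi^0$ is well defined, each $\varphi_s$ is contractive for the sup norm. For this, write $\|\varphi_s(f)\|^2=\|\varphi_{s^*s}(f^**f)\|$, which is the norm of a $*$-homomorphic image of $C_c(U_{s^*s})\subset C_0(G^{(0)})$ by (1)–(3). Contractivity guarantees compatibility with the universal norm, and the universal property of $C^*(G)$ then completes the proof.
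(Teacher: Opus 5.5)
Your steps (a) and (b) are sound. Step (a) is a valid alternative to the paper's Sublemma~1, which instead shows $(\varphi_t(f)-\varphi_s(f))^*(\varphi_t(f)-\varphi_s(f))=0$. It needs one link made explicit: $\varphi_s(f)=\varphi_{s\cdot s^*s}(f*h)=\varphi_s(f)\varphi_{s^*s}(h)=\varphi_s(f)\varphi_{w^*w}(h)=\varphi_{s\cdot w^*w}(f*h)=\varphi_w(f)$, using (2), (1), (2).

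The genuine gap is in well-definedness, which is the real content of the proposition. Step (b) identifies $\varphi_s$ and $\varphi_t$ only on $C_c(U_s\cap U_t)$. A relation $\sum_i f_i=0$, however, only tells you things like $f_1=-f_2\in C_c(U_{s_1})\cap C_c(U_{s_2})$, and for non-Hausdorff $G$ this intersection is strictly larger than $C_c(U_{s_1}\cap U_{s_2})$. Here is a concrete counterexample:
\begin{itemize}
\item Let $S=\{0,e,u\}$ ($e$ a unit, $u^2=e$, $0$ a zero) act on $\mathbb{R}$ by $\mathbb{R}_e=\mathbb{R}$, $\mathbb{R}_0=\mathbb{R}\setminus\{0\}$, $\theta_u=\mathrm{id}$. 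Put $U_s=[s,\mathbb{R}_{s^*s}]$ in $G=S\ltimes\mathbb{R}$; then $U_s\cap U_t=U_0$ whenever $s\neq t$.
\item Take $g\in C_c(\mathbb{R})$ with $g(0)=0$ but $0\in\supp g$. The functions $g\circ d|_{U_e}$ and $g\circ d|_{U_u}$ coincide on $G$, since both vanish at the two distinct arrows $[e,0]\neq[u,0]$. Yet neither is compactly supported in $U_0$.
\item So $(g\circ d|_{U_e})\,\delta_e-(g\circ d|_{U_u})\,\delta_u$ lies in the kernel of $\bigoplus_sC_c(U_s)\to\mathcal{C}(G)$. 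It is not in the span of the elements $f\delta_s-f\delta_t$ with $f\in C_c(U_s\cap U_t)$, because all components of that span lie in $C_c(U_0)$.
\end{itemize}
Hence the quotient description you want to import in your fallback is false in this generality. Your norm heuristic does not get around this either. Expanding $a^*a$ only produces the relations $\sum_j f_i^**f_j=0$, which have exactly the same defect. Localizing to compact sets does not help, because the obstruction sits at boundary points of $U_s\cap U_t$ where the functions vanish without being compactly supported away from them.

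The missing idea is to use the estimate $\|\varphi_s(f)\|\le\|f\|_\infty$ \emph{before} gluing; as you observe, it follows from (1)--(3) alone.
\begin{itemize}
\item Extend each $\varphi_s$ by continuity to $C_0(U_s)$. Then (a) persists, and (b) upgrades by sup-norm approximation to $\varphi_s=\varphi_t$ on $C_0(U_s\cap U_t)=C_0(U_s)\cap C_0(U_t)$. This space does contain functions like $g\circ d|_{U_e}$ above.
\item Prove well-definedness by induction on the number of summands. If $\sum_{i=1}^{k+1}f_i=0$ with $f_i\in C_0(U_{s_i})$, then $f_{k+1}$ vanishes off $U_{s_{k+1}}\cap\bigcup_{i\le k}U_{s_i}$. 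So $f_{k+1}=\sum_{i\le k}g_i$ with $g_i\in C_0(U_{s_{k+1}}\cap U_{s_i})$.
\item Apply the induction hypothesis to $\sum_{i\le k}(f_i+g_i)=0$, and use $\varphi_{s_{k+1}}(g_i)=\varphi_{s_i}(g_i)$.
\end{itemize}
This is the paper's argument. Conversely, contractivity is not what gives compatibility with the norm of $C^*(G)$. Once $\varphi^0$ is a well-defined $*$-homomorphism on $\mathcal{C}(G)$, extension to $C^*(G)$ is automatic by universality.
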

\begin{proof}
  Recall that $\mathcal{C}(G)=\Span\bigcup_{s\in S}C_c(U_s)$ by (i) and Lemma \ref{lem:partition}. Together with the universality of $C^*(G)$, it suffices to show that $\varphi\colon\mathcal{C}(G)\to A$ defined by
  \[
    \varphi(f)=\varphi_s(f),\quad f\in C_c(U_s)
  \]
  is a well-defined $*$-homomorphism. From (2) and (3), it is easy to verify that $\varphi$ is a $*$-homomorphism once well-definedness has been checked.  
  To show the well-definedness, we need some sublemmas.

  \vspace{8pt}\noindent\textbf{Sublemma 1}\quad $\varphi_t|_{C_c(U_s)}=\varphi_s$ for $s,t\in S$ with $s\le t$ in $S$.
  \begin{proof}
    Fix $f\in C_c(U_s)\subset C_c(U_t)$. Since $s\le t$ in $S$, we have $s^*s=s^*t=t^*s\le t^*t$. 
    
    Compute that
    \begin{align*}
      (\varphi_t(f)-\varphi_s(f))^*(\varphi_t(f)-\varphi_s(f))
      &=\varphi_{t^*t}(f^**f)-\varphi_{t^*s}(f^**f)\\&\qquad-\varphi_{s^*t}(f^**f)+\varphi_{s^*s}(f^**f)\\
      &=\varphi_{t^*t}(f^**f)-\varphi_{s^*s}(f^**f)\\
      &=0,
    \end{align*}
    where we used (2), (3) in the first equality and (1) in the last equality. This proves $\varphi_t(f)=\varphi_s(f)$.
  \end{proof}

  \vspace{8pt}\noindent\textbf{Sublemma 2}\quad $\|\varphi_s(f)\|\le \|f\|_\infty$ for any $s\in S$ and $f\in C_c(U_s)$.
  \begin{proof}
    Observe first that if $e\in E(S)$, then $\varphi_e\colon C_c(U_e)\to A$ is a $*$-homomorphism by the assumptions (2) and (3). Hence, in this case, we obtain $\|\varphi_e(g)\|\le\|g\|_\infty$ for all $g\in C_c(U_e)$ by \cite[Proposition 3.14]{exel2008}.
    
    Using this observation, we calculate for $f\in C_c(U_s)$ that
    \[
      \|\varphi_s(f)\|^2
      =\|\varphi_s(f)^*\varphi_s(f)\|
      =\|\varphi_{s^*s}(f^**f)\|
      \le \|f^**f\|_\infty
      =\|f\|^2_\infty.\qedhere
    \]
  \end{proof}

  It follows that $\varphi_s$ uniquely extends to a bounded linear map on $C_0(U_s)$, which we also denote by $\varphi_s$. In this proof, we may view $C_0(U_s)\subset\mathbb{C}^G$. From Sublemma 1, we have $\varphi_t|_{C_0(U_s)}=\varphi_s$ for $s,t\in S$ with $s\le t$ in $S$.

  \vspace{8pt}\noindent\textbf{Sublemma 3}\quad  $\varphi_s(f)=\varphi_t(f)$ for any $s,t\in S$ and $f\in C_0(U_s\cap U_t)$.
  \begin{proof}
    First note that\footnote{Notice that $C_c(U_s\cap U_t)\supset C_c(U_s)\cap C_c(U_t)$ \textbf{does not} hold in general. This is the reason that we extend the domain of $\varphi_s$ to $C_0(U_s)$.} $C_0(U_s\cap U_t)=C_0(U_s)\cap C_0(U_t)$ in $\mathbb{C}^G$.

    Fix $\varepsilon>0$ and find $g\in C_c(U_s\cap U_t)$ with $\|f-g\|_\infty<\varepsilon/2$. Recall from (ii) that
    \[
      U_s\cap U_t=\bigcup_{w\le s,t}U_w.
    \]
    Hence, by Lemma \ref{lem:partition}, there exist $m\ge 1$, $w_i\in S$ and $g_i\in C_c(U_{w_i})$ for each $i=1,\ldots,m$, such that
    \[
      g=g_1+\cdots+g_m,\quad w_i\le s,t\quad(i=1,\ldots,m).
    \]
    From $\varphi_s|_{C_0(U_{w_i})}=\varphi_{w_i}=\varphi_t|_{C_0(U_{w_i})}$, it follows
    \[
      \varphi_s(g)=\sum_{i=1}^m\varphi_s(g_i)=\sum_{i=1}^m\varphi_{w_i}(g_i)=\sum_{i=1}^m\varphi_t(g_i)=\varphi_t(g).
    \]
    Using Sublemma 2, we evaluate that
    \[
      \|\varphi_s(f)-\varphi_t(f)\|\le \|\varphi_s(f)-\varphi_s(g)\|+\|\varphi_t(g)-\varphi_t(f)\|<\varepsilon.
    \]
    Since $\varepsilon>0$ is arbitrary, we are done.
  \end{proof}

  We now prove that $\sum_{i=1}^n\varphi_{s_i}(f_i)=0$ in $A$ for any $n\ge 1$, $s_1,\ldots,s_n\in S$ and $f_i\in C_0(U_{s_i})\;(i=1,\ldots,n)$ with $\sum_{i=1}^nf_i=0$ in $\mathbb{C}^G$. We show this by induction on $n\ge 1$.

  When $n=1$, the claim is trivially true.
  
  Suppose the claim is true for $n=k\ge 1$. We now consider the case where $n=k+1$. Then $f_{k+1}=-\sum_{i=1}^kf_i$ in $\mathbb{C}^G$. Thus $f_{k+1}$ vanishes on
  \[
    U_{s_{k+1}}^c\cup\Bigl(\bigcap_{i=1}^kU_{s_i}^c\Bigr)=\Bigl(U_{s_{k+1}}\cap \Bigl(\bigcup_{i=1}^kU_{s_i}\Bigr) \Bigr)^c,
  \]
  that is, $f_{k+1}$ belongs to 
  \[
    C_0\Bigl(U_{s_{k+1}}\cap \Bigl(\bigcup_{i=1}^k U_{s_i}\Bigr)\Bigr)=\sum_{i=1}^kC_0(U_{s_{k+1}}\cap U_{s_i})\;\subset\; \mathbb{C}^G.
  \]
  Write $f_{k+1}=\sum_{i=1}^kg_i$ in $C_0(U_{s_{k+1}})$, where $g_i\in C_0(U_{s_{k+1}}\cap U_{s_i})$ for each $i=1,\ldots,k$. Then
  \[
    \sum_{i=1}^k(g_i+f_i)=f_{k+1}-f_{k+1}=0\quad\mathrm{in}\quad\mathbb{C}^G,\quad\mathrm{and}\quad g_i+f_i\in C_0(U_{s_i})\quad(i=1,\ldots,k).
  \]
  Hence, by assumption, we obtain $\sum_{i=1}^k\varphi_{s_i}(f_i+g_i)=0$ in $A$, or equivalently, $\sum_{i=1}^k\varphi_{s_i}(f_i)=-\sum_{i=1}^k\varphi_{s_i}(g_i)$ in $A$. This implies that
  \begin{align*}
    \sum_{i=1}^{k+1}\varphi_{s_i}(f_i)
    &=\varphi_{s_{k+1}}(f_{k+1})-\sum_{i=1}^k\varphi_{s_i}(g_i)\\
    &=\varphi_{s_{k+1}}(f_{k+1})-\sum_{i=1}^k\varphi_{s_{k+1}}(g_i)\\
    &=0,
  \end{align*}
  where we used $g_i\in C_0(U_{s_{k+1}}\cap U_{s_i})$ and Sublemma 3 in the second equality.
\end{proof}

From the construction of the transformation groupoid, one can show that the correspondence
\[
  S\ni s\mapsto \Theta_s\in \bis(S\ltimes X)
\]
is a semigroup homomorphism which satisfies the conditions (i) and (ii) of Proposition \ref{prop:technical-lemma}.

\begin{lem}
  There exists a $*$-homomorphism $\Psi\colon C^*(S\ltimes X)\to S\ltimes C_0(X)$ such that
  \[
    \Psi(f)=[\delta_s\,f\circ d_s^{-1}]
  \]
  for all $f\in C_c(\Theta_s)$.
\end{lem}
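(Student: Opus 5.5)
The plan is to apply Proposition \ref{prop:technical-lemma} with $G=S\ltimes X$, $U_s=\Theta_s$, target C*-algebra $A=S\ltimes C_0(X)$, and the linear maps
\[
  \varphi_s=\Psi_s\colon C_c(\Theta_s)\ni g\mapsto [\delta_s\,g\circ d_s^{-1}]\in S\ltimes C_0(X).
\]
These are well defined because $d_s\colon\Theta_s\to X_{s^*s}$ is a homeomorphism, so $g\circ d_s^{-1}\in C_c(X_{s^*s})$. We have already noted that $s\mapsto\Theta_s$ is a semigroup homomorphism satisfying (i) and (ii). It therefore remains to verify conditions (1)--(3), after which the proposition yields the desired $*$-homomorphism $\Psi$ on $C^*(S\ltimes X)$ extending each $\Psi_s$.

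For (1), take $e_1\le e_2$ in $E(S)$ and $g\in C_c(\Theta_{e_1})$. Then $d_{e_2}$ extends $d_{e_1}$, so $g\circ d_{e_2}^{-1}=g\circ d_{e_1}^{-1}$ as elements of $C_c(X_{e_1})\subset C_c(X_{e_2})$. The classes $[\delta_{e_2}\,h]$ and $[\delta_{e_1}\,h]$ coincide because $\delta_{e_2}h-\delta_{e_1}h\in\mathcal{N}_{C_0(X)}$, and the image of this ideal vanishes in $S\ltimes C_0(X)$. Hence $\Psi_{e_2}|_{C_c(\Theta_{e_1})}=\Psi_{e_1}$. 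This is exactly where admissibility, i.e.\ passing to the full crossed product rather than $S\kstimes$, is used.

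For (2) and (3), I would transport the computation from the lemma showing that $\Phi_0$ is a $*$-homomorphism. Given $f\in C_c(\Theta_s)$ and $g\in C_c(\Theta_t)$, put $f_0=f\circ d_s^{-1}$ and $g_0=g\circ d_t^{-1}$, so that $f=\Phi_0(\delta_s f_0)$ and $g=\Phi_0(\delta_t g_0)$. Since $\Phi_0$ is multiplicative, $f*g=\Phi_0(\delta_s f_0\cdot\delta_t g_0)=\Phi_0(\delta_{st}\,\alpha_{t^*}(f_0\alpha_t(g_0)))$, and this function lies in $C_c(\Theta_{st})$. Composing with $d_{st}^{-1}$ gives $(f*g)\circ d_{st}^{-1}=\alpha_{t^*}(f_0\alpha_t(g_0))$, because $\Phi_0(\delta_u h)=h\circ d_u$ and $d_u$ is a bijection. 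Therefore
\[
  \Psi_{st}(f*g)=[\delta_s f_0\cdot\delta_t g_0]=\Psi_s(f)\Psi_t(g).
\]
The involution is handled in the same way. From $f^*=\Phi_0((\delta_s f_0)^*)=\Phi_0(\delta_{s^*}\,\alpha_s(\overline{f_0}))$ we read off $f^*\circ d_{s^*}^{-1}=\alpha_s(\overline{f_0})$, which gives $\Psi_{s^*}(f^*)=[(\delta_s f_0)^*]=\Psi_s(f)^*$.

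I do not expect any step here to be a serious obstacle, since all the analytic difficulty was absorbed into Proposition \ref{prop:technical-lemma}. The one point needing care is the bookkeeping that $f*g$, regarded as a function on $G$, genuinely lies in $C_c(\Theta_{st})$ and equals $\Phi_0$ of the product. This is what the earlier lemma established, together with the injectivity of $h\mapsto h\circ d_u$ on $C_c(X_{u^*u})$.
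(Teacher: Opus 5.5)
Your proposal is correct and follows the paper's proof: you apply Proposition~\ref{prop:technical-lemma} to the maps $\Psi_s$, and condition (1) comes from the admissibility of the quotient onto $S\ltimes C_0(X)$. The one difference is in (2) and (3), where you pull back the already-established $*$-homomorphism property of $\Phi_0$ via $f=\Phi_0(\delta_s\,f\circ d_s^{-1})$, whereas the paper redoes the same pointwise evaluations at $[st,x]$ and $[s^*,x]$ directly; the two arguments are equivalent.
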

\begin{proof}
  For each $s\in S$, define $\Psi_s\colon C_c(\Theta_s)\to S\ltimes C_0(X)$ by
  \[
    \Psi_s(f)=[\delta_s\,f\circ d_s^{-1}].
  \]
  It suffices to check the conditions (1), (2) and (3) of Proposition \ref{prop:technical-lemma}. 

  For (1), suppose $e_1\le e_2$ and $f\in C_c(\Theta_{e_1})$. Then the homeomorphism $d_{e_2}\colon \Theta_{e_2}\to X_{e_2}$ is an extension of $d_{e_1}\colon \Theta_{e_1}\to X_{e_1}$. Hence $f\circ d_{e_2}^{-1}=f\circ d_{e_1}^{-1}$ in $C_c(X_{e_1})$. It follows
  \[
    \Psi_{e_2}(f)=[\delta_{e_2}\,f\circ d_{e_2}^{-1}]=[\delta_{e_1}\,f\circ d_{e_1}^{-1}]=\Psi_{e_1}(f).
  \]
  This proves $\Psi_{e_2}|_{C_c(\Theta_{e_1})}=\Psi_{e_1}$.

  For (2), fix $f\in C_c(\Theta_s)$ and $g\in C_c(\Theta_t)$. First compute that
  \begin{align*}
    \Psi_s(f)\Psi_t(g)
    &=[\delta_s\,f\circ d_s^{-1}][\delta_t\,g\circ d_t^{-1}]\\
    &=[\delta_{st}\,\alpha_{t^*}((f\circ d_s^{-1})\cdot\alpha_t(g\circ d_t^{-1}))]\\
    \Psi_{st}(f*g)&=[\delta_{st}\,(f*g)\circ d_{st} ^{-1}].
  \end{align*}
  Now we show that
  \[
    \alpha_{t^*}((f\circ d_s^{-1})\cdot\alpha_t(g\circ d_t^{-1}))=(f*g)\circ d_{st}^{-1}
  \]
  in $C_c(X_{(st)^*(st)})$. Fix $x\in X_{(st)^*(st)}=\theta_t^{-1}(X_{s^*s}\cap X_{tt^*})$. Compute that
  \begin{align*}
    \alpha_{t^*}\bigl((f\circ d_s^{-1})\cdot\alpha_t(g\circ d_t^{-1})\bigr)(x)
    &=(f\circ d_s^{-1})(\theta_t(x))\cdot\alpha_t(g\circ d_t^{-1})(\theta_t(x))\\
    &=f([s,\theta_t(x)])g([t,x])
  \end{align*}
  and 
  \begin{align*}
    ((f*g)\circ d_{st}^{-1})(x)
    &=(f*g)([st,x])\\
    &=f([s,\theta_t(x)])g([t,x]),
  \end{align*}
  where we used $[st,x]=[s,\theta_t(x)][t,x]$ in the second equality. This proves (2).

  For (3), fix $s\in S$ and $f\in C_c(\Theta_s)$. Compute that
  \begin{align*}
    \Psi_{s}(f)^*
    &=[\delta_s\,f\circ d_s^{-1}]^*\\
    &=[\delta_{s^*}\,\alpha_s(\overline{f\circ d_s^{-1}})]\\
    \Psi_{s^*}(f^*)
    &=[\delta_{s^*}\,f^*\circ d_{s^*}^{-1}].
  \end{align*}
  Hence it suffices to show that $\alpha_s(\overline{f\circ d_s^{-1}})=f^*\circ d_{s^*}^{-1}$ in $C_c(X_{ss^*})$. To see this, fix $x\in X_{ss^*}$ and verify that
  \begin{align*}
    \alpha_s(\overline{f\circ d_s^{-1}})(x)
    &=\overline{f\circ d_s^{-1}(\theta_{s^*}(x))}\\
    &=\overline{f([s,\theta_{s^*}(x)])}\\
    (f^*\circ(d_{s^*})^{-1})(x)
    &=f^*([s^*,x])\\
    &=\overline{f([s^*,x]^{-1})}\\
    &=\overline{f([s,\theta_{s^*}(x)])}.
  \end{align*}
  This completes the proof.
\end{proof}

Now we obtain the full version of the isomorphisms in Theorem \ref{thm:isom}.

\begin{thm}
  Let $S$ be an inverse semigroup acting on a locally compact Hausdorff space $X$. Then the $*$-homomorphisms $\Phi\colon S\ltimes C_0(X)\to C^*(S\ltimes X)$ and $\Psi\colon C^*(S\ltimes X)\to S\ltimes C_0(X)$ are inverses of each other. In particular,
  \[
    C^*(S\ltimes X)\cong S\ltimes C_0(X).
  \]
\end{thm}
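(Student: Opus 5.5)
It suffices to check that $\Psi\circ\Phi$ and $\Phi\circ\Psi$ are the identity on dense subsets, since both maps are already known to be $*$-homomorphisms between C*-algebras.

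For $\Psi\circ\Phi=\mathrm{id}_{S\ltimes C_0(X)}$, I will use the elements $[\delta_s\,f]$ with $s\in S$ and $f\in C_c(X_{s^*s})$. Their span is dense in $S\ltimes C_0(X)$, because $C_c(X_{s^*s})$ is dense in $C_0(X_{s^*s})$ and each map $x\mapsto[\delta_s\,x]$ is contractive (Lemma \ref{lem:contractive}). On these generators,
\[
\Psi(\Phi([\delta_s\,f]))=\Psi(f\circ d_s)=[\delta_s\,(f\circ d_s)\circ d_s^{-1}]=[\delta_s\,f],
\]
since $f\circ d_s\in C_c(\Theta_s)$.

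For $\Phi\circ\Psi=\mathrm{id}_{C^*(S\ltimes X)}$, I will use that $\mathcal{C}(S\ltimes X)$ is dense in $C^*(S\ltimes X)$. By Lemma \ref{lem:partition}, applied to the cover $\{\Theta_s\}_{s\in S}$ of $S\ltimes X$ by open bisections, $\mathcal{C}(S\ltimes X)=\Span\bigcup_s C_c(\Theta_s)$. For $g\in C_c(\Theta_s)$ we have $g\circ d_s^{-1}\in C_c(X_{s^*s})$, so
\[
\Phi(\Psi(g))=\Phi([\delta_s\,g\circ d_s^{-1}])=(g\circ d_s^{-1})\circ d_s=g.
\]
By linearity and continuity the two compositions are the identity maps, and the isomorphism follows.

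The only point needing care is that these formulas are consistent with how $\Phi$ and $\Psi$ were defined. $\Phi$ was defined through Lemma \ref{lem:alg-cp} from $\Phi_0$ on $S\alg C_c(X)$, so its value on $[\delta_s\,f]$ with $f\in C_c(X_{s^*s})$ is exactly $f\circ d_s$. $\Psi$ restricts to $\Psi_s$ on each $C_c(\Theta_s)$ by Proposition \ref{prop:technical-lemma}. No real obstacle remains, because the well-definedness difficulties were already handled in that proposition.
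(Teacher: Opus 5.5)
Your proposal is correct and follows the paper's proof essentially verbatim. You check $\Psi\circ\Phi$ on the generators $[\delta_s\,f]$ with $f\in C_c(X_{s^*s})$, and $\Phi\circ\Psi$ on each $C_c(\Theta_s)$, whose union spans $\mathcal{C}(S\ltimes X)$ by Lemma \ref{lem:partition}; you also spell out the density of the generators, which the paper leaves implicit.
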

\begin{proof}
  For $s\in S$ and $f\in C_c(X_{s^*s})$, compute that
  \[
    \Psi\circ\Phi([\delta_s\,f])=\Psi(f\circ d_s)=[\delta_s\,f].
  \]
  Hence $\Psi\circ\Phi=\mathrm{id}_{S\ltimes C_0(X)}$.
  
  On the other hand, for $g\in C_c(\Theta_s)$, we calculate that
  \[
    \Phi\circ\Psi(g)=\Phi([\delta_s\,g\circ d_s^{-1}])=g.
  \]
  Since $\mathcal{C}(S\ltimes X)=\Span\bigcup_{s\in S} C_c(\Theta_s)$ by Lemma \ref{lem:partition}, it follows $\Phi\circ\Psi=\mathrm{id}_{C^*(S\ltimes X)}$.
\end{proof}

Recall that every point $x\in X$ corresponds to the pure state $\ev_x$ on $C_0(X)$ and every pure state on $C_0(X)$ is of this form. Observe that $e\in E(S)$ belongs to $\supp(\ev_x)$ if and only if $x\in X_e$. In addition, we have
\begin{align*}
  \overline{\ev_x}(\delta_s\,f)=
  \begin{cases}
    f(x) & \mathrm{if}\;x\in X_e\;\mathrm{for\;some\;}e\in E(S)\;\mathrm{with}\;e\le s\\
    0 & \mathrm{otherwise}.
  \end{cases}
\end{align*}
Here, we note that the condition $x\in X_e$ for some $e\in E(S)$ with $e\le s$ is equivalent to $x\in [s,X_{s^*s}]=\Theta_s$ in $S\ltimes X$.

\begin{lem}
  For any $x\in X$ and $T\in S\alg C_c(X)$, we have $\|\pi_{\overline{\ev_x}}(T)\|=\|\lambda_x\circ\Phi_0(T)\|$.
\end{lem}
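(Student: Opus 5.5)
We will show that the GNS representation $\pi_{\overline{\ev_x}}$, restricted to $S\alg C_c(X)$, is unitarily equivalent to $\lambda_x\circ\Phi_0$. The equality of norms then follows at once.

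The natural candidate is the vector state of $\lambda_x$ at $\delta_x\in l^2((S\ltimes X)_x)$, where $x$ is viewed as the unit $[e,x]$. By Proposition \ref{prop:cyclic}, $\pi_{\overline{\ev_x}}$ has a cyclic vector $\xi$ with $\overline{\ev_x}(T)=\langle\xi|\pi_{\overline{\ev_x}}(T)\xi\rangle$ for all $T\in S\alg C_0(X)$. So, by uniqueness of GNS representations up to unitary equivalence, the following three facts suffice:
\begin{enumerate}
  \item[(a)] $\langle\delta_x|\lambda_x(\Phi_0(T))\delta_x\rangle=\overline{\ev_x}(T)$ for all $T\in S\alg C_c(X)$;
  \item[(b)] $\delta_x$ is cyclic for $\lambda_x\circ\Phi_0(S\alg C_c(X))$;
  \item[(c)] $\xi$ is still cyclic for $\pi_{\overline{\ev_x}}(S\alg C_c(X))$.
\end{enumerate}

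For (a), it suffices to take $T=\delta_s\,f$. We compute $\lambda_x(f\circ d_s)\delta_x=\sum_{\beta\in(S\ltimes X)_x}(f\circ d_s)(\beta)\delta_\beta$. The only possible contribution to the $\delta_x$-coefficient is at $\beta=x$, which appears precisely when the unit $x$ lies in $\Theta_s$. In that case the coefficient is $f(x)$, and otherwise it is $0$. By the remark preceding the lemma, $x\in\Theta_s$ is exactly the condition under which $\overline{\ev_x}(\delta_s\,f)=f(x)$, so (a) holds.

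For (b), the elements of $(S\ltimes X)_x$ are $[s,x]$ with $x\in X_{s^*s}$. Choose $f\in C_c(X_{s^*s})$ with $f(x)=1$. Since $\Theta_s$ is a bisection, $\lambda_x(f\circ d_s)\delta_x=\delta_{[s,x]}$, so every basis vector lies in the cyclic subspace generated by $\delta_x$.

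For (c), the GNS space is the completion of $S\alg C_0(X)$ modulo null vectors. We approximate $\delta_s\,f$ with $f\in C_0(X_{s^*s})$ by $\delta_s\,f_n$ with $f_n\in C_c(X_{s^*s})$ and $f_n\to f$. Lemma \ref{lem:contractive} gives $\|\pi(\delta_s(f-f_n))\|\le\|f-f_n\|$, so the corresponding vectors converge in $\mathcal{H}_{\overline{\ev_x}}$. Hence $\pi_{\overline{\ev_x}}(S\alg C_c(X))\xi$ is dense.

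Combining (a) and (c), the positive functional $T\mapsto\langle\xi|\pi_{\overline{\ev_x}}(T)\xi\rangle$ on $S\alg C_c(X)$ coincides with $T\mapsto\langle\delta_x|\lambda_x\Phi_0(T)\delta_x\rangle$. Define $U\colon\pi_{\overline{\ev_x}}(T)\xi\mapsto\lambda_x(\Phi_0(T))\delta_x$ for $T\in S\alg C_c(X)$. It is isometric, since both sides of the norm computation equal the common functional evaluated at $T^*T$. It is densely defined with dense range by (b) and (c), so it extends to a unitary intertwining the two representations. Therefore $\|\pi_{\overline{\ev_x}}(T)\|=\|\lambda_x\circ\Phi_0(T)\|$.

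The main obstacle is the cyclicity bookkeeping in (c). The abstract GNS representation of a $*$-algebra is built from $S\alg C_0(X)$, while $\Phi_0$ is defined only on $C_c$, so we must make sure that restricting to $S\alg C_c(X)$ loses no density. The $L^1$-contractivity estimate of Lemma \ref{lem:contractive} handles this.
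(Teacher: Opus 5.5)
Your proof is correct and takes the same route as the paper: show that $\delta_x$ is cyclic via $\lambda_x(f\circ d_s)\delta_x=\delta_{[s,x]}$, that its vector functional agrees with $\overline{\ev_x}$ on $S\alg C_c(X)$, and that $\xi_{\overline{\ev_x}}$ remains cyclic on $S\alg C_c(X)$, then conclude by uniqueness of cyclic representations. The differences are only in presentation: you construct the intertwining unitary by hand and justify the density with Lemma \ref{lem:contractive}, whereas the paper cites Fell--Doran VI.19.8 and states the density without further comment.
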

\begin{proof}
  First note that, since the image of $S\alg C_c(X)$ is dense in that of $S\alg C_0(X)$ under any $*$-representation $\pi$ of $S\alg C_0(X)$, 
  it follows from Proposition \ref{prop:cyclic} that 
  $\xi_{\overline{\eval_x}}\in\mathcal{H}_{\overline{\eval_x}}$
   is a cyclic vector for $\pi_{\overline{\eval_x}}|_{S\alg C_c(X)}\colon S\alg C_c(X)\to\mathbb{B}(\mathcal{H}_{\overline{\eval_x}})$ satisfying 
  \[
    \overline{\eval_x}(T)=\langle \xi_{\overline{\eval_x}}| \pi_{\overline{\eval_x}}(T)\xi_{\overline{\eval_x}}\rangle
  \]
  for all $T\in S\alg C_c(X)$.
  We next claim that $\lambda_x\circ\Phi_0\colon S\alg C_c(X)\to\mathbb{B}(l^2((S\ltimes X)_x))$ has a cyclic vector $\xi_x$ satisfying
  \[
    \overline{\ev_x}(T)=\langle\xi_x|\lambda_x\circ\Phi_0(T)\xi_x\rangle
  \]
  for all $T\in S\alg C_c(X)$. From the uniqueness of cyclic representation (see VI.19.8 of \cite{fell-doran}), this implies that the two representations $\pi_{\overline{\ev_x}}|_{S\alg C_c(X)}\colon S\alg C_c(X)\to\mathbb{B}(\mathcal{H}_{\overline{\ev_x}})$ and $\lambda_x\circ\Phi_0\colon S\alg C_c(X)\to \mathbb{B}(l^2((S\ltimes X)_x))$ are unitarily equivalent.

  To check the claim, put $\xi_x=\delta_x\in l^2((S\ltimes X)_x)$. Then, for any $g\in\mathcal{C}(S\ltimes X)$, we have
  \[
    \lambda_x(g)\xi_x=\sum_{[s,x]\in (S\ltimes X)_x}g([s,x])\delta_{[s,x]}.
  \]
  Fix $[s,x]\in (S\ltimes X)_x$ and find $f\in C_c(X_{s^*s})$ with $f(x)=1$. A direct computation gives
  \[
    \lambda_x(\Phi_0(\delta_s\,f))\xi_x=\lambda_x(f\circ d_s)\xi_x=\delta_{[s,x]}.
  \] 
  Since $[s,x]\in (S\ltimes X)_x$ is arbitrary, it follows that $\xi_x$ is a cyclic vector for the $*$-representation $(\lambda_x\circ\Phi_0,l^2((S\ltimes X)_x))$. For $t\in S$ and $f\in C_c(X_{t^*t})$, compute further that
  \begin{align*}
    \langle\xi_x|(\lambda_x\circ\Phi_0(\delta_t\,f))\xi_x\rangle
    &=\sum_{[s,x]\in (S\ltimes X)_x}f\circ d_t([s,x])\,\langle\delta_x|\delta_{[s,x]}\rangle\\
    &=\begin{cases}
      f(x) & \mathrm{if}\;x\in\Theta_t\\
      0 & \mathrm{otherwise}
    \end{cases}\\
    &=\overline{\ev_x}(\delta_t\,f).
  \end{align*}
  This completes the proof.
\end{proof}

From the construction of reduced crossed products, we obtain the following:

\begin{prop}
  Suppose that an inverse semigroup $S$ acts on a locally compact Hausdorff space $X$. Then the map $\Phi_0\colon S\alg C_c(X)\to \mathcal{C}(S\ltimes X)$ induces an isomorphism
  \[
    S\ltimesr C_0(X)\cong C^*_r(S\ltimes X).
  \]
\end{prop}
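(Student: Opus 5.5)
The goal is to show $\|T\|_r=\|\Phi_0(T)\|_r$ for every $T\in S\alg C_c(X)$, where the left side is Exel's reduced seminorm and the right side is the reduced groupoid norm. Density then finishes the argument. The image of $S\alg C_c(X)$ is dense in $S\ltimesr C_0(X)$ by Lemma \ref{lem:alg-cp}, since the induced map from $S\alg C_0(X)$ is continuous for any C*-seminorm by Lemma \ref{lem:contractive}. The image $\Phi_0(S\alg C_c(X))$ equals $\mathcal{C}(S\ltimes X)$ by Lemma \ref{lem:partition} applied to the cover $\{\Theta_s\}_{s\in S}$. With the norm equality in hand, $\Phi_0$ descends to an isometric $*$-homomorphism with dense range from the Hausdorff completion $S\ltimesr C_0(X)$ into $C^*_r(S\ltimes X)$, hence an isomorphism. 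By construction it sends $[\delta_s\,f]$ to $f\circ d_s$.

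For the norm equality, recall that every pure state on $C_0(X)$ has the form $\ev_x$ with $x\in X$. By definition of $\|\cdot\|_r$ on $S\alg A$ this gives
\[
\|T\|_r=\sup_{x\in X}\|\pi_{\overline{\ev_x}}(T)\|.
\]
The reduced groupoid norm is
\[
\|\Phi_0(T)\|_r=\sup_{x\in X}\|\lambda_x(\Phi_0(T))\|,
\]
where we identify $(S\ltimes X)^{(0)}=X$. The preceding lemma gives $\|\pi_{\overline{\ev_x}}(T)\|=\|\lambda_x\circ\Phi_0(T)\|$ for each $x\in X$ and each $T\in S\alg C_c(X)$. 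Taking suprema over $x\in X$ yields the equality of seminorms on $S\alg C_c(X)$.

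The main point needing care is the well-definedness of the quotient map. We need to know that the kernel of the seminorm $\|\cdot\|_r$ on $S\alg C_c(X)$ is exactly $\ker(\lambda\circ\Phi_0)$, which follows from the norm equality. We also need that the completion of $S\alg C_c(X)$ under $\|\cdot\|_r$ is all of $S\ltimesr C_0(X)$. For the latter, I would argue that for $x\in C_0(X_{s^*s})$ approximated by $x_n\in C_c(X_{s^*s})$, we have $\|\pi(\delta_s\,(x-x_n))\|\le\|x-x_n\|$ for every representation $\pi$, by Lemma \ref{lem:contractive}. No further obstacle is expected.
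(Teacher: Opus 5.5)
Your proposal is correct and is essentially the paper's argument: the paper obtains the isomorphism directly from the preceding lemma $\|\pi_{\overline{\ev_x}}(T)\|=\|\lambda_x\circ\Phi_0(T)\|$ by taking suprema over $x\in X$ (every pure state on $C_0(X)$ being some $\ev_x$), and leaves implicit the density bookkeeping that you spell out. One minor point: the density of $S\alg C_c(X)$ in $S\ltimesr C_0(X)$ comes from Lemma \ref{lem:contractive} together with the density of $C_c(X_{s^*s})$ in $C_0(X_{s^*s})$, not from Lemma \ref{lem:alg-cp}, as your last paragraph in fact already shows.
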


\bibliographystyle{amsalpha}
\bibliography{ks_groupoid.bbl}

\providecommand{\bysame}{\leavevmode\hbox to3em{\hrulefill}\thinspace}
\providecommand{\MR}{\relax\ifhmode\unskip\space\fi MR }
% \MRhref is called by the amsart/book/proc definition of \MR.
\providecommand{\MRhref}[2]{%
  \href{http://www.ams.org/mathscinet-getitem?mr=#1}{#2}
}
\providecommand{\href}[2]{#2}
\begin{thebibliography}{BKM25}

\bibitem[BE12]{be}
Alcides Buss and Ruy Exel, \emph{Fell bundles over inverse semigroups and twisted \'etale groupoids}, J. Operator Theory \textbf{67} (2012), no.~1, 153--205.

\bibitem[BHM18]{BHM2018}
Alcides Buss, Rohit~D. Holkar, and Ralf Meyer, \emph{A universal property for groupoid {$\rm C^*$}-algebras. {I}}, Proc. Lond. Math. Soc. (3) \textbf{117} (2018), no.~2, 345--375.

\bibitem[BKM25]{BKM2025}
Krzysztof Bardadyn, Bartosz Kwa\'sniewski, and Andrew McKee, \emph{Banach algebras associated to twisted \'etale groupoids: inverse semigroup disintegration and representations on {$L^p$}-spaces}, J. Funct. Anal. \textbf{289} (2025), no.~12, Paper No. 111163, 66. \MR{4948043}

\bibitem[BM17]{bm}
Alcides Buss and Ralf Meyer, \emph{Inverse semigroup actions on groupoids}, Rocky Mountain J. Math. \textbf{47} (2017), no.~1, 53--159.

\bibitem[BN17]{bn}
Erik B\'edos and Magnus~D. Norling, \emph{On {F}ell bundles over inverse semigroups and their left regular representations}, New York J. Math. \textbf{23} (2017), 1013--1044. \MR{3690239}

\bibitem[DP85]{dp}
J.~Duncan and A.~L.~T. Paterson, \emph{{$C\sp \ast$}-algebras of inverse semigroups}, Proc. Edinburgh Math. Soc. (2) \textbf{28} (1985), no.~1, 41--58.

\bibitem[EP16]{ep}
Ruy Exel and Enrique Pardo, \emph{The tight groupoid of an inverse semigroup}, Semigroup Forum \textbf{92} (2016), no.~1, 274--303.

\bibitem[ES17]{es}
Ruy Exel and Charles Starling, \emph{Amenable actions of inverse semigroups}, Ergodic Theory Dynam. Systems \textbf{37} (2017), no.~2, 481--489.

\bibitem[Exe08]{exel2008}
Ruy Exel, \emph{Inverse semigroups and combinatorial {$C^\ast$}-algebras}, Bull. Braz. Math. Soc. (N.S.) \textbf{39} (2008), no.~2, 191--313.

\bibitem[Exe11]{exel2011}
\bysame, \emph{Noncommutative {C}artan subalgebras of {$C^*$}-algebras}, New York J. Math. \textbf{17} (2011), 331--382.

\bibitem[FD88]{fell-doran}
J.~M.~G. Fell and R.~S. Doran, \emph{Representations of {$^*$}-algebras, locally compact groups, and {B}anach {$^*$}-algebraic bundles. {V}ol. 1}, Pure and Applied Mathematics, vol. 125, Academic Press, Inc., Boston, MA, 1988, Basic representation theory of groups and algebras.

\bibitem[FKU]{FKU}
Takuto Fujieda, Takeshi Katsura, and Tomoki Uchimura, \emph{A categorical approach to inverse semigroups, \'etale groupoids, and their {C*}-algebras}, Tokyo Journal of Mathematics, to appear, arXiv:2410.20661.

\bibitem[KS02]{ks1}
Mahmood Khoshkam and Georges Skandalis, \emph{Regular representation of groupoid {$C^*$}-algebras and applications to inverse semigroups}, J. Reine Angew. Math. \textbf{546} (2002), 47--72.

\bibitem[KS04]{ks2}
\bysame, \emph{Crossed products of {$C^*$}-algebras by groupoids and inverse semigroups}, J. Operator Theory \textbf{51} (2004), no.~2, 255--279.

\bibitem[Lan95]{lance}
E.~C. Lance, \emph{Hilbert {$C^*$}-modules}, London Mathematical Society Lecture Note Series, vol. 210, Cambridge University Press, Cambridge, 1995, A toolkit for operator algebraists.

\bibitem[Law26]{lawson}
Mark~V. Lawson, \emph{Inverse semigroups---the theory of partial symmetries}, second ed., World Scientific Publishing Co. Pte. Ltd., Hackensack, NJ, [2026] \copyright 2026.

\bibitem[Nes26]{Neshveyev2026}
Sergey Neshveyev, \emph{A disintegration theorem for non-second-countable \'etale groupoids}, 2026, preprint, arXiv:2607.06010.

\bibitem[Pat99]{paterson}
Alan L.~T. Paterson, \emph{Groupoids, inverse semigroups, and their operator algebras}, Progress in Mathematics, vol. 170, Birkh\"auser Boston, Inc., Boston, MA, 1999.

\bibitem[Ren80]{renault}
Jean Renault, \emph{A groupoid approach to {$C\sp{\ast} $}-algebras}, Lecture Notes in Mathematics, vol. 793, Springer, Berlin, 1980.

\bibitem[Sie97]{ns}
N\'andor Sieben, \emph{{$C^\ast$}-crossed products by partial actions and actions of inverse semigroups}, J. Austral. Math. Soc. Ser. A \textbf{63} (1997), no.~1, 32--46.

\bibitem[SSW20]{as}
Aidan Sims, G\'abor Szab\'o, and Dana Williams, \emph{Operator algebras and dynamics: groupoids, crossed products, and {R}okhlin dimension}, Advanced Courses in Mathematics. CRM Barcelona, Birkh\"auser/Springer, Cham, 2020, Lecture notes from the Advanced Course held at Centre de Recerca Matem\`atica (CRM) Barcelona, March 13--17, 2017.

\bibitem[Ste10]{steinberg}
Benjamin Steinberg, \emph{A groupoid approach to discrete inverse semigroup algebras}, Adv. Math. \textbf{223} (2010), no.~2, 689--727.

\end{thebibliography}
\end{document}